\documentclass[reqno,11pt]{amsart}
\usepackage[colorlinks=true, linkcolor=blue, citecolor=blue]{hyperref}

\usepackage{amssymb}
\usepackage{amsmath, graphicx, rotating}
\usepackage{color}     
\usepackage{soul}
\usepackage[dvipsnames]{xcolor}   
\usepackage{tcolorbox}

\usepackage{ifthen}
\usepackage{xkeyval}
\usepackage{todonotes}
\usepackage[T1]{fontenc}
\usepackage{lmodern}
\usepackage[english]{babel}

\usepackage{ upgreek }
\usepackage{stmaryrd}
\SetSymbolFont{stmry}{bold}{U}{stmry}{m}{n}
\usepackage{amsthm}
\usepackage{float}

\usepackage{ bbm }
\usepackage{ stmaryrd }
\usepackage{ mathrsfs }
\usepackage{ frcursive }
\usepackage{ comment }

\usepackage{pgf, tikz}
\usetikzlibrary{shapes}
\usepackage{varioref}
\usepackage{enumitem}
\usepackage{longtable}

\usepackage{amsmath,amscd}

\usepackage{mathtools}

\usepackage{dsfont}

\definecolor{rouge}{rgb}{0.7,0.00,0.00}
\definecolor{vert}{rgb}{0.00,0.5,0.00}
\definecolor{bleu}{rgb}{0.00,0.00,0.8}

\usepackage{color,graphics}

\usepackage{tikz-cd}

\usepackage[margin=1.3in]{geometry}

\newtheorem{theorem}{Theorem}[section]
\newtheorem*{theorem*}{Theorem}
\newtheorem{lemma}[theorem]{Lemma}

\newtheorem{corollary}[theorem]{Corollary}
\newtheorem{proposition}[theorem]{Proposition}

\labelformat{hypothesis}{\textbf{M\kern-0.1mm#1}}

\newtheorem{condition}{Condition}

\labelformat{conditionA}{\textbf{A\kern-0.1mm#1}}

\renewcommand\dots{\hbox to 1em{.\hss.\hss.}}

\theoremstyle{definition}
\newtheorem{example}[theorem]{Example}
\newtheorem{remark}[theorem]{Remark}

\numberwithin{equation}{section}

\def\bb#1{\mathbb{#1}}

\def\bf#1{\mathbf{#1}}
\def\scr#1{\mathscr{#1}}

\def\geq{\geqslant}
\def\leq{\leqslant}

\def\wt{\widetilde}
\def\wh{\widehat}

\newcommand\ee{\varepsilon}

\DeclareMathOperator{\supp}{supp}

\DeclareMathOperator{\End}{End}

\def\geq{\geqslant}
\def\leq{\leqslant}
\def\Rd {\mathbb{R}^d}
\def\Rd*{(\mathbb{R}^d)^*}
\def\Pd{{\mathbb{P}}^{d-1}}
\def\Pd*{(\mathbb{P}^{d-1})^*}

\def\bb#1{\mathbb{#1}}

\begin{document}
\title[Local limit theorem for the operator norm]{Local limit theorem for the operator norm of products of random matrices} 

\author{Ion Grama}
\author{Jean-Fran\c cois Quint}
\author{Hui Xiao}

\curraddr[Grama, I.]{Univ Bretagne Sud, CNRS UMR 6205, LMBA, Vannes, France.}
\email{ion.grama@univ-ubs.fr}

\curraddr[Quint, J.-F.]{IMAG, Univ Montpellier, CNRS UMR 5149, Montpellier, France.}
\email{Jean-Francois.Quint@umontpellier.fr}

\curraddr[Xiao, H.]{State Key Laboratory of Mathematical Sciences, Academy of Mathematics and Systems Science, Chinese Academy of Sciences, Beijing 100190, China.}
\email{xiaohui@amss.ac.cn}

\date{\today}
\subjclass[2020]{Primary 60F05, 60J05, 60J10. Secondary 22E46, 22D40}
\keywords{Random walks on groups, operator norm, local limit theorem, principal bundle, representation theory of Lie groups}

\begin{abstract}
We prove a local limit theorem for the quantity $\| g_n \cdots g_1 \|$, where $g_1, g_2, \ldots$ is a sequence 
of independent and identically distributed random invertible square matrices. 
The norm $\| \cdot \|$ is arbitrary and we make no proximality assumption.  
\end{abstract}

\maketitle

\tableofcontents


\section{Introduction and results}
\subsection{Notation and background}

Let $V$ be a finite-dimensional real vector space. Denote by ${\rm GL}(V)$ the group of linear automorphisms of $V$.
We equip the space $\End(V)$ of endomorphisms on $V$ with an arbitrary norm $\| \cdot \|$; 
for instance, one may take the operator norm induced by a chosen norm on $V$. 
Let $\mu$ be a Borel probability measure on the group ${\rm GL}(V)$. 
Denote by $\Gamma_{\mu}$ the closed subsemigroup of ${\rm GL}(V)$ spanned by the support of the measure $\mu$. 
We say that $\Gamma_{\mu}$ is strongly irreducible if no finite union of proper non-zero subspaces 
of $V$ is $\Gamma_{\mu}$-invariant. 
The measure $\mu$ is said to admit a finite first moment if  $\int_{{\rm GL}(V)}  \log \max \{ \|g\|,  \|g^{-1} \| \} \mu(dg) < \infty$,  
and a finite exponential moment if there exists a constant $\alpha >0$ such that 
\begin{align}\label{Exponential-moment} 
\int_{{\rm GL}(V)} \max \{ \|g\|,  \|g^{-1} \| \}^{\alpha} \mu(dg) < \infty. 
\end{align}

Throughout the paper, we fix a probability space $(\Omega, \scr F, \bb P)$,
and a sequence $g_1, g_2, \ldots$ of 
independent and identically distributed (i.i.d.) random elements of ${\rm GL}(V)$ with law $\mu$. 
It is well known (see \cite{FK60, Boug-Lacr85, BQ16b}) that if the measure $\mu$ 
admits a finite first moment, 
then there exists a real number $\lambda_{\mu}$, called the first Lyapunov exponent of $\mu$,
such that,  $\bb P$-almost surely, 
\begin{align*}
\frac{1}{n}  \log \|g_n \cdots g_1 \|  \xrightarrow[n\to\infty]{}  \lambda_{\mu}. 
\end{align*}
Note that $\lambda_{\mu}$ does not depend on the choice of the norm $\|\cdot \|$. 

According to the results in \cite{BQ16b}, when $\Gamma_{\mu}$ is strongly irreducible 
and the measure $\mu$ admits a finite exponential moment,  the asymptotic variance   
\begin{align}\label{def-variance-aa}
\upsilon_{\mu}^2 = \lim_{n \to \infty} \frac{1}{n} \bb E \left[ (\log \|g_n \cdots g_1 \| - n \lambda_{\mu})^2 \right]
\end{align}
exists and does not depend on the choice of the norm. 
This asymptotic variance $\upsilon_{\mu}^2$ is equal to zero if and only if the set 
$e^{-\lambda_{\mu}} \supp \mu$ is contained in a compact subgroup of ${\rm GL}(V)$, where $\supp \mu$ denotes the support of the measure $\mu$. 
In particular, if $\lambda_{\mu} = 0$,
then $\upsilon_{\mu}$ is positive if and only if $\Gamma_{\mu}$ is unbounded in ${\rm GL}(V)$. 

The main result of this article is the following local limit theorem for the norm of products of random matrices.

\begin{theorem}\label{Main-Thm-LLT}
Assume that $\Gamma_{\mu}$ is strongly irreducible 
and that the image of $\Gamma_{\mu}$ in the projective linear group 
${\rm PGL}(V)$ is not contained in any compact subgroup of ${\rm PGL}(V)$. 
Assume also that $\mu$ admits a finite exponential moment \eqref{Exponential-moment}. 
Then, for any norm $\| \cdot \|$ on $\End (V)$ and any real numbers $a < b$, we have 
\begin{align}\label{LLT formula}
\upsilon_{\mu} \sqrt{2 \pi n} \,  \bb P  \left( \log \|g_n \cdots g_1 \| - n \lambda_{\mu} \in [a, b] \right)
\xrightarrow[n\to\infty]{}  b-a. 
\end{align}
\end{theorem}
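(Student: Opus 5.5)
We reduce the operator norm to a cocycle over a compact space on which spectral methods for Markov operators with Fourier perturbation apply.

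\textbf{Step 1: reduction to a cocycle.} Let $G$ be the Zariski closure of $\Gamma_\mu$; it is reductive because $\Gamma_\mu$ is strongly irreducible. Let $K$ be a maximal compact subgroup, $A^+$ a Weyl chamber, and $\kappa$ the Cartan projection. Without proximality, $V$ decomposes under the maximal split torus into weight spaces, and the top weight space $V^{\chi}$ may have dimension larger than one. Following Benoist--Quint, let $P$ be the parabolic subgroup stabilizing the flag associated with $V^\chi$, and let $\mathcal P = G/P_c$ be the principal bundle over the flag variety $G/P$ whose fibers carry the compact group $M$ acting on $V^\chi$. On this space we use the Iwasawa cocycle $\sigma$ composed with $\chi$, giving an additive cocycle $\sigma_\chi(g,\xi)$. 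We then prove the comparison estimate: for $\xi$ ranging over a large set, $\log\|g_n\cdots g_1\| = \sigma_\chi(g_n\cdots g_1,\xi) + O(1)$. More precisely, with the norm-dependent correction written as an explicit function $\log\|g\| = \chi(\kappa(g)) + \varphi(\text{polar data of } g)$, the random walk on $\mathcal P$ asymptotically decouples the $K$-components. The arbitrary norm enters only through a continuous positive function on the compact set of rank-$\le\dim V^\chi$ limits $\lim g/\|g\|$. This function depends on the limiting Cartan $K$-parts $k_n$ and $\ell_n$, where $g_n\cdots g_1 = k_n a_n \ell_n$.

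\textbf{Step 2: joint LLT with targets.} Apply the spectral gap of the transfer operators $P_{it}f(\xi)=\int e^{it\sigma_\chi(g,\xi)}f(g\xi)\,d\mu(g)$ on Hölder functions on $\mathcal P$. This gives a local limit theorem with target functions of the form
\[
\upsilon_\mu\sqrt{2\pi n}\,\bb E\bigl[\phi(g_n\cdots g_1\xi)\,\psi(\sigma_\chi(g_n\cdots g_1,\xi)-n\lambda_\mu)\bigr]\to \int\phi\,d\nu\int\psi.
\]
The crucial input is non-arithmeticity: $P_{it}$ has no eigenvalue of modulus one for $t\ne 0$. This must hold on the bundle rather than on the flag variety alone, because the $M$-fiber component can absorb rotations. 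Non-arithmeticity follows from the Zariski density of $\Gamma_\mu$ in $G$ and the density of the group generated by the Jordan projections in $\mathfrak a$ (Benoist's theorem), combined with the non-compactness hypothesis in ${\rm PGL}(V)$. That hypothesis ensures $\chi$ is non-constant on the relevant subgroup and $\upsilon_\mu>0$.

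\textbf{Step 3: handling the two-sided dependence.} The norm correction depends on both the left $K$-part $k_n$, which is governed by the forward walk on the flag bundle, and the right part $\ell_n$, which is governed by $g_1,g_2,\dots$ and converges almost surely. We split the product as $(g_n\cdots g_{m+1})(g_m\cdots g_1)$ with $m=m_n\to\infty$ slowly. The right factor's direction has essentially converged after $m$ steps and is independent of the remaining LLT increment. The left factor's direction is handled by the target function $\phi$ in Step 2, after reversing time; since the $g_i$ are i.i.d., the forward and reversed walks have the same law. Approximating indicators of $[a,b]$ by smooth functions with compactly supported Fourier transforms from above and below, and using continuity of the correction term together with exponential large-deviation bounds for the defect from Step 1, we obtain \eqref{LLT formula}. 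The limit is $b-a$ because a bounded, asymptotically independent shift does not change the Lebesgue-type limit.

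\textbf{Expected main obstacle.} The main difficulty is Step 2's aperiodicity on the principal bundle in the non-proximal case. There one must exclude eigenfunctions $f(g\xi)e^{it\sigma_\chi(g,\xi)}=e^{i\theta}f(\xi)$ on the $\Gamma_\mu$-orbit closure. I would first pass to the Zariski-dense setting and use Guivarc'h--Raugi and Benoist--Quint arguments, namely density of Jordan projections together with the $M$-equivariance, to show that $\sigma_\chi$ cannot be cohomologous to a lattice-valued cocycle.
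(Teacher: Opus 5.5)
\textbf{The gap is Step 2.} Your Steps 1 and 3 follow the paper's reduction (Proposition~\ref{Prop-norm-comparison}, Section~\ref{Sec-LLT-approximation}, Lemma~\ref{Lem-cohomology-cocycle}): split the product after $p_n\approx A\log n$ steps, replace $g_{p_n}\cdots g_1/\|g_{p_n}\cdots g_1\|$ by a normalized element of $G\varpi G$, put the left-hand norm correction into a transfer function on $G/AU$, and use an LLT that is uniform in the starting point and in shifts $w_n=o(\sqrt n)$. Two small corrections: the shift $\log\|g_{p_n}\cdots g_1\|$ is independent of the rest but not bounded, and no time reversal is needed, since $k_n$ is driven by the forward walk.

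Step 2 fails as stated: there is no spectral gap for $P_{it}$ on H\"older functions on $\mathcal P=G/AU$ (the paper's $\wt B$). The $G$-action commutes with the right $M$-action, so it maps $M$-fibres to $M$-fibres equivariantly without any contraction, and the Le Page/Ionescu-Tulcea--Marinescu argument breaks down. Quasi-compactness already fails at $t=0$. Take $V=\bb C^m\simeq\bb R^{2m}$ with $m\geq2$, and $\mu$ supported on a small open subset of $\{g\in{\rm GL}_m(\bb C):\det_{\bb C}g=e^{i\alpha}\}$ with $\alpha\notin\pi\bb Q$. Then:
\begin{itemize}
\item all hypotheses of the theorem hold, with $G=U(1)\cdot{\rm SL}_m(\bb C)$ and $AU\subset{\rm SL}_m(\bb C)$;
\item $\phi_k(gAU)=(\det_{\bb C}g)^k$ satisfies $\phi_k(g_n\cdots g_1\xi)=e^{ik\alpha n}\phi_k(\xi)$ deterministically;
\item so $P_\mu$ has smooth eigenfunctions with eigenvalues $e^{ik\alpha}$, $k\in\bb Z$, dense in the unit circle.
\end{itemize}
Your joint LLT is therefore false. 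Every $P_\mu$-invariant $\nu$ on $\mathcal P$ has $\int\phi_k\,d\nu=0$ for $k\neq0$, whereas the left-hand side is asymptotic to $e^{ik\alpha n}\phi_k(\xi)\int\psi$. In general the walk on $\mathcal P$ mixes only modulo the rotation by $(m_0^\mu)^n$ on the compact abelian group $M/M_0^\mu$ (Corollary~\ref{Cor-equdistribution}).

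\textbf{The missing idea is the $M$-isotypic decomposition.} Split functions on $\mathcal P$ according to $\wh M$, and identify the $x$-component with H\"older sections of the associated bundle $B\times_{\rho_x}\bb C^{d_x}$ over the contracting base $B=G/N_G(P)$. There the action is an isometric Lipschitz bundle action, and $P_{\mu,x,it}$ is quasi-compact component by component (Theorem~\ref{Thm-spectral-gap}, Corollary~\ref{Corollary-Spectral-gap-002}). The union of the peripheral spectra over $x\in\wh M$ can still be infinite, as in the example. The argument then runs as follows:
\begin{itemize}
\item At $t=0$ only characters $\chi$ of $M/M_0^\mu$ have a unimodular eigenvalue, namely the simple eigenvalue $\chi(m_0^\mu)$.
\item For $t\neq0$ no component has one; this is the condition $M_1^\mu=M_0^\mu\times\bb R$.
\item Near $t=0$ the perturbed eigenvalues satisfy $\kappa_{\chi,it}=\chi(m_0^\mu)\kappa_{0,it}$, because on $\eta^{-1}(\supp\nu)$ multiplication by $\varphi_\chi$ conjugates $P_{\mu,\chi,z}$ to $\chi(m_0^\mu)P_{\mu,0,z}$.
\end{itemize}
This gives the LLT for finite sums of isotypic H\"older pieces (Proposition~\ref{Prop-Local-limit-theorem-001}). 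It extends to continuous targets by uniform approximation together with the a priori bound $\sup_n\sqrt n\,\bb E\,h(\cdot)<\infty$ (Corollary~\ref{Corollary-Local-limit-theorem-002}). The correct limit is $\int_{M/M_0^\mu}Q\phi((m_0^\mu)^nm)\,\theta_\xi(dm)\int\psi$. For your shift-type targets $f(u+c(\cdot))$ it still gives $\int f$ after integrating in $u$, so Step 3 survives once Step 2 is repaired this way.

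\textbf{Non-arithmeticity.} This step is also too weak as stated. On the bundle the obstruction is a relation $\kappa(\lambda(g))=w^{-n}$, where $\kappa$ is a character of $Z$ combining a character of the compact part with $|\theta|^{-it}$ on $A$. Density of Jordan projections in $\mathfrak a$ alone cannot rule out that the $M$-part compensates the $A$-part. Proposition~\ref{Prop-Non-arithmeticity} instead uses the joint result of [GQX26]: the elements $\lambda(gh)\lambda(g)^{-1}\lambda(h)^{-1}$, which eliminate $w$, generate a closed subgroup containing an open subgroup of $(S\cap Z)/[Z,Z]$.
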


\begin{remark}
Recall that an endomorphism of $V$ is said to be proximal if it admits a unique eigenvalue of maximal modulus and this eigenvalue 
is a simple root of its characteristic polynomial.  
In the case where $\Gamma_{\mu}$ contains a proximal element, 
Theorem \ref{Main-Thm-LLT} is essentially due to Le Page \cite{LePage82}, see also Guivarc'h \cite{Gui08}, Benoist and Quint \cite{BQ16b}. 
More precisely, Le Page \cite{LePage82} established Theorem \ref{Main-Thm-LLT} 
for the operator norm associated with a Euclidean norm on $V$,
under an additional non-arithmeticity assumption on the measure $\mu$. 
Thanks to the work of Benoist \cite{Benoist97, Be2}, Guivarc'h \cite{Gui08} was able to remove this kind of assumption 
in order to prove a multidimensional central limit theorem,
which simultaneously handles norms in several proximal representations of the same semigroup. 
The main difficulty in the present paper is to further get rid of the proximality assumption entirely.
\end{remark}

\begin{remark}
In Theorem \ref{Main-Thm-LLT}, 
one cannot replace the strong irreducibility assumption on $\Gamma_{\mu}$ by mere irreducibility without changing the conclusion. 
As a counterexample, let $V = \bb R^2$ and define random matrices
$$g_i = \begin{pmatrix} 0 & 1 \\  1 & 0  \end{pmatrix}^{\ee_i}  \begin{pmatrix} e^{t_i} & 0 \\  0 & e^{-t_i}  \end{pmatrix},$$
where $(\ee_i)_{i \geq 1}$ is an i.i.d.\ sequence of Bernoulli random variables taking values $0$ and $1$,
and $(t_i)_{i \geq 1}$ is a real-valued i.i.d.\ sequence, independent of $(\ee_i)_{i \geq 1}$, 
with a finite second moment and satisfying a non-arithmeticity condition. 
In this case, the subsemigroup $\Gamma_{\mu}$ is irreducible (it does not preserve any nontrivial subspace) 
but not strongly irreducible (it preserves the union of the two coordinate axes).  
For the standard Euclidean operator norm on $\bb R^2$, the local limit theorem then becomes:
for any real numbers $a < b$, 
\begin{align*}
\upsilon_{\mu} \sqrt{2 \pi n} \,  \bb P  \left( \log \|g_n \cdots g_1 \|  \in [a, b] \right) 
\xrightarrow[n\to\infty]{}  2 (\max\{b, 0\} - \max\{a, 0\}). 
\end{align*}
Thus the limiting distribution is not the Lebesgue measure on $[a, b]$ but rather a reflected version concentrated on the positive half-line.

Using the ideas developed in this paper, 
one could describe the local limit behavior of $\log \|g_n \cdots g_1 \|$ under the sole assumption
that $\Gamma_{\mu}$ is irreducible, or even totally reducible. 
However, the resulting statement would be considerably more intricate than Theorem \ref{Main-Thm-LLT}. 
\end{remark}

\begin{remark}
If the image of $\Gamma_{\mu}$ in ${\rm PGL}(V)$ is contained in a compact subgroup of ${\rm PGL}(V)$, 
then one may find a compact subgroup $K \subset {\rm GL}(V)$ that acts strongly irreducibly on $V$
and such that $\Gamma_{\mu}$ is contained in the group $\bb R_+^* K$ of linear automorphisms which are positive multiples
of an element in $K$. 
This group is isomorphic to the product group $\bb R \times K$ and the projection of the random walk $g_n \cdots g_1$
on the real component is a sum $S_n = X_1 + \ldots + X_n$ of i.i.d.\ real-valued random variables. 
In other words, we may write $g_n \cdots g_1 \in e^{S_n} K$. 
If the random walk $S_n$ is non-lattice, then the local limit theorem \eqref{LLT formula} remains valid. 
In general, however, the asymptotic behavior of the left-hand side of \eqref{LLT formula} depends on the lattice properties of $S_n$. 
\end{remark}

\subsection{Sketch of the proof}
As in the original work by Le Page \cite{LePage82}, the strategy is to approximate the quantity 
$\log \|g_n \cdots g_1 \|$ by a quantity of the form $\sigma(g_n \cdots g_1, x)$, where $\sigma: \Gamma_{\mu} \times \bb X \to \bb R$
is an additive cocycle on a compact space $\bb X$ equipped with an action of $\Gamma_{\mu}$ and $x$ is an element of $\bb X$. 
This approximation is achieved in Section \ref{Sec-LLT-approximation} and we are brought back to proving a local limit theorem for the cocycle 
$\sigma(g_n \cdots g_1, x)$. 

The main difficulty compared to \cite{LePage82, Gui08, BQ16b} is that the action of $\Gamma_{\mu}$ on $\bb X$
is not contracting, that is, starting from two different points $x, x' \in \bb X$, the two random trajectories $g_n \cdots g_1 x$ and $g_n \cdots g_1 x'$
do not in general get close to each other.  
This reflects the fact that we have not assumed that $\Gamma_{\mu}$ contains a proximal element. 
Fortunately, this lack of proximality is precisely described by the fact that the space $\bb X$ is also equipped with an action of a compact group $M$, 
commuting with the action of $\Gamma_{\mu}$, such that the induced action of $\Gamma_{\mu}$ on
the orbit space $M \backslash \bb X$ possesses the contraction property used in \cite{LePage82, Gui08, BQ16b}. 
Consequently, using the representation theory of the compact group $M$, one can reduce the study of functions on $\bb X$ 
to the study of sections of certain vector bundles over $M \backslash \bb X$. 
This viewpoint is employed throughout the paper. 

In Section \ref{Section Vector bundles}, we recall the definition of vector bundles
and prove a spectral gap theorem 
for Markov operators acting on spaces of sections of vector bundles whose base space is contracting.  
This is an extension of the spectral gap properties in spaces of functions considered in \cite{LePage82, Gui08, BQ16b}. 

In Section \ref{Section principal bundles}, we discuss $M$-principal bundles.
This will be useful later since the quotient map $\bb X \to M \backslash \bb X$ is an $M$-principal bundle.
We use this language to relate $M$-equivariant maps from $\bb X$ to representation spaces of $M$ with sections of vector bundles
over $M \backslash \bb X$. 

In Section \ref{Sec-Existence of limit measures}, 
we combine the above concepts with the representation theory of the compact group $M$
to prove an equidistribution property for random walks on the total space of an $M$-principal bundle whose base is contracting. 
Still within this abstract framework, in Section \ref{Section-LLT}, 
we establish a local limit theorem for real-valued cocycles over such random walks on the total space of principal bundles.

In Section \ref{Sec-action-Strong-irre}, we introduce the specific principal bundle relevant to our problem
and make explicit, for this example, the conclusions of the previous two sections. 
This yields a local limit theorem for certain cocycles,  
which is then used in Section \ref{Sec-LLT-approximation} to deduce Theorem \ref{Main-Thm-LLT} 
thanks to the aforementioned approximation of $\log \|g_n \cdots g_1 \|$ by $\sigma(g_n \cdots g_1, x)$.

\section{Vector bundles and spectral gap}\label{Section Vector bundles}

\subsection{Generalities on vector bundles}\label{Subsec-vector-bundles}
We first recall the definition of vector bundles that will be used throughout this section. 
Let $B$ be a compact metric space equipped with a distance $\bf{d}$; this space will serve as the base space of the bundle. 
A Lipschitz vector bundle of dimension $d$ over $B$ is the data of another topological space $E$ 
together with a continuous surjective map $\pi: E \to B$, an open covering $(U_i)_{i \in I}$ of $B$ and a collection $(\phi_i)_{i \in I}$ of maps
such that 
\begin{enumerate}
\item
for any $i \in I$, $\phi_i$ is a continuous map from $\pi^{-1} (U_i)$ to $\bb C^d$,
and the map $(\pi \times \phi_i): w \mapsto (\pi w, \phi_i w)$ is a homeomorphism from $\pi^{-1} (U_i)$ onto $U_i \times \bb C^d$. 

\item
for any $i, j \in I$, there exists a Lipschitz continuous map $\theta_{i, j}: U_i \cap U_j \to {\rm GL}_d(\bb C)$
such that for every $w \in \pi^{-1} (U_i \cap U_j)$, we have 
\begin{align}\label{transition function}
\phi_i w = \left(  \theta_{i, j} \left( \pi w \right) \right) \left( \phi_j  w \right). 
\end{align}
\end{enumerate}

Such a collection $(U_i, \phi_i, \theta_{i, j})_{i, j \in I}$ will be called a local trivialization of the bundle. 
Two local trivializations $(U_i, \phi_i, \theta_{i, j})_{i, j \in I}$ and $(U'_{i'}, \phi'_{i'}, \theta_{i', j'}')_{i', j' \in I'}$ are said to be compatible 
if for every $i \in I$ and $i' \in I'$, there exists a Lipschitz continuous map $\tau_{i, i'}: U_i \cap U'_{i'}  \to {\rm GL}_d(\bb C)$
such that for all $w \in \pi^{-1} (U_i \cap U'_{i'})$, 
\begin{align*}
\phi_i w = \left(  \tau_{i, i'} \left( \pi w \right) \right) \left( \phi'_{i'} w \right). 
\end{align*}
A local trivialization is said to be maximal if it contains every  local trivialization that is compatible with it. 
One easily verifies that any local trivialization is compatible with a unique maximal one
and therefore, in the definition of vector bundle, we always assume that 
the local trivialization is maximal.

The fact that the transition functions $\theta_{i, j}$ in \eqref{transition function} 
are Lipschitz continuous with respect to the distance $\bf{d}$ on $B$ 
is essential for the spectral gap arguments in the next subsection.

The space $B$ is called the base space, the space $E$ is called the total space, 
and the map $\pi$ is called the projection of the bundle. 
For each $b \in B$, the preimage $\pi^{-1}(b)$ is called the fiber of the bundle over $b \in B$.
Intuitively, one can view a vector bundle as a family of fibers $\pi^{-1}(b)$
parametrized by $b \in B$  and glued together according to the global topology of the total space $E$.
Note that, for each $b \in B$, the fiber $\pi^{-1} (b)$ is endowed with a natural structure of a $d$-dimensional vector space 
over $\bb C$. 
Indeed, if $b \in U_i$, for $w, w' \in \pi^{-1} (b)$ and $\lambda \in \bb C$, we can set 
\begin{align}\label{fiber-vector-space-001}
w + w' = (\pi \times \phi_i)^{-1} (b, \phi_i w + \phi_i w')  
\quad \mbox{and}  \quad 
\lambda w =  (\pi \times \phi_i)^{-1} (b, \lambda \phi_i w), 
\end{align}
where $(\pi \times \phi_i)^{-1} : U_i \times \mathbb{C}^d \to \pi^{-1}(U_i)$ is the homeomorphism inverse to 
$w \mapsto (\pi(w), \phi_i(w))$. 
These definitions do not depend on the choice of $(U_i)_{i \in I}$, 
thanks to the compatibility condition (2) for the transition functions.

 A continuous section of the bundle $E$ is a continuous map $s: B \to E$ such that $\pi (s(b)) = b$ (i.e. $s(b) \in \pi^{-1}(b)$) for every $b \in B$.
 For $\gamma \in (0, 1]$, the continuous section $s$ is said to be $\gamma$-H\"older continuous if 
for every $i \in I$, the map $\phi_i \circ s: U_i \to \bb C^d$ is locally $\gamma$-H\"older continuous, 
where $(U_i, \phi_i, \theta_{i, j})_{i, j \in I}$ is the maximal local trivialization which defines the bundle $E$. 
Note that it actually suffices to check the above condition for any local trivialization which is compatible with 
the maximal one $(U_i, \phi_i, \theta_{i, j})_{i, j \in I}$. 
 
We denote by $\boldsymbol{\Gamma}^0(E)$ the space of all continuous sections of $E$. 
For $\gamma \in (0, 1]$, we also denote by $\boldsymbol{\Gamma}^{\gamma}(E)$
the space of $\gamma$-H\"older continuous sections of $E$. 
These are vector spaces for the natural operations. 
In the sequel, we shall equip them with appropriate Banach space structures.

We fix a norm $\| \cdot \|$ on $\bb C^d$ and choose a local trivialization $(U_i, \phi_i, \theta_{i, j})_{i, j \in I}$
such that the covering $(U_i)_{i \in I}$ of $B$ is finite (i.e. the index set $I$ is finite). 
By the compactness of $B$, we can further select, for each $i \in I$, an open subset $V_i \subset U_i$ 
with the property that the closure of $V_i$ in $B$ is contained in $U_i$, 
and such that the family $(V_i)_{i \in I}$ still covers $B$: $B = \bigcup_{i \in I} V_i$.  
For a continuous section $s$ of $E$, we define its uniform norm by 
\begin{align}\label{def-norm-s-infty}
\|s\|_0 = \max_{i \in I} \sup_{b \in V_i} \| \phi_i \circ s (b) \|. 
\end{align}
For $\gamma \in (0, 1]$, if $s$ is a $\gamma$-H\"older continuous section of $E$, 
we define its $\gamma$-H\"older norm by
\begin{align}\label{def-norm-s-Holder}
\|s\|_{\gamma} = \max_{i \in I} \Bigg( \sup_{b \in V_i} \| \phi_i \circ s (b) \| 
+ \sup_{\substack{b, b' \in V_i\\ b \neq b'}}  \frac{\| \phi_i \circ s (b) - \phi_i \circ s (b') \|}{\bf{d}(b, b')^{\gamma}}  \Bigg), 
\end{align}
where $\bf{d}$ is the distance on $B$. 
One easily checks that these formulae define Banach norms on the spaces $\boldsymbol{\Gamma}^{\gamma}(E)$ 
for any $\gamma \in [0, 1]$.
Up to norm equivalence, these norms do not depend on the choices that we made for the norm $\|\cdot\|$ on $\bb C^d$, 
the local trivialization $(U_i, \phi_i, \theta_{i, j})_{i, j \in I}$, 
or the covering $(V_i)_{i \in I}$.  

As in the case of function spaces, we have the following lemma,
which is an extension of the classical fact that, over a compact space,
the embedding of the space of $\gamma$-H\"older continuous functions into the space of continuous functions
is a compact operator.

\begin{proposition}\label{Sobolev-injection-lemma}
Let $B$ be a compact metric space and let $\pi: E \to B$ be a 
Lipschitz vector bundle of dimension $d$ over $B$. 
Then, for any $\gamma \in (0,1]$, the natural injection $\boldsymbol{\Gamma}^{\gamma}(E) \to \boldsymbol{\Gamma}^{0}(E)$
is a compact operator with dense image. 
\end{proposition}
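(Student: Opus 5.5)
We prove the two assertions separately, working throughout with the finite trivialization $(U_i,\phi_i,\theta_{i,j})_{i,j\in I}$ and the shrunken cover $(V_i)_{i\in I}$ used to define $\|\cdot\|_0$ and $\|\cdot\|_\gamma$.

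\emph{Compactness.} Let $(s_n)$ be a sequence in $\boldsymbol{\Gamma}^{\gamma}(E)$ with $\|s_n\|_\gamma\leq 1$. For each $i\in I$, the maps $f_{n,i}=\phi_i\circ s_n$ restricted to $V_i$ are bounded by $1$ in $\bb C^d$. They are also uniformly $\gamma$-H\"older with constant $1$, so they are equicontinuous. Each such map extends uniquely to the closure $\overline{V_i}$, which is compact, with the same bounds. By Arzel\`a--Ascoli on $\overline{V_i}$ and a diagonal extraction over the finite set $I$, we get a subsequence along which each $f_{n,i}$ converges uniformly on $V_i$. By the definition \eqref{def-norm-s-infty}, this means the subsequence is Cauchy for $\|\cdot\|_0$. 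Since $\boldsymbol{\Gamma}^0(E)$ is Banach, the subsequence converges there, and the injection is compact.

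\emph{Density.} Fix $s\in\boldsymbol{\Gamma}^0(E)$ and $\varepsilon>0$. The plan is to approximate locally and glue with a Lipschitz partition of unity.
\begin{enumerate}
\item Choose open sets $W_i$ with $\overline{V_i}\subset W_i$ and $\overline{W_i}\subset U_i$. This is possible by normality of $B$, since $\overline{V_i}$ is a compact subset of the open set $U_i$.
\item Build a Lipschitz partition of unity $(\chi_i)_{i\in I}$ subordinate to $(W_i)$ from the functions $\psi_i(b)=\mathbf d(b,B\setminus W_i)$. These are Lipschitz, positive exactly on $W_i$, and $\sum_j\psi_j$ is bounded below on $B$ by a positive constant, because the $W_i$ cover the compact space $B$. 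Hence $\chi_i=\psi_i/\sum_j\psi_j$ is Lipschitz, and therefore $\gamma$-H\"older, since $B$ has finite diameter.
\item On the compact set $\overline{W_i}\subset U_i$, the map $\phi_i\circ s$ is uniformly continuous. Approximate each of its coordinates uniformly within $\varepsilon$ by a Lipschitz function $h_i:B\to\bb C^d$. One way is the inf-convolution $\inf_{b'}\{f(b')+L\,\mathbf d(b,b')\}$ for large $L$, applied to real and imaginary parts. Another is Stone--Weierstrass, using that Lipschitz functions form a point-separating algebra.
\item Define the section
\[
t=\sum_{i\in I}\chi_i\,(\pi\times\phi_i)^{-1}\bigl(\cdot,\,h_i\bigr),
\]
where each term is set to zero outside $W_i$. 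This is legitimate because $\chi_i$ vanishes off $W_i$.
\end{enumerate}

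Two properties of $t$ remain to be checked.
\begin{itemize}
\item \emph{Regularity.} In the chart $\phi_k$, we have $\phi_k\circ t=\sum_i\chi_i\,\theta_{k,i}\,h_i$ on $U_k\cap W_i$. Here the $\theta_{k,i}$ are Lipschitz, and bounded on the relevant compact sets $\overline{V_k}\cap\overline{W_i}$, with the same shrinking trick applied in $k$ if needed. Products and sums of bounded Lipschitz maps are Lipschitz, so $t\in\boldsymbol{\Gamma}^\gamma(E)$.
\item \emph{Approximation.} Writing $\phi_k\circ s=\sum_i\chi_i\,\theta_{k,i}\,\phi_i\circ s$, we get $\|\phi_k\circ t-\phi_k\circ s\|\leq \sum_i\chi_i\,\|\theta_{k,i}\|\,\varepsilon\leq C\varepsilon$ on $V_k$. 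The constant $C$ depends only on the trivialization, so $\|t-s\|_0\leq C\varepsilon$, which proves density.
\end{itemize}

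The main technical point is boundedness of the transition maps $\theta_{k,i}$ on the sets where they are used. This is why we shrink $W_i$ inside $U_i$ with compact closure: it makes all the constants finite. The rest is routine.
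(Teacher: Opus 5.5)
Your proof is correct and follows the paper's argument. For compactness you apply Arzel\`a--Ascoli chartwise on $\overline{V_i}$ with a diagonal extraction; for density you use a Lipschitz partition of unity and chartwise Lipschitz approximation, controlled by the boundedness of the transition maps on sets with compact closure in $U_k\cap U_i$. The differences are minor: you get the limit from completeness of $\boldsymbol{\Gamma}^0(E)$ instead of gluing the chartwise limits by hand, and you approximate $\phi_i\circ s$ before cutting off with $\chi_i$ rather than after, which makes the support condition automatic.
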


\begin{proof}
We first prove that the natural injection is a compact operator. 
Let $(s_n)_{n \geq 1}$ be a bounded sequence in $\boldsymbol{\Gamma}^{\gamma}(E)$. 
We need to show that it admits a subsequence converging in $\boldsymbol{\Gamma}^{0}(E)$. 
Choose a finite local trivialization $(U_i, \phi_i, \theta_{i, j})_{i, j \in I}$, 
and for each $i \in I$, an open subset $V_i \subset U_i$ such that the closure of $V_i$ in $B$ is contained in $U_i$ 
and $B = \bigcup_{i \in I} V_i$. 
For each $i \in I$ and $n \geq 1$, 
define the $\bb C^d$-valued function $f^i_n: \overline{V_i} \to \bb C^d$ by 
\begin{align*}
f^i_n(b) = \phi_i (s_n(b)).
\end{align*}
Since $(s_n)_{n \geq 1}$ is bounded in $\boldsymbol{\Gamma}^{\gamma}(E)$, 
the family 
$(f^i_n)_{n \geq 1}$ is uniformly bounded in the space of $\gamma$-H\"older continuous functions
on the compact space $\overline{V_i}$. 
By the Arzel\`a-Ascoli theorem, after passing to a subsequence (common to all $i$ by a diagonal argument), 
we may assume that, for each $i \in I$,
there exists a continuous function $f^i: \overline{V_i} \to \bb C^d$ such that 
$f^i_n \xrightarrow[n\to\infty]{}  f^i$ uniformly on $\overline{V_i}$. 
For any $i, j \in I$, $n \geq 1$ and $b \in V_i \cap V_j$, we have 
\begin{align*}
(\pi \times \phi_i)^{-1}(b, f_n^i (b)) = s_n(b) = (\pi \times \phi_j)^{-1}(b, f_n^j (b)). 
\end{align*}
Letting $n \to \infty$ and using the continuity of the maps $(\pi \times \phi_i)^{-1}$, we obtain 
\begin{align*}
(\pi \times \phi_i)^{-1}(b, f^i (b))  = (\pi \times \phi_j)^{-1}(b, f^j (b)). 
\end{align*}
Thus we can define a section $s : B \to E$ by setting, for any $b\in B$ and any $i \in I$ with $b \in V_i$,
\begin{align*}
s(b) = (\pi \times \phi_i)^{-1}(b, f^i(b)).
\end{align*}
This definition does not depend on the choice of $i \in I$ and hence we get $s \in \boldsymbol{\Gamma}^{0}(E)$. 
Moreover, from the uniform convergence of $f_n^i$ to $f^i$ on each $\overline{V_i}$,
we deduce that $s_n \xrightarrow[n\to\infty]{} s$ in the space $\boldsymbol{\Gamma}^{0}(E)$. 
Hence the natural injection $\boldsymbol{\Gamma}^{\gamma}(E) \to \boldsymbol{\Gamma}^{0}(E)$ is a compact operator.

Now we prove that $\boldsymbol{\Gamma}^{1}(E)$ is dense in $\boldsymbol{\Gamma}^{0}(E)$.
Let $s$ be an arbitrary continuous section of $E$.
Our goal is to approximate $s$ by a Lipschitz continuous section $s'$. 
To this end, take a finite Lipschitz partition of unity $(h_i)_{i \in I}$ associated with the open cover 
$B = \bigcup_{i \in I} V_i$,
thus for each $i \in I$, the function $h_i: B \to [0, 1]$ is Lipschitz continuous,  $\supp h_i \subset V_i$
and we have $\sum_{i \in I} h_i = 1$ on $B$. 
 For each $i \in I$, we set $s_i = h_i s$, so that $s = \sum_{i \in I} s_i$.
Next, define a continuous function $f_i: U_i \to \bb C^d$ by $f_i(b) = \phi_i (s_i(b))$. 
For each $i \in I$, 
as $f_i$ is continuous and compactly supported on $V_i$, for any $\ee>0$, 
we can find a Lipschitz continuous function $f_i': U_i \to \bb C^d$, 
also with compact support contained in $V_i$, such that $\sup_{b \in U_i} \|f_i(b) - f_i'(b) \| < \ee$.  
We then define a section $s_i'$ of $E$ by 
\begin{align}\label{def new section}
\phi_i (s_i'(b)) = f_i'(b) \quad \mbox{for}  \  b \in V_i,  
\quad 
s_i'(b) = 0  \quad \mbox{for}  \  b \in  B \setminus V_i. 
\end{align}

We claim that $s_i': B \to E$ is a Lipschitz continuous section. 
Indeed, since $s_i'$ vanishes outside of $V_i$,  we have that, for any $j \in I$,
\begin{align}\label{decom-Holder-norm-phi-s}
& \sup_{\substack{b, b' \in V_j \\  b \neq b'}}  \frac{\|\phi_j(s_i'(b)) - \phi_j(s_i'(b'))\|}{\bf d(b, b')}   \notag\\
 & = \sup_{\substack{b \in V_i \cap V_j, b' \in V_j \\  b \neq b'}}  \frac{\|\phi_j(s_i'(b)) - \phi_j(s_i'(b'))\|}{\bf d(b, b')} \notag\\
& = \max \Bigg\{\sup_{\substack{b \in V_i \cap V_j, b' \in U_i^c \cap V_j \\  b \neq b'}}  \frac{\|\phi_j(s_i'(b))\|}{\bf d(b, b')},
  \sup_{\substack{b \in V_i \cap V_j, b' \in U_i \cap V_j \\  b \neq b'}}  \frac{\|\phi_j(s_i'(b)) - \phi_j(s_i'(b'))\|}{\bf d(b, b')}
 \Bigg\}, 
\end{align}
where we have split the supremum according to whether $b'$ belongs to the set $U_i$ or not. 
For the first term in \eqref{decom-Holder-norm-phi-s}, 
by \eqref{transition function} and \eqref{def new section}, 
we have that for any $b \in V_i \cap V_j$, 
\begin{align}\label{identity transition}
\phi_j(s_i'(b)) = (\theta_{j,i}(b)) (\phi_i(s_i'(b))) = (\theta_{j,i}(b)) (f_i'(b)), 
\end{align}
and hence 
\begin{align*}
\sup_{\substack{b \in V_i \cap V_j, b' \in U_i^c \cap V_j \\  b \neq b'}}  \frac{\|\phi_j(s_i'(b))\|}{\bf d(b, b')}
 \leq  \sup_{b \in V_i \cap V_j}  \frac{\| (\theta_{j,i}(b)) (f_i'(b))\|}{\bf d(b, U_i^c)} 
 < \infty,
\end{align*}
where in the last inequality we have used the fact that $V_i \cap V_j$ has compact closure in $U_i \cap U_j$ 
and that the maps $\theta_{j,i}$, $f_i'$ and $\bf d(\cdot, U_i^c)$ are continuous on $V_i \cap V_j$, 
with $\bf d(b, U_i^c) > 0$ since $\overline{V_i} \subset U_i$. 
For the second term in \eqref{decom-Holder-norm-phi-s}, using again \eqref{identity transition} and the triangle inequality, we get 
\begin{align*}
\|\phi_j(s_i'(b)) - \phi_j(s_i'(b'))\| 
& = \| (\theta_{j,i}(b)) (f_i'(b))- (\theta_{j,i}(b'))(f_i'(b')) \|  \notag\\
& \leq  \| (\theta_{j,i}(b)) (f_i'(b) - f_i'(b')) \| + \| (\theta_{j,i}(b) - \theta_{j,i}(b')) (f_i'(b')) \|, 
\end{align*}
so that 
\begin{align*}
\frac{\|\phi_j(s_i'(b)) - \phi_j(s_i'(b'))\|}{\bf d(b, b')}
\leq  \frac{\| (\theta_{j,i}(b)) (f_i'(b) - f_i'(b')) \|}{\bf d(b, b')}  + \frac{\| (\theta_{j,i}(b) - \theta_{j,i}(b')) (f_i'(b')) \|}{\bf d(b, b')}. 
\end{align*}
Since $\theta_{j,i}$ is Lipschitz continuous on $U_i \cap U_j$ and $V_i \cap V_j$ has compact closure in $U_i \cap U_j$,
we have $\sup_{b \in V_i \cap V_j} \|\theta_{j,i}(b)\| < \infty$.
Moreover, as $f_i'$ is Lipschitz continuous on $U_i$, we get 
\begin{align*}
\sup_{\substack{b \in V_i \cap V_j, b' \in U_i \cap V_j \\  b \neq b'}}  \frac{\| (\theta_{j,i}(b)) (f_i'(b) - f_i'(b')) \|}{\bf d(b, b')} < \infty. 
\end{align*}
Besides, since $f_i'$ is Lipschitz continuous with compact support contained in $U_i$, we get that $\sup_{b' \in U_i} \|f_i'(b')\| < \infty$
and hence, as $\theta_{j,i}$ is Lipschitz continuous on $U_i \cap U_j$, 
\begin{align*}
\sup_{\substack{b \in V_i \cap V_j, b' \in U_i \cap V_j \\  b \neq b'}}  \frac{\| (\theta_{j,i}(b) - \theta_{j,i}(b')) (f_i'(b')) \|}{\bf d(b, b')} < \infty. 
\end{align*}
This finishes the proof that $s_i': B \to E$ is a Lipschitz continuous section.

To conclude the proof, we need to show that, for each $i \in I$, the section $s_i$ is well approximated by $s_i'$. 
Since both $s_i$ and $s_i'$ vanish outside of $V_i$, and $\sup_{b \in V_i} \|f_i(b) - f_i'(b) \| < \ee$, 
by \eqref{def-norm-s-infty} and \eqref{identity transition} we get 
\begin{align*}
\|s_i -s_i'\|_0 
& = \max_{j \in I} \sup_{b \in V_j} \| \phi_j(s_i(b)) - \phi_j(s_i'(b)) \|   \notag\\
& = \max_{j \in I} \sup_{b \in V_j \cap V_i} \| \phi_j(s_i(b)) - \phi_j(s_i'(b)) \|   \notag\\
& =  \max_{j \in I} \sup_{b \in V_j \cap V_i} \| (\theta_{j,i}(b)) (f_i(b)- f_i'(b)) \|   \notag\\
& \leq \ee \max_{j \in I} \sup_{b \in V_j \cap V_i} \| \theta_{j,i}(b) \|   \notag\\ 
& \leq C \ee,
\end{align*}
where $C= \max_{i, j \in I} \sup_{b \in V_j \cap V_i} \| \theta_{j,i}(b) \|$ is finite since for any $i, j \in I$, 
the closure of $V_i \cap V_j$ in $B$ is contained in $U_i \cap U_j$
and $\theta_{j,i}$ is continuous there. 
Finally, we set $s' = \sum_{i \in I} s_i'$.  Then 
\begin{align*}
\| s - s' \|_0 \leq \sum_{i \in I} \|s_i -s_i'\|_0 \leq  C |I|  \ee. 
\end{align*}
Since $\ee > 0$ is arbitrary, we can make $\| s - s' \|_0$ as small as desired, 
proving that $\boldsymbol{\Gamma}^{1}(E)$ is dense in $\boldsymbol{\Gamma}^{0}(E)$. 
\end{proof}

\begin{example} \label{Example-001}
A particularly simple (though not very interesting) example of a vector bundle is the \emph{trivial bundle}. 
In this case, the total space is $E = B \times \mathbb{C}^d$, 
and the projection $\pi : E \to B$ is given by $\pi(b, v) = b$. 
A continuous section $s : B \to E$ then necessarily has the form $s(b) = (b, \varphi(b))$ for some continuous function $\varphi : B \to \mathbb{C}^d$. 
Under this identification, for any $\gamma \in [0,1]$, the space $\boldsymbol{\Gamma}^{\gamma}(E)$ 
of $\gamma$-H\"older continuous sections is naturally identified with the space of $\gamma$-H\"older continuous functions 
from $B$ to $\mathbb{C}^d$.
\end{example}

\begin{example} \label{Example-002}
A non-trivial example that the reader may keep in mind is the case where $B = \bb R / \bb Z$ is the unit circle 
and $E = \widetilde{E}/\iota$ is the quotient of the space $\widetilde{E} = (\bb R / (2 \bb Z)) \times \bb C^d$ 
with the involution $\iota: (b, v) \mapsto (b+1, -v)$. 
The projection on the first component $\widetilde{E} \to \bb R / (2 \bb Z)$ factors as a map $\pi: E \to B$. 
Then $\pi : E \to B$ is a nontrivial vector bundle of rank \(d\) over the circle. 
For $\gamma \in [0, 1]$, the space $\boldsymbol{\Gamma}^{\gamma}(E)$ of $\gamma$-H\"older continuous sections 
can be identified with the space of $\gamma$-H\"older continuous 
functions $\varphi: \bb R / (2 \bb Z) \to \bb C^d$ satisfying the antiperiodicity condition: for any $b \in \bb R / (2 \bb Z)$, 
one has $\varphi(b+1) = - \varphi(b)$. 
Indeed, given such a $\varphi$, 
the map $b \mapsto (b, \varphi(b))$ from $\bb R / (2 \bb Z)$ to $\widetilde{E}$ factors as 
a well-defined section $s : B \to E$, and this section is $\gamma$-H\"older continuous.
See Section \ref{Sec-Equivariant maps} below where this example will fit into the general framework of principal bundles. 
\end{example}

\subsection{Group actions and spectral gap}\label{Subsection-Group actions-spectral gap}
We now define actions of groups on vector bundles. 
Let $G$ be a locally compact group that acts continuously on both the base space $B$ and the total space $E$ in such a way
that for every $g \in G$ and $w \in E$, we have 
\begin{align}\label{property-group-action}
\pi (gw) = g \pi(w), 
\end{align}
and that $g$ preserves the linear structure on each fiber. 
Note that if $s$ is a section of $E$, then, by \eqref{property-group-action}, 
for any $g \in G$, the map $b \mapsto g^{-1} s(gb)$ is also a section of $E$ (since $\pi(g^{-1} s(gb)) = g^{-1} \pi (s(gb)) = g^{-1} g b = b$). 
Besides, for any indices $i, j \in I$ (where $(U_i, \phi_i, \theta_{i, j})_{i, j \in I}$ is a local trivialization of $E$), 
there exists a unique map 
\begin{align*}
\sigma_{i, j}:  \{ (g, b) \in G \times U_i: gb \in U_j \}  \longrightarrow  {\rm GL}_d(\bb C)
\end{align*}
such that for every $g \in G$, $v \in \bb C^d$ and $b \in U_i \cap  (g^{-1} U_j)$, we have 
\begin{align}\label{def-sigma-ij}
g \left( \pi \times \phi_i \right)^{-1} (b, v) = \left( \pi \times \phi_j \right)^{-1} (gb, \sigma_{i, j} (g, b) v). 
\end{align}
Later we will be mainly concerned with the case where all the linear maps $\sigma_{i, j}(g, b)$ are isometries of $\bb C^d$, 
for any $i, j \in I$, $g \in G$ and $b \in U_i \cap (g^{-1} U_j)$.  
In this case, we say that the action of $G$ on $E$ is an \emph{isometric bundle action}. 
Moreover, we say that the action of $G$ on $E$ is a \emph{Lipschitz bundle action} 
if for every $g \in G$, the map $b \mapsto gb$ is Lipschitz continuous on $B$
and for all $i, j \in I$, the map $b \mapsto \sigma_{i, j}(g, b)$ is locally Lipschitz continuous on $U_i \cap  (g^{-1} U_j)$. 
These definitions do not depend on the choice of the local trivialization $(U_i, \phi_i, \theta_{i, j})_{i, j \in I}$.

Given a Lipschitz bundle action of $G$ on $E$, we define the Lipschitz constant of an element $g \in G$ as follows. 
Again we fix a norm $\| \cdot \|$ on $\bb C^d$ and choose a local trivialization $(U_i, \phi_i, \theta_{i, j})_{i, j \in I}$
such that the covering $(U_i)_{i \in I}$ of $B$ is finite. 
For each $i \in I$, 
pick an open subset $V_i \subset U_i$ with $\overline{V_i} \subset U_i$ and such that $B = \bigcup_{i \in I} V_i$.  
Then we set 
\begin{align}\label{def-Lip-norm-g}
{\rm Lip}_E(g) 
 = \sup_{\substack{b, b' \in B \\ b \neq b'}} \frac{\bf d (gb, gb')}{\bf d (b, b')}  
 + \max_{i, j \in I} 
  \sup_{\substack{b, b' \in  V_i  \cap  (g^{-1} V_j)    \\ b \neq b'}}   \frac{\| \sigma_{i, j}(g, b) - \sigma_{i, j}(g, b') \|}{\bf d (b, b')}. 
\end{align}
This definition depends on the choices made (norm, trivialization, and the sets $(V_i)_{i \in I}$) 
only up to a multiplicative constant that is independent of $g$. 
Hence, for our purpose, it defines an equivalent notion of Lipschitz constant.

Let $\mu$ be a Borel probability measure on $G$,
and let $(g_n)_{n \geq 1}$ be a sequence of independent random variables with law $\mu$. 
We say that the pair $(G, \mu)$ has the contraction property on $B$
if there exist constants $\delta \in (0, 1]$, $c>0$ and $a \in (0, 1)$ such that 
for all $b, b' \in B$ and all $n \geq 1$, 
\begin{align}\label{contraction-property-on-B}
\bb E \left( \bf d (g_n \cdots g_1 b, g_n \cdots g_1 b')^{\delta} \right) \leq c a^n \bf d(b, b')^{\delta}. 
\end{align} 
The main result of this section is the following theorem. 

\begin{theorem}\label{Thm-spectral-gap}
Let $B$ be a compact metric space and let $\pi: E \to B$ be a 
Lipschitz vector bundle of dimension $d$ over $B$. 
Let $G$ be a locally compact group acting on $E$ by an isometric Lipschitz bundle action. 
Let $\mu$ be a Borel probability measure on $G$. 
For any continuous section $s: B \to E$, define the operator $P_\mu$ by
\begin{align*}
P_{\mu} s (b) = \int_{G}  g^{-1} s (g b) \mu(dg), \quad b \in B. 
\end{align*}
Then $P_{\mu} s: B \to E$ is again a continuous section, and the operator $P_{\mu}$ is bounded on $\bf \Gamma^{0}(E)$. 

Assume furthermore that there exists a constant $\alpha >0$ such that $\bb E ({\rm Lip}_E(g_1)^{\alpha} ) < \infty$. 
Then, for any $\gamma \in (0, \alpha)$, 
the operator $P_{\mu}$ is bounded on $\bf \Gamma^{\gamma}(E)$. 
Moreover, if $(G, \mu)$ satisfies the contraction property \eqref{contraction-property-on-B} on $B$,
then for any $\gamma \in (0, \min \{ \delta, \alpha \})$, 
the operator $P_{\mu}$ has essential spectral radius strictly less than $1$ on $\bf \Gamma^{\gamma}(E)$. 
\end{theorem}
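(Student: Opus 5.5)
The plan is to follow the classical Doeblin–Fortet / Ionescu-Tulcea–Marinescu scheme, with Hennion's theorem as the final step. Concretely, I aim to prove a Lasota–Yorke type inequality of the form
\begin{align*}
\|P_\mu^n s\|_\gamma \leq C a_\gamma^n \|s\|_\gamma + C_n \|s\|_0, \qquad a_\gamma<1,
\end{align*}
and then combine it with the compact injection of Proposition \ref{Sobolev-injection-lemma}. Hennion's theorem then bounds the essential spectral radius on $\boldsymbol{\Gamma}^\gamma(E)$ by $a_\gamma$.

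\textbf{Continuity and boundedness on $\boldsymbol{\Gamma}^0(E)$.} For each $g$, the section $b\mapsto g^{-1}s(gb)$ is continuous. Its value is computed in a chart through \eqref{def-sigma-ij}: for $b\in V_i$ with $gb\in V_j$,
\begin{align*}
\phi_i(g^{-1}s(gb))=\sigma_{i,j}(g,b)^{-1}\phi_j(s(gb)).
\end{align*}
Since $\sigma_{i,j}$ is an isometry, this gives $\|g^{-1}s(gb)\|\leq C\|s\|_0$. Here the constant $C$ only accounts for changing between the finitely many charts, using that the transition maps $\theta$ are bounded on $\overline{V_i}\cap\overline{V_j}$. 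Dominated convergence then yields continuity of $P_\mu s$ and the bound $\|P_\mu s\|_0\leq C\|s\|_0$. Measurability in $g$ follows from continuity of the action. It is convenient to replace $\|\cdot\|_0$ by the equivalent norm $\sup_b |s(b)|_b$, where $|\cdot|_b$ is a fiberwise norm obtained by gluing the chart norms with a partition of unity. Under an isometric bundle action, $g$ then acts isometrically fiberwise up to a fixed factor. I would actually first upgrade $|\cdot|_b$ to be exactly $G$-invariant if possible; if not, I would just carry the bounded distortion constants through the estimates.

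\textbf{Hölder estimate.} For $b,b'$ close, in a common chart $V_i$, I write $\phi_i(P_\mu^n s(b))-\phi_i(P_\mu^n s(b'))$ as the expectation, with $L_n=g_n\cdots g_1$, of
\begin{align*}
\sigma(L_n,b)^{-1}\big(\phi_j s(L_nb)-\phi_{j'} s(L_nb')\big) + \big(\sigma(L_n,b)^{-1}-\sigma(L_n,b')^{-1}\big)\phi_{j'} s(L_nb').
\end{align*}
The charts $j,j'$ are adjusted by a bounded Lipschitz transition when $L_nb,L_nb'$ fall in different $V_j$'s; Lebesgue-number arguments handle this for $\mathbf d(L_nb,L_nb')$ small, and otherwise the difference is crudely bounded by $2C\|s\|_0\,\mathbf d(L_nb,L_nb')^\gamma/\epsilon^\gamma$. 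The first term is at most $C\|s\|_\gamma \,\mathbf d(L_nb,L_nb')^\gamma$, because $\sigma$ is isometric. Its expectation is at most $C a^{n\gamma/\delta}c'\mathbf d(b,b')^\gamma$ by \eqref{contraction-property-on-B} and Jensen, since $\gamma<\delta$. The second term uses that $\sigma$ is isometric, hence bounded, together with the Lipschitz bound
\begin{align*}
\|\sigma(L_n,b)-\sigma(L_n,b')\|\leq C\,{\rm Lip}(L_n)\,\mathbf d(b,b').
\end{align*}
Interpolating with the trivial bound $2$ gives the bound $\min(2,C{\rm Lip}(L_n)\mathbf d(b,b'))\leq C{\rm Lip}(L_n)^\gamma \mathbf d(b,b')^\gamma$. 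Its expectation is finite because $\gamma<\alpha$ and ${\rm Lip}_E$ is submultiplicative up to constants through the cocycle relation of $\sigma$. This contributes $C_n\|s\|_0\mathbf d(b,b')^\gamma$. Pairs $b,b'$ that are not both in one $V_i$ are at distance at least a Lebesgue number and are controlled by $\|s\|_0$. With $n=1$ and no contraction, the same computation shows boundedness on $\boldsymbol{\Gamma}^\gamma(E)$. Choosing $n$ large then makes $C a^{n\gamma/\delta}<1$, and Hennion's theorem applies to $P_\mu^n$.

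\textbf{Main obstacle.} The delicate point is bookkeeping with charts. The isometry property only holds for $\sigma_{i,j}$ relative to fixed charts, while Hölder norms mix different charts via Lipschitz transitions $\theta$, which need not be isometric. I expect the main work to be verifying that ${\rm Lip}_E$ behaves submultiplicatively, i.e. ${\rm Lip}_E(gh)\leq C\,{\rm Lip}_E(g){\rm Lip}_E(h)$, so that $\bb E({\rm Lip}_E(L_n)^\gamma)<\infty$. I also need to check that the transition distortions enter only the $\|s\|_0$ term and not the contracting coefficient. If they did multiply the contracting term, the factor would be a fixed constant $C$ independent of $n$, which is harmless since $a^{n\gamma/\delta}\to0$.
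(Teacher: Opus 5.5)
Your proposal is correct and follows essentially the same route as the paper: you split the H\"older difference according to whether $(L_nb,L_nb')$ lies in a common $V_j\times V_j$, with a Markov-inequality bound on the complement. You then use the isometry of $\sigma_{i,j}$ together with the interpolation $\|\sigma_{i,j}(L_n,b)-\sigma_{i,j}(L_n,b')\|\leq 2^{1-\gamma}{\rm Lip}_E(L_n)^{\gamma}\mathbf d(b,b')^{\gamma}$, Jensen with the contraction property, and finally the compact injection of Proposition \ref{Sobolev-injection-lemma} with an Ionescu-Tulcea--Marinescu (equivalently Hennion) argument.
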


\begin{example} \label{Example-003}
In the case of the trivial bundle $E = B \times \bb C$ (see Example \ref{Example-001}), 
suppose that a group $G$ acts on $B$ by Lipschitz continuous homeomorphisms. 
We let $G$ act on $E$ by the formula $g(b, v) = (gb, v)$
for $g \in G$, $b \in B$ and $v \in \bb C$. 
Then, for a section $s(b) = (b, \varphi(b))$, the operator $P_{\mu}$ defined in Theorem \ref{Thm-spectral-gap} becomes 
$P_{\mu} s (b) = (b, \int_{G} \varphi(gb) \mu(dg))$, or equivalently, 
\begin{align}\label{operator-classical}
P_{\mu} \varphi(b) = \int_{G} \varphi(gb) \mu(dg). 
\end{align}
Thus $P_{\mu}$ can be identified with the classical transfer operator acting on continuous functions on $B$. 
In this setting, Theorem \ref{Thm-spectral-gap} then reduces to a well-known result; see \cite{LePage82} and \cite[Lemma 11.9]{BQ16b}. 
\end{example}

\begin{example}
In the case of Example \ref{Example-002} with $d=1$, the operator $P_{\mu}$ may be identified with the operator defined by 
the same formula \eqref{operator-classical} as in Example \ref{Example-003},
but now acting on continuous functions $\varphi: \bb R / (2 \bb Z) \to \bb C$ such that, for any $b \in \bb R / (2 \bb Z)$,
$\varphi(b+1) = - \varphi(b)$. 
Indeed, given an action of some group $G$ by Lipschitz continuous homeomorphisms on $B = \bb R / \bb Z$, 
by the theory of covering spaces (cf.\ \cite[Chapter 3]{Die08}, in particular, Theorem 3.5.2), 
up to replacing $G$ by a central extension by $\bb Z/(2 \bb Z)$,
we may assume that the action of $G$ on $B = \bb R / \bb Z$ is the quotient of an action of $G$ on $\bb R / (2 \bb Z)$
such that, for any $g \in G$ and $b \in \bb R / (2 \bb Z)$, $g(b+1) = gb + 1$. 
We then let $G$ act on $(\bb R / (2 \bb Z)) \times \bb C$ by the formula $g(b, v) = (gb, v)$,
so that $G$ acts on the quotient space $E$ by the quotient action. 
\end{example}

\begin{proof}[Proof of Theorem \ref{Thm-spectral-gap}]
We begin by proving that the operator $P_{\mu}$ is bounded on $\bf \Gamma^{0}(E)$. 
Fix a finite local trivialization $(U_i, \phi_i, \theta_{i, j})_{i, j \in I}$ 
and as above, for each $i \in I$, 
choose an open subset $V_i \subset U_i$ such that $\overline{V_i} \subset U_i$ and $B = \bigcup_{i \in I} V_i$. 
For $i , j \in I$, we let $\sigma_{i, j}:  \{ (g, b) \in G \times U_i: gb \in U_j \} \to {\rm GL}_d(\bb C)$ be the map defined by \eqref{def-sigma-ij}. 
Let $\| \cdot \|$ denote a fixed norm on $\bb C^d$. 
Then, for any section $s \in \bf \Gamma^{0}(E)$, any $i \in I$ and $b \in V_i$, we have
\begin{align}\label{identity phi P mu section}
\phi_i \circ P_{\mu} s (b) =  \phi_i  \left(  \int_{G}  g^{-1} s (g b)  \mu(dg)  \right)
= \int_{G}  \phi_i  \left( g^{-1} s (g b) \right)  \mu(dg), 
\end{align}
where $\phi_i$ is a continuous map from $\pi^{-1} (U_i)$ to $\bb C^d$. 
Using the finite covering $B = \bigcup_{j \in I} V_j$, we obtain 
\begin{align}\label{inequality P mu 001}
\| \phi_i \circ P_{\mu} s (b) \|
\leq \sum_{j \in I}    \int_{G}  \left\|   \phi_i \left( g^{-1} s (g b)   \right)  \right\| \mathds 1_{ \{ gb \in V_j \} } \mu(dg). 
\end{align}
Note that the formula \eqref{def-sigma-ij} says that, for any $i, j \in I$, $w \in \pi^{-1} (b)$ and $g \in G$ satisfying $gb \in U_j$, 
we have 
\begin{align*}
\phi_j (gw) = \sigma_{i, j} (g, b) \phi_i(w). 
\end{align*}
Applying this identity to $w = g^{-1} s (g b)$ gives 
\begin{align}\label{def-sigma-ij002}
\phi_i \big( g^{-1} s (g b) \big) =  \sigma_{i, j} (g, b)^{-1} \phi_j (s (g b)). 
\end{align}
Since the bundle action is isometric, every $\sigma_{i, j} (g, b)^{-1}$ is an isometry of $\bb C^d$.
Hence, substituting \eqref{def-sigma-ij002} into \eqref{inequality P mu 001}, 
 in view of \eqref{def-norm-s-infty}, we get 
\begin{align*}
\| \phi_i \circ P_{\mu} s (b) \| 
\leq  \sum_{j \in I}   \int_{G}   \left\|  \phi_j (s (g b))  \right\| \mathds 1_{ \{ gb \in V_j \} } \mu(dg)  
\leq |I| \|s\|_0.  
\end{align*}
Taking the supremum over $b \in V_i$ and then the maximum over $i \in I$, again by \eqref{def-norm-s-infty}, we obtain 
\begin{align}\label{boundedness-P-mu}
\|P_{\mu} s \|_0 \leq |I| \|s\|_0,
\end{align}
so that the operator $P_{\mu}$ is bounded on $\bf \Gamma^{0}(E)$. 

Next we assume that $\bb E ({\rm Lip}_E(g_1)^{\alpha} ) < \infty$ for some constant $\alpha>0$, 
and prove that, for any $\gamma \in (0, \alpha)$, 
the operator $P_{\mu}$ maps $\bf \Gamma^{\gamma}(E)$ to itself and is bounded. 
Fix $s \in \bf \Gamma^{\gamma}(E)$. 
For any $i \in I$ and $b, b' \in V_i$ with $b \neq b'$, by \eqref{identity phi P mu section} and splitting the integral into two parts, we have
\begin{align}\label{decom-Holder-norm-001}
& \| \phi_i \circ P_{\mu} s (b) - \phi_i \circ P_{\mu} s (b') \|  \notag\\
& = \left\|   \int_{G}  \left(  \phi_i  \left( g^{-1} s (g b) \right)  -  \phi_i  \left( g^{-1} s (g b') \right)  \right) \mu(dg)  \right\| \notag\\
& \leq \sum_{j \in I}    \int_{G}  \left\|  \phi_i  \left( g^{-1} s (g b) \right)  -  \phi_i  \left( g^{-1} s (g b') \right) \right\|
  \mathds 1_{\{(gb, gb') \in V_j \times V_j \}} \mu(dg)   \notag\\
  & \quad +   \int_{G}  \left\| \phi_i  \left( g^{-1} s (g b) \right)  -  \phi_i  \left( g^{-1} s (g b') \right) \right\|
  \mathds 1_{\{(gb, gb') \notin \bigcup_{j \in I} V_j \times V_j \}} \mu(dg).  
\end{align}
To handle the first term in \eqref{decom-Holder-norm-001}, 
using \eqref{def-sigma-ij002}, we derive that, for any $j \in I$ and $g \in G$ with $(gb, gb') \in V_j \times V_j$, 
\begin{align}\label{phi section difference 01}
& \left\|  \phi_i  \left( g^{-1} s (g b) \right)  -  \phi_i  \left( g^{-1} s (g b') \right) \right\|  \notag\\
& = \left\|  \sigma_{i, j} (g, b)^{-1} \phi_j (s (g b)) - \sigma_{i, j} (g, b')^{-1} \phi_j (s (g b')) \right\|  \notag\\
& \leq  \left\|  \sigma_{i, j} (g, b)^{-1} \left(  \phi_j (s (g b)) -  \phi_j (s (g b')) \right) \right\|  \notag\\
& \quad + \left\| \left(  \sigma_{i, j} (g, b)^{-1}  - \sigma_{i, j} (g, b')^{-1}  \right)\phi_j (s (g b')) \right\|  \notag\\
& \leq   \left\|   \phi_j (s (g b)) -  \phi_j (s (g b'))  \right\|
+ \| \sigma_{i, j} (g, b)  - \sigma_{i, j} (g, b') \| \|s\|_0,
\end{align}
where we have used the identity $x^{-1} (y - x) y^{-1} = x^{-1} - y^{-1}$ and the fact that the bundle action is isometric. 
Since $(b, b') \in V_i \times V_i$ and $(gb, gb') \in V_j \times V_j$,  we have, on one hand, by \eqref{def-norm-s-Holder} and \eqref{def-Lip-norm-g}, 
\begin{align}\label{phi section difference 02}
\left\|   \phi_j (s (g b)) -  \phi_j (s (g b'))  \right\| 
\leq \|s\|_{\gamma}  \mathbf d(gb, gb')^{\gamma}
\leq  \|s\|_{\gamma}  {\rm Lip}_E(g)^{\gamma}  \,  \mathbf d(b, b')^{\gamma}, 
\end{align}
and on the other hand, as $\sigma_{i, j}(g, b)$ is an isometry of $\bb C^d$ and again by \eqref{def-Lip-norm-g}, 
\begin{align}\label{phi section difference 03}
& \| \sigma_{i, j} (g, b)  - \sigma_{i, j} (g, b') \|  \notag\\
& = \| \sigma_{i, j} (g, b)  - \sigma_{i, j} (g, b') \|^{1-\gamma}  \| \sigma_{i, j} (g, b)  - \sigma_{i, j} (g, b') \|^{\gamma}  \notag\\
& \leq  2^{1-\gamma}   {\rm Lip}_E(g)^{\gamma}  \,  \mathbf d(b, b')^{\gamma}. 
\end{align}
Consequently, combining \eqref{phi section difference 01}, \eqref{phi section difference 02} and \eqref{phi section difference 03}, 
we obtain the upper bound for the first term in \eqref{decom-Holder-norm-001}: 
\begin{align}\label{bound-A1-001}
& \sum_{j \in I}    \int_{G}  \left\|  \phi_i  \left( g^{-1} s (g b) \right)  -  \phi_i  \left( g^{-1} s (g b') \right) \right\|
  \mathds 1_{\{(gb, gb') \in V_j \times V_j \}} \mu(dg)  \notag\\
& \leq   |I| \int_G  {\rm Lip}_E(g)^{\gamma} \mu(dg) \,  \left(  \|s\|_{\gamma} + 2^{1-\gamma} \|s\|_0 \right)  \mathbf d(b, b')^{\gamma}. 
\end{align}
Now we deal with the second term in \eqref{decom-Holder-norm-001}. 
As the set $\bigcup_{j \in I} (V_j \times V_j)$ is a neighborhood of the diagonal in $B \times B$, 
we may find $\ee >0$ such that for any $(u, u') \notin \bigcup_{j \in I} (V_j \times V_j)$, 
we have $\mathbf d(u, u') \geq \ee$. 
Note also that, by \eqref{def-norm-s-infty}, we have $\|  \phi_i  \left( g^{-1} s (g b) \right)  -  \phi_i  \left( g^{-1} s (g b') \right) \| \leq 2 \| s \|_0$. 
Therefore, using Markov's inequality and \eqref{def-Lip-norm-g}, we obtain 
\begin{align}\label{bound-A2-001}
& \int_{G}  \left\|  \phi_i  \left( g^{-1} s (g b) \right)  -  \phi_i  \left( g^{-1} s (g b') \right) \right\|
  \mathds 1_{\{(gb, gb') \notin \bigcup_{j \in I} V_j \times V_j \}} \mu(dg)  \notag\\ 
&  \leq  2 \| s \|_0 \   \mu \big( \{ g \in G: \mathbf d(gb, gb') \geq \ee \} \big)  \notag\\
& \leq   2  \ee^{-\gamma}  \| s \|_0     \int_{G}  \mathbf d(gb, gb')^{\gamma} \mu(dg) \notag\\
& \leq  2 \ee^{-\gamma}  \| s \|_0    \int_{G}   {\rm Lip}_E(g)^{\gamma}   \mu(dg)  \,  \mathbf d(b, b')^{\gamma}. 
\end{align}
Substituting \eqref{bound-A1-001} and \eqref{bound-A2-001} into \eqref{decom-Holder-norm-001}, taking into account \eqref{def-norm-s-Holder}, 
and using the assumption that $\bb E ({\rm Lip}_E(g_1)^{\alpha} ) < \infty$ for some $\alpha >0$, 
we get that, for any $\gamma \in (0, \alpha)$, the operator $P_{\mu}$ is bounded on $\bf \Gamma^{\gamma}(E)$. 

Finally, we assume that $(G, \mu)$ satisfies the contraction property \eqref{contraction-property-on-B} on $B$. 
For any $s \in \bf \Gamma^{\gamma}(E)$ and any $n \geq 1$, 
repeating the above computations leads to 
\begin{align*}
\| P_{\mu}^n s \|_{\gamma} 
& \leq |I| \|s\|_0
+ |I| \|s\|_{\gamma}  \sup_{\substack{b, b' \in B \\ b \neq b'}}  
 \frac{  \bb E \big( \mathbf d(g_n \cdots g_1 b, g_n \cdots g_1 b')^{\gamma} \big)  }{ \mathbf d(b, b')^{\gamma} }  \notag\\
& \quad + 2^{1-\gamma} |I| \|s\|_{0} \  \bb E  \big( {\rm Lip}_E(g_n \cdots g_1)^{\gamma}  \big)  \notag\\
 & \quad + 2 \ee^{-\gamma}  \| s \|_0 \   \sup_{\substack{b, b' \in B \\ b \neq b'}}  
 \frac{  \bb E  \big( \mathbf d(g_n \cdots g_1 b, g_n \cdots g_1 b')^{\gamma}  \big) }{ \mathbf d(b, b')^{\gamma} }. 
\end{align*}
Let $\delta \in (0, 1]$, $c>0$ and $a \in (0, 1)$ be constants as in \eqref{contraction-property-on-B}. 
Choosing $0 < \gamma < \min \{ \delta, \alpha \}$ and using H\"older's inequality, 
we get that, for any $b, b' \in B$ and $n \geq 1$, 
\begin{align*}
\bb E \big( \bf d (g_n \cdots g_1 b, g_n \cdots g_1 b')^{\gamma} \big) 
\leq \left( c a^n \bf d(b, b')^{\delta} \right)^{\gamma/\delta}
= c^{\frac{\gamma}{\delta}}  a^{ \frac{\gamma}{\delta} n}  \bf d(b, b')^{\gamma}, 
\end{align*} 
so that, by the submultiplicativity of ${\rm Lip}_E$, 
\begin{align*}
\| P_{\mu}^n s \|_{\gamma} 
 \leq \Big(  |I| + 2^{1-\gamma} |I| \Big[ \bb E \big( {\rm Lip}_E( g_1)^{\gamma} \big) \Big]^n 
 + 2 \ee^{-\gamma}  c^{\frac{\gamma}{\delta}}  a^{ \frac{\gamma}{\delta} n} \Big) \|s\|_0  
  +  c^{\frac{\gamma}{\delta}}  |I|  a^{\frac{\gamma}{\delta}n} \|s\|_{\gamma}. 
\end{align*}
By Proposition \ref{Sobolev-injection-lemma}, 
the natural injection $\boldsymbol{\Gamma}^{\gamma}(E) \to \boldsymbol{\Gamma}^{0}(E)$ is a compact operator. 
Hence, by the Ionescu-Tulcea and Marinescu theorem (see \cite{ITM50} and \cite{BQ16b}), the essential spectral radius
of the operator $P_{\mu}$ acting on the space $\bf \Gamma^{\gamma}(E)$ is at most $a^{\frac{\gamma}{\delta}}$. 
This completes the proof of the theorem. 
\end{proof}

\begin{corollary}\label{Corollary-001}
Let $B$ be a compact metric space and let $\pi: E \to B$ be a 
Lipschitz vector bundle of dimension $d$ over $B$. 
Let $G$ be a locally compact group acting on $E$ by an isometric Lipschitz bundle action
and let $\mu$ be a Borel probability measure on $G$. 
Assume that there exists a constant $\alpha >0$ such that $\bb E ({\rm Lip}_E(g_1)^{\alpha} ) < \infty$ 
and that the pair $(G, \mu)$ satisfies the contraction property \eqref{contraction-property-on-B} on $B$. 
Then, for each $b \in B$, there exists a continuous linear map $\zeta_b: \bf \Gamma^{0}(E) \to \pi^{-1} (b)$
such that, for any section $s \in \bf \Gamma^{0}(E)$, we have  
\begin{align*}
\frac{1}{n} \sum_{k=0}^{n-1} P_{\mu}^k s(b)  \xrightarrow[n\to\infty]{}  \zeta_b(s). 
\end{align*}
Moreover, this convergence holds uniformly over $b \in B$. 
\end{corollary}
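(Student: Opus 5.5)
We deduce the Ces\`aro convergence from the spectral information in Theorem \ref{Thm-spectral-gap}, via a mean ergodic argument: first on the dense subspace $\boldsymbol{\Gamma}^{\gamma}(E)$, then on all of $\boldsymbol{\Gamma}^{0}(E)$ by uniform boundedness.

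\textbf{Step 1: uniform power bound.} Because the action is isometric, the estimate \eqref{boundedness-P-mu} applied to the measure $\mu^{*n}$ gives
\begin{align*}
\|P_\mu^n s\|_0 \leq |I| \, \|s\|_0 \quad \text{for all } n\geq 1 .
\end{align*}
The key point is that the constant $|I|$ does not grow with $n$. Equivalently, one can renorm $\boldsymbol{\Gamma}^0(E)$ by a fiberwise isometry-invariant norm so that $P_\mu$ becomes a contraction. Either way, the powers $P_\mu^n$ are uniformly bounded on $\boldsymbol{\Gamma}^0(E)$.

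\textbf{Step 2: spectral decomposition on $\boldsymbol{\Gamma}^\gamma(E)$.} Fix $\gamma\in(0,\min\{\delta,\alpha\})$. By Theorem \ref{Thm-spectral-gap}, the essential spectral radius of $P_\mu$ on $\boldsymbol{\Gamma}^\gamma(E)$ is less than $1$. Hence the spectrum outside a disc of radius $r<1$ consists of finitely many eigenvalues with finite-dimensional generalized eigenspaces, and we get a decomposition
\begin{align*}
P_\mu = \sum_{\lambda} (\lambda \Pi_\lambda + N_\lambda) + R,
\end{align*}
where $R$ has spectral radius less than $1$.

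\textbf{Step 3: the peripheral spectrum is semisimple and of modulus at most $1$.} Let $\lambda$ be such an eigenvalue with $|\lambda|\geq 1$ and $s$ a $\gamma$-H\"older eigen- or generalized eigensection. The uniform bound of Step 1 together with the continuous embedding $\|\cdot\|_0\lesssim\|\cdot\|_\gamma$ shows the following:
\begin{itemize}
\item $|\lambda|\leq 1$, since $\|P_\mu^n s\|_0$ stays bounded while $\lambda^n s$ would blow up otherwise;
\item there are no nontrivial Jordan blocks for $|\lambda|=1$, since $P_\mu^n s$ would otherwise grow linearly in $\|\cdot\|_0$.
\end{itemize}
It follows that $\frac1n\sum_{k<n}P_\mu^k s\to \Pi_1 s$ in $\boldsymbol{\Gamma}^\gamma(E)$, and hence in $\boldsymbol{\Gamma}^0(E)$. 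Indeed, for $|\lambda|=1$ with $\lambda\neq 1$ the averages $\frac1n\sum_{k<n}\lambda^k$ tend to $0$, and the part coming from $R$ decays geometrically.

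\textbf{Step 4: extension to $\boldsymbol{\Gamma}^0(E)$ and definition of $\zeta_b$.} Let $A_n=\frac1n\sum_{k<n}P_\mu^k$. By Step 1, $\|A_n\|_{0\to0}\leq |I|$ uniformly in $n$. The sequence $A_n s$ is Cauchy in $\boldsymbol{\Gamma}^0(E)$ for $s$ in the dense subspace $\boldsymbol{\Gamma}^\gamma(E)$ (density is Proposition \ref{Sobolev-injection-lemma}). A standard $\varepsilon/3$ argument then shows that $A_n s$ is Cauchy in $\boldsymbol{\Gamma}^0(E)$ for every $s$. Its limit defines a bounded operator $\Pi$ on $\boldsymbol{\Gamma}^0(E)$.

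Set $\zeta_b(s)=(\Pi s)(b)\in\pi^{-1}(b)$. This is linear in $s$, and it is continuous because evaluation at $b$ is continuous from $\boldsymbol{\Gamma}^0(E)$ to the fiber, with norm controlled by $\|\cdot\|_0$ through a chart $\phi_i$ with $b\in V_i$. Uniformity in $b$ is exactly convergence in the norm $\|\cdot\|_0$, which by definition is a supremum over $b$ in charts.

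\textbf{Main obstacle.} The main obstacle is Step 3, excluding Jordan blocks and eigenvalues of modulus greater than $1$. It rests entirely on the $n$-independent bound of Step 1, so I would check carefully that \eqref{boundedness-P-mu} holds verbatim for $\mu^{*n}$. If the constant $|I|$ caused trouble, I would instead use the norm $\sup_{b}|s(b)|_b$, where $|\cdot|_b$ is the norm pulled back from any chart $\phi_i$. This is well defined only if the transition maps $\theta_{i,j}$ are isometries, which can be arranged by using the fiber metric built with a partition of unity. In that norm $P_\mu$ is an honest contraction, and the argument goes through unchanged.
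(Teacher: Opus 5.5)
Your proposal is correct and follows the paper's proof step by step. Both arguments use the $n$-uniform bound $\|P_\mu^n s\|_0\le |I|\,\|s\|_0$ (your route of rerunning \eqref{boundedness-P-mu} for $\mu^{*n}$ is valid), quasi-compactness of $P_\mu$ on $\boldsymbol{\Gamma}^\gamma(E)$ from Theorem \ref{Thm-spectral-gap}, exclusion of nontrivial Jordan blocks on the unit circle via that bound, and extension to $\boldsymbol{\Gamma}^0(E)$ by density (Proposition \ref{Sobolev-injection-lemma}).

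Packaging the limit as a bounded operator $\Pi$ on $\boldsymbol{\Gamma}^0(E)$ gives the uniformity in $b$ a bit more cleanly than the paper's brief sketch. Your fallback renorming is not needed: taking $g=e$ in \eqref{def-sigma-ij} already shows that the transition maps of the trivialization in which the action is isometric are themselves isometries.
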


\begin{proof}
We first observe that the estimate established in the proof of Theorem \ref{Thm-spectral-gap} 
can be extended to every iterate of the operator. 
More precisely, with the notation introduced there, following the proof of \eqref{boundedness-P-mu} actually gives that, 
for any $n \geq 0$ and any continuous section $s \in \bf \Gamma^{0}(E)$, we have 
\begin{align}\label{P-mu-n-001}
\| P_{\mu}^n s \|_{0} \leq |I| \|s\|_0. 
\end{align}
By Theorem \ref{Thm-spectral-gap}, we may find a constant $\gamma>0$ such that $P_{\mu}$ is bounded on $\bf \Gamma^{\gamma}(E)$
and has essential spectral radius strictly less than $1$. 
We then apply the spectral decomposition results from \cite[Section B.4, Proposition B.14]{BQ16b} 
to obtain a direct sum decomposition 
\begin{align*}
\bf \Gamma^{\gamma}(E) = X \oplus Y, 
\end{align*}
where $X$ and $Y$ are closed $P_{\mu}$-invariant subspaces,
the spectral radius of $P_{\mu}$ restricted to $X$ is strictly less than $1$,
and $Y$ is finite-dimensional and the spectrum of $P_{\mu}$ on $Y$ lies on the unit circle. 
 In particular, for any $s \in X$, we have $P_{\mu}^n s  \xrightarrow[n\to\infty]{}  0$.  
Now we decompose the finite-dimensional space $Y$ as a direct sum $Y = \bigoplus_{\lambda} Y_{\lambda}$ of generalized eigenspaces
associated with the eigenvalues of $P_{\mu}$. For each $\lambda$, we may find a nilpotent linear endomorphism $N_{\lambda}$ of $Y_{\lambda}$ such that 
the restriction of $P_{\mu}$ to $Y_{\lambda}$ is equal to $\lambda e^{N_{\lambda}}$.
Thus, for any $n\geq 0$, the restriction of $P_{\mu}^n$ to $Y_{\lambda}$ is equal to $\lambda^n  e^{n N_{\lambda}}$. 
On the other hand, due to \eqref{P-mu-n-001}, we get $\sup_{n \geq 0} \| e^{n N_{\lambda}} \| < \infty$ and therefore, since the map $t \mapsto e^{t N}$
is polynomial in $t$ and bounded on integers, it must be constant and hence $N_{\lambda} = 0$. 
It follows that the restriction of $P_\mu$ to $Y_\lambda$ reduces to multiplication by $\lambda$. 
This implies that, if $\lambda \neq 1$, then for any $s \in Y_{\lambda}$, 
it holds 
\begin{align*}
\frac{1}{n} \sum_{k=0}^{n-1} P_{\mu}^k s = \frac{1}{n} \frac{\lambda^n - 1}{\lambda -1} s  \xrightarrow[n\to\infty]{}  0,
\end{align*}
whereas, if $\lambda = 1$, then for any $s \in Y_{1}$,  it holds $\frac{1}{n} \sum_{k=0}^{n-1} P_{\mu}^k s = s$.  
We conclude that, for any $s \in \bf \Gamma^{\gamma}(E)$, as $n \to \infty$, 
the Birkhoff sum $\frac{1}{n} \sum_{k=0}^{n-1} P_{\mu}^k s$ converges in $\bf \Gamma^{\gamma}(E)$ towards the component of $s$ in $Y_1$, 
under the decomposition $\bf \Gamma^{\gamma}(E) = X \oplus \bigoplus_{\lambda} Y_{\lambda}$. 

Now let us fix $b \in B$ and denote by $e_b: \bf \Gamma^{0}(E) \to \pi^{-1} (b)$ the evaluation map, which is continuous and linear. 
We choose a norm on the finite-dimensional vector space $\pi^{-1} (b)$. 
For any $s \in \bf \Gamma^{0}(E)$ and $\ee >0$, 
Proposition \ref{Sobolev-injection-lemma} yields that there exists a $\gamma$-H\"older continuous section 
$s' \in \bf \Gamma^{\gamma}(E)$ such that $\|s - s'\|_0 < \ee$. 
For any $n \geq 1$, we estimate 
\begin{align*}
\left\| \frac{1}{n} \sum_{k=0}^{n-1} P_{\mu}^k s(b) - \frac{1}{n} \sum_{k=0}^{n-1} P_{\mu}^k s'(b)  \right\|
& = \left\|  \frac{1}{n} \sum_{k=0}^{n-1} e_b  P_{\mu}^k (s-s') \right\|  \notag\\
& \leq \|e_b\|   \left\|  \frac{1}{n} \sum_{k=0}^{n-1}   P_{\mu}^k \right\| \|s -s'\|_0  \notag\\
& \leq |I| \|e_b\| \ee, 
\end{align*}
where we have used the inequality \eqref{P-mu-n-001}. 
Since $\frac{1}{n} \sum_{k=0}^{n-1} P_{\mu}^k s'(b)$ converges as $n \to \infty$, 
the estimate above shows that for any $s \in \bf \Gamma^{0}(E)$, 
the Birkhoff sum $\frac{1}{n} \sum_{k=0}^{n-1} P_{\mu}^k s(b)$ also converges as $n \to \infty$.  
We denote its limit by $\zeta_b(s)$. 
The map $\zeta_b: \bf \Gamma^{0}(E) \to \pi^{-1} (b)$ is linear by construction,
and its continuity follows from \eqref{P-mu-n-001}. 
Finally, by refining the above arguments, we obtain that, for each $i \in I$, the convergence is uniform on $V_i$, and consequently uniform on $B$. 
\end{proof}

We will actually need a version of Theorem \ref{Thm-spectral-gap} for a more general family of operators, 
which can be defined by perturbing $P_{\mu}$ by means of a cocycle.  

Let $\sigma_0: G \times B \to \bb R$ be a continuous cocycle such that, for every $g \in G$, 
the function $\sigma_0(g, \cdot)$ is Lipschitz continuous on $B$.
For $g \in G$, we set 
\begin{align}\label{sigma-Lip-g}
\sigma_{\sup}(g) = \sup_{b \in B} |\sigma_0(g, b)|
\quad  \mbox{and}   \quad
\sigma_{\rm Lip}(g) 
 = \sup_{\substack{b, b' \in B \\ b \neq b'}} \frac{|\sigma_0(g, b) - \sigma_0(g, b')|}{\bf d (b, b')}. 
\end{align}
Throughout this subsection, we assume that there exists a constant $\alpha>0$ such that 
\begin{align}\label{moment-assumption-02-a}
\int_{G} e^{\alpha \sigma_{\sup}(g)} \mu(dg) < \infty
\quad  \mbox{and}   \quad
\int_{G} \sigma_{\rm Lip}(g)^{\alpha} \mu(dg) < \infty. 
\end{align}
For any $z \in \bb C$ with $|\Re(z)| < \alpha$ and any continuous section $s: B \to E$,
we define
\begin{align*}
P_{\mu, z} s (b) = \int_{G}  e^{z \sigma_0(g, b)} g^{-1} s (g b) \mu(dg), 
\quad b \in B. 
\end{align*}
The integrability assumption \eqref{moment-assumption-02-a} ensures that the integral is well defined. Moreover, 
$P_{\mu, z} s$ is again a continuous section of $E$. 

\begin{corollary}\label{Corollary-Spectral-gap-002}
Let $B$ be a compact metric space and let $\pi: E \to B$ be a 
Lipschitz vector bundle of dimension $d$ over $B$. 
Let $G$ be a locally compact group acting on $E$ by an isometric Lipschitz bundle action 
and let $\mu$ be a Borel probability measure on $G$. 
Assume that there exists a constant $\alpha >0$ such that $\bb E ({\rm Lip}_E(g_1)^{\alpha} ) < \infty$. 
Let $\sigma_0: G \times B \to \bb R$ be a continuous cocycle satisfying \eqref{moment-assumption-02-a}.
Then, for every $\alpha' \in (0, \alpha)$ and $\gamma >0$ close enough to $0$, 
the operator $P_{\mu, z}$ is bounded on $\bf \Gamma^{\gamma}(E)$, 
and the map $z \mapsto P_{\mu, z}$
with values in the space of bounded operators on $\bf \Gamma^{\gamma}(E)$ 
is analytic on the strip $\{ z \in \bb C: |\Re(z)| < \alpha' \}$. 

Moreover, if $(G, \mu)$ satisfies the contraction property \eqref{contraction-property-on-B} on $B$,
then, for every $t \in \bb R$, 
the operator $P_{\mu, it}$ has essential spectral radius strictly less than $1$ on $\bf \Gamma^{\gamma}(E)$. 
\end{corollary}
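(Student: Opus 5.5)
The plan is to redo the computations in the proof of Theorem \ref{Thm-spectral-gap} with the weight $e^{z\sigma_0(g,b)}$ inserted, and then to handle analyticity by a power series expansion in $z$.

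\emph{Boundedness.} Fix a finite trivialization $(U_i,\phi_i,\theta_{i,j})$ and sets $V_i$ as before. Writing $z=x+iy$, we have $|e^{z\sigma_0(g,b)}|\le e^{|x|\sigma_{\sup}(g)}$, so the argument giving \eqref{boundedness-P-mu} yields
\[
\|P_{\mu,z}s\|_0\le |I|\,\bb E\big(e^{|x|\sigma_{\sup}(g_1)}\big)\|s\|_0 .
\]
For the H\"older seminorm we split the difference as
\[
e^{z\sigma_0(g,b)}\phi_i(g^{-1}s(gb))-e^{z\sigma_0(g,b')}\phi_i(g^{-1}s(gb')),
\]
into $e^{z\sigma_0(g,b)}$ times the difference already estimated in \eqref{decom-Holder-norm-001}, plus $(e^{z\sigma_0(g,b)}-e^{z\sigma_0(g,b')})\phi_i(g^{-1}s(gb'))$. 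For the second piece we use $|e^{u}-e^{v}|\le |u-v|\,e^{\max(\Re u,\Re v)}$ together with $\min(1,r)\le r^{\gamma}$:
\[
|e^{z\sigma_0(g,b)}-e^{z\sigma_0(g,b')}|\le 2\,(1+|z|)\,e^{|x|\sigma_{\sup}(g)}\,\sigma_{\rm Lip}(g)^{\gamma}\,\mathbf d(b,b')^{\gamma}.
\]
All resulting terms are then bounded by integrals such as $\bb E\big(e^{|x|\sigma_{\sup}}\,{\rm Lip}_E^{\gamma}\big)$ and $\bb E\big(e^{|x|\sigma_{\sup}}\,\sigma_{\rm Lip}^{\gamma}\big)$. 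These are finite by H\"older's inequality once $|x|<\alpha'<\alpha$ and $\gamma$ is small enough relative to $\alpha-\alpha'$: split the exponents with $p=\alpha/\alpha'$ and conjugate $q$, so that $\gamma q\le\alpha$.

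\emph{Analyticity.} Expand $e^{z\sigma_0}=\sum_k (z-z_0)^k\sigma_0^k e^{z_0\sigma_0}/k!$. Define operators $Q_k s(b)=\int \sigma_0(g,b)^k e^{z_0\sigma_0(g,b)}g^{-1}s(gb)\,\mu(dg)$. The same estimates, using $|\sigma_0^k(b)-\sigma_0^k(b')|\le k\,\sigma_{\sup}^{k-1}\sigma_{\rm Lip}\,\mathbf d(b,b')$ and $\sigma_{\sup}^k\le k!\,\eta^{-k}e^{\eta\sigma_{\sup}}$, give
\[
\|Q_k\|_{\gamma}\le C\,(k+1)\,k!\,\eta^{-k}
\]
for $\eta>0$ small with $|\Re z_0|+\eta<\alpha'$. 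The series therefore converges in operator norm for $|z-z_0|<\eta$, which is analyticity on the strip.

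\emph{Essential spectral radius at $z=it$.} Here $|e^{it\sigma_0}|=1$. I would aim for a Doeblin--Fortet inequality
\[
\|P_{\mu,it}^n s\|_\gamma\le C_n\|s\|_0+C\,a^{\gamma n/\delta}\|s\|_\gamma .
\]
The cocycle property gives $P_{\mu,it}^n s(b)=\bb E\big[e^{it\sigma_0(g_n\cdots g_1,b)}(g_n\cdots g_1)^{-1}s(g_n\cdots g_1b)\big]$. In this formula the term involving $\|s\|_\gamma$ is exactly as in Theorem \ref{Thm-spectral-gap}, since the phase has modulus one; it is controlled by $\bb E\,\mathbf d(g_n\cdots g_1 b,g_n\cdots g_1 b')^{\gamma}\le c^{\gamma/\delta}a^{\gamma n/\delta}\mathbf d(b,b')^{\gamma}$. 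The new phase term only contributes
\[
\|s\|_0\,|t|^{\gamma}\,\bb E\,\big|\sigma_0(g_n\cdots g_1,b)-\sigma_0(g_n\cdots g_1,b')\big|^{\gamma},
\]
using $|e^{iu}-e^{iv}|\le 2|u-v|^{\gamma}$. This is some finite constant $C_n$ times $\mathbf d(b,b')^{\gamma}$, because expanding the cocycle and using subadditivity of $r\mapsto r^\gamma$ gives
\[
|\sigma_0(g_n\cdots g_1,b)-\sigma_0(g_n\cdots g_1,b')|^\gamma\le\sum_k \sigma_{\rm Lip}(g_{k+1})^{\gamma}\,{\rm Lip}_E(g_k\cdots g_1)^{\gamma}\,\mathbf d(b,b')^{\gamma},
\]
which is integrable. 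Since $C_n$ only multiplies $\|s\|_0$, its size is irrelevant. Proposition \ref{Sobolev-injection-lemma} and Ionescu-Tulcea--Marinescu then give essential spectral radius at most $a^{\gamma/\delta}<1$.

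The step I expect to need the most care is the choice of $\gamma$ uniformly in the strip, making the moment bounds in the analyticity estimate compatible. That step is where H\"older's inequality with the gap $\alpha-\alpha'$ is used.
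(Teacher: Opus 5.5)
Your proposal is correct. For boundedness and analyticity it carries out the expansion that the paper only sketches, referring to \cite[Lemma 11.16]{BQ16b}. Your H\"older splitting with $p=\alpha/\alpha'$ is what makes a single $\gamma\le\alpha-\alpha'$ work on the whole strip. One small point: in the bound on $Q_k$, interpolate
\[
|\sigma_0(g,b)^k-\sigma_0(g,b')^k|\le\min\{2\sigma_{\sup}(g)^k,\;k\sigma_{\sup}(g)^{k-1}\sigma_{\rm Lip}(g)\mathbf d(b,b')\},
\]
as you did in the boundedness step. The reason is that only $\bb E(\sigma_{\rm Lip}(g_1)^{\alpha})<\infty$ is assumed, and $\alpha$ may be smaller than $1$.

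The genuine difference is in the essential spectral radius step. The paper does not redo any estimate there. It observes that, for $z\in i\bb R$, the formula $g\underset{z}{\cdot}w=e^{-z\sigma_0(g,\pi w)}gw$ defines (by the cocycle identity) a new isometric Lipschitz bundle action on the same bundle $E$, with ${\rm Lip}^z_E(g)\le{\rm Lip}_E(g)+|z|\,\sigma_{\rm Lip}(g)$. Its transfer operator is exactly $P_{\mu,z}$, so Theorem \ref{Thm-spectral-gap} applies verbatim. You instead use the cocycle identity to write $P_{\mu,it}^n$ as an expectation over $g_n\cdots g_1$ and re-prove the Doeblin--Fortet inequality by hand. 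You bound the H\"older variation of the phase $e^{it\sigma_0(g_n\cdots g_1,\cdot)}$ by telescoping the cocycle and using independence.

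Both arguments are sound. The paper's trick is shorter: it only checks one-step moments for the twisted action and inherits the control of iterates from the proof of Theorem \ref{Thm-spectral-gap}. Your route is self-contained and avoids verifying the bundle-action axioms for the twisted action. It also makes explicit that the phase only enters the coefficient of $\|s\|_0$, while the contracting coefficient $|I|\,c^{\gamma/\delta}a^{\gamma n/\delta}$ in front of $\|s\|_\gamma$ comes solely from the contraction on $B$.
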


\begin{proof}
The boundedness of the operator $P_{\mu, z}$ on the Banach space $\bf \Gamma^{\gamma}(E)$ for small enough $\gamma$,
along with its analytic dependence on the complex parameter $z$,
can be obtained by expanding the arguments used in the proof of Theorem \ref{Thm-spectral-gap}. 
This adaptation is analogous to the standard treatment of group actions on function spaces; for a detailed reference, see \cite[Lemma 11.16]{BQ16b}.

 To establish the spectral gap property, we introduce a $z$-dependent deformation of the original bundle action. 
For each $z \in \bb C$, we define a new action of $G$ on the total space $E$ by setting,
for every $g \in G$ and $w \in E$, 
\begin{align}\label{def-new-action-001}
g \underset{z}{\cdot} w = e^{-z \sigma_0 (g, \pi w)} gw,
\end{align}
where $\pi: E \to B$ is the projection of the bundle. 
We now check that this indeed defines a group action. 
Since $\sigma_0: G \times B \to \bb R$ is a cocycle,
using \eqref{property-group-action} we have the following associativity property: 
for any $g_1, g_2 \in G$ and $w \in E$, 
\begin{align*}
g_1 \underset{z}{\cdot} (g_2  \underset{z}{\cdot} w) 
& =  g_1 \underset{z}{\cdot} \left( e^{-z \sigma_0 (g_2, \pi w)} g_2 w \right)  \notag\\
& =  e^{-z \sigma_0 (g_1, \pi (e^{-z \sigma_0 (g_2, \pi w)} g_2 w) )}  g_1 \left( e^{-z \sigma_0 (g_2, \pi w)} g_2 w \right) \notag\\
& =  e^{-z \sigma_0 (g_1, g_2 \pi (w) )}  e^{-z \sigma_0 (g_2, \pi w)}  g_1 g_2 w  \notag\\
& =  e^{-z \sigma_0 (g_1g_2, \pi w )}  g_1 g_2 w \notag\\
& = (g_1 g_2)  \underset{z}{\cdot} w,
\end{align*}
so that we actually have defined an action. 

This action is related to the operator $P_{\mu, z}$ as follows. 
Since $\pi (s(b')) = b'$ for every $b' \in B$, we have the following identity: for any section $s \in \bf \Gamma^{0}(E)$ and any $b \in B$,  
\begin{align*}
\int_{G}   g^{-1} \underset{z}{\cdot} s (g b) \mu(dg)
& =  \int_{G}  e^{- z \sigma_0(g^{-1}, \pi(s(gb)))} g^{-1} s (g b) \mu(dg)  \notag\\
& =  \int_{G}  e^{- z \sigma_0(g^{-1}, gb)} g^{-1} s (g b) \mu(dg)  \notag\\
& = \int_{G}  e^{z \sigma_0(g, b)} g^{-1} s (g b) \mu(dg)  \notag\\
& = P_{\mu, z} s (b). 
\end{align*}

Therefore, to conclude the proof of the corollary, it suffices to check that, for any $z \in i \bb R$ (purely imaginary), 
this new action satisfies all the assumptions of Theorem \ref{Thm-spectral-gap}. 
Indeed, we claim that, for any $z \in i \bb R$, this new action constitutes again an isometric Lipschitz bundle action on $E$. 
To verify this, we fix a finite local trivialization $(U_i, \phi_i, \theta_{i, j})_{i, j \in I}$ 
and as usual, for each $i \in I$, 
choose an open subset $V_i \subset U_i$ such that $\overline{V_i} \subset U_i$ and 
we still have $B = \bigcup_{i \in I} V_i$. 
For indices $i, j \in I$, we let $\sigma_{i, j}: \{ (g, b) \in G \times U_i: gb \in U_j \} \to U(d)$ be the transition maps satisfying \eqref{def-sigma-ij}. 
Using \eqref{def-new-action-001} and \eqref{def-sigma-ij},
we get, for any $z \in i \bb R$, $g \in G$, $v \in \bb C^d$ and $b \in U_i \cap  (g^{-1} U_j)$,   
\begin{align*}
g \underset{z}{\cdot} \left( \pi \times \phi_i \right)^{-1} (b, v) 
& = e^{-z \sigma_0(g, b)} g \left( \pi \times \phi_i \right)^{-1} (b, v)  \notag\\
& = e^{-z \sigma_0(g, b)} \left( \pi \times \phi_j \right)^{-1} (gb, \sigma_{i, j} (g, b) v) \notag\\
& = \left( \pi \times \phi_j \right)^{-1} \left(gb, e^{-z \sigma_0(g, b)} \sigma_{i, j} (g, b) v \right), 
\end{align*}
where the last equality follows from \eqref{fiber-vector-space-001}. 
This calculation shows that the new action is an isometric bundle action on $E$.  
Note that the associated map $\sigma_{i, j}^z: \{ (g, b) \in G \times U_i: gb \in U_j \} \to U(d)$ 
is defined by the product formula $\sigma_{i, j}^z(g, b) = e^{-z \sigma_0(g, b)} \sigma_{i, j} (g, b)$. 
Since $\sigma_{i, j}$ is locally Lipschitz in $b$, so is $\sigma_{i, j}^z$ and consequently, 
the new action $\underset{z}{\cdot}$ is also Lipschitz on $E$. 
Finally, it remains to verify that the moment assumption holds for this new action. 
For $g \in G$, 
we need to compute the new Lipschitz constant ${\rm Lip}_E^z(g)$ associated to the new action. 
By the definition given in \eqref{def-Lip-norm-g}, we have
\begin{align*}
 {\rm Lip}_E^z(g) 
&  = \sup_{\substack{b, b' \in B \\ b \neq b'}} \frac{\bf d (gb, gb')}{\bf d (b, b')}   \notag\\
& \quad + \max_{i, j \in I} 
  \sup_{\substack{b, b' \in  V_i  \cap  (g^{-1} V_j)    \\ b \neq b'}}   
  \frac{\| e^{-z \sigma_0(g, b)} \sigma_{i, j}(g, b) - e^{-z \sigma_0(g, b')} \sigma_{i, j}(g, b') \|}{\bf d (b, b')}. 
\end{align*}
For any $i, j \in I$ and $b, b' \in V_i  \cap  (g^{-1} V_j)$, 
by \eqref{sigma-Lip-g} and the fact that all the linear maps $\sigma_{i, j}(g, \cdot)$ are isometries of $\bb C^d$, 
we derive that 
\begin{align*}
& \| e^{-z \sigma_0(g, b)} \sigma_{i, j}(g, b) - e^{-z \sigma_0(g, b')} \sigma_{i, j}(g, b') \| \notag\\
& \leq \| e^{-z \sigma_0(g, b)} (\sigma_{i, j}(g, b) -  \sigma_{i, j}(g, b') ) \| + \|(e^{-z \sigma_0(g, b)} - e^{-z \sigma_0(g, b')}) \sigma_{i, j}(g, b') \| 
 \notag\\
& = \| \sigma_{i, j}(g, b) -  \sigma_{i, j}(g, b')  \| + \|e^{-z \sigma_0(g, b)} - e^{-z \sigma_0(g, b')} \|  \notag\\
& \leq  \| \sigma_{i, j}(g, b) -  \sigma_{i, j}(g, b')  \| +  \sigma_{\rm Lip}(g) |z| \bf d (b, b'),
\end{align*}
which, together with \eqref{def-Lip-norm-g}, yields 
\begin{align*}
{\rm Lip}_E^z(g)  \leq  {\rm Lip}_E(g) +  \sigma_{\rm Lip}(g) |z|. 
\end{align*}
Consequently, by the integrability assumption \eqref{moment-assumption-02-a}, we obtain 
\begin{align*}
\bb E ({\rm Lip}_E^z(g_1)^{\alpha})  \leq  c_{\alpha} \left( \bb E ({\rm Lip}_E(g_1)^{\alpha})  + \bb E  (\sigma_{\rm Lip}(g_1)^{\alpha}) |z|^{\alpha} \right) < \infty,
\end{align*}
which verifies the required moment condition. 
Hence, all the assumptions of Theorem \ref{Thm-spectral-gap} are satisfied for $z \in i \bb R$, completing the proof of the corollary. 
\end{proof}


\section{Principal bundles and representation theory}\label{Section principal bundles}

In this section, 
we give a general construction of vector bundles which will be used later in the present article. 
This construction requires the use of the formalism of principal bundles; 
for a comprehensive introduction to this topic, see for example \cite{Die08, Hus94}.  
Here we shall briefly recall the essential aspects of this theory, focusing specifically on the properties 
which we will need in the context of Lipschitz regularity of certain associated maps. 

\subsection{Free actions of compact groups}\label{subsec-free-action}
Let $\wt B$ be a compact metric space, equipped with a distance $\wt{\bf d}$ 
and endowed with a free right action by isometries of a compact metrizable group $M$. 
Recall that the action being free means that for any $m \in M$ and $\wt b \in \wt B$, the equality $\wt b m = \wt b$ implies $m = e$, 
where $e$ denotes the neutral element of $M$.

As the action of $M$ on $\wt B$ is free, there is a well-defined map from the graph of the action $\{(\wt b, \wt b m): \wt b \in \wt B, m \in M\}$
towards the group $M$, which maps the pair $(\wt b, \wt b m)$ to the group element $m \in M$. 
This map is called the {\it translation map} of the bundle
and it is easily seen to be continuous. 
For our purposes, however, we will actually need to use a stronger Lipschitz continuity property of this map, which holds when $M$ is a compact Lie group.

Indeed, we can define a distance $D$ on $M$ as follows. 
Since $M$ is a compact Lie group, it admits a faithful linear representation (see for example \cite{Sim96}). 
We choose such a faithful linear representation $\iota: M \to {\rm GL}_d(\bb C)$ for some integer $d \geq 1$, 
and equip $\bb C^d$ with an $M$-invariant norm $\| \cdot \|$. 
For any $m, m' \in M$, we then set 
\begin{align*}
D(m, m') = \| \iota(m) - \iota(m') \|. 
\end{align*}
Although this distance depends on the chosen representation and the norm, 
its Lipschitz equivalence class is independent of these choices, as follows from the next result. 
For notational simplicity, we shall denote by $\bf d$ the distance on $\wt B \times \wt B$ induced by the distance on $\wt B$,
whenever no confusion can arise. 

\begin{lemma}\label{Lem-structure}
Let $M$ be a compact Lie group acting freely on the right by isometries on a compact metric space $\wt B$.
Assume that the graph of the action is equipped with the restriction of the product distance $\bf d$, 
and that $M$ is equipped with the distance $D$. 
Then, the translation map of the action is Lipschitz continuous, which is to say that 
there exists a constant $c>0$ such that, for all $\wt b, \wt b' \in \wt B$ and $m, m' \in M$, 
\begin{align*}
D (m, m') \leq c \,  \bf d \Big( (\wt b, \wt b m), (\wt b',  \wt b' m') \Big). 
\end{align*}
\end{lemma}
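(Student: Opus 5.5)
The plan is to reduce the statement to one uniform estimate comparing the displacement of a point with the distance of the group element to the identity $e$:
\begin{align}\label{key-displacement}
D(e, n) \leq c_0 \, \wt{\bf d}(\wt b, \wt b n) \quad \mbox{for all } \wt b \in \wt B, \ n \in M.
\end{align}
The key step in proving \eqref{key-displacement} will be a scaling argument through the exponential map of $M$.

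\textbf{Step 1: reduction to \eqref{key-displacement}.} Because the norm on $\bb C^d$ is $M$-invariant, $\iota(m)$ acts isometrically for every $m$. Hence $D$ is bi-invariant, and for $n = m m'^{-1}$ we get
\begin{align*}
D(m, m') = \| (\iota(n) - I)\iota(m') \| = D(n, e).
\end{align*}
The right action of $M$ is isometric, so $\wt{\bf d}(\wt b m, \wt b m') = \wt{\bf d}(\wt b, \wt b n)$. The triangle inequality then gives
\begin{align*}
\wt{\bf d}(\wt b m, \wt b m') \leq \wt{\bf d}(\wt b m, \wt b' m') + \wt{\bf d}(\wt b' m', \wt b m') = \wt{\bf d}(\wt b m, \wt b' m') + \wt{\bf d}(\wt b', \wt b).
\end{align*}
Combining these with \eqref{key-displacement} yields $D(m,m') \leq 2 c_0 \, \bf d\big((\wt b, \wt b m), (\wt b', \wt b' m')\big)$ for either natural choice of product distance. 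This is the lemma.

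\textbf{Step 2: away from the identity.} For any open neighborhood $U$ of $e$, the function $(\wt b, n) \mapsto \wt{\bf d}(\wt b, \wt b n)$ is continuous on the compact set $\wt B \times (M \setminus U)$. By freeness it does not vanish there, so it has a positive minimum $\eta_U$. Since $D$ is bounded on $M$ by some constant $C$, inequality \eqref{key-displacement} holds for $n \notin U$ with $c_0 = C/\eta_U$. Note that compactness alone cannot handle $n$ close to $e$, because the orbit map could a priori distort distances in a non-Lipschitz way.

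\textbf{Step 3: near the identity, the main obstacle.} Fix a norm $|\cdot|$ on the Lie algebra $\mathfrak m$. Choose $r > 0$ so that $\exp$ is a diffeomorphism from the ball $\{|Y| < 3r\}$ onto its image, with $D(e, \exp Y) \leq L|Y|$ there. The compact set $A = \exp(\{r \leq |Y| \leq 2r\})$ avoids $e$, so by Step 2 we have $\wt{\bf d}(\wt b, \wt b a) \geq \eta > 0$ for all $a \in A$ and all $\wt b$.

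Now take $n = \exp X$ with $0 < |X| < r$ and put $k = \lceil r/|X| \rceil$, so that $r \leq k|X| < 2r$ and hence $n^k = \exp(kX) \in A$. Using the isometry of the action and the triangle inequality along the chain $\wt b, \wt b n, \dots, \wt b n^k$,
\begin{align*}
\eta \leq \wt{\bf d}(\wt b, \wt b n^k) \leq k \, \wt{\bf d}(\wt b, \wt b n) \leq \frac{2r}{|X|} \, \wt{\bf d}(\wt b, \wt b n).
\end{align*}
Therefore $D(e,n) \leq L|X| \leq (2rL/\eta)\, \wt{\bf d}(\wt b, \wt b n)$.

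Taking $U = \exp(\{|Y| < r\})$ in Step 2 covers all remaining $n$, which proves \eqref{key-displacement}. The only point to check carefully is that $\exp$ restricted to the chosen ball satisfies the Lipschitz bound $D(e, \exp Y) \leq L|Y|$. This follows from smoothness of $\iota \circ \exp$ on a compact ball.
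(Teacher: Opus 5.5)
Your proof is correct, and it takes a genuinely different route from the paper's.

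The paper argues by linearization, in the spirit of Gleason and Palais. It realizes $M$ inside ${\rm GL}_d(\mathbb R)$ and builds a tube $(1+V_1)M \subset \mathfrak{gl}_d(\mathbb R)$, on which $(Y,m)\mapsto(1+Y)m$ is a diffeomorphism from $V_1\times M$, so the translation map there is smooth. It then averages a Lipschitz bump function over $M$ to obtain a Lipschitz $M$-equivariant map $F(\widetilde b)=\int_M f(\widetilde b m^{-1})\,m\,dm$, which sends a neighborhood $U_0M$ of an orbit into that tube. The translation map of $U_0M$ factors through the smooth one, which gives local Lipschitz continuity, and compactness concludes.

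You avoid any slice or tube construction. First, you use bi-invariance of $D$ and isometry of the right action to reduce the lemma to the single displacement inequality $D(e,n)\le c_0\,\widetilde{\mathbf d}(\widetilde b,\widetilde b n)$. You then prove this inequality in two regimes:
\begin{enumerate}
\item Away from $e$, compactness and freeness give a positive lower bound on the displacement.
\item Near $e$, you use a no-small-subgroups scaling argument along one-parameter subgroups. The chain estimate $\widetilde{\mathbf d}(\widetilde b,\widetilde b n^k)\le k\,\widetilde{\mathbf d}(\widetilde b,\widetilde b n)$ (which is where isometry is used essentially), combined with the choice $k=\lceil r/|X|\rceil$ that forces $r\le k|X|<2r$, transfers the lower bound $\eta$ on the annulus $A$ to a linear lower bound at small scales.
\end{enumerate}
The details check out, including $e\notin A$ (by injectivity of $\exp$ on the $3r$-ball) and the inequality $k<2r/|X|$. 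When $M$ is finite, Step 3 is simply vacuous and Step 2 with $U=\{e\}$ suffices.

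Your argument is shorter and more elementary. It needs no Haar averaging or inverse-function-theorem tube, and it produces explicit constants. The paper's argument yields more structure: it shows that, locally, the translation map is a smooth map composed with a Lipschitz equivariant map into a linear model. That is more than the lemma requires, but it is closer in spirit to the slice-theorem picture of free compact group actions.
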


\begin{proof}
The proof relies on classical linearization arguments, see Gleason \cite{Gle50} and Palais \cite{Pal61}.  
We will actually prove that the translation map is locally Lipschitz continuous. The conclusion will then follow by compactness.

We begin by constructing an example of a free right $M$-action with locally Lipschitz continuous translation map. 
We assume that $M$ is a closed subgroup of ${\rm GL}_d(\bb R)$ for some integer $d \geq 1$. 
Let $\mathfrak m \subset \mathfrak{gl}_d(\bb R)$ be the Lie algebra of $M$, 
where we view $\mathfrak{gl}_d(\bb R)$ as the space of square matrices with size $d$, 
 and choose a linear subspace $\mathfrak v \subset \mathfrak{gl}_d(\bb R)$
such that $\mathfrak{gl}_d(\bb R) = \mathfrak v \oplus \mathfrak m$. 
By the inverse function theorem, 
we may find an open neighborhood $W_0$ of the identity matrix $e = 1$ in $M$, and an open neighborhood $V_0$ of $0$ in $\mathfrak{v}$ 
such that the map 
\begin{align*}
(X, m) \longmapsto (1 + X)m
\end{align*}
is a smooth diffeomorphism from $V_0 \times W_0$ onto an open subset of $\mathfrak{gl}_d(\bb R)$,
which we can assume only to contain invertible matrices. 
Let $V_1 \subset V_0$ be an open neighborhood  of $0$ in $\mathfrak v$
such that 
\begin{align*}
\big( (1 + V_1)^{-1} (1 + V_1) \big) \cap M \subset W_0. 
\end{align*}

We claim that, for any $X \in (1 + V_1) M$, the intersection $(XM) \cap (1 + V_1)$ contains exactly one element. 
Indeed, suppose that there are two elements $Y_1, Y_2 \in V_1$ such that both $1 + Y_1$ and $1 + Y_2$ lie in $XM$.  
Then, as $1 + Y_1$ and $1 + Y_2$ are in the right $M$-orbit of $X$, we may find $m \in M$ satisfying $1 + Y_2 =  (1 + Y_1) m$. 
Hence we get $m = (1 + Y_1)^{-1} (1 + Y_2) \in W_0$. 
Since the map $(X, m) \mapsto (1 + X)m$ is a bijection from $V_0 \times W_0$ onto $(1 + V_0) W_0$, 
we must have $m = e$ and $Y_1 = Y_2$, proving the claim. 

Consequently, the map 
\begin{align*}
V_1 \times M  &  \longrightarrow  (1 + V_1)M  \notag\\
(Y, m) & \longmapsto  (1 + Y)m
\end{align*}
 is a bijection. 
Moreover, this map is a smooth diffeomorphism in the neighborhood of $V_1 \times \lbrace1 \rbrace$, 
and by $M$-equivariance, it is also a (global) diffeomorphism $V_1 \times M \to (1 + V_1)M$.  
This implies that the translation map associated with the right action of $M$ on $(1 + V_1)M$ is smooth and therefore, locally Lipschitz continuous. 

Fix an arbitrary point $\wt b_0 \in \wt B$.
We will use the previous construction to prove that the translation map of $\wt B$ is Lipschitz continuous on a neighborhood of $(\wt b_0, \wt b_0 M)$. 
Let $f: \wt B \to \bb R_+$ be a nonnegative Lipschitz continuous function 
with support contained in a neighborhood of $\wt b_0$ in $\wt B$ such that  
\begin{align*}
\int_{M} f(\wt b_0 m) dm = 1, 
\end{align*}
where $dm$ denotes the normalized Haar measure on $M$. 
For each $\wt b \in \wt B$, we define 
$$F(\wt b) = \int_{M} f(\wt b m^{-1}) m dm,$$ 
which is an element of $\mathfrak{gl}_d(\bb R)$. 
Up to shrinking the support of the function $f$, we may assume that $F(\wt b_0) \in (1 + V_1)M$. 
Notice that $F$ is a Lipschitz continuous map from $\wt B$ to $\mathfrak{gl}_d(\bb R)$ and 
it is right $M$-equivariant: for any $\wt b \in \wt B$
and $m \in M$, we have $F(\wt b m) = F(\wt b)m$. 
We finally choose an open neighborhood $U_0$ of $\wt b_0$ in $\wt B$ such that $F(U_0) \subset  (1+V_1)M$. 
By $M$-equivariance, the translation map of $U_0 M$ may be written as the composition of the translation map of $(1+ V_1)M$ with the restriction of $F \times F$ 
to the graph of the $M$-action on $U_0 M$. 
Therefore, the translation map of $U_0 M$ is locally Lipschitz continuous, as claimed. 
\end{proof}


\subsection{Principal bundles}\label{Section-Quotient vector bundles}
Let $\wt B$ still be a compact metric space with metric $\wt{\bf d}$, and 
let $M$ be a metrizable compact topological group acting freely and isometrically on $\wt B$. 
We define the quotient space 
\begin{align*}
B =  \wt B / M = \left\{  \wt bM:  \wt b \in \wt B  \right\}, 
\end{align*}
that is, the space of all right $M$-orbits in $\wt B$. 
We equip $B$ with the quotient topology, 
then $B$ is a compact Hausdorff space and its topology 
can be induced by a metric $\bf d$ given as follows: 
 for $b =  \wt b M \in B$ and $b' =  \wt b' M \in B$,
\begin{align}\label{def distance on B}
\bf d (b, b') = \inf_{m, m' \in M} \wt{\bf d} (\wt b m, \wt b' m') = \inf_{m \in M} \wt{\bf d} (\wt b m, \wt b'). 
\end{align}

Define by $\eta: \wt B \to B$ the quotient map $\wt b \mapsto  \wt b M$.  
We say that $\eta: \wt B \to B$ is a {\it Lipschitz $M$-principal bundle}
if there exist local Lipschitz continuous sections, meaning that,  
for every point $b \in B$, there exists an open neighborhood $U$ of $b$ in $B$ and a Lipschitz continuous map $\sigma: U \to \wt B$
such that, for every $b' \in U$, we have $\eta(\sigma(b')) = b'$.

It is important to emphasize that, in general, not every free action of a compact group $M$ gives rise to an $M$-principal bundle; 
counterexamples can be found in the literature (see \cite{Kol37}). 
Nevertheless, this is the case for smooth actions of Lie groups. 

\begin{lemma}\label{Lem-principal-bundle}
Let $M$ be a compact Lie group and let $\wt B$ be a compact smooth manifold equipped with a smooth free right action of $M$.
Then, the quotient map $\eta: \wt B \to \wt B / M$ is a Lipschitz $M$-principal bundle. 
\end{lemma}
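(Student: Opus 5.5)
The plan is to build local Lipschitz sections from a smooth slice through each orbit, obtained from the differential-geometric slice theorem. Throughout, $\wt B$ carries a Riemannian distance $\wt{\bf d}$ (any two such are Lipschitz equivalent on a compact manifold). By averaging a Riemannian metric over the compact group $M$ with respect to Haar measure, we may assume $M$ acts by isometries, so that the framework of Section \ref{Section-Quotient vector bundles} applies and $B = \wt B/M$ carries the quotient distance \eqref{def distance on B}.

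Fix $\wt b_0 \in \wt B$. The orbit map $M \to \wt B$, $m \mapsto \wt b_0 m$, is an injective immersion. Injectivity comes from freeness. For the immersion property, note that the differential at $e$ is injective because an injective Lie group homomorphism has injective differential: if $X$ lies in the kernel, then $\exp(tX)$ fixes $\wt b_0$, since the curve $t \mapsto \wt b_0\exp(tX)$ has zero derivative everywhere by equivariance, so $X=0$. Since $M$ is compact, the orbit $\wt b_0 M$ is an embedded compact submanifold of dimension $\dim M$.

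Next, choose a small embedded submanifold $S \ni \wt b_0$ transverse to the orbit, of complementary dimension; for instance take $S = \exp_{\wt b_0}$ of a small ball in the normal space. The map $\Phi: S \times M \to \wt B$, $(s, m) \mapsto s m$, has invertible differential at $(\wt b_0, e)$. By equivariance, $\Phi(s, mm') = \Phi(s,m)m'$, so the differential is invertible at every point $(\wt b_0, m)$. Shrinking $S$, the differential is invertible on $S \times M$, so $\Phi$ is a local diffeomorphism.

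The main obstacle is global injectivity of $\Phi$ after shrinking $S$. I would argue by contradiction using compactness of $M$, exactly as in the proof of Lemma \ref{Lem-structure}. Suppose $s_k m_k = s_k' m_k'$ with $s_k, s_k' \to \wt b_0$ and $(s_k,m_k)\neq(s'_k,m'_k)$. Then $n_k = m_k m_k'^{-1}$ satisfies $s_k n_k = s_k'$. Along a subsequence $n_k \to n$ with $\wt b_0 n = \wt b_0$, so $n = e$ by freeness. Then both $(s_k, n_k)$ and $(s_k', e)$ converge to $(\wt b_0, e)$ and have the same image, which contradicts local injectivity of $\Phi$ near $(\wt b_0, e)$.

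So, for $S$ small, $\Phi$ is a diffeomorphism onto the open, $M$-invariant set $SM$. It follows that $\eta|_S : S \to U := \eta(SM)$ is a bijection onto an open neighborhood $U$ of $b_0 = \eta(\wt b_0)$, and its inverse $\sigma : U \to S \subset \wt B$ is a continuous local section.

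It remains to prove that $\sigma$ is Lipschitz with respect to ${\bf d}$; I will do this on a smaller neighborhood, shrinking $S$ to a compact sub-slice if needed. Take $b, b' \in U$ and write $s=\sigma(b)$, $s'=\sigma(b')$. By compactness of $M$, choose $m$ attaining ${\bf d}(b,b') = \wt{\bf d}(s m, s')$. Now let $\psi = \Phi^{-1}$, restricted to a compact neighborhood of $\Phi(\overline{S'} \times M)$ inside $SM$, where $S' \Subset S$. This $\psi$ is smooth, hence Lipschitz for $\wt{\bf d}$ on that set, at least for nearby pairs; pairs at distance bounded below are handled trivially by boundedness of $\wt B$. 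Applying it gives
\[
\wt{\bf d}(s,s') \le \operatorname{dist}\big((s,m),(s',e)\big) \le C\,\wt{\bf d}(sm,s') = C\,{\bf d}(b,b').
\]
Hence $\sigma$ is Lipschitz on $\eta(S'M)$, which is a neighborhood of $b_0$. This proves that $\eta$ is a Lipschitz $M$-principal bundle.
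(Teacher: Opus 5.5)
Your proof is correct, but it takes a more hands-on route than the paper. The paper simply cites the quotient manifold theorem (from Varadarajan) to give $B=\wt B/M$ a smooth structure making $\eta$ a submersion, and then obtains smooth local sections from the implicit function theorem. It leaves tacit why a smooth section is Lipschitz for the specific quotient distance $\mathbf d$ of \eqref{def distance on B}. That step needs $\mathbf d$ to be locally bi-Lipschitz to a Riemannian distance on the manifold $B$, which holds, e.g., because for an $M$-invariant metric $\eta$ is a Riemannian submersion.

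You instead build the slice $S$ explicitly and show that $\Phi\colon S\times M\to SM$ is a diffeomorphism. For this you use the inverse function theorem together with a compactness-and-freeness argument of the same kind as in Lemma \ref{Lem-structure}. You then take the section to be $(\eta|_S)^{-1}$. Finally, you check the Lipschitz bound directly against $\mathbf d$, by applying the locally Lipschitz map $\operatorname{pr}_S\circ\Phi^{-1}$ to the pair $(sm,s')$ that realizes the infimum.

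In effect you reprove the tube theorem for free actions of compact Lie groups. Your argument is longer, but it never needs a smooth structure on $B$. It also closes exactly the gap between ``smooth section'' and ``Lipschitz for $\mathbf d$'' that the paper's two-line proof passes over.
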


\begin{proof}
The orbit space $B = \wt B / M$ admits a unique smooth manifold structure 
for which the quotient map $\eta: \wt B \to \wt B / M$ is a submersion, see \cite[Theorem 2.9.10]{Var84}. 
By the implicit function theorem, this map admits local smooth sections. 
\end{proof}

We now fix a linear representation of the group $M$, that is, 
a continuous homomorphism $\rho: M \to {\rm GL}_d(\bb C)$ for some integer $d \geq 1$. 
The space $\wt B \times \bb C^d$ is equipped with the product right $M$-action:
for any $\wt b \in \wt B$, $v \in \bb C^d$ and $m \in M$, we have 
\begin{align}\label{def-action-M}
(\wt b, v) m = \left( \wt b m, \rho (m)^{-1} v \right). 
\end{align}
Let $\wt \pi: \wt B \times \bb C^d \to \wt B$ be the projection onto the first component,
which, by definition, commutes with the action of $M$. 
We now consider the quotient space 
\begin{align*}
B \times_{\rho} \bb C^d :=  (\wt B \times \bb C^d) / M, 
\end{align*}
which will be equipped with the quotient topology. 
Since the map $\wt \pi$ commutes with the action of $M$, there exists a unique continuous map 
$\pi: B \times_{\rho} \bb C^d \to B$ such that for every $\wt b \in \wt B$ and $v \in \bb C^d$, 
we have
\begin{align}\label{def-pi-M}
\pi ((\wt b, v) M)  = \wt \pi  (\wt b, v) M = \wt b M. 
\end{align}
More explicitly, the situation is described in the following commutative diagram: 
\begin{equation}\label{commutative diagram}
\begin{CD}
 \wt B \times \bb C^d  @>\wt{\eta}>>   B \times_{\rho} \bb C^d =  (\wt B \times \bb C^d) / M  \\
@VV\wt{\pi}V    @VV\pi V \\
\wt B    @>\eta>>  B = \wt B / M
\end{CD}
\end{equation}
where we have denoted by $\wt{\eta}$ and $\eta$ 
the quotient maps $\wt B \times \bb C^d \to B \times_{\rho} \bb C^d$ and $\wt B \to B$, respectively.

The following lemma allows us to define a complex vector space structure on each fiber of the projection $\pi$. 

\begin{lemma}
Let $M$ be a compact metrizable group acting freely on the compact space $\wt B$. 
Let $\rho: M \to {\rm GL}_d(\bb C)$ be a continuous linear representation for some integer $d \geq 1$. 
Then, for every $b \in B = \wt B / M$, there exists a unique complex vector space structure
on the fiber $\pi^{-1}(b)$ such that, for any $\wt b \in \wt B$ with $\wt b M = b$,
the map 
\begin{align*}
\psi_{\wt b}:  \bb C^d  &    \longrightarrow  \pi^{-1}(b)  \notag\\
v & \longmapsto  (\wt b, v) M
\end{align*}
is a linear isomorphism.
\end{lemma}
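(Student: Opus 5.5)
The plan is to show that, once a base point $\wt b$ in the orbit $b$ is fixed, $\psi_{\wt b}$ is a bijection onto $\pi^{-1}(b)$. We then transport the vector space structure of $\bb C^d$ along $\psi_{\wt b}$. The remaining task is to check that this structure does not depend on the choice of $\wt b$. Uniqueness is then automatic: a vector space structure making one bijection $\psi_{\wt b}$ linear is completely determined by that bijection.

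\emph{Step 1: bijectivity.} For surjectivity, take an element of $\pi^{-1}(b)$. It has the form $(\wt b', v')M$ with $\wt b' M = b$ by \eqref{def-pi-M}, so $\wt b' = \wt b m$ for some $m \in M$. By \eqref{def-action-M},
$$(\wt b m, v')M = (\wt b, \rho(m) v')M = \psi_{\wt b}(\rho(m)v').$$
For injectivity, suppose $(\wt b, v)M = (\wt b, v')M$. Then $(\wt b, v') = (\wt b m, \rho(m)^{-1}v)$ for some $m \in M$. Hence $\wt b m = \wt b$, and freeness of the action gives $m = e$, so $v = v'$.

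\emph{Step 2: independence of the base point.} Let $\wt b' = \wt b m$ be another point of the orbit. The computation above gives
$$\psi_{\wt b m}(v) = (\wt b m, v)M = (\wt b, \rho(m)v)M = \psi_{\wt b}(\rho(m)v),$$
that is, $\psi_{\wt b m} = \psi_{\wt b} \circ \rho(m)$. Since $\rho(m) \in {\rm GL}_d(\bb C)$ is a linear automorphism of $\bb C^d$, the structures transported by $\psi_{\wt b m}$ and by $\psi_{\wt b}$ coincide.

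Consequently the structure defined via one $\wt b$ makes every $\psi_{\wt b'}$ with $\wt b' M = b$ a linear isomorphism, which gives existence. Uniqueness holds because linearity of a single bijection $\psi_{\wt b}$ already forces the operations to be $w + w' = \psi_{\wt b}(\psi_{\wt b}^{-1}w + \psi_{\wt b}^{-1}w')$ and $\lambda w = \psi_{\wt b}(\lambda\psi_{\wt b}^{-1}w)$.

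There is no real obstacle here. The only point requiring care is injectivity, which is exactly where freeness of the action enters; without freeness, a nontrivial stabilizer would identify $v$ with $\rho(m)^{-1}v$. Continuity and compactness play no role in this argument.
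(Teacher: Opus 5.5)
Your proposal is correct and follows the paper's proof essentially step for step: you show that $\psi_{\wt b}$ is a bijection, using freeness for injectivity, and then use the identity $\psi_{\wt b m} = \psi_{\wt b}\circ\rho(m)$ to get independence of the base point. Beyond the paper, you spell out the surjectivity computation and the uniqueness argument, both of which the paper leaves implicit.
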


\begin{proof}
Fix $b \in B$ and choose $\wt b \in \wt B$ such that $\wt b M = b$. 
We first claim that the map $\psi_{\wt b}: \bb C^d  \to \pi^{-1}(b)$ is a bijection. 
Indeed, it is surjective by the definition of the map $\pi$. 
To prove injectivity, suppose that there are two vectors $v, v' \in \bb C^d$ satisfying $\psi_{\wt b}(v) = \psi_{\wt b}(v')$,
then, by \eqref{def-action-M}, we may find $m \in M$ such that $(\wt b, v') = (\wt b, v) m = (\wt b m, \rho(m)^{-1} v)$. 
Comparing the first components gives $\wt b = \wt b m$. 
Since the action of $M$ on $\wt B$ is free, this implies that $m$ equals the identity element of $M$. Hence $v = v'$
and $\psi_{\wt b}$ is injective. 
Therefore, the map $\psi_{\wt b}$ is a bijection from $\bb C^d$ onto $\pi^{-1}(b)$.  

Besides, for any $\wt b' = \wt b m$ with $m \in M$ and $v \in \bb C^d$, in view of \eqref{def-action-M}, we have
\begin{align*}
\psi_{\wt b'}(v) = (\wt b', v) M = (\wt b m, v) M = (\wt b, \rho(m) v) M = (\psi_{\wt b} \circ \rho(m))(v), 
\end{align*}
so that $\psi_{\wt b'} = \psi_{\wt b} \circ \rho(m).$
Therefore, the vector space structure on $\pi^{-1}(b)$ which is obtained by transporting the one on $\bb C^d$
through the bijection $\psi_{\wt b}$ does not depend on the choice of $\wt b$,
which is the meaning of the lemma. 
\end{proof}

When $\eta: \wt B \to B$ is a Lipschitz $M$-principal bundle,  
we will refine the above result by constructing a Lipschitz vector bundle structure on the map $\pi: B \times_{\rho} \bb C^d \to B$.

Now we define our vector bundle structure. 
We let $I$ be the set of all local Lipschitz continuous sections of the map $\eta: \wt B \to B$.
More precisely, for each $i \in I$, we are given an open subset $U_i$ of $B$ and a Lipschitz continuous map 
$\sigma_i: U_i \to \wt B$ such that $\eta \circ \sigma_i$ is the identity map of $U_i$, that is, for every $b \in U_i$, 
\begin{align}\label{def-sigma-i}
\eta(\sigma_i(b)) = b.
\end{align}
By definition,  the family $(U_i)_{i \in I}$ forms an open covering of $B$. 
Our goal is to construct continuous maps $\phi_i: \pi^{-1} (U_i) \to \bb C^d$.
To this end, observe that the preimage $\pi^{-1} (U_i) \subset  B \times_{\rho} \bb C^d = (\wt B \times \bb C^d) / M$ 
is naturally the quotient of the space $\eta^{-1} (U_i) \times \bb C^d$ by the product $M$-action.  
This is summarized by the following commutative diagram: 
$$
\begin{CD}
\eta^{-1} (U_i) \times \bb C^d   @>\wt{\eta}>>   \pi^{-1} (U_i)  \\
@VV\wt{\pi}V    @VV\pi V \\
\eta^{-1} (U_i)    @>\eta>>  U_i
\end{CD}
$$
Consequently, defining a continuous map $\phi_i: \pi^{-1} (U_i) \to \bb C^d$ amounts to defining an $M$-invariant continuous map 
$\wt{\phi_i}: \eta^{-1} (U_i) \times \bb C^d \to \bb C^d$:  
$$
\begin{CD}
\bb C^d  @<\wt{\phi_i}<<  \eta^{-1} (U_i) \times \bb C^d   @>\wt{\eta}>>   \pi^{-1} (U_i)  @>\phi_i>>  \bb C^d  \\
@.   @VV\wt{\pi}V    @VV\pi V \\
@.   \eta^{-1} (U_i)    @>\eta>>  U_i
\end{CD}
$$
We now proceed to define the required invariant map $\wt{\phi_i}$. 
For any $\wt b \in \eta^{-1}(U_i)$, 
the freeness of the $M$-action on $\wt B$ guarantees a unique element $\mu_i(\wt b) \in M$ such that 
\begin{align}\label{existence-mu-i}
\wt b =  \sigma_i (\eta(\wt b)) \mu_i (\wt b),
\end{align}
where $\sigma_i: U_i \to \wt B$ is given by \eqref{def-sigma-i}. 
Geometrically, this means that $\mu_i (\wt b)$ is the unique group element 
that moves the reference section point $\sigma_i (\eta(\wt b))$ to the actual point $\wt b$. 
The situation is illustrated in Figure \ref{the-map-mu-i}. 
Since the translation map of the action of $M$ on $\wt B$ is continuous (see Subsection \ref{subsec-free-action}), 
the map $\mu_i: \eta^{-1}(U_i) \to M$ is continuous as well. 
Furthermore, for any $\wt b \in \eta^{-1} (U_i)$ and $m \in M$, since $\eta(\wt b m) = \eta(\wt b)$, 
by the uniqueness property in \eqref{existence-mu-i}, it follows that 
\begin{align}\label{property-mu-m-invariant}
\mu_i (\wt b m) = \mu_i (\wt b) m. 
\end{align}
With this in hand, 
we define the continuous map $\wt{\phi_i}: \eta^{-1} (U_i) \times \bb C^d \to \bb C^d$ by setting, 
for each $\wt b \in \eta^{-1}(U_i)$ and $v \in \bb C^d$,  
\begin{align}\label{def-wt-phi-i}
\wt{\phi_i}(\wt b, v) = \rho(\mu_i(\wt b)) v. 
\end{align} 
Then it remains to verify that $\wt{\phi_i}$ is $M$-invariant. 
Indeed, for any $m \in M$, using \eqref{property-mu-m-invariant}, \eqref{def-wt-phi-i} and the fact that 
$\rho$ is a group homomorphism, we compute  
\begin{align*}
\wt{\phi_i}(\wt b, \rho(m)^{-1} v) 
&  = \rho(\mu_i(\wt b m)) \rho(m)^{-1} v  \notag\\
& =  \rho(\mu_i( \wt b) m)  \rho(m)^{-1} v  
 = \rho(\mu_i(\wt b)) v = \wt{\phi_i}(\wt b, v). 
\end{align*}
Therefore, the continuous map $\wt{\phi_i}$ is $M$-invariant, and consequently it gives rise to a continuous map $\phi_i: \pi^{-1} (U_i) \to \bb C^d$.

\begin{figure}[H]
\begin{center}
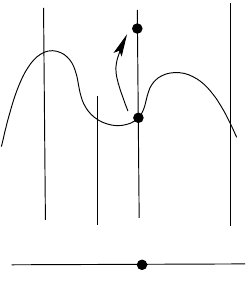
\caption{The map $\mu_i: \eta^{-1}(U_i) \to M$}
\label{the-map-mu-i}
\end{center}
\end{figure}

With the help of the above constructions, the next result says that $\pi: B \times_{\rho} \bb C^d \to B$ is a Lipschitz vector bundle. 

\begin{proposition}\label{Propo-Lipschitz-bundle-structure}
Let $M$ be a compact metrizable group
and let $\eta: \wt B \to B$ be a Lipschitz $M$-principal bundle. 
Let $\rho: M \to {\rm GL}_d(\bb C)$ be a continuous linear representation for some integer $d \geq 1$. 
Then, the maps $(U_i, \phi_i)_{i \in I}$ constructed above define a Lipschitz vector bundle structure on 
$\pi: B \times_{\rho} \bb C^d \to B$. 
\end{proposition}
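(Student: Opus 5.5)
The plan is to check the two axioms of a Lipschitz vector bundle directly for the charts $(U_i,\phi_i)_{i\in I}$. First I identify $\phi_i$ concretely on representatives. If $b\in U_i$, every point of $\pi^{-1}(b)$ can be written as $(\sigma_i(b),v)M$ for a unique $v\in\bb C^d$, by the lemma on the fiber structure. Since $\mu_i(\sigma_i(b))=e$ by the uniqueness in \eqref{existence-mu-i}, the definition \eqref{def-wt-phi-i} gives
\begin{align*}
\phi_i\big((\sigma_i(b),v)M\big)=v .
\end{align*}

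\textbf{Axiom (1).} The map $\Psi_i:U_i\times\bb C^d\to\pi^{-1}(U_i)$, $(b,v)\mapsto(\sigma_i(b),v)M=\wt\eta(\sigma_i(b),v)$, is a composition of continuous maps, so it is continuous. By the identity above, $(\pi\times\phi_i)\circ\Psi_i$ is the identity of $U_i\times\bb C^d$. Conversely, take any $w=(\wt b,v)M$ with $\wt b\in\eta^{-1}(U_i)$. Then $w=(\sigma_i(\eta\wt b),\rho(\mu_i(\wt b))v)M$, so $\Psi_i\circ(\pi\times\phi_i)$ is the identity of $\pi^{-1}(U_i)$. Since $\pi\times\phi_i$ is continuous ($\phi_i$ being induced by the continuous $M$-invariant map $\wt\phi_i$), it is a homeomorphism with inverse $\Psi_i$. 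Fiberwise linearity of $\phi_i$ is also immediate from the formula $\phi_i\circ\psi_{\wt b}=\rho(\mu_i(\wt b))$.

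\textbf{Axiom (2): transition maps.} For $b\in U_i\cap U_j$ write $\sigma_j(b)=\sigma_i(b)\,m_{ij}(b)$, where $m_{ij}(b)=\mu_i(\sigma_j(b))\in M$. For $w=(\sigma_j(b),v)M$ we have $\phi_j(w)=v$ and $\phi_i(w)=\rho(m_{ij}(b))v$. Hence
\begin{align*}
\theta_{i,j}(b)=\rho\big(m_{ij}(b)\big),
\end{align*}
which is continuous with values in ${\rm GL}_d(\bb C)$. It remains to prove that $b\mapsto\rho(m_{ij}(b))$ is Lipschitz. The map $b\mapsto(\sigma_i(b),\sigma_j(b))$ is Lipschitz from $U_i\cap U_j$ into the graph of the $M$-action, so the point is to show that $\rho$ composed with the translation map is Lipschitz on that graph.

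\textbf{Main obstacle.} This last step is where I expect the difficulty, because Lemma \ref{Lem-structure} requires $M$ to be a compact Lie group, whereas here $M$ is only compact metrizable. I would reduce to that lemma as follows.
\begin{itemize}
\item Let $K=\ker\rho$ and $M'=M/K$. Then $M'\cong\rho(M)$ is a closed subgroup of ${\rm GL}_d(\bb C)$, hence a compact Lie group.
\item Let $\wt B'=\wt B/K$, with the quotient distance defined as in \eqref{def distance on B}. Then $M'$ acts on $\wt B'$ freely (freeness passes to the quotient) and by isometries. Moreover the projection $\wt B\to\wt B'$ is $1$-Lipschitz and $\wt B'/M'=B$.
\item The composite sections $\sigma_i':U_i\to\wt B'$ remain Lipschitz, and the image of $m_{ij}(b)$ in $M'$ is exactly the translation element between $\sigma_i'(b)$ and $\sigma_j'(b)$.
\item Apply Lemma \ref{Lem-structure} to $M'$ acting on $\wt B'$, choosing as faithful representation the map induced by $\rho$ together with an $M'$-invariant norm on $\bb C^d$. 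This gives
\begin{align*}
\|\rho(m_{ij}(b))-\rho(m_{ij}(b'))\|\leq c\,\bf d\big((\sigma_i'(b),\sigma_j'(b)),(\sigma_i'(b'),\sigma_j'(b'))\big)\leq c'\,\bf d(b,b').
\end{align*}
\end{itemize}
Since all norms on $\End(\bb C^d)$ are equivalent, this shows that $\theta_{i,j}$ is Lipschitz, and the family $(U_i,\phi_i,\theta_{i,j})$ is a local trivialization. Passing to the unique compatible maximal trivialization then completes the proof.
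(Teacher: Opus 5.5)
Your proof is correct and follows essentially the same route as the paper: you reduce to the compact Lie group $\rho(M)=M/\ker\rho$ acting on $\wt B/\ker\rho$ and apply Lemma \ref{Lem-structure}, which yields Lipschitz transition maps $\rho\circ\mu_i\circ\sigma_j$. The differences are only cosmetic. For axiom (1) you write down the explicit inverse $(b,v)\mapsto(\sigma_i(b),v)M$ instead of descending the homeomorphism $\wt\Phi_i$ from $\eta^{-1}(U_i)\times\bb C^d$, and for axiom (2) you apply the translation-map estimate directly to the pair $(\sigma_i(b),\sigma_j(b))$ instead of first proving that $\mu_i$ and $\mu_j$ are locally Lipschitz.
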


\begin{proof}
We first check condition (1) in the definition of Lipschitz vector bundles given by Subsection \ref{Subsec-vector-bundles}. 
For each $i \in I$, consider the map 
\begin{align*}
\wt \Phi_i: \eta^{-1}(U_i) \times \bb C^d  & \longrightarrow  \eta^{-1}(U_i) \times \bb C^d  \notag\\
(\wt b, v) & \longmapsto \left(\wt{\pi} (\wt b, v), \wt{\phi_i} (\wt b, v) \right)
= \left( \wt b, \rho(\mu_i(\wt b)) v \right), 
\end{align*}
where $\wt \pi:  \eta^{-1}(U_i) \times \bb C^d  \to \eta^{-1}(U_i)$ is the projection onto the first component. 
Then it is immediate to see that $\wt \Phi_i$ is a homeomorphism of $\eta^{-1}(U_i) \times \bb C^d$ onto itself; 
its inverse is explicitly given by $(\wt b, v) \mapsto (\wt b, \rho(\mu_i(\wt b))^{-1} v)$, which is continuous as well.  
Besides, for any $m \in M$ and $(\wt b, v) \in \eta^{-1}(U_i) \times \bb C^d$, using \eqref{property-mu-m-invariant} and \eqref{def-wt-phi-i}, we compute 
\begin{align*}
\wt \Phi_i \left(\wt b m, \rho(m)^{-1} v \right) 
& = \left(\wt b m,   \rho(\mu_i(\wt b m))  \rho(m)^{-1} v \right)  \notag\\
& = \left(\wt b m, \rho(\mu_i(\wt b)) v \right)  \notag\\
& = \left(\wt b m, \wt{\phi_i} (\wt b, v) \right). 
\end{align*}  
Therefore, the map $\wt \Phi_i$ factors as a homeomorphism $\Phi_i$ from the quotient $\pi^{-1}(U_i)$ of $\eta^{-1}(U_i) \times \bb C^d$
by the diagonal action of $M$ onto $U_i \times \bb C^d$.
By construction, for any $w \in \pi^{-1}(U_i)$, we have 
\begin{align*}
\Phi_i (w) = (\pi(w), \phi_i(w)). 
\end{align*}
Hence the maps $\Phi_i: \pi^{-1}(U_i) \to U_i \times \bb C^d$, $i \in I$, 
provide the required local trivializations for the bundle $\pi : B \times_\rho \mathbb{C}^d \to B$, so condition (1) is satisfied.

Now we check condition (2) in the definition of Lipschitz vector bundles given by Subsection \ref{Subsec-vector-bundles}. 
Fix two indices $i, j \in I$. 
We consider the map $\wt \theta_{i, j}: \eta^{-1} (U_i \cap U_j) \to M$ by setting, 
for every $\wt b \in \eta^{-1} (U_i \cap U_j)$, 
\begin{align}\label{define-map-theta-ij}
\wt \theta_{i, j}(\wt b) = \mu_i (\wt b) \mu_j (\wt b)^{-1}. 
\end{align}
By \eqref{property-mu-m-invariant}, 
this map is $M$-invariant and factors as a continuous map $\theta_{i, j}: U_i \cap U_j \to M$. 
By construction, for any $(\wt b, v) \in \eta^{-1} (U_i \cap U_j) \times \bb C^d$, 
from \eqref{def-wt-phi-i} and \eqref{define-map-theta-ij} we have 
\begin{align*}
\wt \phi_i (\wt b, v) 
& = \rho(\mu_i(\wt b)) v  \notag\\
& = \rho(\mu_i(\wt b)) \rho(\mu_j(\wt b))^{-1} \rho(\mu_j(\wt b)) v  \notag\\
& = \rho(\wt \theta_{i, j}(\wt b))  \wt \phi_j (\wt b, v). 
\end{align*}
Since all these maps are $M$-invariant, passing to the quotient gives that, for every $w \in \pi^{-1} (U_i \cap U_j)$, 
\begin{align}\label{relation-phi-i-phi-j}
\phi_i (w) = \rho(\theta_{i, j} (\pi w)) \phi_j(w). 
\end{align}
To complete the proof of the proposition, 
it remains to show that the map 
$$\rho \circ \theta_{i, j}: U_i \cap U_j  \longrightarrow  {\rm GL}_d(\bb C)$$ is locally Lipschitz continuous. 
To this end, we first notice that, up to replacing $M$ by $\rho(M) = M / \ker \rho$
and $\wt B$ by $\wt B / \ker \rho$, we can assume $M$ to be a Lie group. 
Now observe that the map 
\begin{align*}
\eta^{-1}(U_i \cap U_j) &  \longrightarrow  (U_i \cap U_j) \times M  \notag\\
\wt b & \longmapsto (\eta(\wt b), \mu_i(\wt b))
\end{align*}
is a homeomorphism; by \eqref{existence-mu-i}, its inverse is given by $(b, m) \mapsto \sigma_i(b) m$. 
Therefore, 
applying Lemma \ref{Lem-structure} (which guarantees the Lipschitz continuity of the translation map for Lie group actions), we conclude that both
 maps $\mu_i$ and $\mu_j$ are locally Lipschitz continuous
on $\eta^{-1}(U_i \cap U_j)$.  
Consequently, their composition in \eqref{define-map-theta-ij} shows that the map $\wt \theta_{i, j}$ is locally Lipschitz continuous on $\eta^{-1}(U_i \cap U_j)$, 
and hence the factorized map $\theta_{i, j}$ is locally Lipschitz continuous on $U_i \cap U_j$. 
Therefore, the map $\rho \circ \theta_{i, j}: U_i \cap U_j \to {\rm GL}_d(\bb C)$ is also locally Lipschitz continuous, as required. 
\end{proof}


\subsection{Isometric actions of groups and principal bundles}

In the sequel we will use isometric actions of groups on vector bundles of the form $B \times_{\rho} \bb C^d$,
which are obtained as follows. Assume that the space $\wt B$ is equipped with a left action 
of a locally compact second countable topological group $G$
which commutes with the action of $M$, that is, for any $g \in G$, $\wt b \in \wt B$  and $m \in M$, we have $g(\wt b m) = (g \wt b) m$. 
Then we let $G$ act on $\wt B \times \bb C^d$ by setting, for $g \in G$, $\wt b \in \wt B$ and $v \in \bb C^d$, 
\begin{align}\label{action-G-on-B-Cd}
g(\wt b, v) = (g \wt b, v).  
\end{align}
Since $G$ commutes with the action of $M$,
by \eqref{def-action-M}, this action commutes with the action of $M$ on $\wt B \times \bb C^d$
and therefore induces an action of $G$ on $B \times_{\rho} \bb C^d$.

\begin{corollary}\label{Corollary-isometric Lipschitz bundle action}
Let $M$ be a compact metrizable group 
and let $\eta: \wt B \to B$ be a Lipschitz $M$-principal bundle. 
Let $\rho: M \to {\rm GL}_d(\bb C)$ be a continuous linear representation for some integer $d \geq 1$.  
Let $G$ be a locally compact second countable topological group acting continuously on $\wt B$ on the left by Lipschitz continuous homeomorphisms, 
and assume that the actions of $M$ and $G$ commute with each other. 
Then, the induced action of $G$ on $B \times_{\rho} \bb C^d$ is an isometric Lipschitz bundle action,
and there exists a constant $c >0$ such that, for any $g \in G$,  we have 
\begin{align}\label{Lipschitz-inequality001}
{\rm Lip}_{B \times_{\rho} \bb C^d}(g) \leq c \, {\rm Lip}(g). 
\end{align}
\end{corollary}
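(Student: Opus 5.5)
We work in the local trivializations $(U_i,\phi_i)_{i\in I}$ of Proposition \ref{Propo-Lipschitz-bundle-structure}, built from Lipschitz local sections $\sigma_i:U_i\to\wt B$ and the maps $\mu_i$ of \eqref{existence-mu-i}. As in the proof of that proposition, we may replace $M$ by $\rho(M)$ and $\wt B$ by $\wt B/\ker\rho$ to assume that $M$ is a compact Lie group. We may also assume that $\rho$ is unitary for an $M$-invariant Hermitian norm on $\bb C^d$, since averaging gives such a norm and norms are only fixed up to equivalence. We will use this norm throughout.

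First, we compute the cocycle $\sigma_{i,j}(g,b)$ of \eqref{def-sigma-ij}. For $b\in U_i\cap g^{-1}U_j$ and $v\in\bb C^d$, the point $(\pi\times\phi_i)^{-1}(b,v)$ is the class of $(\sigma_i(b),v)$, because $\mu_i(\sigma_i(b))=e$. Applying $g$ gives the class of $(g\sigma_i(b),v)$. Since $G$ commutes with $M$, the point $g\sigma_i(b)$ lies over $gb\in U_j$, so $\phi_j$ of this class equals $\rho(\mu_j(g\sigma_i(b)))v$. Hence
\begin{align*}
\sigma_{i,j}(g,b)=\rho\big(\mu_j(g\,\sigma_i(b))\big)\in\rho(M).
\end{align*}
These matrices are unitary, so the action is isometric. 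The $G$-action on $B$ is continuous and Lipschitz: for $b=\wt bM$ and $b'=\wt b'M$, we have $\bf d(gb,gb')\le \inf_m\wt{\bf d}(g\wt bm,g\wt b')\le {\rm Lip}(g)\,\bf d(b,b')$ by \eqref{def distance on B}, because $g$ commutes with $M$.

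Next, we establish the Lipschitz bound. We need, uniformly over $b,b'\in V_i\cap g^{-1}V_j$,
\begin{align*}
\|\rho(\mu_j(g\sigma_i(b)))-\rho(\mu_j(g\sigma_i(b')))\|\le c\,{\rm Lip}(g)\,\bf d(b,b').
\end{align*}
The map $\sigma_i$ is Lipschitz on $\overline{V_i}$ with some constant $L_i$, and $g$ has Lipschitz constant ${\rm Lip}(g)$ on $\wt B$. It therefore suffices to know that $\rho\circ\mu_j$ is Lipschitz on $\eta^{-1}(\overline{V_j})$ with a constant independent of $g$. This is where the obstacle lies, since Proposition \ref{Propo-Lipschitz-bundle-structure} only asserts local Lipschitz continuity.

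To handle this, we write $\mu_j(\wt b)$ as the translation element from $\sigma_j(\eta(\wt b))$ to $\wt b$. By Lemma \ref{Lem-structure}, $D(\mu_j(\wt b),\mu_j(\wt b'))\le c\big(\wt{\bf d}(\wt b,\wt b')+\wt{\bf d}(\sigma_j\eta\wt b,\sigma_j\eta\wt b')\big)$. The map $\eta$ is $1$-Lipschitz by \eqref{def distance on B}, and $\sigma_j$ is Lipschitz on the compact set $\overline{V_j}\subset U_j$, shrinking the $U_j$ if necessary so that the sections are Lipschitz on their closures. This gives a global Lipschitz bound for $\mu_j$ on $\eta^{-1}(\overline{V_j})$. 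Finally, $\rho:M\to{\rm GL}_d(\bb C)$ is Lipschitz for $D$; one can take $D$ to be defined by $\rho$ itself, which is faithful after the reduction, and Lemma \ref{Lem-structure} shows the Lipschitz class of $D$ is independent of this choice.

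Combining these steps yields ${\rm Lip}_{B\times_\rho\bb C^d}(g)\le {\rm Lip}(g)+c\,L\,{\rm Lip}(g)$, with $L=\max_i L_i$. This proves \eqref{Lipschitz-inequality001}, after possibly adjusting $c$ to absorb a lower bound ${\rm Lip}(g)\ge$ const, which holds if $\wt B$ has at least two points. In particular, each $\sigma_{i,j}(g,\cdot)$ is locally Lipschitz, so the action is a Lipschitz bundle action.
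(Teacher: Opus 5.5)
Your argument is correct and is essentially the paper's proof: your direct computation $\sigma_{i,j}(g,b)=\rho(\mu_j(g\sigma_i(b)))$ is exactly the paper's identity $\sigma_{i,j}(g,b)=T(\sigma_j(gb),g\sigma_i(b))$, which the paper reaches through a $G$-action on the index set of local sections. In both cases the Lipschitz estimate then follows from Lemma \ref{Lem-structure}, the Lipschitz constants of $\sigma_i$ and $\sigma_j$, and ${\rm Lip}(g)$; your closing remark about absorbing a lower bound on ${\rm Lip}(g)$ is unnecessary, since both terms of your bound are already proportional to ${\rm Lip}(g)$.
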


In \eqref{Lipschitz-inequality001} we have used the notation \eqref{def-Lip-norm-g} for ${\rm Lip}_{B \times_{\rho} \bb C^d}(g)$, 
and for $g \in G$, we have denoted by ${\rm Lip}(g)$ the Lipschitz constant of $g$: 
\begin{align}\label{def Lip g}
{\rm Lip}(g) 
 = \sup_{\substack{\wt b, \wt b' \in \wt B \\ \wt b \neq \wt b'}} \frac{\wt{\bf d} (g \wt b, g \wt b')}{\wt{\bf d} (\wt b, \wt b')}, 
\end{align}
where $\wt{\bf d}$ is the metric on $\wt B$. 

\begin{proof}[Proof of Corollary \ref{Corollary-isometric Lipschitz bundle action}]
Let us first prove that the induced action of $G$ commutes with the bundle projection map $\pi: B \times_{\rho} \bb C^d \to B$.
Indeed, for any $\wt b \in \wt B$ and $v \in \bb C^d$, by \eqref{def-pi-M} we have $\pi ( (\wt b, v) M) = \wt b M$ 
and hence, using \eqref{action-G-on-B-Cd} and the fact that the actions of $M$ and $G$ commute, we get that, for any $g \in G$, 
\begin{align*}
\pi (g  (\wt b, v) M) 
 = \pi ( (g\wt b, v) M)  = g\wt b M  = g \pi ( (\wt b, v) M). 
\end{align*}
Besides, we claim that $G$ acts on $B$ by Lipschitz homeomorphisms. Indeed, by \eqref{def distance on B} and \eqref{def Lip g},
we have, for any $g \in G$, $b = \wt b M \in B$ and $b' = \wt b' M \in B$, 
\begin{align}\label{G acting on B Lipschitz}
\bf d (g b, g b') 
 = \inf_{m \in M} \wt{\bf d} (g \wt b m, g \wt b') 
 \leq  {\rm Lip}(g)  \inf_{m \in M} \wt{\bf d} (\wt b m, \wt b')
= {\rm Lip}(g)  \bf d (b, b'). 
\end{align}

Now recall that $I$ stands for the index set of all local Lipschitz continuous sections of the map $\eta: \wt B \to B$.
We define an action of $G$ on this index set $I$ as follows. 
For any $g \in G$, and any $i \in I$ with associated local section $(U_i, \sigma_i)$
where $\sigma_i: U_i \to \wt B$ is given by \eqref{def-sigma-i}, 
we set 
\begin{align}\label{def-sigma-gi-b}
U_{gi} = g U_i 
\quad \mbox{and} \quad 
\sigma_{gi}(b) = g \sigma_i(g^{-1} b) 
\quad  \mbox{for}  \  b \in U_{gi}. 
\end{align}
The map $\sigma_{gi}$ is indeed a section since the actions of $M$ and $G$ commute with each other and consequently, 
$$\eta(\sigma_{gi}(b)) = \eta(g \sigma_i(g^{-1} b)) = g \eta(\sigma_i(g^{-1} b)) = g g^{-1} b = b.$$ 
Furthermore, the section $\sigma_{gi}$ is locally Lipschitz continuous as $G$ acts on both $\wt B$ and $B$ by Lipschitz continuous homeomorphisms. 
We claim that, for any $g \in G$ and $\wt b \in \eta^{-1} (gU_i)$, it holds that 
\begin{align}\label{relation-mu-wtb}
\mu_{gi}(\wt b) = \mu_i(g^{-1} \wt b), 
\end{align}
where $\mu_i: \eta^{-1}(U_i) \to M$ is given by \eqref{existence-mu-i}. 
Indeed, by \eqref{def-sigma-gi-b}, \eqref{existence-mu-i} 
and again the fact that the actions of $M$ and $G$ commute with each other, we have, for any $g \in G$ and $\wt b \in \eta^{-1} (gU_i)$, 
\begin{align*}
 \sigma_{gi}(\eta (\wt b))  \mu_i(g^{-1} \wt b)
 =   g \sigma_i(g^{-1} \eta (\wt b))  \mu_i(g^{-1} \wt b)  
= g g^{-1} \wt b = \wt b,
\end{align*}
so that \eqref{relation-mu-wtb} holds by the uniqueness of the map $\mu_{gi}: \eta^{-1}(gU_i) \to M$. 
From this construction, by \eqref{def-wt-phi-i}, \eqref{action-G-on-B-Cd} and \eqref{relation-mu-wtb}, 
we get that, for any $g \in G$, $\wt b \in \eta^{-1} (gU_i)$ and $v \in \bb C^d$, 
\begin{align*}
\wt \phi_{gi}(\wt b, v) 
 = \rho \big( \mu_{gi}(\wt b) \big) v   = \rho \big( \mu_i(g^{-1} \wt b) \big) v  
 = \wt \phi_{i}(g^{-1} \wt b, v) =  \wt \phi_{i}(g^{-1}(\wt b, v)). 
\end{align*}
Consequently, in view of \eqref{relation-phi-i-phi-j}, by passing to the quotient, this yields that, for any $g \in G$ and $w \in \pi^{-1}(gU_i)$, 
\begin{align}\label{identity-phi-i}
\phi_{gi}(w) = \phi_i(g^{-1} w). 
\end{align}

Keeping the notation from the proof of Proposition \ref{Propo-Lipschitz-bundle-structure}, 
for $i, j \in I$, $g \in G$ and $b \in U_i \cap (g^{-1} U_j)$, we define 
\begin{align}\label{identity-sigma-ij-theta-ij}
\sigma_{i, j}(g, b) = \theta_{i, g^{-1} j} (b)^{-1}. 
\end{align}
Up to replacing $M$ by $M/\ker \rho$ and $\wt B$ by $\wt B / \ker \rho$,
we may assume that $M$ is a Lie group. 
Then, by Lemma \ref{Lem-structure},
the map $b \mapsto \rho(\sigma_{i, j}(g, b))$ is Lipschitz continuous. 
Now, using \eqref{relation-phi-i-phi-j}, \eqref{identity-phi-i} and \eqref{identity-sigma-ij-theta-ij}, 
for $w \in \pi^{-1}(U_i \cap (g^{-1} U_j) )$, we obtain 
\begin{align*}
\phi_i (w) 
& =  \rho(\theta_{i, g^{-1} j} (\pi w)) \phi_{g^{-1} j}(w)  \notag\\
& = \rho( \sigma_{i, j}(g, \pi w)^{-1} ) \phi_{g^{-1} j}(w)  \notag\\
& =  \rho( \sigma_{i, j}(g, \pi w)^{-1} ) \phi_j(g w)
\end{align*}
and hence
\begin{align}\label{relation-phi-j-phi-i}
\phi_j(gw)  = \rho( \sigma_{i, j}(g, \pi w) ) \phi_i (w). 
\end{align}
Fix $(b, v) \in U_i \times \bb C^d$ and set $w = (\pi \times \phi_i)^{-1}(b, v)$. 
We have $\pi w = b$ and $\phi_i(w) = v$. 
By \eqref{property-group-action}, we get $\pi(gw) = g \pi(w) = g b$ and therefore, 
from \eqref{relation-phi-j-phi-i} we derive that, for any $g \in G$, $v \in \bb C^d$ and $b \in U_i \cap  (g^{-1} U_j)$, 
\begin{align*}
g (\pi \times \phi_i)^{-1} (b, v) 
 = gw 
& = \left( \pi \times \phi_j \right)^{-1} \Big( \pi(gw), \rho \big( \sigma_{i, j} (g, \pi w) \big) \phi_i (w) \Big)  \notag\\
&= \left( \pi \times \phi_j \right)^{-1} \Big( gb, \rho(\sigma_{i, j} (g, b)) v \Big). 
\end{align*}
That is, the identity \eqref{def-sigma-ij} holds. 

To establish \eqref{Lipschitz-inequality001}, 
we proceed as in Subsection \ref{Subsection-Group actions-spectral gap} by fixing a finite subset $J \subset I$ such that 
the corresponding open subsets $(U_i)_{i \in J}$ form a finite open cover of $B$. 
From \eqref{identity-sigma-ij-theta-ij}, \eqref{define-map-theta-ij}, \eqref{relation-mu-wtb} and \eqref{existence-mu-i}, 
we deduce that,
for any $i, j \in J$, $g \in G$ and $\wt b \in \eta^{-1}(U_i)$ such that $g \wt b \in \eta^{-1}(U_j)$, 
\begin{align*}
\sigma_{i, j}(g, \eta(\wt b)) 
& = \theta_{i, g^{-1} j} (\eta(\wt b))^{-1}  \notag\\
& =  \mu_{g^{-1} j} (\wt b)  \mu_i (\wt b)^{-1}  \notag\\
& =  \mu_{j} (g \wt b)  \mu_i (\wt b)^{-1}   \notag\\
& = T \left( \sigma_j(\eta(g \wt b)), g\wt b \right) T \left( \sigma_i(\eta(\wt b)), \wt b \right)^{-1}, 
\end{align*}
where $T$ denotes the translation map for the action of $M$ on $\wt B$ (cf.\ Subsection \ref{subsec-free-action}). 
Thus, taking $b \in U_i$ and applying the above identity to $\wt b = \sigma_i (b)$, we obtain 
\begin{align}\label{identity sigma ij sigma j}
\sigma_{i, j}(g, b) =  T \left( \sigma_j(g b), g \sigma_i (b) \right). 
\end{align}
For each $i \in J$, we write $c_i$ for the Lipschitz constant of the section $\sigma_i: U_i \to \wt B$. 
Taking into account \eqref{identity sigma ij sigma j} and applying Lemma \ref{Lem-structure} to the $\rho(M)$-principal bundle $\wt B / \ker \rho$, 
we obtain that, for any $g \in G$, and any $b, b' \in U_i \cap  (g^{-1} U_j)$, 
\begin{align*}
 D(\rho(\sigma_{i, j}(g, b)), \rho(\sigma_{i, j}(g, b')))  
& \leq c  \Big( \wt{\bf d} \left( \sigma_j(gb),  \sigma_j(gb')  \right) +   \wt{\bf d} (g \sigma_i (b), g \sigma_i (b'))  \Big) \notag\\
& \leq c  {\rm Lip}(g) (c_i + c_j)   \bf d (b, b') \notag\\
& \leq  c'  {\rm Lip}(g)  \bf d (b, b'), 
\end{align*}
where we have used the finiteness of $J$. 
In view of \eqref{G acting on B Lipschitz} and \eqref{def-Lip-norm-g},  
this proves the required Lipschitz estimate \eqref{Lipschitz-inequality001}, and therefore completes the proof 
of the corollary. 
\end{proof}

\subsection{Equivariant maps and sections}\label{Sec-Equivariant maps}
We retain the previous setup: a compact metrizable group $M$ acts on a compact metrizable topological space $\wt B$ on the right,
and we have a continuous linear representation $\rho: M \to {\rm GL}_d(\bb C)$ for some integer $d \geq 1$. 
We now interpret $M$-equivariant maps $\wt B \to \bb C^d$ as sections of the associated vector bundle $B \times_{\rho} \bb C^d \to B$. 
Denote by $C^0_{\rho}(\wt B, \bb C^d)$ the space of all continuous $M$-equivariant maps $\wt B \to \bb C^d$. 
This means that an element $\varphi \in C^0_{\rho}(\wt B, \bb C^d)$ is a continuous map $\varphi: \wt B \to \bb C^d$ such that,
for any $\wt b \in \wt B$ and $m \in M$, it holds 
\begin{align}\label{func-space-C0-wtB}
\varphi(\wt b m) = \rho(m)^{-1} \varphi(\wt b). 
\end{align}
We equip the space $C^0_{\rho}(\wt B, \bb C^d)$ with the topology of uniform convergence. 
In the same way, for $\gamma \in (0, 1]$, let $C^{\gamma}_{\rho}(\wt B, \bb C^d)$ be the space of all $M$-equivariant $\gamma$-H\"older continuous maps $\wt B \to \bb C^d$. 

Given a continuous map $\varphi \in C^0_{\rho}(\wt B, \bb C^d)$, 
we shall define a continuous section $s_{\varphi}$ of the bundle $B \times_{\rho} \bb C^d \to B$ as follows. 
For $\wt b \in \wt B$, 
we set 
\begin{align}\label{def wt s varphi}
\wt s_{\varphi} (\wt b) = (\wt b, \varphi(\wt b)) \in \wt B \times \bb C^d. 
\end{align}
By the equivariance property \eqref{func-space-C0-wtB}, the map $\wt s_{\varphi}: \wt B \to \wt B \times \bb C^d$ is continuous 
and commutes with the action of $M$ 
(i.e., $\wt s_{\varphi} (\wt b m) = (\wt b m, \rho(m)^{-1} \varphi(\wt b)) = (\wt b, \varphi(\wt b)) m = \wt s_{\varphi} (\wt b) m$, by using \eqref{def-action-M}), 
so that it factors as a continuous map $s_{\varphi}: B \to B \times_{\rho} \bb C^d$. 
Recall from Subsection \ref{Subsec-vector-bundles} that, if $E \to B$ is a Lipschitz vector bundle, for $\gamma \in [0, 1]$, 
we write $\bf \Gamma^{\gamma}(E)$ for the space of $\gamma$-H\"older continuous sections of $E$. 

\begin{lemma}\label{Lem-closed injection-001}
Let $M$ be a compact metrizable group 
and let $\eta: \wt B \to B$ be a Lipschitz $M$-principal bundle. 
Let $\rho: M \to {\rm GL}_d(\bb C)$ be a continuous linear representation for some integer $d \geq 1$. 
Then, for any continuous map $\varphi \in C^0_{\rho}(\wt B, \bb C^d)$, 
the map $s_{\varphi}$ is a continuous section of the bundle $B \times_{\rho} \bb C^d \to B$. 
Moreover, for any $\gamma \in [0, 1]$, the map 
\begin{align*}
 C^{\gamma}_{\rho}(\wt B, \bb C^d) &  \longrightarrow  \bf \Gamma^{\gamma}(B \times_{\rho} \bb C^d)  \notag\\
\varphi  &  \longmapsto s_{\varphi}
\end{align*}
is a Banach space isomorphism. 
\end{lemma}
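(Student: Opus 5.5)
Continuity of $s_\varphi$ has already been observed: $\wt s_\varphi$ is continuous and $M$-equivariant, so it descends to a continuous map $s_\varphi: B \to B \times_\rho \bb C^d$. It is a section because, by \eqref{def-pi-M}, $\pi(s_\varphi(\wt b M)) = \wt b M$. The plan for the isomorphism is to compute $s_\varphi$ in the local trivializations $(U_i,\phi_i)$ of Proposition \ref{Propo-Lipschitz-bundle-structure}. By \eqref{def-wt-phi-i}, \eqref{existence-mu-i} and the equivariance \eqref{func-space-C0-wtB}, for $b \in U_i$ and $\wt b = \sigma_i(b)$ (so that $\mu_i(\wt b)=e$) one gets
\begin{align*}
\phi_i(s_\varphi(b)) = \wt\phi_i(\sigma_i(b), \varphi(\sigma_i(b))) = \varphi(\sigma_i(b)).
\end{align*}
Thus $\phi_i\circ s_\varphi = \varphi\circ\sigma_i$. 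Linearity and injectivity of $\varphi\mapsto s_\varphi$ follow immediately, since $\varphi(\wt b)=\rho(\mu_i(\wt b))^{-1}\varphi(\sigma_i(\eta\wt b))$ is determined by $s_\varphi$.

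For surjectivity I would invert the construction. Given a continuous section $s$, define, for $\wt b\in\eta^{-1}(U_i)$,
\begin{align*}
\varphi(\wt b) = \rho(\mu_i(\wt b))^{-1}\phi_i(s(\eta \wt b)).
\end{align*}
Using \eqref{relation-phi-i-phi-j} and \eqref{define-map-theta-ij}, this does not depend on $i$. By \eqref{property-mu-m-invariant} it satisfies \eqref{func-space-C0-wtB}, and it is continuous, with $s_\varphi = s$. Equivalently, $\varphi(\wt b)=\psi_{\wt b}^{-1}(s(\eta\wt b))$, which shows independence of choices directly.

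The regularity comparison is the main step. If $\varphi$ is $\gamma$-H\"older, then $\phi_i\circ s_\varphi=\varphi\circ\sigma_i$ is locally $\gamma$-H\"older, because $\sigma_i$ is Lipschitz; this gives $\|s_\varphi\|_\gamma\le C\|\varphi\|_\gamma$ for a fixed finite cover $V_i$ with $\overline{V_i}\subset U_i$. The converse is the expected obstacle, since it requires Lipschitz control of $\mu_i$ on $\eta^{-1}(\overline{V_i})$. As in the proof of Proposition \ref{Propo-Lipschitz-bundle-structure}, I would first replace $M$ by $\rho(M)$ and $\wt B$ by $\wt B/\ker\rho$, so that $M$ is a Lie group; $\varphi$ factors through this quotient by equivariance. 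Lemma \ref{Lem-structure} then shows that $\mu_i(\wt b)=T(\sigma_i(\eta\wt b),\wt b)$ is Lipschitz, since $\eta$ is $1$-Lipschitz by \eqref{def distance on B} and $\sigma_i$ is Lipschitz. Hence $\rho\circ\mu_i$ is Lipschitz and bounded, $\rho(M)$ being compact.

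With this in hand, local H\"older estimates for $\varphi$ on each $\eta^{-1}(V_i)$ follow from the product rule. To handle pairs $\wt b,\wt b'$ lying in different pieces, I would argue as follows. The sets $\eta^{-1}(V_i)$ form an open cover of the compact space $\wt B$, so there is a Lebesgue number $\ee$. For $\wt{\bf d}(\wt b,\wt b')\ge\ee$, the bound $2\|\varphi\|_0\,\ee^{-\gamma}\,\wt{\bf d}(\wt b,\wt b')^\gamma$ suffices. The sup norms compare in both directions because $\rho(M)$ is bounded. This yields $\|\varphi\|_\gamma\le C\|s_\varphi\|_\gamma$, so the bijective bounded map $\varphi\mapsto s_\varphi$ has bounded inverse; the case $\gamma=0$ is the same argument without the H\"older seminorms.
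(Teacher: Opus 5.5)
Your proposal is correct and follows essentially the same route as the paper. The forward H\"older bound comes from the local identity $\phi_i\circ s_\varphi=\varphi\circ\sigma_i$. The reverse bound comes from the inverse formula $\varphi(\wt b)=\rho(\mu_i(\wt b))^{-1}\phi_i(s(\eta\wt b))$ together with the Lipschitz property of $\rho\circ\mu_i$, obtained from Lemma \ref{Lem-structure} after passing to $\rho(M)$. Your explicit-formula proof of continuity in the surjectivity step and your Lebesgue-number argument for globalizing the local H\"older estimates are more detailed versions of the paper's limit-point argument and its brief appeal to compactness of $\wt B$.
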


\begin{proof}
Let $b  \in B$ and pick $\wt b \in \wt B$ such that $b = \eta(\wt b) = \wt b M$. Since, by using \eqref{commutative diagram}, 
\begin{align*}
\pi(s_{\varphi}(b)) 
= \pi \left( \wt \eta(\wt s_{\varphi}(\wt b)) \right) 
= \eta \left( \wt \pi(\wt s_{\varphi}(\wt b)) \right) 
= \eta \left( \wt \pi(\wt b, \varphi(\wt b)) \right)
= \eta(\wt b) 
= b, 
\end{align*}
the map $s_{\varphi}: B \to B \times_{\rho} \bb C^d$ is indeed a section. 
Conversely, let us show that each section is of that form. 
As the action of $M$ on $\wt B$ is free, 
if $s \in \bf \Gamma^0(B \times_{\rho} \bb C^d)$ is a continuous section of $B \times_{\rho} \bb C^d$, 
then, for any $\wt b \in \wt B$, there exists a unique vector $\varphi(\wt b) \in \bb C^d$ such that 
\begin{align*}
s(\eta(\wt b)) = (\wt b, \varphi(\wt b)) M. 
\end{align*}
We claim that the map $\varphi: \wt B \to \bb C^d$ is continuous. 
Indeed, if $\wt b_n \xrightarrow[n\to\infty]{} \wt b$ in $\wt B$, then 
\begin{align*}
(\wt b_n, \varphi(\wt b_n)) M = s(\eta(\wt b_n)) \xrightarrow[n\to\infty]{} s(\eta(\wt b)), 
\end{align*}
and therefore, the sequence $(\varphi(\wt b_n))_{n \geq 1}$ is bounded in $\bb C^d$.
Since any limit point $v$ of $(\varphi(\wt b_n))_{n \geq 1}$ in $\bb C^d$ has to satisfy $s(\eta(\wt b)) = (\wt b, v) M$,
by uniqueness, we get that $\varphi(\wt b_n)$ converges to $\varphi(\wt b)$ as $n \to \infty$. 
Therefore, $\varphi$ is continuous and the uniqueness property also shows that $\varphi$ is $\rho$-equivariant. 
By construction, we get $s = s_{\varphi}$, so that each section is of that form.

Now let $i \in I$ and let $(U_i, \sigma_i)$ be the associated  local section of the map $\eta$.
Then, by \eqref{def-wt-phi-i}, for any $b = \wt b M \in U_i$, we have 
\begin{align}\label{identity phi varphi}
\phi_i(s_{\varphi}(b)) = \wt \phi_i(\wt s_{\varphi}(\wt b)) = \wt \phi_i(\wt b, \varphi(\wt b)) =  \rho(\mu_i(\wt b)) \varphi(\wt b). 
\end{align}
Therefore, for an open subset $V_i \subset U_i$ as in \eqref{def-norm-s-infty}, we have 
$$\sup_{b \in V_i} \| \phi_i(s_{\varphi}(b)) \| =  \sup_{\wt b \in \eta^{-1}(V_i)} \| \varphi(\wt b) \|,$$
which says that the map $\varphi \mapsto s_{\varphi}$ 
is continuous and is a Banach space isomorphism from 
$C^0_{\rho}(\wt B, \bb C^d)$ to $\bf \Gamma^0(B \times_{\rho} \bb C^d).$ 

Now let $\gamma \in (0, 1]$ and take $\varphi \in C^{\gamma}_{\rho}(\wt B, \bb C^d)$. 
For any $i \in I$ and $b, b' \in U_i$, by \eqref{def-sigma-i}, we get  
\begin{align}\label{Lipschitz phi varphi}
\| \phi_i(s_{\varphi}(b)) - \phi_i(s_{\varphi}(b')) \| 
& = \|  \varphi(\sigma_i (b)) -  \varphi(\sigma_i (b')) \|  \notag\\
& \leq \|\varphi\|_{\gamma}  \wt{\bf d}  (\sigma_i (b), \sigma_i (b'))^{\gamma}  \notag\\
& \leq  c_i^{\gamma}  \|\varphi\|_{\gamma}  \bf d (b, b')^{\gamma}, 
\end{align}
where $c_i$ is the Lipschitz constant of the section $\sigma_i: U_i \to \eta^{-1}(U_i)$. 
This tells us that the map $\varphi \mapsto s_{\varphi}$ sends $C^{\gamma}_{\rho}(\wt B, \bb C^d)$ 
continuously to $\bf \Gamma^{\gamma}(B \times_{\rho} \bb C^d)$. 
Conversely, if $s_{\varphi}$ is a $\gamma$-H\"older continuous section, then, by \eqref{identity phi varphi},
we have, for any $i \in I$ and $\wt b \in \eta^{-1}(U_i)$, 
\begin{align}\label{identity varphi b rho s-varphi}
\varphi(\wt b) = \rho(\mu_i(\wt b))^{-1} \phi_i(s_{\varphi}(\eta(\wt b))). 
\end{align}
By Lemma \ref{Lem-structure}, the map $\rho \circ \mu_i$ is Lipschitz continuous on $\eta^{-1}(U_i)$. 
Hence $\varphi$ is $\gamma$-H\"older continuous on $\eta^{-1}(U_i)$. 
As this is true for every $i \in I$, the map $\varphi$ is $\gamma$-H\"older continuous on $\wt B$
and this shows that the map $\varphi \mapsto s_{\varphi}$ sends $C^{\gamma}_{\rho}(\wt B, \bb C^d)$ 
 to a closed subspace of $\bf \Gamma^{\gamma}(B \times_{\rho} \bb C^d)$. 
 Indeed, let $c>0$ be such that, for any $i \in I$, the function $\rho \circ \mu_i$ is $c$-Lipschitz continuous on $\eta^{-1}(U_i)$. 
 For any $i \in I$, we choose an open subset $V_i$ of $U_i$ such that the closure of $V_i$ in $B$ is contained in $U_i$ in such a way that
we have $B = \bigcup_{i \in I} V_i$. 
Then, using \eqref{identity varphi b rho s-varphi}, \eqref{Lipschitz phi varphi} and \eqref{def-norm-s-Holder}, 
and the fact that $\rho \circ \mu_i$ is Lipschitz continuous on $\eta^{-1}(V_i)$, 
we get that for any $\wt b, \wt b' \in \eta^{-1}(V_i)$,  
\begin{align*}
\| \varphi(\wt b) - \varphi(\wt b') \| 
& = \|  \rho(\mu_i(\wt b))^{-1} \phi_i(s_{\varphi}(\eta(\wt b))) - \rho(\mu_i(\wt b'))^{-1} \phi_i(s_{\varphi}(\eta(\wt b'))) \| \notag\\
& \leq  \Big\|  \rho(\mu_i(\wt b))^{-1}  \Big[  \phi_i(s_{\varphi}(\eta(\wt b))) -  \phi_i(s_{\varphi}(\eta(\wt b'))) \Big]   \Big\|  \notag\\
& \quad  +  \Big\| \Big[ \rho(\mu_i(\wt b))^{-1} - \rho(\mu_i(\wt b'))^{-1}  \Big]  \phi_i(s_{\varphi}(\eta(\wt b')))   \Big\|  \notag\\
& \leq \|  \phi_i(s_{\varphi}(\eta(\wt b))) -  \phi_i(s_{\varphi}(\eta(\wt b'))) \| \notag\\
& \quad  + \|  \rho(\mu_i(\wt b))^{-1}  - \rho(\mu_i(\wt b'))^{-1}  \| \|\phi_i(s_{\varphi}(\eta(\wt b')))\|  \notag\\
& =  \|  \phi_i(s_{\varphi}(\eta(\wt b))) -  \phi_i(s_{\varphi}(\eta(\wt b'))) \| \notag\\
& \quad  + \|  \rho(\mu_i(\wt b'))  - \rho(\mu_i(\wt b))  \| \|\phi_i(s_{\varphi}(\eta(\wt b')))\|  \notag\\
& \leq  \|s_{\varphi}\|_{\gamma} \bf d(\eta(\wt b), \eta(\wt b'))^{\gamma} + c \|s_{\varphi}\|_{\gamma}  \bf d(\wt b', \wt b)  \notag\\
& \leq  \left( 1 + c \, {\rm diam}(\wt B)^{1 - \gamma} \right)\|s_{\varphi}\|_{\gamma} \bf d(\wt b, \wt b')^{\gamma}. 
\end{align*}
Since $\wt B$ is compact, it follows that there exists a constant $c>0$ such that $\|\varphi\|_{\gamma} \leq c \|s_{\varphi}\|_{\gamma}$. 
This shows that the map $\varphi \mapsto s_{\varphi}$ is a Banach space isomorphism from $C^{\gamma}_{\rho}(\wt B, \bb C^d)$ 
 to $\bf \Gamma^{\gamma}(B \times_{\rho} \bb C^d)$. 
\end{proof}

Let $G$ be a locally compact second countable group acting on $\wt B$ on the left
in such a way that this action commutes with the action of $M$.
Then $G$ acts naturally on both the space of continuous $M$-equivariant maps $C^0_{\rho}(\wt B, \bb C^d)$ 
and the space of continuous sections $\bf \Gamma^0(B \times_{\rho} \bb C^d)$.
Specifically, for any $g \in G$, $\varphi \in C^0_{\rho}(\wt B, \bb C^d)$ and $\wt b \in \wt B$, 
we define 
\begin{align}\label{g action on function}
(g \varphi)(\wt b) = \varphi(g^{-1} \wt b). 
\end{align}
Likewise, for any $s \in \bf \Gamma^0(B \times_{\rho} \bb C^d)$ and $b \in B$, we set 
\begin{align}\label{g action on section}
(g s)(b) = g s(g^{-1} b). 
\end{align}
A direct computation gives the following equivariance property.   

\begin{corollary}\label{Cor-closed injection-001}
Let $M$ be a compact metrizable group 
and let $\eta: \wt B \to B$ be a Lipschitz $M$-principal bundle. 
Let $\rho: M \to {\rm GL}_d(\bb C)$ be a continuous linear representation for some integer $d \geq 1$.  
Let $G$ be a locally compact second countable topological group acting continuously on $\wt B$ by Lipschitz continuous homeomorphisms, 
and assume that the actions of $M$ and $G$ commute with each other. 
Then, for any $g \in G$ and $\varphi \in C^0_{\rho}(\wt B, \bb C^d)$, we have 
\begin{align*}
s_{g \varphi} = g s_{\varphi}. 
\end{align*}
\end{corollary}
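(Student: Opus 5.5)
The plan is to unwind the definitions and compare both sides at an arbitrary point $b \in B$. Fix $g \in G$, $\varphi \in C^0_{\rho}(\wt B, \bb C^d)$, and first check that $g\varphi$ again lies in $C^0_{\rho}(\wt B, \bb C^d)$. Continuity is clear because $G$ acts continuously on $\wt B$. For equivariance, since the actions of $G$ and $M$ commute, for $\wt b \in \wt B$ and $m \in M$ we get
\begin{align*}
(g\varphi)(\wt b m) = \varphi(g^{-1}\wt b m) = \rho(m)^{-1}\varphi(g^{-1}\wt b) = \rho(m)^{-1}(g\varphi)(\wt b).
\end{align*}
So $s_{g\varphi}$ is well defined by Lemma \ref{Lem-closed injection-001}.

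Now let $b \in B$ and pick $\wt b \in \wt B$ with $\eta(\wt b) = b$. By \eqref{def wt s varphi} and the definition of $s_{g\varphi}$ as the factorization of $\wt s_{g\varphi}$ through $\wt\eta$, we have
\begin{align*}
s_{g\varphi}(b) = (\wt b, \varphi(g^{-1}\wt b))M.
\end{align*}
On the other side, $g^{-1}b = \eta(g^{-1}\wt b)$ because $G$ commutes with $M$ and so acts on $B$ compatibly with $\eta$. Hence $s_\varphi(g^{-1}b) = (g^{-1}\wt b, \varphi(g^{-1}\wt b))M$. The induced action of $G$ on $B\times_\rho \bb C^d$ is defined from \eqref{action-G-on-B-Cd} as $g\big((\wt c, v)M\big) = (g\wt c, v)M$. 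This is well defined since \eqref{action-G-on-B-Cd} commutes with \eqref{def-action-M}. Applying it gives
\begin{align*}
(gs_\varphi)(b) = g\, s_\varphi(g^{-1}b) = (g g^{-1}\wt b, \varphi(g^{-1}\wt b))M = (\wt b, \varphi(g^{-1}\wt b))M,
\end{align*}
which coincides with $s_{g\varphi}(b)$.

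No step is a genuine obstacle. The only point needing care is that each expression is independent of the lift $\wt b$. That holds because $s_\varphi$ and the induced $G$-action were already shown to factor through the $M$-quotients. It also uses the identity $g^{-1}\eta(\wt b) = \eta(g^{-1}\wt b)$, which follows directly from the commutation of the two actions, as in the proof of Corollary \ref{Corollary-isometric Lipschitz bundle action}.
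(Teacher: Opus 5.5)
Your proof is correct and is essentially the paper's argument: the paper simply says the identity follows by unwinding the definitions of the $G$-action on $\wt B \times \bb C^d$, of $\wt s_{\varphi}$, and of the $G$-actions on equivariant maps and on sections. That is exactly the pointwise computation at a lift $\wt b$ of $b$ that you carry out, together with your (correct) check that $g\varphi$ is again $\rho$-equivariant.
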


\begin{proof}
This is a consequence of \eqref{action-G-on-B-Cd}, \eqref{def wt s varphi}, \eqref{g action on function} and \eqref{g action on section}. 
\end{proof}



\section{Existence of limit measures}\label{Sec-Existence of limit measures}

\subsection{Equidistribution properties}\label{section equidistribution 001}
We will apply the results developed in the preceding sections to study Markov chains 
arising from actions of a group $G$ that commute with the action of another compact group $M$. 

Let $G$ be a second countable locally compact group
acting on the left by Lipschitz continuous homeomorphisms on the compact metric space $\wt B$.  
As before, for $g \in G$, we denote by ${\rm Lip}(g)$ the Lipschitz constant of $g$:  
\begin{align*}
{\rm Lip}(g) 
 = \sup_{\substack{\wt b, \wt b' \in \wt B \\ \wt b \neq \wt b'}} \frac{\wt{\bf d} (g \wt b, g \wt b')}{\wt{\bf d} (\wt b, \wt b')}, 
\end{align*}
where $\wt{\bf d}$ is the metric on $\wt B$. 
We assume that $\wt B$ is also equipped with a free right action by isometries of a compact metrizable group $M$ 
such that the quotient map $\eta: \wt B \to B = \wt B / M$ is a Lipschitz $M$-principal bundle
(see Subsection \ref{Section-Quotient vector bundles} for the definition). 
We further suppose that the actions of $G$ and $M$ commute with each other.  
The quotient space $B$ is equipped with the quotient action of $G$. 
Let $\mu$ be a Borel probability measure on $G$, 
and let $(g_n)_{n \geq 1}$ be a sequence of independent random variables with common law $\mu$. 
We always assume that the action of $G$ on $B$ satisfies the contraction property \eqref{contraction-property-on-B}. 
In addition, we always assume that there exists a constant $\alpha>0$ such that 
\begin{align}\label{moment-assumption-01}
\int_{G} {\rm Lip}(g)^{\alpha} \mu(dg) < \infty. 
\end{align}
Denote by $P_{\mu}$ the convolution operator associated with $\mu$, that is, for any continuous function $\varphi \in C^0(\wt B)$ and $\wt b \in \wt B$,
we set 
\begin{align*}
P_{\mu} \varphi(\wt b) = \int_{G} \varphi(g \wt b) \mu(dg) = \bb E \varphi(g_1 \wt b). 
\end{align*}
Under these assumptions, we will prove the following equidistribution result, 
which describes the asymptotic behavior of the random walk on the space $\wt B$. 

\begin{theorem}\label{Thm-equdistribution}
Let $\mu$ be a Borel probability measure on $G$. 
Assume that the action of $G$ on $\wt B$ satisfies the moment condition \eqref{moment-assumption-01} and 
that the action of $G$ on $B$ satisfies the contraction property \eqref{contraction-property-on-B}. 
Then, for every $\wt b \in \wt B$, there exists a Borel probability measure $\wt \nu_{\wt b}$ on $\wt B$ such that, for any continuous 
function $\varphi$ on $\wt B$, we have 
\begin{align*}
 \frac{1}{n} \sum_{k=0}^{n-1}  P_{\mu}^k \varphi(\wt b) 
=  \frac{1}{n} \sum_{k=0}^{n-1}  \bb E \varphi(g_k \cdots g_1 \wt b) 
\xrightarrow[n\to\infty]{}  \int_{\wt B} \varphi(\wt b') \wt \nu_{\wt b}(d \wt b'). 
\end{align*}
Moreover, this convergence holds uniformly over $\wt b \in \wt B$, and we have the following harmonicity property: for any $\wt b \in \wt B$, 
\begin{align*}
\wt \nu_{\wt b} = \int_G \wt \nu_{g \wt b} \mu(dg). 
\end{align*}
\end{theorem}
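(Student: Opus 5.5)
Our strategy is to reduce, via the Peter--Weyl theorem, to $M$-equivariant vector-valued functions, which are sections of the bundles $B \times_{\rho} \bb C^d$ by Lemma \ref{Lem-closed injection-001}, and then apply Corollary \ref{Corollary-001}.

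First, fix an irreducible unitary representation $\rho: M \to {\rm U}(d)$, with $\bb C^d$ carrying an $M$-invariant Hermitian norm. By Proposition \ref{Propo-Lipschitz-bundle-structure} and Corollary \ref{Corollary-isometric Lipschitz bundle action}, $G$ acts on $E_\rho = B \times_{\rho} \bb C^d$ by an isometric Lipschitz bundle action, and ${\rm Lip}_{E_\rho}(g) \leq c\,{\rm Lip}(g)$. The moment condition \eqref{moment-assumption-01} therefore gives $\bb E({\rm Lip}_{E_\rho}(g_1)^\alpha)<\infty$. For $\varphi \in C^0_\rho(\wt B,\bb C^d)$, Corollary \ref{Cor-closed injection-001} gives
\begin{align*}
s_{P_\mu \varphi}(b) = \int_G g^{-1} s_\varphi(gb)\,\mu(dg),
\end{align*}
since $(g^{-1}\varphi)(\wt b) = \varphi(g\wt b)$. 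Hence $s_{P_\mu\varphi} = P_\mu s_\varphi$, where the right-hand side is the operator of Theorem \ref{Thm-spectral-gap}. Corollary \ref{Corollary-001} then shows that the Cesàro averages $\frac1n\sum_{k<n}P_\mu^k s_\varphi$ converge uniformly on $B$. Through the Banach isomorphism of Lemma \ref{Lem-closed injection-001}, which is uniform on fibres by \eqref{identity phi varphi} and the fact that $\rho(\mu_i(\wt b))$ is unitary, this means $\frac1n\sum_{k<n}P_\mu^k\varphi$ converges uniformly on $\wt B$.

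Second, pass to scalar functions. Let $\varphi$ be an $M$-finite continuous function on $\wt B$, i.e.\ a finite linear combination of matrix-coefficient type functions $\wt b \mapsto \langle \psi(\wt b), u\rangle$ with $\psi \in C^0_\rho(\wt B,\bb C^{d_\rho})$. One obtains such $\psi$ from any $f\in C^0(\wt B)$ by setting
\begin{align*}
\psi(\wt b) = \int_M f(\wt b m)\,\rho(m)\,u\,dm .
\end{align*}
Since $P_\mu$ commutes with the right $M$-action, it also commutes with $\varphi\mapsto\langle\varphi,u\rangle$. The first step therefore gives uniform convergence of the Cesàro averages for every $M$-finite $\varphi$. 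By Peter--Weyl, $M$-finite continuous functions are dense in $C^0(\wt B)$: convolve on the right by an approximate identity on $M$ consisting of matrix coefficients, using uniform continuity on the compact space $\wt B$. Moreover $\|P_\mu^k\|_{C^0\to C^0}\le 1$ because $P_\mu$ is Markov. A $3\varepsilon$-argument then yields uniform convergence of $\frac1n\sum_{k<n}P_\mu^k\varphi$ for every $\varphi\in C^0(\wt B)$.

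Third, identify the limit. The limit $L\varphi(\wt b)$ is linear and positive in $\varphi$, and $L1=1$. By the Riesz representation theorem, $L\varphi(\wt b)=\int\varphi\,d\wt\nu_{\wt b}$ for a probability measure $\wt\nu_{\wt b}$. For harmonicity, write
\begin{align*}
P_\mu\Big(\tfrac1n\textstyle\sum_{k<n}P_\mu^k\varphi\Big)-\tfrac1n\textstyle\sum_{k<n}P_\mu^k\varphi=\tfrac1n\big(P_\mu^n\varphi-\varphi\big)\to0 .
\end{align*}
Uniform convergence lets us pass $P_\mu$ inside the limit, so $P_\mu L\varphi=L\varphi$, which is exactly $\wt\nu_{\wt b}=\int\wt\nu_{g\wt b}\,\mu(dg)$.

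The main obstacle is the first step. There we must check carefully that the functional-level operator $P_\mu$ on $M$-equivariant maps really corresponds to the bundle operator of Theorem \ref{Thm-spectral-gap}. We must also check that uniform convergence of sections in $\bf\Gamma^0$ transfers back to uniform convergence on $\wt B$, using that the norms in \eqref{def-norm-s-infty} match sup norms of $\varphi$.

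If $M$ is not a Lie group, Lemma \ref{Lem-structure} applies only after quotienting by $\ker\rho$, as in the proofs above. We do this separately for each $\rho$: each $\rho$ factors through the Lie group $\rho(M)$.
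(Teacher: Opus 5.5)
Your proposal is correct and is essentially the paper's proof. You decompose into $M$-isotypic components and identify $\rho$-equivariant maps with sections of $B\times_{\rho}\bb C^{d}$ via Lemma~\ref{Lem-closed injection-001} and Corollary~\ref{Cor-closed injection-001}. You then apply Corollary~\ref{Corollary-001} on each bundle and conclude by density of $M$-finite functions together with $\|P_\mu^k\|\le 1$, exactly as the paper does. The only difference is that you spell out the Riesz-representation step and the harmonicity step, using $P_\mu\bigl(\frac1n\sum_{k<n}P_\mu^k\varphi\bigr)-\frac1n\sum_{k<n}P_\mu^k\varphi=\frac1n(P_\mu^n\varphi-\varphi)$; the paper leaves these as ``the conclusion follows''.
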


Under some additional assumption, which will be satisfied in all of our concrete examples, 
we can get rid of the Birkhoff average in the previous theorem. 
We will prove that, 
since $(G, \mu)$ has the contraction property on $B$,  
there exists a unique $\mu$-stationary probability measure $\nu$ on $B$, see Lemma \ref{Lemma-unqiue-sta-measure} below. 
Then, we define $\wt \nu$ to be the unique $M$-invariant probability measure $\wt \nu$ on $\wt B$ whose image (push-forward) under 
the quotient map $\eta: \wt B \to B = \wt B / M$ equals $\nu$, i.e., 
$\eta_* \wt \nu = \nu$. 

\begin{corollary}\label{Cor-equdistribution}
Let $\mu$ be a Borel probability measure on $G$. 
Assume that the action of $G$ on $\wt B$ satisfies the moment condition \eqref{moment-assumption-01} and 
that the action of $G$ on $B$ satisfies the contraction property \eqref{contraction-property-on-B}.  
Suppose that, for every $\wt b \in \wt B$ with $\eta(\wt b) \in \supp \nu$,   
we have $\wt b M \subset \overline{\Gamma_{\mu} \wt b}$. 
Then, $\wt \nu$ is the unique invariant probability measure of the operator $P_{\mu}$. 
Furthermore, there exist  \\
(1) a closed coabelian normal subgroup $M_0^{\mu}$ of $M$, \\  
(2) an element $m_0^{\mu} \in M/M_0^{\mu}$ which spans a dense subgroup of $M/M_0^{\mu}$,  \\
(3) a continuous map $\wt b \mapsto \theta_{\wt b}$ from $\wt B$ to the set of Borel probability measures on $M/M_0^{\mu}$,  \\
(4) a non-negative linear map $\varphi \mapsto Q \varphi: C^0(\wt B) \to C^0(M/M_0^{\mu})$ with $Q 1 = 1$, \\
 such that, for any $\varphi \in C^0(\wt B)$, we have, uniformly in $\wt b \in \wt B$, 
\begin{align}
P_{\mu}^n \varphi(\wt b) - \int_{M/M_0^{\mu}} Q \varphi \big( (m_0^{\mu})^n m \big)  \,  \theta_{\wt b}(dm) 
 \xrightarrow[n\to\infty]{}  0. 
  \label{convergence-P-mu-meas} 
\end{align}
In addition, we have the following identities: 
\begin{align}
& \int_{\wt B} \varphi(\wt b)  \,  \wt \nu(d\wt b) = \int_{M/M_0^{\mu}} Q \varphi(m) dm,  \label{nu-M-M0-meas}\\
&  Q (P_{\mu} \varphi)(m) = Q \varphi(m_0^{\mu} m),  \label{property-F-meas}\\
&  \theta_{\wt b} = \int_{G} ((m_0^{\mu})^{-1})_*\theta_{g \wt b} \, \mu(dg).  \label{harmonicity-meas}
\end{align}
\end{corollary}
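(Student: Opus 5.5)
The plan is to decompose $C^0(\wt B)$ under the right $M$-action into isotypic components, identify each component with a space of sections of an associated bundle, and apply the spectral theory of Section \ref{Section Vector bundles} to each one. The peripheral spectrum will then be shown to be carried only by characters of $M$, and this is the data that produces $M_0^\mu$, $m_0^\mu$, $Q$ and $\theta$.

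\textbf{Step 1: reduction to sections and quasi-compactness.} By Peter--Weyl, finite sums of $M$-isotypic functions are dense in $C^0(\wt B)$. Each isotypic piece is spanned by coordinates of maps in $C^0_\rho(\wt B,\bb C^d)$ with $\rho$ irreducible and unitary. Since $P_\mu$ commutes with $M$, it preserves each $C^\gamma_\rho(\wt B,\bb C^d)$. By Lemma \ref{Lem-closed injection-001} and Corollary \ref{Cor-closed injection-001}, this space is isomorphic to $\bf\Gamma^\gamma(B\times_\rho\bb C^d)$, and $P_\mu$ becomes the bundle operator of Theorem \ref{Thm-spectral-gap}. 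Corollary \ref{Corollary-isometric Lipschitz bundle action} together with \eqref{moment-assumption-01} gives the moment condition, so $P_\mu$ is quasi-compact on each piece. Because $P_\mu$ is Markov, $\|P_\mu^n\|_{C^0}\le 1$. Hence it suffices to understand the peripheral eigenvectors $P_\mu\varphi=\lambda\varphi$, $|\lambda|=1$, $\varphi\in C^\gamma_\rho$: the other spectral parts decay geometrically, and Jordan blocks are excluded as in Corollary \ref{Corollary-001}.

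\textbf{Step 2: peripheral eigenvectors come only from characters.} Let $P_\mu\varphi=\lambda\varphi$ with $|\lambda|=1$. The function $|\varphi|$ is $M$-invariant because $\rho$ is unitary, so it descends to $B$, and it satisfies $P_\mu|\varphi|\ge|\varphi|$.
\begin{itemize}
\item By contraction on $B$ and uniqueness of $\nu$ (Lemma \ref{Lemma-unqiue-sta-measure}), $P_\mu^n|\varphi|\to\nu(|\varphi|)$ uniformly. Hence $|\varphi|\le\nu(|\varphi|)$, and integrating against $\nu$ shows $|\varphi|$ is constant on $\eta^{-1}(\supp\nu)$.
\item Equality in the triangle inequality then gives $\varphi(g\wt b)=\lambda\varphi(\wt b)$ for $g\in\Gamma_\mu$ and $\eta(\wt b)\in\supp\nu$.
\item The hypothesis $\wt bM\subset\overline{\Gamma_\mu\wt b}$ yields $\rho(m)^{-1}\varphi(\wt b)\in\bb S^1\varphi(\wt b)$ for all $m\in M$. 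So the line spanned by $\varphi(\wt b)$ is $M$-invariant, and irreducibility forces $d=1$, that is, $\rho=\chi$ is a character.
\item Applying the same argument to $\varphi_1-c\varphi_2$, chosen to vanish at one point of the support, shows that each such eigenspace is one-dimensional and that $\lambda=\lambda_\chi$ is determined by $\chi$. The bound $|\varphi|\le\nu(|\varphi|)$ also shows that a peripheral eigenfunction vanishing on $\eta^{-1}(\supp\nu)$ is zero.
\end{itemize}

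\textbf{Step 3: the group structure.} Products of eigenfunctions are again eigenfunctions. Hence the set $\Lambda$ of characters admitting a peripheral eigenfunction is a subgroup of $\widehat{M}$, and $\chi\mapsto\lambda_\chi$ is a homomorphism. Set $M_0^\mu=\bigcap_{\chi\in\Lambda}\ker\chi$; this is closed, normal and coabelian, and $\Lambda=\widehat{M/M_0^\mu}$. By Pontryagin duality there is a unique $m_0^\mu\in M/M_0^\mu$ with $\chi(m_0^\mu)=\lambda_\chi$, up to the inversion convention fixed by \eqref{property-F-meas}.

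For density of the subgroup generated by $m_0^\mu$: otherwise some $\chi\ne1$ in $\Lambda$ has $\lambda_\chi=1$. Its eigenfunction is then constant along $\overline{\Gamma_\mu\wt b}\supset\wt bM$, contradicting $\chi\ne1$. For $\chi=1$ the only peripheral eigenvalue is $1$, with constant eigenfunctions, and this gives uniqueness of the invariant measure $\wt\nu$.

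Normalize the eigenfunctions $\psi_\chi$ by modulus $1$ on $\eta^{-1}(\supp\nu)$, with $\psi_{\chi\chi'}=\psi_\chi\psi_{\chi'}$; this is possible by one-dimensionality. Let $\ell_\chi$ be the corresponding spectral (eigen-)functionals. Then define:
\begin{itemize}
\item $Q\varphi(m)=\sum_\chi \ell_\chi(\varphi)\chi(m)$;
\item $\theta_{\wt b}$ as the measure on $M/M_0^\mu$ with Fourier coefficients $\psi_\chi(\wt b)$.
\end{itemize}
The convergence \eqref{convergence-P-mu-meas} follows for finite isotypic sums, and then for all $\varphi$ by density and $\|P_\mu^n\|\le1$. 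The identities \eqref{nu-M-M0-meas}--\eqref{harmonicity-meas} follow from $P_\mu\psi_\chi=\lambda_\chi\psi_\chi$ and $\ell_\chi\circ P_\mu=\lambda_\chi\ell_\chi$.

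\textbf{Main obstacle.} I expect the hardest step to be showing that $\theta_{\wt b}$ really is a probability measure and that $Q$ is positive, rather than these being merely formal Fourier series. I would first prove this on $\eta^{-1}(\supp\nu)$. There each $\psi_\chi(\wt b)=\chi(\tau(\wt b))$ for a continuous $M$-equivariant map $\tau$ to $M/M_0^\mu$, so $\theta_{\wt b}=\delta_{\tau(\wt b)}$, and $Q\varphi(m)$ is a limit of $P_\mu^{n_k}\varphi(\wt b)$ along times with $(m_0^\mu)^{n_k}\tau(\wt b)\to m$, hence positive with $Q1=1$. I would then extend to general $\wt b$ through the harmonicity \eqref{harmonicity-meas}: $\theta_{\wt b}$ is a limit of averages of pushed-forward Dirac masses along trajectories that approach $\supp\nu$, and continuity of $\wt b\mapsto\theta_{\wt b}$ comes from the uniform convergence in Theorem \ref{Thm-equdistribution}.
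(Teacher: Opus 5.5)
Your overall architecture matches the paper's: isotypic decomposition, spectral gap on the associated bundles, peripheral spectrum carried only by characters, Pontryagin duality for $m_0^{\mu}$, $\theta_{\wt b}$ defined by Fourier coefficients, and positivity of $Q$ via recurrence on $\eta^{-1}(\supp\nu)$. The first three bullets of your Step 2 are a valid variant of the paper's maximum-modulus argument.

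\textbf{Main gap: the subgroup property in Step 3.} Step 3 rests on a false claim. The product of two $P_\mu$-eigenfunctions is \emph{not} an eigenfunction on $\wt B$. Indeed, $P_\mu(\varphi\varphi')(\wt b)=\int\varphi(g\wt b)\varphi'(g\wt b)\,\mu(dg)$ only factors at points where $\varphi(g\wt b)=\lambda\varphi(\wt b)$ for $\mu$-a.e.\ $g$, and you only know this on $\eta^{-1}(\supp\nu)$. So $\varphi\varphi'$ is merely a ``pointwise'' eigenfunction on that invariant set, and you have not shown $\chi\chi'\in\Lambda$. This is exactly what gives $\Lambda=\widehat{M/M_0^{\mu}}$, and hence what lets you define $\psi_\chi$, $\ell_\chi$, $Q$ and $\theta_{\wt b}$ for \emph{every} character of $M/M_0^{\mu}$.

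The missing idea is an upgrade from a pointwise eigenfunction on $\eta^{-1}(\supp\nu)$ to a genuine one. This is the key claim in the paper's proof of Proposition \ref{Propo-eigenvalue-one}. Suppose $P_{\mu,\chi\chi'}$ had no eigenvalue of modulus one. Quasi-compactness on $W^{\gamma}_{\chi\chi'}$ would then give spectral radius $<1$, so $P_\mu^n(\varphi\varphi')\to 0$ uniformly. But on $\eta^{-1}(\supp\nu)$ one has $|P_\mu^n(\varphi\varphi')|=|\varphi\varphi'|$, a nonzero constant, which is a contradiction. Since your $\varphi\varphi'$ is already a global H\"older equivariant function, you can even bypass Lemma \ref{Lemma-surjectivity}; the paper needs it because it starts from arbitrary continuous functions on $\eta^{-1}(\supp\nu)$. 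For the same reason, the normalization $\psi_{\chi\chi'}=\psi_\chi\psi_{\chi'}$ can only hold on $\eta^{-1}(\supp\nu)$, not on $\wt B$. That restricted version suffices for your map $\tau$, but it must be stated that way.

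\textbf{Smaller gap: uniqueness of the eigenvalue.} Your argument with $\varphi_1-c\varphi_2$ proves that each eigenspace is one-dimensional. It does not prove that the peripheral eigenvalue $\lambda_\chi$ is unique, because for $\lambda_1\neq\lambda_2$ the difference is not an eigenfunction. Yet uniqueness is needed for $\lambda_\chi$, and hence $m_0^{\mu}$, to be well defined. Use the ratio argument of Corollary \ref{Cor-eigenvalue-modulus-one} instead:
\begin{itemize}
\item $f=\varphi_1\overline{\varphi_2}$ is $M$-invariant on $\eta^{-1}(\supp\nu)$, so it descends to a continuous function on $\supp\nu$ with constant nonzero modulus.
\item It satisfies $f(gb)=\lambda_1\overline{\lambda_2}f(b)$ for $g\in\supp\mu$.
\item Lemma \ref{Lemma-unqiue-sta-measure}, applied to a continuous extension of $f$, forces $(\lambda_1\overline{\lambda_2})^n f(b)$ to converge to a constant, hence $\lambda_1=\lambda_2$.
\end{itemize}
The same ratio argument gives $\lambda_{\chi\chi'}=\lambda_\chi\lambda_{\chi'}$ once $\chi\chi'\in\Lambda$ is known.

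With these two repairs the rest goes through. This includes density of the group generated by $m_0^{\mu}$ via a nontrivial character killing it. It also includes obtaining $\theta_{\wt b}$ as a weak-$*$ limit of averaged Dirac masses along trajectories approaching $\eta^{-1}(\supp\nu)$, in place of the paper's Bochner positive-definiteness check.
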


Recall that a normal subgroup $\Delta$ of a group $\Gamma$ is said to be {\it coabelian} if the quotient group $\Gamma/\Delta$ is Abelian. 

\begin{remark}\label{Remark-unique-invariant-measure}
The convergence \eqref{convergence-P-mu-meas} together with the integral identity \eqref{nu-M-M0-meas} 
implies the uniqueness of the invariant measure of the operator $P_{\mu}$. 
Indeed, for any $\wt b \in \wt B$ and $\varphi \in C^0(\wt B)$,  
we have  
\begin{align*}
\lim_{n \to \infty}  \frac{1}{n} \sum_{k=0}^{n-1}  P_{\mu}^k \varphi(\wt b) 
& =  \lim_{n \to \infty}  \int_{M/M_0^{\mu}}  \frac{1}{n} \sum_{k=0}^{n-1}  Q \varphi \big( (m_0^{\mu})^k m \big)  \, \theta_{\wt b}(dm)  \notag\\
& =  \int_{M/M_0^{\mu}}  Q \varphi(m) dm  \notag\\
& =  \int_{\wt B} \varphi(\wt b)  \,  \wt \nu(d\wt b), 
\end{align*}
where we have used the fact that, since $m_0^{\mu}$ spans a dense subgroup of $M/M_0^{\mu}$, the translation map $m \mapsto m_0^{\mu} m$ is uniquely ergodic on $M/M_0^{\mu}$. 
Note that this also implies Theorem \ref{Thm-equdistribution} in this particular case. 
\end{remark}

\begin{remark}\label{Rem-operator-Q-q}
Thanks to the Riesz representation theorem, 
the identities \eqref{convergence-P-mu-meas}, \eqref{nu-M-M0-meas} and \eqref{property-F-meas} can be equivalently reformulated 
in terms of a family of probability measures on $\wt B$. 
Specifically, there exists a continuous map $m \mapsto q_m$ from $M/M_0^{\mu}$ to the set of Borel probability measures on $\wt B$ 
 such that, for any $\wt b \in \wt B$ and $\varphi \in C^0(\wt B)$, 
we have 
\begin{align*}
&   P_{\mu}^n \varphi(\wt b) - 
  \int_{M/M_0^{\mu}}  \left( \int_{\wt B} \varphi(\wt b) \, q_{(m_0^{\mu})^n m}(d\wt b)  \right)   \theta_{\wt b}(dm) 
   \xrightarrow[n\to\infty]{} 0, \\
& \int_{\wt B} \varphi(\wt b)  \,  \wt \nu(d\wt b)  = \int_{M/M_0^{\mu}}  \left( \int_{\wt B} \varphi(\wt b) \, q_{m}(d\wt b) \right)  dm, \\ 
&  \int_{\wt B}  P_{\mu} \varphi(\wt b) \, q_{m}(d\wt b)  =  \int_{\wt B} \varphi(\wt b) \, q_{m_0^{\mu} m}(d\wt b). 
\end{align*}

\end{remark}

\begin{example}\label{Example-G-compact}
Consider the case where $G = M$ is a compact group 
equipped with a Borel probability measure $\mu$ whose support spans a dense subgroup of $M$. 
Then we can define $\wt B$ as the space $M$ itself, endowed with the actions on the left and on the right by translations
which commute with each other. 
The proof of Corollary \ref{Cor-equdistribution} then allows to retrieve the classical fact that $M$ admits 
a largest topologically cyclic quotient $M/M_0^{\mu}$ such that the image of $\mu$ in $M/M_0^{\mu}$ is a Dirac mass $\delta_{m_0^{\mu}}$
for some element $m_0^{\mu} \in M/M_0^{\mu}$. 
In this setting, the conclusion of the corollary states that for any $\varphi \in C^0(M)$, uniformly in $m \in M$, 
we have 
\begin{align*}
 \bb E \varphi(g_n \cdots g_1 m) - \int_{M_0^{\mu}} \varphi \big( (m_0^{\mu})^n m h \big) dh 
\xrightarrow[n\to\infty]{}  0. 
\end{align*}
\end{example}

\subsection{Harmonic analysis on compact groups}

When the group $M$ is trivial, that is, $\wt B = B$, the equidistribution property stated in Theorem \ref{Thm-equdistribution}
follows directly  from the contraction property  \eqref{contraction-property-on-B}. 

\begin{lemma}\label{Lemma-unqiue-sta-measure}
Let $\mu$ be a Borel probability measure on $G$. 
Assume that the action of $G$ on $B$ satisfies the contraction property \eqref{contraction-property-on-B}.
Then, the operator $P_{\mu}$ admits a unique invariant Borel probability measure $\nu$ on $B$, and 
for any continuous function $\varphi$ on $B$, we have, uniformly in $b \in B$, 
\begin{align*}
 P_{\mu}^n \varphi(b) 
=    \bb E \varphi(g_n \cdots g_1 b) 
\xrightarrow[n\to\infty]{}  \int_{B} \varphi(b')  \nu(d b'). 
\end{align*}
\end{lemma}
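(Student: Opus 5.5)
The plan is to prove uniform convergence first for Hölder continuous functions, then extend to all continuous functions by density, and finally read off existence and uniqueness of the invariant measure from the uniform limit.

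\textbf{Hölder functions.} Fix $\gamma \in (0,\delta]$ and a $\gamma$-Hölder function $\varphi$ on $B$, with Hölder seminorm $[\varphi]_\gamma$. For any $b, b' \in B$ and $n \geq 1$, the contraction property \eqref{contraction-property-on-B} combined with Jensen's (or Hölder's) inequality, exactly as in the proof of Theorem \ref{Thm-spectral-gap}, gives
\begin{align*}
|P_\mu^n\varphi(b) - P_\mu^n\varphi(b')| \leq [\varphi]_\gamma\, \bb E\, \bf d(g_n\cdots g_1 b, g_n\cdots g_1 b')^{\gamma} \leq c^{\gamma/\delta} a^{\gamma n/\delta} [\varphi]_\gamma \,{\rm diam}(B)^\gamma .
\end{align*}
Hence the oscillation $\sup_B P^n_\mu\varphi - \inf_B P^n_\mu\varphi$ tends to zero exponentially fast. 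The sequence $\sup_B P_\mu^n \varphi$ is nonincreasing and $\inf_B P_\mu^n\varphi$ is nondecreasing, because $P_\mu$ is a Markov operator. Both sequences therefore converge to a common limit $\ell(\varphi)$. This shows that $P_\mu^n \varphi$ converges uniformly to the constant $\ell(\varphi)$.

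\textbf{Extension to continuous functions.} Lipschitz functions are dense in $C^0(B)$, by the scalar case of the density part of Proposition \ref{Sobolev-injection-lemma}. Moreover $\|P_\mu^n\|_{C^0 \to C^0} \leq 1$. A standard $\varepsilon/3$ argument then shows that $P_\mu^n\varphi$ is uniformly Cauchy for every $\varphi \in C^0(B)$, and that its limit is a constant $\ell(\varphi)$. The functional $\ell$ is linear, nonnegative, and satisfies $\ell(1)=1$, so the Riesz representation theorem yields a probability measure $\nu$ with $\ell(\varphi) = \int_B \varphi \, d\nu$.

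\textbf{Invariance and uniqueness.} Since $\ell(P_\mu\varphi) = \lim_n P_\mu^{n+1}\varphi = \ell(\varphi)$, the measure $\nu$ is $P_\mu$-invariant. For uniqueness, suppose $\nu'$ is any invariant probability measure. Then $\int \varphi\, d\nu' = \int P_\mu^n\varphi\, d\nu'$ for every $n$. Letting $n\to\infty$ and using uniform convergence gives $\int\varphi\, d\nu' = \int\varphi\, d\nu$, so $\nu' = \nu$.

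The only mildly delicate point is the Hölder estimate in the first step, that is, using Jensen to lower the exponent from $\delta$ to $\gamma$. Everything else is routine. I expect no real obstacle.
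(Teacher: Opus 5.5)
Your proof is correct, but it reaches the result by a somewhat different route than the paper.

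The paper starts from an arbitrary $P_\mu$-invariant probability measure $\nu$, whose existence it takes for granted from compactness of $B$ (a Krylov--Bogolyubov argument). It then works directly with a continuous $\varphi$. Uniform continuity of $\varphi$, combined with the contraction property, gives $\int_B |P_\mu^n\varphi(b)-P_\mu^n\varphi(b')|\,\nu(db')\to 0$ uniformly in $b$. The identity $\int_B P_\mu^n\varphi\,d\nu=\int_B\varphi\,d\nu$ then identifies the limit, and uniqueness of $\nu$ follows at once.

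You never assume an invariant measure exists. The monotonicity of $\sup_B P_\mu^n\varphi$ and $\inf_B P_\mu^n\varphi$ produces the limiting constant; this needs $\varphi$ real-valued, so treat complex $\varphi$ through its real and imaginary parts. You then construct $\nu$ by the Riesz representation theorem. Your route is self-contained and gives an exponential rate for H\"older observables.

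The price is the H\"older-plus-density step, which the paper avoids. You could drop it too, by bounding directly $|P_\mu^n\varphi(b)-P_\mu^n\varphi(b')|\le \omega_\varphi(r)+2\|\varphi\|_\infty\, r^{-\delta} c\, a^n\, {\rm diam}(B)^\delta$ for every $r>0$. Here $\omega_\varphi$ is the modulus of continuity of $\varphi$, and the second term comes from Markov's inequality applied to the contraction estimate.

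One point you use implicitly should be stated: $P_\mu\varphi$ is again continuous, by continuity of the action and dominated convergence. You need this because you evaluate $\ell$ at $P_\mu\varphi$ when checking invariance. The paper needs the same Feller property for its existence step.
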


\begin{proof}
Let $\nu$ be a $P_{\mu}$-invariant Borel probability measure on $B$, which exists since $B$ is compact. 
For any $\varphi \in C^0(B)$, the contraction property \eqref{contraction-property-on-B} 
together with the uniform continuity of $\varphi$ yields, uniformly in $b \in B$, 
\begin{align*}
  \int_{B} | P_{\mu}^n \varphi(b)  -  P_{\mu}^n \varphi(b')| \nu(d b')  \xrightarrow[n\to\infty]{}  0.
\end{align*}
The conclusion follows as 
\begin{align*}
\int_{B}  P_{\mu}^n \varphi(b') \nu(d b') = \int_{B}  \varphi(b') \nu(d b') 
\end{align*}
for any $n \geq 1$. 
\end{proof}

For proving Theorem \ref{Thm-equdistribution} in the general case where $M$ is nontrivial, 
we need to lift this equidistribution property in $B$ to an equidistribution property in $\wt B$. 
This will be achieved by studying the action of the operator $P_{\mu}$ on a particular class of functions on $\wt B$. 
To define these functions, we employ the representation theory of compact groups, for which we refer to \cite{Ser77, Sim96}. 
Let $\widehat M$ be the set of isomorphism classes of complex irreducible representations of $M$.
Thus, for each $x \in \wh M$, we have an associated integer $d_x \geq 1$ 
and a continuous irreducible representation $\rho_x: M \to {\rm GL}_{d_x}(\bb C)$. Besides, for any $d \geq 1$ 
and any continuous irreducible representation $\rho: M \to {\rm GL}_{d}(\bb C)$, 
there exists a unique $x \in \wh M$ such that $d_x = d$ and we may find $A \in {\rm GL}_{d}(\bb C)$ with $\rho(m) = A \rho_x(m) A^{-1}$
for any $m \in M$.  
Note that since we have assumed the compact group $M$ to be metrizable, the set $\wh M$ is countable. 

For $x \in \wh M$ and $m \in M$, we set as usual $\chi_x(m) = {\rm tr}(\rho_x(m))$ to be the trace of $\rho_x(m)$. 
Define the spectral projection $\pi_x$ on the $x$-isotypical component of $C^0(\wt B)$ as follows: 
for $\varphi \in C^0(\wt B)$ and $\wt b \in \wt B$, 
\begin{align}\label{def operator pi-x}
\pi_x(\varphi)(\wt b) = d_x \int_{M} \overline{\chi_x(m)} \varphi(\wt b m) dm. 
\end{align}
where $dm$ denotes the normalized Haar measure on $M$. 
We denote by $W_x^0$ the space of all continuous $M$-equivariant maps from $\wt B$ to $V_x: = \bb C^{d_x}$, that is,
the space of all continuous maps $\psi: \wt B \to V_x$ satisfying, for any $m \in M$ and $\wt b \in \wt B$, 
\begin{align}\label{equivariant maps psi 001}
\psi(\wt b m) = \rho_x(m)^{-1} \psi(\wt b). 
\end{align}
The following lemma gathers several classical results from the representation theory of compact groups.

\begin{lemma}\label{Lem-representation-compact-group}
Let $M$ be a metrizable compact group acting continuously on the right on the compact metrizable space $\wt B$. 
Then, the following statements hold. 
\begin{enumerate}
\item
For each $x \in \wh M$, 
the operator $\pi_x$ is a continuous projection of $C^0(\wt B)$ whose range is the closed subspace $\bigcap_{\substack{x' \in \wh M \\ x' \neq x}} \ker \pi_{x'}$.

\item
The linear subspace $\bigoplus_{x \in \wh M} \pi_x (C^0(\wt B))$ is dense in $C^0(\wt B)$. 

\item
The linear map $f \otimes \psi \mapsto \langle f, \psi \rangle: V_x^* \otimes W_x^0 \to C^0(\wt B)$ is a continuous surjection onto $\pi_x (C^0(\wt B))$. 
\end{enumerate}
\end{lemma}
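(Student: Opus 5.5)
The plan is to reduce all three statements to the Peter--Weyl theorem and the Schur orthogonality relations for $M$, applied fiberwise through the right action of $M$ on $\wt B$. Throughout, we write $(R_m \varphi)(\wt b) = \varphi(\wt b m)$. Since $M$ is compact and acts continuously on the compact space $\wt B$, the map $m \mapsto R_m \varphi$ from $M$ to $C^0(\wt B)$ is continuous, by uniform continuity. We also choose the representations $\rho_x$ unitary, which is possible after conjugation.

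For (1), we first note that $\pi_x \varphi = d_x \int_M \overline{\chi_x(m)} R_m \varphi \, dm$ is a Bochner-type integral of a continuous $C^0(\wt B)$-valued function. Hence $\|\pi_x \varphi\|_0 \leq d_x^2 \|\varphi\|_0$, because $|\chi_x| \leq d_x$, and $\pi_x$ is continuous. The key identity is $\pi_x \pi_{x'} = \delta_{x,x'} \pi_x$. Using $R_m R_{m'} = R_{m'm}$, this reduces to the convolution relation $d_x d_{x'} \, \overline{\chi_x} * \overline{\chi_{x'}} = \delta_{x,x'} d_x \overline{\chi_x}$, which follows from Schur orthogonality. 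This gives both that $\pi_x$ is a projection and the inclusion $\mathrm{range}\, \pi_x \subset \bigcap_{x' \neq x} \ker \pi_{x'}$.

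The converse inclusion is the main obstacle. Take $\varphi$ with $\pi_{x'} \varphi = 0$ for all $x' \neq x$, and fix $\wt b$. Consider the continuous function $f(m) = \varphi(\wt b m)$ on $M$, and note that $\pi_{x'}\varphi(\wt b m') = d_{x'}\int_M \overline{\chi_{x'}(m)} f(m'm)\,dm$ is the $x'$-isotypic component of $f$ evaluated at $m'$. The hypothesis says that $f$ has no isotypic component other than $x$. By Peter--Weyl, $L^2(M)$ is the Hilbert sum of its isotypic components, so $f$ equals its $x$-component. Evaluating at $m = e$ gives $\varphi(\wt b) = \pi_x\varphi(\wt b)$.

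For (2), we would take an approximate identity on $M$ consisting of trigonometric polynomials. These are dense in $C^0(M)$ by Peter--Weyl, and we can arrange nonnegative central polynomials $k_j$ with $\int_M k_j = 1$ concentrating at $e$; for instance, take $|p|^2$ of central polynomials $p$, normalized. Then $\varphi_j = \int_M k_j(m) R_{m^{-1}}\varphi \, dm$ converges uniformly to $\varphi$ by uniform continuity of $m \mapsto R_m\varphi$. Moreover, expanding the central polynomial $k_j$ in characters shows that $\varphi_j$ is a finite sum of the $\pi_x \varphi$.

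For (3), continuity is immediate. Maps $\langle f, \psi\rangle$ with $\psi \in W_x^0$ lie in the range of $\pi_x$: the function $m \mapsto \langle f, \rho_x(m)^{-1}\psi(\wt b)\rangle$ is a matrix coefficient of $\rho_x^*$, and the indices should be checked against the convention \eqref{equivariant maps psi 001}, possibly giving the dual class. For surjectivity, given $\varphi = \pi_x\varphi$ we define $\psi(\wt b) = d_x\int_M \rho_x(m)\varphi(\wt b m)\,dm \in V_x$. This $\psi$ is continuous, and a change of variables shows that it is equivariant. By Schur orthogonality and the reproducing property of isotypic functions, we then recover $\varphi(\wt b) = \mathrm{tr}\,\psi(\wt b)$-type expressions, that is, $\varphi = \sum_k \langle e_k^*, \psi_k\rangle$. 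Since the image of the map is a linear span, a finite sum suffices.
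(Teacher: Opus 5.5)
Your arguments for (1) and (2) are correct. They are the standard Peter--Weyl and Schur orthogonality proofs, which the paper does not write out but cites from Serre and Simon. There are two harmless slips. For the right action one has $R_mR_{m'}=R_{mm'}$, not $R_{m'm}$; this does not matter because characters are class functions. And your $\psi$ in (3) is $\End(V_x)$-valued, with columns in $W_x^0$, rather than $V_x$-valued.

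\textbf{Part (3).} Your proposal does not go through as written here, and your hedge ``possibly giving the dual class'' is decisive, so it must be resolved. The paper's conventions are $\overline{\chi_x}$ in $\pi_x$ and $\psi(\wt b m)=\rho_x(m)^{-1}\psi(\wt b)$ for $\psi\in W_x^0$. For $\varphi=\langle f,\psi\rangle$, the substitution $m\mapsto m^{-1}$ gives
\[
\pi_{x'}\varphi(\wt b)=d_{x'}\Big\langle f,\Big(\int_M \chi_{x'}(m)\rho_x(m)\,dm\Big)\psi(\wt b)\Big\rangle .
\]
Since $\chi_{x'}=\overline{\chi_{x'^*}}$, Schur orthogonality makes this equal to $\delta_{x',x^*}\,\varphi(\wt b)$, where $x^*$ is the contragredient class. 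So the image lies in $\pi_{x^*}(C^0(\wt B))$, not in $\pi_x(C^0(\wt B))$.

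Your surjectivity construction shows the same thing. It gives
\[
\sum_k\langle e_k^*,\Psi(\wt b)e_k\rangle=\mathrm{tr}\,\Psi(\wt b)=d_x\int_M\chi_x(m)\varphi(\wt b m)\,dm=\pi_{x^*}\varphi(\wt b).
\]
If you start from $\varphi=\pi_x\varphi$ with $x\neq x^*$, this returns $\pi_{x^*}\pi_x\varphi=0$, not $\varphi$.

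A concrete example: take $M=\wt B=\bb R/\bb Z$ and $\rho_x(m)=e^{2\pi i n m}$ with $n\neq 0$. Then $\pi_x(C^0(\wt B))$ is spanned by $\wt b\mapsto e^{2\pi i n\wt b}$. But $W_x^0$, and hence the image of your map, is spanned by $\wt b\mapsto e^{-2\pi i n\wt b}$. So (3) as literally stated fails for non-self-dual $x$.

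\textbf{The fix.} What your argument actually proves is surjectivity of $V_x^*\otimes W_x^0\to C^0(\wt B)$ onto $\pi_{x^*}(C^0(\wt B))$. The only input needed is $\varphi=\pi_{x^*}\varphi$, which replaces your vague ``reproducing property''. Equivalently, you could use $\chi_x$ instead of $\overline{\chi_x}$ in the definition of $\pi_x$, or $W_{x^*}^0$ instead of $W_x^0$. You should state (3) in one of these corrected forms.

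This relabeling is harmless for the paper. In the proof of Theorem \ref{Thm-equdistribution} the lemma is used for all $x\in\wh M$ at once. In Corollary \ref{Cor-equdistribution} the relevant classes are the characters of $M/M_0^{\mu}$, a group stable under $\chi\mapsto\overline{\chi}$.
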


In the above statement, 
we write $V_x^*$ for the dual vector space of $V_x = \bb C^{d_x}$, and
for $f \in V_x^*$ and $\psi \in W_x^0$, we have denoted by $\langle f, \psi \rangle$ the continuous function 
$\wt b \mapsto \langle f, \psi(\wt b) \rangle$ on $\wt B$. 

\begin{proof}[Proof of Lemma \ref{Lem-representation-compact-group}]
See \cite[Part I, Chapter 2]{Ser77} or \cite[Chapter III, Section 7 and Chapter VII, Section 9]{Sim96}. 
\end{proof}

Now we show that the operator $P_{\mu}$ is block-diagonal in the direct sum $\bigoplus_{x \in \wh M} \pi_x (C^0(\wt B))$. 
For $x \in \wh M$, $\psi \in W_x^0$ and $\wt b \in \wt B$, define the operator $P_{\mu, x}$ as follows: 
\begin{align}\label{def operator P mu x}
P_{\mu, x} \psi(\wt b) = \int_{G} \psi(g \wt b) \mu(dg). 
\end{align}
Since the actions of $G$ and $M$ on $\wt B$ commute with each other, we get the following lemma. 

\begin{lemma}\label{Lem-two-operators}
Let $\mu$ be a Borel probability measure on $G$. 
Then, for any $x \in \wh M$, 
the operator $P_{\mu, x}$ is bounded on $W_x^0$, and for any $f \in V_x^*$ and $\psi \in W_x^0$, we have 
\begin{align*}
P_{\mu} \langle f, \psi \rangle = \langle f, P_{\mu, x} \psi \rangle. 
\end{align*}
\end{lemma}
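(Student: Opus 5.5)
The plan is to verify directly that the formula \eqref{def operator P mu x} preserves $W_x^0$, then bound the uniform norm, and finally compute the pairing. First, given $\psi \in W_x^0$, I will check that $P_{\mu,x}\psi$ is well defined and continuous. The integrand $g \mapsto \psi(g\wt b)$ is continuous and bounded by $\sup_{\wt B}\|\psi\|$, since the action of $G$ on $\wt B$ is continuous and $\wt B$ is compact. For continuity in $\wt b$, I will use the uniform continuity of $\psi$ on the compact space $\wt B$. Combined with joint continuity of the action, $\wt b_n \to \wt b$ gives $\psi(g\wt b_n)\to \psi(g\wt b)$ for each $g$, and dominated convergence (the bound $\sup\|\psi\|$ is integrable against the probability $\mu$) then gives continuity of $P_{\mu,x}\psi$.

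Next comes equivariance, which is the only place where the commutation of the two actions enters. For $m \in M$, we have $g(\wt b m) = (g\wt b)m$, so
\begin{align*}
P_{\mu,x}\psi(\wt b m) = \int_G \psi((g\wt b)m)\,\mu(dg) = \int_G \rho_x(m)^{-1}\psi(g\wt b)\,\mu(dg) = \rho_x(m)^{-1} P_{\mu,x}\psi(\wt b).
\end{align*}
Here $\rho_x(m)^{-1}$ is pulled out of the (vector-valued, Bochner or componentwise) integral by linearity. This shows $P_{\mu,x}\psi \in W_x^0$, i.e.\ it satisfies \eqref{equivariant maps psi 001}.

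For boundedness, I will equip $W_x^0$ with the uniform norm $\sup_{\wt b}\|\psi(\wt b)\|$, where $\|\cdot\|$ is any norm on $V_x$. The triangle inequality for integrals gives $\|P_{\mu,x}\psi\|_\infty \le \|\psi\|_\infty$, since $\mu$ is a probability measure, so $P_{\mu,x}$ is a contraction.

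Finally, for the pairing identity, take $f\in V_x^*$ and $\psi \in W_x^0$. Because $f$ is a continuous linear functional on the finite-dimensional space $V_x$, it commutes with the integral, and so
\begin{align*}
P_\mu\langle f,\psi\rangle(\wt b) = \int_G \langle f,\psi(g\wt b)\rangle\,\mu(dg) = \Big\langle f, \int_G \psi(g\wt b)\,\mu(dg)\Big\rangle = \langle f, P_{\mu,x}\psi\rangle(\wt b).
\end{align*}
There is no real obstacle in this argument. The only points needing care are the measurability and continuity of the vector-valued integral, which reduce to the scalar case coordinatewise, and the equivariance step, where the commutation $g(\wt b m)=(g\wt b)m$ is essential.
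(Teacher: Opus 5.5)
Your proposal is correct and follows essentially the same route as the paper: equivariance of $P_{\mu,x}\psi$ from the commutation $g(\wt b m) = (g\wt b)m$, boundedness because $P_{\mu,x}$ is the restriction to the closed subspace $W_x^0$ of a contraction of $C^0(\wt B, V_x)$ in the uniform norm, and the pairing identity by pulling $f$ through the integral. The only difference is that you spell out the continuity of $P_{\mu,x}\psi$ via dominated convergence, which the paper leaves implicit.
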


\begin{proof}
We claim that, for any $\psi \in W_x^0$, 
the function $P_{\mu, x} \psi$ is still an element of $W_x^0$, that is, $P_{\mu, x} \psi$ is still an $M$-equivariant function
from $\wt B$ to $V_x$. 
Indeed, by \eqref{equivariant maps psi 001} and the fact that the actions of $G$ and $M$ on $\wt B$ commute, 
we have that, for any $\wt b \in \wt B$ and $m \in M$, 
\begin{align*}
P_{\mu, x} \psi(\wt b m)  
& = \int_{G} \psi(g (\wt b m)) \mu(dg)   \notag\\
& = \int_{G} \psi((g \wt b) m) \mu(dg)  \notag\\
& = \rho_x(m)^{-1} \int_{G}  \psi(g \wt b) \mu(dg)  \notag\\
& = \rho_x(m)^{-1} P_{\mu, x} \psi(\wt b). 
\end{align*}
Thus, the operator $P_{\mu, x}$ preserves the space $W_x^0$. 
Now $W_x^0$ is a closed subspace of $C^0(\wt B, V_x)$ and $P_{\mu, x}$ may be seen as the restriction to $W_x^0$ of a bounded operator
of $C^0(\wt B, V_x)$. Therefore, $P_{\mu, x}$ is bounded on $W_x^0$. 
The last formula is obvious. 
\end{proof}

To prove Theorem \ref{Thm-equdistribution}, it suffices to study the powers of the operator $P_{\mu, x}$ in $W_x^0$
for a fixed element $x \in \wh M$.

\begin{proof}[Proof of Theorem \ref{Thm-equdistribution}]
Recall that in Subsection \ref{Section-Quotient vector bundles} 
we have associated with $x \in \wh M$ the vector bundle $E_x = B \times_{\rho_x} \bb C^{d_x}$ over $B$.
By Corollary \ref{Corollary-isometric Lipschitz bundle action} and the moment assumption \eqref{moment-assumption-01},
the moment condition (i.e., $\bb E ({\rm Lip}_{E_x}(g_1)^{\alpha} ) < \infty$ for some constant $\alpha >0$)
 required in Corollary \ref{Corollary-001} is satisfied for the vector bundle $E_x \to B$. 
By Corollary \ref{Corollary-001}, we know that the Birkhoff averages of the natural operator associated with $\mu$ on $\bf \Gamma^0(E_x)$ 
converge strongly in the space of bounded operators on $\bf \Gamma^0(E_x)$. 
Thanks to Lemma \ref{Lem-closed injection-001} and Corollary \ref{Cor-closed injection-001},
this tells us that the Birkhoff averages of the operator $P_{\mu, x}$ on $W_x^0$ 
converge strongly in the space of bounded operators on $W_x^0$. 
By part (3) of Lemma \ref{Lem-representation-compact-group} and Lemma \ref{Lem-two-operators}, 
this implies that the Birkhoff averages of the operator $P_{\mu}$ on $\pi_x (C^0(\wt B))$ 
converge strongly in the space of bounded operators on $\pi_x (C^0(\wt B))$. 
By part (2) of Lemma \ref{Lem-representation-compact-group}, 
as all powers of the operator $P_{\mu}$ in $C^0(\wt B)$ have norm at most $1$, 
we obtain that, for any $\varphi \in C^0(\wt B)$, the sequence $\frac{1}{n} \sum_{k=0}^{n-1}  P_{\mu}^k \varphi$ is a uniform Cauchy sequence 
and the conclusion follows. 
\end{proof}


\subsection{Eigenvalues of modulus one}
In order to get the more precise statement in Corollary \ref{Cor-equdistribution}, we need to describe the eigenvalues of modulus one
of the operators $P_{\mu, x}$ for any representation $x \in \wh M$. 
This is a refinement of the arguments in \cite[Section 15.2]{BQ16b}.

\begin{proposition}\label{Propo-eigenvalue-one}
Let $\mu$ be a Borel probability measure on $G$. 
Assume that the action of $G$ on $\wt B$ satisfies the moment condition \eqref{moment-assumption-01} and 
that the action of $G$ on $B$ satisfies the contraction property \eqref{contraction-property-on-B}.  
Suppose that, for every $\wt b \in \wt B$ with $\eta(\wt b) \in \supp \nu$,   we have $\wt b M \subset \overline{\Gamma_{\mu} \wt b}$. 
Then, there exists a unique closed coabelian normal subgroup $M_0^{\mu}$ of $M$ 
such that, 
for any $x \in \wh M$, the operator $P_{\mu, x}$ admits an eigenvalue with modulus one in $W^0_x$ if and only if 
$\rho_x$ is the one-dimensional representation associated with a character $\chi_x$ of $M$ which is trivial on $M_0^{\mu}$. 
\end{proposition}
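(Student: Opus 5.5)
Suppose $\psi \in W_x^0$, $\psi \neq 0$, satisfies $P_{\mu,x}\psi = \lambda \psi$ with $|\lambda| = 1$. The plan is to show that such an eigenfunction is rigid, forcing $d_x = 1$ and an explicit form for $\psi$. From these one-dimensional characters we then build $M_0^\mu$.

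\textbf{Step 1: the modulus $\|\psi\|$ is constant.} Fix an $M$-invariant Hermitian norm on $V_x$. Then $\|\psi\|$ is $M$-invariant, so it descends to a continuous function $h$ on $B$. From the eigenvalue equation,
\begin{align*}
h(b) = \|\psi(\wt b)\| \leq \int_G \|\psi(g\wt b)\|\, \mu(dg) = P_\mu h(b),
\end{align*}
so $h$ is $P_\mu$-subharmonic on $B$. Iterating and applying Lemma \ref{Lemma-unqiue-sta-measure} gives $h \leq \int h\, d\nu$ everywhere. By the maximum principle, equality $h = \max h$ holds on $\supp \nu$, and in fact (using Hölder regularity, if needed via Theorem \ref{Thm-spectral-gap}) along the whole random trajectory.

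\textbf{Step 2: $\psi$ is invariant along the walk.} Since $V_x$ carries a strictly convex norm and the average of $\psi(g\wt b)$ has the maximal norm, we get $\psi(g\wt b) = \lambda\psi(\wt b)$ for $\mu$-a.e.\ $g$, for every $\wt b \in \eta^{-1}(\supp \nu)$. By continuity, $\psi(\gamma \wt b) = \lambda^n \psi(\wt b)$ for every $\gamma$ in the support of $\mu^{*n}$.

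\textbf{Step 3: $x$ is a character.} Use the hypothesis $\wt b M \subset \overline{\Gamma_\mu \wt b}$. For each $m$, choose $\gamma_k \wt b \to \wt b m$ with $\gamma_k \in \supp\mu^{*n_k}$. Then $\rho_x(m)^{-1}\psi(\wt b) = \lim_k \lambda^{n_k} \psi(\wt b)$. Hence the nonzero vector $\psi(\wt b)$ spans an $M$-invariant line, and irreducibility forces $d_x = 1$. Moreover $\chi_x(m)^{-1}$ lies in the closure of $\{\lambda^{n}\}$ along such return times.

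\textbf{Step 4: define $M_0^\mu$.} Let $M_0^\mu$ be the intersection of the kernels of all characters $\chi$ for which $P_{\mu,\chi}$ has a unimodular eigenvalue. This set of characters is a group: products and conjugates of such eigenfunctions are again eigenfunctions, because the equivariance multiplies and Step 2 gives exact multiplicativity on $\eta^{-1}(\supp\nu)$, which extends by the contraction argument of Step 1. Since $M_0^\mu$ is an intersection of kernels of characters, it is closed, normal, and coabelian. By Pontryagin duality on $M/M_0^\mu$, the characters trivial on $M_0^\mu$ are exactly this group. Uniqueness of $M_0^\mu$ is then immediate, because closed coabelian subgroups are determined by their annihilating characters.

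\textbf{Step 5: converse and main obstacle.} For the converse we must check that every such character admits an eigenfunction; this holds by construction of the group in Step 4. The main obstacle is extending the eigenfunction identities from $\eta^{-1}(\supp\nu)$ to all of $\wt B$, which is needed for the group property. I would handle this by noting that, by the contraction property, any eigenfunction with unimodular eigenvalue is determined by its restriction to $\eta^{-1}(\supp\nu)$. Concretely, $\psi = \lambda^{-n}P_{\mu,x}^n\psi$, and the walk accumulates near $\supp \nu$.
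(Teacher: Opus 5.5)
Steps 1--3 are sound and match the paper. Your subharmonicity bound $h\leq P_\mu^n h\to\int h\,d\nu$ replaces the paper's closed $M$- and $\Gamma_\mu$-invariant set $\{\|\psi\|=\max\|\psi\|\}$. It gives constancy of $\|\psi\|$ only on $K:=\eta^{-1}(\supp\nu)$, not on all of $\wt B$ as your Step 1 title says, but $K$ is all you need. Strict convexity together with $\wt b M\subset\overline{\Gamma_\mu \wt b}$ then forces $d_x=1$.

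The gap is the group property in Step 4. Take eigenfunctions $\psi\in W^0_\chi$ and $\psi'\in W^0_{\chi'}$ with unimodular eigenvalues $\lambda,\lambda'$. Then $\psi\psi'\in W^0_{\chi\chi'}$ satisfies $P_{\mu,\chi\chi'}(\psi\psi')=\lambda\lambda'\psi\psi'$ only on $K$; off $K$ the average of a product is not the product of the averages. (Complex conjugates are harmless, since $P_{\mu,\bar\chi}\bar\psi=\overline{P_{\mu,\chi}\psi}$ holds globally.)

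So you need an \emph{existence} statement: every nonzero continuous $\chi$-equivariant $\phi$ on $K$ with $\phi(g\wt b)=\lambda\phi(\wt b)$ for $g\in\supp\mu$ must come from a genuine eigenfunction in $W^0_\chi$. Your Step 5 remedy, that a unimodular eigenfunction is determined by its restriction to $K$, is the uniqueness half and does not supply this.

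The natural candidate $\Phi=\lim_n(\lambda\lambda')^{-n}P^n_{\mu,\chi\chi'}(\psi\psi')$ has increments $(\lambda\lambda')^{-k}P^k_{\mu,\chi\chi'}D$, where $D=(\lambda\lambda')^{-1}P_{\mu,\chi\chi'}(\psi\psi')-\psi\psi'$ vanishes on $K$. These increments tend to $0$, but nothing you have shown makes them summable. The contraction property bounds $|P^kD|$ only through the modulus of continuity of $D$ near $K$, evaluated at $\mathbf{d}(g_k\cdots g_1 b,\supp\nu)$. That is summable when $D$ is H\"older (it gives $O(a^{k\gamma/\delta})$), but not for merely continuous $D$. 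H\"older regularity of an eigenfunction in $W^0_\chi$ is not available a priori; in the paper it comes out of the spectral theory.

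The paper supplies this existence step through quasi-compactness:
\begin{enumerate}
\item Theorem~\ref{Thm-spectral-gap}, applied to the line bundle $B\times_\chi\bb C$, shows $P_{\mu,\chi}$ has essential spectral radius $<1$ on $W^\gamma_\chi$. Hence if there is no unimodular eigenvalue in $W^0_\chi$ (so none in $W^\gamma_\chi$), the spectral radius on $W^\gamma_\chi$ is $<1$.
\item Lemma~\ref{Lemma-surjectivity} says the restriction $W^\gamma_\chi\to\overline W^\gamma_\chi$ is onto and open, which transfers this bound to functions on $K$.
\item Density of H\"older functions and $\sup_n\|P^n_{\mu,\chi}\|<\infty$ then give $P^n_{\mu,\chi}\phi\to 0$ for every continuous equivariant $\phi$ on $K$.
\end{enumerate}
Apply this to $\phi=\psi\psi'|_K$, which is $\chi\chi'$-equivariant and satisfies $P^n\phi=(\lambda\lambda')^n\phi$ on $K$. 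The resulting contradiction shows $P_{\mu,\chi\chi'}$ must have a unimodular eigenvalue in $W^0_{\chi\chi'}$.

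You need this argument, or an equivalent one controlling the fibre holonomy along random trajectories. Once $\Lambda$ is shown to be a group, your annihilator, Pontryagin-duality and uniqueness arguments in Step 4 are fine.
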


In the course of the proof of this proposition, we shall need the following lemma. 
For any $x \in \wh M$ and $\gamma \in (0, 1]$, 
define $W_{x}^{\gamma}$ (respectively $\overline{W}_{x}^{\gamma}$)
as the space of all $\gamma$-H\"older continuous functions $\varphi: \wt B \to \bb C^{d_x}$ (respectively $\varphi: \eta^{-1}(\supp \nu) \to \bb C^{d_x}$)
such that, for any $\wt b \in \wt B$ (respectively $\wt b \in \eta^{-1}(\supp \nu)$) and $m \in M$, 
we have $\varphi(\wt b m) = \rho_x(m)^{-1} \varphi(\wt b)$. 

\begin{lemma}\label{Lemma-surjectivity}
For any $x \in \wh M$ and $\gamma \in (0, 1]$, 
 the restriction map $W_{x}^{\gamma} \to \overline{W}_{x}^{\gamma}$ is surjective and hence open.
\end{lemma}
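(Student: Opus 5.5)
Via Lemma \ref{Lem-closed injection-001} (and its analogue over the closed subset $\supp \nu \subset B$, which is what a restricted $M$-equivariant map corresponds to), the statement becomes a Hölder extension problem for sections of the Lipschitz vector bundle $E_x = B \times_{\rho_x} \bb C^{d_x}$. An element $\varphi \in \overline{W}_x^{\gamma}$ is the same as a $\gamma$-Hölder section $s$ of $E_x$ over the compact set $K = \supp \nu$. I will extend $s$ to a $\gamma$-Hölder section of $E_x$ over all of $B$ and then pull it back to an element of $W_x^{\gamma}$ by Lemma \ref{Lem-closed injection-001}.

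\textbf{Step 1 (local extension).} Fix a finite Lipschitz local trivialization $(U_i,\phi_i,\theta_{i,j})_{i\in I}$ with open $V_i$, $\overline{V_i}\subset U_i$, covering $B$. On $K\cap \overline{V_i}$ the map $f_i = \phi_i\circ s$ is $\gamma$-Hölder and $\bb C^{d_x}$-valued; the Hölder control comes from \eqref{identity phi varphi} together with the Lipschitz local sections $\sigma_i$. Apply the McShane extension coordinatewise (real and imaginary parts), i.e. $\tilde f(b)=\inf_{b'\in K\cap\overline{V_i}}(f(b')+L\,\mathbf d(b,b')^\gamma)$, which works because $\mathbf d^\gamma$ is again a metric. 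Then truncate to keep it bounded. This gives a $\gamma$-Hölder map $F_i: B\to\bb C^{d_x}$ extending $f_i$ on $K\cap\overline{V_i}$.

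\textbf{Step 2 (gluing).} Take a Lipschitz partition of unity $(h_i)$ subordinate to $(V_i)$ and set $s' = \sum_i h_i\, (\pi\times\phi_i)^{-1}(\cdot, F_i(\cdot))$, each term being extended by zero outside $V_i$. The argument showing that $s_i'$ is a Lipschitz section in the proof of Proposition \ref{Sobolev-injection-lemma} carries over verbatim with Hölder exponent $\gamma$, using the Lipschitz property of $\theta_{j,i}$ and $\mathbf d(\operatorname{supp}h_i, U_i^c)>0$. So $s'$ is a $\gamma$-Hölder section on $B$. On $K$, every term equals $h_i s$, and hence $s'|_K = s$.

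\textbf{Step 3 (conclusion).} Lemma \ref{Lem-closed injection-001} gives $\psi\in W_x^\gamma$ with $s_\psi=s'$, and $\psi$ restricts to $\varphi$ on $\eta^{-1}(K)$. Openness then follows from the open mapping theorem, since both spaces are Banach with the Hölder norms and the restriction map is continuous.

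The main obstacle is Step 1's bookkeeping: the Hölder norm of $\phi_i\circ s$ on $K\cap\overline{V_i}$ has to be controlled by $\|\varphi\|_\gamma$. This requires that Lemma \ref{Lem-closed injection-001}'s correspondence, in particular \eqref{identity varphi b rho s-varphi} and the Lipschitz property of $\rho_x\circ\mu_i$ from Lemma \ref{Lem-structure}, be valid over the closed invariant set $\eta^{-1}(K)$ rather than all of $\wt B$. I would check that the proof only uses local data, so that it restricts without change.
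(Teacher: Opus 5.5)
Your proof is correct, but it takes a different and heavier route than the paper.

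\textbf{The paper's route.} The paper never descends to the base.
\begin{enumerate}
\item It extends $\varphi$ to an arbitrary $\gamma$-H\"older map $\psi\colon \wt B\to\bb C^{d_x}$. This is a plain vector-valued H\"older extension on the compact metric space $\wt B$, with no equivariance asked for.
\item It restores equivariance by averaging: $\psi_1(\wt b)=\int_M\rho_x(m)\psi(\wt b m)\,dm$.
\item Since $M$ acts isometrically on $\wt B$ and each $\rho_x(m)$ is an isometry of $\bb C^{d_x}$, the map $\psi_1$ is again $\gamma$-H\"older, with H\"older norm at most that of $\psi$.
\item Invariance of Haar measure makes $\psi_1$ $\rho_x$-equivariant.
\item On the $M$-invariant set $\eta^{-1}(\supp\nu)$, $\psi_1=\varphi$, precisely because $\varphi$ is already equivariant there.
\end{enumerate}
This uses only the isometric action and unitarity of $\rho_x$. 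It needs no local sections, trivializations, partitions of unity, Lemma \ref{Lem-structure} or Lemma \ref{Lem-closed injection-001}.

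\textbf{Your route.} You descend to a H\"older section over $K=\supp\nu$, extend it chart by chart with McShane, and glue with a Lipschitz partition of unity as in Proposition \ref{Sobolev-injection-lemma}. You then lift back with Lemma \ref{Lem-closed injection-001}. This works, and it proves something more general: H\"older sections of any Lipschitz vector bundle extend from closed subsets of the base. The price is all the bundle bookkeeping, which the averaging trick avoids entirely.

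\textbf{The step you flag as the main obstacle is not one.} For $b\in K\cap U_i$, take $\wt b=\sigma_i(b)$ in \eqref{identity phi varphi}; then $\mu_i(\wt b)=e$, so $\phi_i\circ s=\varphi\circ\sigma_i$ on $K\cap U_i$. Since $\sigma_i$ maps $K\cap U_i$ into $\eta^{-1}(K)$, the estimate \eqref{Lipschitz phi varphi} holds verbatim using only that $\sigma_i$ is Lipschitz. This is what your own Step 1 already says. The Lipschitz property of $\rho_x\circ\mu_i$, i.e.\ the reverse correspondence, is needed only in Step 3. There you apply Lemma \ref{Lem-closed injection-001} on all of $\wt B$, exactly as it is stated and proved, so nothing has to be re-checked over $\eta^{-1}(K)$.
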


\begin{proof}
Take a function $\varphi \in \overline{W}_{x}^{\gamma}$. 
By general facts on function spaces (see for example \cite[Lemma 11.8]{BQ16b}), 
there exists a $\gamma$-H\"older continuous function $\psi: \wt B \to \bb C^{d_x}$  
whose restriction to $\eta^{-1}(\supp \nu)$ is the function $\varphi$.  
For $\wt b \in \wt B$, we now define 
\begin{align*}
\psi_1(\wt b) =  \int_{M} \rho_x(m) \psi(\wt b m) dm, 
\end{align*}
where $dm$ is the normalized Haar measure of $M$. 
Since $M$ preserves the distance on $\wt B$ and the norm on $\bb C^{d_x}$, 
the function $\psi_1$ is also $\gamma$-H\"older continuous on $\wt B$. 
Moreover, we have, for any $m_0 \in M$, 
\begin{align*}
\psi_1(\wt b m_0) 
& = \int_{M} \rho_x(m) \psi(\wt b m_0 m) dm  \notag\\
& = \rho_x(m_0)^{-1} \int_{M} \rho_x(m_0 m) \psi(\wt b m_0 m) dm  \notag\\
& = \rho_x(m_0)^{-1}  \psi_1(\wt b), 
\end{align*}
so that $\psi_1 \in W_{x}^{\gamma}$. 
Besides, it holds that, for any $\wt b \in \eta^{-1}(\supp \nu)$, 
\begin{align*}
\psi_1(\wt b) = \int_{M} \rho_x(m) \psi(\wt b m) dm = \int_{M} \rho_x(m) \varphi(\wt b m) dm = \varphi(\wt b). 
\end{align*}
This shows that the restriction map $W_{x}^{\gamma} \to \overline{W}_{x}^{\gamma}$ is surjective. 
By the open mapping theorem, it follows that this restriction map is open.  
\end{proof}

\begin{proof}[Proof of Proposition \ref{Propo-eigenvalue-one}]
Let $x \in \wh M$ be such that $P_{\mu, x}$ admits an eigenvalue with modulus one in $W^0_x$. 
The main step of the proof is to show that $d_x = 1$. 
Indeed, let $\varphi \in W_x^0$ be a non-zero element satisfying that there exists $\lambda \in \bb C$ with $|\lambda| = 1$
such that $P_{\mu, x} \varphi = \lambda \varphi$. 
Up to rescaling, we can assume that $\sup_{\wt b \in \wt B} \|\varphi(\wt b)\| = 1$. 
Then, set 
\begin{align*}
\wt X = \left\{ \wt b \in \wt B: \|\varphi(\wt b)\| = 1 \right\}.
\end{align*}
Note that, for any $\wt b \in \wt X$ and $m \in M$, we have 
\begin{align*}
\|\varphi(\wt b m)\| = \| \rho_x(m)^{-1} \varphi(\wt b) \| = 1 
\end{align*}
and hence $\wt b m \in \wt X$. 
Thus, the set $\wt X$ is $M$-invariant. 
Moreover, from \eqref{def operator P mu x} and the equation $P_{\mu, x} \varphi = \lambda \varphi$ we derive that, 
for any $g \in \supp \mu$, we have $g \wt X \subseteq \wt X$ and for any $\wt b \in \wt X$, it holds that $\varphi(g \wt b) = \lambda \varphi(\wt b)$.
Therefore, the map $\bar \varphi: \wt b \mapsto \bb C \varphi(\wt b)$ from $\wt X$ to $\bb P(\bb C^{d_x})$  is $\Gamma_{\mu}$-invariant.
As $\wt X$ is closed, both $M$-invariant and $\Gamma_{\mu}$-invariant, 
we get that $\wt X$ contains the set $\eta^{-1}(\supp \nu)$, by Lemma \ref{Lemma-unqiue-sta-measure}. 
Let $\wt b$ be in $\eta^{-1}(\supp \nu)$, then, by assumption, we have $\wt b M \subset \overline{\Gamma_{\mu} \wt b}$
and therefore, the map $\bar \varphi$ is constant on $\wt b M$. 
Consequently, since $\bar \varphi$ is $M$-equivariant, this means that $\bb C \varphi(\wt b)$ is a fixed point of $M$ in $\bb P(\bb C^{d_x})$. 
As the representation $\rho_x: M \to {\rm GL}_{d_x}(\bb C)$ is irreducible, 
we get that $\bb C^{d_x} = \bb C \varphi(\wt b)$ and hence $d_x = 1$, as required.

Thus, the set $\Lambda$ of all the representations $x \in \wh M$ such that 
$P_{\mu, x}$ has an eigenvalue of modulus one is  a subset of the group of characters of $M$,
where we identify each character with the associated one-dimensional representation. 
To conclude, by the Pontryagin duality theorem (see  \cite[Proposition 38]{Mor77}), 
it remains to show that this set $\Lambda$ is a subgroup of the group of characters, which we will proceed to prove below. 

Denote by $\bb U$ the group of complex numbers with modulus one. 
We fix a character $\chi$ of the group $M$, that is, $\chi$ is a continuous homomorphism from $M$ to $\bb U$. 
We claim that 
$P_{\mu, \chi}$ has an eigenvalue of modulus one in $W_{\chi}^0$ if and only if 
there exists a continuous function $\varphi: \eta^{-1}(\supp \nu) \to \bb U$
and $\lambda \in \bb U$
such that, for every $g \in \supp \mu$, $m \in M$ and $\wt b \in \eta^{-1}(\supp \nu)$, we have 
\begin{align}\label{two equations varphi}
\varphi(\wt b m) = \chi(m)^{-1} \varphi(\wt b) 
\quad  \mbox{and}  \quad 
\varphi(g \wt b) = \lambda \varphi(\wt b). 
\end{align}
Indeed, if $P_{\mu, \chi}$ has an eigenvalue of modulus one in $W_{\chi}^0$, then, by the above discussions, such a function exists. 
Conversely, assume that $P_{\mu, \chi}$ has no eigenvalue of modulus one in $W_{\chi}^0$, 
then it has no eigenvalue of modulus one in $W_{\chi}^{\gamma} \subset W_{\chi}^0$. 
By Corollary \ref{Corollary-isometric Lipschitz bundle action} and the moment assumption \eqref{moment-assumption-01},
the moment condition (i.e., $\bb E ({\rm Lip}_{E_{\chi}}(g_1)^{\alpha} ) < \infty$ for some constant $\alpha >0$) 
in Theorem \ref{Thm-spectral-gap} is satisfied by the complex line bundle $E_{\chi} := B  \times_{\chi} \bb C \to B$. 
By Theorem \ref{Thm-spectral-gap}, 
Lemma \ref{Lem-closed injection-001} and Corollary \ref{Cor-closed injection-001}, 
the operator $P_{\mu, \chi}$ has essential spectral radius strictly smaller than one in $W_{\chi}^{\gamma}$. 
Since $P_{\mu, \chi}$ has no eigenvalue of modulus one in $W_{\chi}^{\gamma}$,
 it follows that $P_{\mu, \chi}$ has spectral radius strictly smaller than one in $W_{\chi}^{\gamma}$. 
Recall that $\overline{W}_{\chi}^{\gamma}$ is the space of all $\gamma$-H\"older continuous functions $\varphi: \eta^{-1}(\supp \nu) \to \bb C$
such that, for any $\wt b \in \eta^{-1}(\supp \nu)$ and $m \in M$, 
$\varphi(\wt b m) = \chi(m)^{-1} \varphi(\wt b)$. 
Then, by Lemma \ref{Lemma-surjectivity}, 
the restriction map $W_{\chi}^{\gamma} \to \overline{W}_{\chi}^{\gamma}$ is surjective and open.
Therefore, the operator $P_{\mu, \chi}$ has spectral radius strictly smaller than one in $\overline{W}_{\chi}^{\gamma}$. 
In particular, this implies that, for any $\varphi \in \overline{W}_{\chi}^{\gamma}$, we have $P_{\mu, \chi}^n \varphi \xrightarrow[n\to\infty]{} 0$. 
By \eqref{P-mu-n-001} in the proof of Corollary \ref{Corollary-001}, 
we obtain that this convergence also holds for any $\varphi \in \overline{W}_{\chi}^{0}$, 
so that there cannot exist $0 \neq \varphi \in \overline{W}_{\chi}^{0} $ and $\lambda \in \bb U$ 
such that $\varphi(g \wt b) = \lambda \varphi(\wt b)$ for any $g \in \supp \mu$ and $\wt b \in \eta^{-1}(\supp \nu)$. 
This completes the proof of the claim. 

Now, let $\chi$ and $\chi'$ be the characters of $M$
such that $P_{\mu, \chi}$ and $P_{\mu, \chi'}$ admit eigenvalues of modulus one in $W_{\chi}^0$
and $W_{\chi'}^0$, respectively. 
Then, using the above claim, we take continuous functions $\varphi, \varphi': \eta^{-1}(\supp \nu) \to \bb U$ 
and $\lambda, \lambda' \in \bb U$ as in \eqref{two equations varphi}. 
Set 
\begin{align*}
\chi'' = \chi^{-1} \chi',  
\quad 
\varphi'' = \varphi^{-1} \varphi'
\quad \mbox{and}  \quad
\lambda'' = \lambda^{-1} \lambda'. 
\end{align*}
Then, $\varphi'': \eta^{-1}(\supp \nu) \to \bb U$ is also a continuous function 
and, for every $g \in \supp \mu$, $m \in M$ and $\wt b \in \eta^{-1}(\supp \nu)$, 
we have 
\begin{align*}
\varphi''(\wt b m) = \chi''(m)^{-1} \varphi''(\wt b)
\quad \mbox{and}  \quad 
\varphi''(g \wt b) = \lambda'' \varphi''(\wt b). 
\end{align*}
Therefore, again by the above claim, the operator $P_{\mu, \chi''}$ admits an eigenvalue of modulus one in $W_{\chi''}^0$. 
This shows that $\Lambda$ is indeed a subgroup of the group of characters of $M$, as required. 

Finally, by  \cite[Proposition 38]{Mor77}, 
it suffices to take $M_0^{\mu}$ as the annihilator of $\Lambda$ in $M$, 
that is, 
\begin{align}\label{def annihilator M0}
M_0^{\mu} = \{ m \in M:  \chi(m) = 1,  \,  \forall \, \chi \in \Lambda \}.
\end{align}
Then $M_0^{\mu}$ is a closed normal subgroup of $M$ 
and the quotient $M/M_0^{\mu}$ is Abelian. 
The conclusion of the proposition follows. 
\end{proof}

Inspecting the proof of Proposition \ref{Propo-eigenvalue-one}, we obtain the following additional properties. 

\begin{corollary}\label{Cor-eigenvalue-modulus-one}
Let $\mu$ be a Borel probability measure on $G$. 
Assume that the action of $G$ on $\wt B$ satisfies the moment condition \eqref{moment-assumption-01} and 
that the action of $G$ on $B$ satisfies the contraction property \eqref{contraction-property-on-B}.  
Suppose that, for every $\wt b \in \wt B$ with $\eta(\wt b) \in \supp \nu$,   we have $\wt b M \subset \overline{\Gamma_{\mu} \wt b}$. 
Fix $\wt b_0 \in \wt B$ with $\eta(\wt b_0) \in \supp \nu$. 
Then, for every character $\chi$ of the group $M/M_0^{\mu}$, the operator $P_{\mu, \chi}$ 
has a unique eigenvalue $\lambda_{\chi}$ of modulus one in the space $W_{\chi}^{0}$. 

Moreover, there exists a unique associated eigenvector $\varphi_{\chi} \in W_{\chi}^{0}$
such that $\varphi_{\chi}(\wt b_0) = 1$ and $\varphi_{\chi}$ is H\"older continuous on $\wt B$.  
Also, the generalized eigenspace associated with the eigenvalue $\lambda_{\chi}$ for the operator $P_{\mu, \chi}$ acting on the space $W_{\chi}^0$ 
is equal to $\bb C \varphi_{\chi}$, that is, this eigenvalue is multiplicity free. 

Finally, the restriction $\varphi_{\chi}|_{\eta^{-1}(\supp \nu)}$ is the unique continuous function on $\eta^{-1}(\supp \nu)$
such that $\varphi_{\chi}(\wt b_0) = 1$ and for every $m \in M$,  $g \in \supp \mu$ and $\wt b \in \eta^{-1}(\supp \nu)$, we have 
\begin{align}\label{property varphi phi chi m}
\varphi_{\chi}(\wt b m) = \chi(m)^{-1} \varphi_{\chi}(\wt b)
\quad \mbox{and} \quad
\varphi_{\chi}(g \wt b) = \lambda_{\chi} \varphi_{\chi}(\wt b). 
\end{align}
\end{corollary}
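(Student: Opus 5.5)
The plan is to reread the proof of Proposition \ref{Propo-eigenvalue-one} with $\chi$ a character trivial on $M_0^{\mu}$, i.e. $\chi\in\Lambda$, and to strengthen each step. Existence of an eigenvalue of modulus one in $W_\chi^0$ is the definition of $\Lambda$. Restricting a corresponding eigenvector to $\eta^{-1}(\supp\nu)$ and normalizing gives a continuous $\varphi:\eta^{-1}(\supp\nu)\to\bb U$ satisfying \eqref{two equations varphi} for some $\lambda\in\bb U$.

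\textbf{Uniqueness on the support.} Let $(\varphi,\lambda)$ and $(\varphi',\lambda')$ both satisfy \eqref{two equations varphi} for the same $\chi$. Then $\psi=\varphi^{-1}\varphi'$ is $M$-invariant and satisfies $\psi(g\wt b)=\lambda^{-1}\lambda'\psi(\wt b)$. Being $M$-invariant, $\psi$ descends to a continuous function $\bar\psi$ on $\supp\nu$, which is an eigenfunction of the operator of Lemma \ref{Lemma-unqiue-sta-measure} restricted to $\supp\nu$. By Lemma \ref{Lemma-unqiue-sta-measure}, $P_\mu^n\bar\psi$ converges to a constant, while also $P_\mu^n\bar\psi=(\lambda^{-1}\lambda')^n\bar\psi$ with $|\bar\psi|=1$. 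Hence $\lambda=\lambda'$ and $\bar\psi$ is constant. The normalization $\varphi(\wt b_0)=1$ then gives uniqueness of the restriction. The eigenvalue $\lambda_\chi$ is unique as well: by the claim in the proof of Proposition \ref{Propo-eigenvalue-one}, any modulus-one eigenvector of $P_{\mu,\chi}$ on $W^0_\chi$ restricts on $\eta^{-1}(\supp\nu)$ to a $\bb U$-valued solution of \eqref{two equations varphi} up to scaling, since its norm is maximal there. So any two modulus-one eigenvalues coincide.

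\textbf{Extension from the support, and the main obstacle.} The key point is that an element of $W_\chi^0$ whose restriction to $\eta^{-1}(\supp\nu)$ vanishes cannot carry modulus-one spectrum. I would argue as follows. If $\psi\in W^0_\chi$ satisfies $P_{\mu,\chi}\psi=\lambda\psi$ with $|\lambda|=1$ and $\psi=0$ on $\eta^{-1}(\supp\nu)$, then the maximum-modulus argument of Proposition \ref{Propo-eigenvalue-one} forces the set where $\|\psi\|=\sup\|\psi\|$ to contain $\eta^{-1}(\supp\nu)$. This gives $\psi=0$. Consequently two normalized eigenvectors differ by an eigenvector vanishing on the support, so they coincide. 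This proves uniqueness of $\varphi_\chi$ in $W_\chi^0$.

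\textbf{H\"older regularity and multiplicity freeness.} Theorem \ref{Thm-spectral-gap}, transported through Lemma \ref{Lem-closed injection-001} and Corollary \ref{Cor-closed injection-001}, shows that $P_{\mu,\chi}$ has essential spectral radius $<1$ on $W^\gamma_\chi$. By Corollary \ref{Corollary-001}, the projection onto the peripheral eigenspaces is a limit of Birkhoff averages twisted by $\bar\lambda_\chi^k$. These averages are bounded on $W^0_\chi$ by \eqref{P-mu-n-001}, and the image is finite dimensional and lies in $W^\gamma_\chi$. Density of $W^\gamma_\chi$ in $W^0_\chi$ (Proposition \ref{Sobolev-injection-lemma}) then shows that every modulus-one eigenvector in $W_\chi^0$ already lies in $W^\gamma_\chi$, hence $\varphi_\chi$ is H\"older. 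For multiplicity freeness, the nilpotent part vanishes by the argument in Corollary \ref{Corollary-001}, since the powers of $P_{\mu,\chi}$ are bounded by \eqref{P-mu-n-001}. The generalized eigenspace is therefore the eigenspace, which is $\bb C\varphi_\chi$ by the uniqueness above.

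The step I expect to be most delicate is the maximum-principle argument showing that an eigenvector vanishing on $\eta^{-1}(\supp\nu)$ is zero. It requires that the closed $\Gamma_\mu$-invariant $M$-invariant set where the norm is maximal meets $\eta^{-1}(\supp\nu)$. This follows because its image in $B$ is a closed $\Gamma_\mu$-invariant set, which must contain $\supp\nu$ by Lemma \ref{Lemma-unqiue-sta-measure}.
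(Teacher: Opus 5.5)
Your proof is correct, and its core is the paper's argument. The ratio of two modulus-one eigenvectors is $M$-invariant on $\eta^{-1}(\supp\nu)$ and descends to $\supp\nu$; the contraction property (through Lemma \ref{Lemma-unqiue-sta-measure}) then forces $\lambda=\lambda'$ and makes the ratio constant. Power-boundedness of $P_{\mu,\chi}$ on $W^0_\chi$ rules out Jordan blocks.

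Two steps are handled differently.

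First, you prove explicitly, by the maximum-modulus argument, that a modulus-one eigenvector vanishing on $\eta^{-1}(\supp\nu)$ is zero. The paper passes directly from ``proportional on $\eta^{-1}(\supp\nu)$'' to ``the eigenspace is one-dimensional''. Your version therefore supplies a step the paper leaves implicit.

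Second, for H\"older regularity you apply the argument of Corollary \ref{Corollary-001} to $\bar\lambda_\chi P_{\mu,\chi}$, using power-boundedness on $W^0_\chi$, essential spectral radius $<1$ on $W^\gamma_\chi$, and density. This shows that every $\lambda_\chi$-eigenvector in $W^0_\chi$ lies in the finite-dimensional $\lambda_\chi$-eigenspace inside $W^\gamma_\chi$. The paper instead defers to the proof of Proposition \ref{Propo-eigenvalue-one}. The converse direction of the claim there, read contrapositively, shows that a $\mathbb{U}$-valued solution on the support forces a modulus-one eigenvector already in $W^\gamma_\chi$. One-dimensionality of the eigenspace in $W^0_\chi$ then makes $\varphi_\chi$ H\"older. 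Your route is self-contained and gives regularity of every modulus-one eigenvector without using uniqueness; the paper's is shorter given what it has already established.

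One small point: the final uniqueness assertion concerns arbitrary continuous solutions on $\eta^{-1}(\supp\nu)$, not only unimodular ones. These are automatically unimodular, because their modulus descends to a continuous $\Gamma_\mu$-invariant function on $\supp\nu$, which is constant by Lemma \ref{Lemma-unqiue-sta-measure}. So your argument covers that case too.
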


\begin{proof}
Let $\lambda$ and $\lambda'$ be eigenvalues of modulus one of the operator $P_{\mu, \chi}$ in $W_{\chi}^{0}$,
and let $\varphi$ and $\varphi'$ be associated non-zero eigenvectors in $W_{\chi}^{0}$. 
It follows from the proof of Proposition \ref{Propo-eigenvalue-one} that both $\varphi$ and $\varphi'$ have constant non-zero modulus 
on $\eta^{-1}(\supp \nu)$ and that for every $g \in \supp \mu$ and $\wt b \in \eta^{-1}(\supp \nu)$, we have 
\begin{align*}
\varphi(g \wt b) = \lambda \varphi(\wt b)
\quad \mbox{and} \quad
\varphi'(g \wt b) = \lambda' \varphi'(\wt b). 
\end{align*}
For $\wt b \in \eta^{-1}(\supp \nu)$, we set $\varphi''(\wt b) = \frac{\varphi(\wt b)}{\varphi'(\wt b)}$. 
Then, for every $g \in \supp \mu$, $m \in M$ and $\wt b \in \eta^{-1}(\supp \nu)$, we get 
\begin{align}\label{equality equivalence varphi001}
\varphi''(g \wt b) = \frac{\lambda}{\lambda'} \varphi''(\wt b). 
\end{align}
Besides, as $\varphi(\wt b m) = \chi(m)^{-1} \varphi(\wt b)$ and $\varphi'(\wt b m) = \chi(m)^{-1} \varphi'(\wt b)$,
we have $\varphi''(\wt b m) = \varphi''(\wt b).$ 
This equality implies that we may write $\varphi'' = \overline{\varphi''} \circ \eta$, 
where $\overline{\varphi''}$ is a continuous function on $\supp \nu$.  
The equality \eqref{equality equivalence varphi001} may be rewritten as follows: 
 for every $g \in \supp \mu$ and $b \in \supp \nu$,
 we have $\overline{\varphi''}(g b) = \frac{\lambda}{\lambda'} \overline{\varphi''}(b)$. 
 Since the contraction property \eqref{contraction-property-on-B} holds, 
 we get that $\lambda = \lambda'$ and $\overline{\varphi''}$ is constant on $\supp \nu$, 
 which means that the eigenvalue is unique and that the associated eigenspace is one-dimensional. 
 As the powers of the operator $P_{\mu}$ acting on the space $C^0(\wt B)$ have uniformly bounded norm,
 so have the powers of the operator $P_{\mu, \chi}$ acting on the space $W_{\chi}^0$. 
 Therefore, the generalized eigenspace associated with the eigenvalue $\lambda_{\chi}$ coincides with the eigenspace. 
 The remainder of the corollary is a consequence of the proof of Proposition \ref{Propo-eigenvalue-one}. 
\end{proof}

We now proceed to the proof of Corollary \ref{Cor-equdistribution}. 
We shall need the following intermediate results. 

\begin{lemma}\label{Lem-dense-Con-Holder}
For any $x \in \wh M$ and $\gamma \in (0, 1]$, the space $W_x^{\gamma}$ is dense in $W_x^{0}$. 
\end{lemma}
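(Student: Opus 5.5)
The plan is to transfer the density statement from sections to equivariant maps. Set $E_x = B \times_{\rho_x} \bb C^{d_x}$. By Lemma \ref{Lem-closed injection-001}, the map $\varphi \mapsto s_{\varphi}$ is a Banach space isomorphism $W_x^{\gamma} = C^{\gamma}_{\rho_x}(\wt B, \bb C^{d_x}) \to \bf \Gamma^{\gamma}(E_x)$ for every $\gamma \in [0,1]$. These isomorphisms are compatible with the inclusions $W_x^{\gamma} \subset W_x^0$ and $\bf \Gamma^{\gamma}(E_x) \subset \bf \Gamma^0(E_x)$, since both are given by the same formula $s_{\varphi}$. By Proposition \ref{Sobolev-injection-lemma}, the injection $\bf \Gamma^{\gamma}(E_x) \to \bf \Gamma^0(E_x)$ has dense image. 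Pulling back through the isomorphism at level $0$ shows that $W_x^{\gamma}$ is dense in $W_x^0$.

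One point needs care. Lemma \ref{Lem-closed injection-001} and Proposition \ref{Propo-Lipschitz-bundle-structure} require the bundle to be a Lipschitz $M$-principal bundle, and they use Lemma \ref{Lem-structure}, which is where the Lie property enters. As in the proofs of Proposition \ref{Propo-Lipschitz-bundle-structure} and Corollary \ref{Corollary-isometric Lipschitz bundle action}, I would replace $M$ by $M/\ker \rho_x$ and $\wt B$ by $\wt B/\ker\rho_x$, which makes the relevant group a compact Lie group. I would also check that $W_x^{\gamma}$ is unchanged by this reduction: a $\rho_x$-equivariant map is $\ker\rho_x$-invariant, so it factors through $\wt B/\ker\rho_x$, and Hölder continuity passes through the quotient distance \eqref{def distance on B}. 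This verification is the main thing to be careful about.

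An alternative proof avoids bundles and would serve as a fallback if the reduction becomes delicate. Given $\varphi \in W_x^0$, approximate it uniformly by a Lipschitz map $\psi:\wt B \to \bb C^{d_x}$; this uses the density of Lipschitz functions on a compact metric space, for instance via Stone--Weierstrass applied componentwise. Then average:
\[
\psi_1(\wt b) = \int_M \rho_x(m)\psi(\wt b m)\,dm,
\]
exactly as in the proof of Lemma \ref{Lemma-surjectivity}. Because $M$ acts by isometries and $\rho_x$ may be taken unitary, $\psi_1$ is still Lipschitz, hence $\gamma$-Hölder. The computation in that proof shows $\psi_1$ is $\rho_x$-equivariant, so $\psi_1 \in W_x^{\gamma}$. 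Since averaging fixes $\varphi$, we get
\[
\|\psi_1-\varphi\|_0 = \Big\|\int_M \rho_x(m)(\psi-\varphi)(\cdot\, m)\,dm\Big\|_0 \leq \|\psi-\varphi\|_0,
\]
which can be made arbitrarily small. This second route is in fact the simplest, and I would present it as the main proof.
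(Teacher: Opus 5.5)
Your main (averaging) argument is essentially the paper's proof. You approximate $\varphi$ uniformly by a Hölder map, then project onto $\rho_x$-equivariant maps via $\psi \mapsto \int_M \rho_x(m)\psi(\cdot\, m)\,dm$; this preserves Hölder regularity (since $M$ acts isometrically and $\rho_x$ is unitary), fixes $\varphi$, and does not increase the uniform distance to it. Your bundle route via Lemma \ref{Lem-closed injection-001} and Proposition \ref{Sobolev-injection-lemma} also works, given the reduction to $M/\ker\rho_x$ you describe, but it is unnecessary.
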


\begin{proof}
For any $\ee >0$ and $\varphi \in W_x^{0}$, 
there exists a $\gamma$-H\"older continuous function $\psi: \wt B \to \bb C^{d_x}$  
such that $\|\psi(\wt b) - \varphi(\wt b)\| < \ee$ for all $\wt b \in \wt B$. 
For $\wt b \in \wt B$, we now define 
\begin{align*}
\psi_1(\wt b) =  \int_{M} \rho_x(m) \psi(\wt b m) dm, 
\end{align*}
where $dm$ is the normalized Haar measure of $M$.  
As in the proof of Lemma \ref{Lemma-surjectivity}, 
the function $\psi_1$ is also $\gamma$-H\"older continuous on $\wt B$, 
and $\psi_1(\wt b m_0) = \rho_x(m_0)^{-1} \psi_1(\wt b)$
 for any $m_0 \in M$, 
so that $\psi_1 \in W_{x}^{\gamma}$. 
Besides, as $\varphi$ is $\rho_x$-equivariant
(i.e., $\varphi(\wt b m) = \rho_x(m)^{-1} \varphi(\wt b)$ for any $\wt b \in \wt B$ and $m \in M$), 
we get that, for any $\wt b \in \wt B$, 
\begin{align*}
\|\psi_1(\wt b) - \varphi(\wt b)\| 
& =  \left\|  \int_{M} \rho_x(m) \psi(\wt b m) dm -  \varphi(\wt b) \right\|  \notag\\
& = \left\|  \int_{M} \rho_x(m) (\psi(\wt b m) -  \varphi(\wt b m)) dm  \right\|  \notag\\
& < \ee   \int_{M} \| \rho_x(m) \| dm  = \ee. 
\end{align*}
Since $\ee >0$ is arbitrary, the conclusion follows. 
\end{proof}

\begin{lemma}\label{Lem-Riesz-repre}
Let $X, Y$ be compact metrizable spaces, and let $W \subset C^0(X)$ be a dense subspace 
that contains the constant functions and is stable under complex conjugation. 
Assume that we are given a linear operator $Q: W \to C^0(Y)$ such that $Q 1 = 1$ and $Q \varphi \geq 0$ 
for every $\varphi \geq 0$ in the space $W$. 
Then $Q$ can be extended to a continuous operator $Q: C^0(X) \to C^0(Y)$. 

Moreover, there exists a unique continuous map $y \mapsto q_y$ from $Y$ to the set of Borel probability measures on $X$, where the latter is equipped with the weak-$*$ topology
such that, for every $\varphi \in C^0(X)$ and $y \in Y$, 
\begin{align*}
Q \varphi(y) = \int_X \varphi(x) \,  q_y(dx). 
\end{align*}
\end{lemma}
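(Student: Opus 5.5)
The plan is the standard argument that positive unital maps are contractions, followed by Riesz representation pointwise in $y$. The first step is to show that $Q$ is bounded on $W$ for the uniform norm, with $\|Q\varphi\|_\infty \leq \|\varphi\|_\infty$ for real-valued $\varphi$. If $\varphi \in W$ is real-valued, then $\|\varphi\|_\infty \pm \varphi \geq 0$ and both lie in $W$, because $W$ contains constants. Positivity and $Q1 = 1$ then give $-\|\varphi\|_\infty \leq Q\varphi \leq \|\varphi\|_\infty$. In particular $Q\varphi$ is real-valued.

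For complex $\varphi \in W$, I will use stability under conjugation. This gives $\Re\varphi = (\varphi+\bar\varphi)/2 \in W$, and likewise for $\Im\varphi$, so $Q$ commutes with conjugation. To get the sharp bound I will fix $y \in Y$ and choose $\theta$ with $e^{i\theta}Q\varphi(y) = |Q\varphi(y)|$. Then $|Q\varphi(y)| = Q(\Re(e^{i\theta}\varphi))(y) \leq \|\varphi\|_\infty$. The crude bound $\|Q\varphi\|_\infty \leq 2\|\varphi\|_\infty$ would already be enough for extension. Since $W$ is dense in $C^0(X)$ and $C^0(Y)$ is complete, $Q$ extends uniquely by continuity to a bounded operator $C^0(X) \to C^0(Y)$. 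The extension is still positive with $Q1=1$. Indeed, a nonnegative $\varphi\in C^0(X)$ is a uniform limit of real $\varphi_n \in W$ (replace $\varphi_n$ by $\Re\varphi_n$). Then $\varphi_n + \ee_n$ with $\ee_n = \|\varphi-\varphi_n\|_\infty$ is nonnegative and lies in $W$, and it still converges to $\varphi$.

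Next, for each $y\in Y$ the functional $\varphi \mapsto Q\varphi(y)$ on $C^0(X)$ is linear, positive, and takes the value $1$ at $1$. By the Riesz representation theorem on the compact metrizable space $X$, there is a unique Borel probability measure $q_y$ with $Q\varphi(y) = \int_X \varphi\, dq_y$ for all $\varphi \in C^0(X)$. Uniqueness of the family $(q_y)$ is immediate from uniqueness in the Riesz theorem. Continuity of $y \mapsto q_y$ in the weak-$*$ topology amounts to continuity of $y \mapsto \int \varphi\, dq_y = Q\varphi(y)$ for each fixed $\varphi \in C^0(X)$. This holds because $Q\varphi \in C^0(Y)$. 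Since the space of probability measures on $X$ is metrizable, sequential continuity suffices, although the definition of the weak-$*$ topology gives continuity directly anyway.

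The only mildly delicate point is checking that positivity survives the extension, which uses that $W$ contains constants and is conjugation-stable. Everything else is routine, so I expect no real obstacle.
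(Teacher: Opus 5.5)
Your proof is correct and follows the same route as the paper: show $Q$ is a contraction from positivity and $Q1=1$, extend by density, then apply the Riesz representation theorem pointwise in $y$ and read off weak-$*$ continuity from $Q\varphi \in C^0(Y)$. You also supply two details the paper leaves implicit: the contraction bound for complex-valued $\varphi \in W$, which uses stability under conjugation, and the fact that positivity survives the extension to $C^0(X)$.
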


\begin{proof}
The positivity ($Q \varphi \geq 0$ for every non-negative $\varphi \in W$) 
and $Q 1 = 1$ imply that, for any $\varphi \in W$, one has $\|Q \varphi \|_{\infty} \leq \| \varphi \|_{\infty} Q 1 = \| \varphi \|_{\infty}$. 
Thus $Q$ is a contraction on $W$ and can be continuously extended as a contraction from $C^0(X)$ to $C^0(Y)$, 
giving the first claim.

For each $y \in Y$, the map $\varphi \mapsto Q\varphi(y)$ is a positive linear functional on $C^0(X)$ with norm $1$. 
By the Riesz representation theorem (see for example \cite[Theorems 2.14 and 2.18]{Rud87}), 
there exists a unique Borel probability measure $q_y$ on $X$ representing this functional. 
The continuity of the map $y \mapsto q_y$ in the weak-$*$ topology 
follows from the fact that for each $\varphi \in C^0(X)$, the function $y \mapsto Q\varphi(y)$ is continuous. 
\end{proof}

\begin{proof}[Proof of Corollary \ref{Cor-equdistribution}]
We begin by constructing the element $m_0^{\mu}$. 
We keep the notation of Corollary \ref{Cor-eigenvalue-modulus-one},
where $M_0^{\mu}$ is defined by \eqref{def annihilator M0}. 
The uniqueness properties established in Corollary \ref{Cor-eigenvalue-modulus-one}
imply that, for any characters $\chi, \chi'$ of $M/M_0^{\mu}$, 
we have $\lambda_{\chi \chi'} = \lambda_{\chi} \lambda_{\chi'}$, 
where $\lambda_{\chi}$ is the unique eigenvalue of modulus one for 
the operator $P_{\mu, \chi}$ acting on the space $W_{\chi}^{0}$. 
Thus the map $\chi \mapsto \lambda_{\chi}$
is a character of the dual group of $M/M_0^{\mu}$.
By the Pontryagin--van Kampen duality theorem \cite[Chapter 4]{Mor77}, 
there exists a unique element $m_0^{\mu} \in M/M_0^{\mu}$ such that, for any character $\chi$ of $M/M_0^{\mu}$,
we get 
\begin{align}\label{def m0 mu}
\chi(m_0^{\mu}) = \lambda_{\chi}. 
\end{align}
In the same way, for any characters $\chi, \chi'$ of $M/M_0^{\mu}$, 
the uniqueness properties also give that 
on the set $\eta^{-1}(\supp \nu)$, 
the eigenvectors satisfy $\varphi_{\chi \chi'} = \varphi_{\chi} \varphi_{\chi'}$. 
Consequently, there exists a unique H\"older continuous map $\theta: \eta^{-1}(\supp \nu) \to M/M_0^{\mu}$ such that, 
for every character $\chi$ of $M/M_0^{\mu}$ and every $\wt b \in \eta^{-1}(\supp \nu)$, we have 
\begin{align}\label{def-theta-wt-b}
\chi(\theta(\wt b)) = \varphi_{\chi}(\wt b).
\end{align}
By \eqref{property varphi phi chi m}, for any character $\chi$ of $M/M_0^{\mu}$, 
any $m \in M$, any $g \in \supp \mu$ and any $\wt b \in \eta^{-1}(\supp \nu)$, 
we have $\varphi_{\chi}(\wt b m) = \chi(m)^{-1} \varphi_{\chi}(\wt b)$ and $\varphi_{\chi}(g \wt b) = \lambda_{\chi} \varphi_{\chi}(\wt b)$. 
Hence, from \eqref{def m0 mu} and \eqref{def-theta-wt-b}, it follows that 
\begin{align*}
\theta(\wt b m) = m^{-1} \theta(\wt b)
\quad \mbox{and}  \quad
\theta(g \wt b) = m_0^{\mu} \theta(\wt b). 
\end{align*}
In particular, define $M_1^{\mu}$ as the closure in $M/M_0^{\mu}$ of the subgroup spanned by $m_0^{\mu}$. 
Fix $\wt b \in \eta^{-1}(\supp \nu)$ with $\theta(\wt b) = e$, then the above relations imply that the set $\theta^{-1}(M_1^{\mu})$ 
contains the orbit closure $\overline{\Gamma_{\mu} \wt b}$. 
Thus, using the assumption $\wt b M \subset \overline{\Gamma_{\mu} \wt b}$, 
we get that $\theta^{-1}(M_1^{\mu})$ contains $\wt b M$. 
Therefore, as $\theta(\wt b M) = M/M_0^{\mu}$, we obtain $M_1^{\mu} = M/M_0^{\mu}$,
which is to say that $m_0^{\mu}$ spans a dense subgroup of $M/M_0^{\mu}$.

Thanks to Corollary \ref{Corollary-isometric Lipschitz bundle action} and the moment assumption \eqref{moment-assumption-01},
we may find $\gamma>0$ such that the conclusion of  Theorem \ref{Thm-spectral-gap} 
applies for all the vector bundles $B  \times_{\rho_x} \bb C^{d_x} \to B$,
where $x \in \wh M$. 
Next we will construct, for any representation $x \in \wh M$, 
a linear operator $Q_x: \pi_x(C^{\gamma}(\wt B)) \to C^0(M/M_0^{\mu})$ as follows,
where $\pi_x$ is the operator defined by \eqref{def operator pi-x}. 
If $P_{\mu, x}$ does not admit an eigenvalue of modulus one in $W_x^0$, then we set $Q_x = 0$. 
Note that, as the space $\pi_x(C^{\gamma}(\wt B))$ is $P_{\mu}$-invariant, on this space we have $Q_x P_{\mu} = 0$. 
Otherwise, if $P_{\mu, x}$ admits an eigenvalue of modulus one in $W_x^0$, 
then, by Proposition \ref{Propo-eigenvalue-one}, the representation $\rho_x$ is the one-dimensional representation
associated to a character $\chi$ of $M/M_0^{\mu}$. 
By Theorem \ref{Thm-spectral-gap}, Lemma \ref{Lem-closed injection-001} and Corollary \ref{Cor-closed injection-001}, 
the essential spectral radius of $P_{\mu, x} = P_{\mu, \chi}$ in $W_x^{\gamma}$ is strictly smaller than one. 
By Corollary \ref{Cor-eigenvalue-modulus-one}, $P_{\mu, \chi}$ admits $\lambda_{\chi}$ as its unique eigenvalue of modulus one
and the associated generalized eigenspace is the line $\bb C \varphi_{\chi}$. Therefore, there exists a unique bounded linear functional $\zeta_{\chi}$
of $W_{\chi}^{\gamma}$ such that $\zeta_{\chi}(\varphi_{\chi}) = 1$
and $P_{\mu, \chi} \ker(\zeta_{\chi}) \subset \ker(\zeta_{\chi})$. 
In particular, if $\chi$ is the trivial character of $M/M_0^{\mu}$, then $W_{\chi}^{\gamma}$ may be identified with the space of $\gamma$-H\"older continuous functions on $B$,
and thanks to Lemma \ref{Lemma-unqiue-sta-measure}, the associated linear functional $\zeta_0$ is integration with respect to the stationary measure $\nu$. 
In general, for any character $\chi$ of $M/M_0^{\mu}$ and any function $\varphi \in W_{\chi}^{\gamma}$, we set 
\begin{align}\label{formula Q chi varphi}
Q_{\chi} \varphi = \zeta_{\chi}(\varphi) \chi \in C^0(M/M_0^{\mu}). 
\end{align}
Note that, by \eqref{def m0 mu} and \eqref{formula Q chi varphi}, we get that, for any $m \in M$,  
\begin{align}\label{equivariance-chi-001}
Q_{\chi} P_{\mu, \chi} \varphi(m) 
& = \zeta_{\chi}(P_{\mu, \chi}\varphi) \chi(m) 
= \lambda_{\chi} \zeta_{\chi}(\varphi) \chi(m)  \notag\\
& = \chi(m_0^{\mu}) \zeta_{\chi}(\varphi) \chi(m)
= Q_{\chi} \varphi(m_0^{\mu} m). 
\end{align}

We claim that for any $x \in \wh M$, any $\varphi \in \pi_x(C^{\gamma}(\wt B))$ and any $\wt b \in \eta^{-1}(\supp \nu)$,
we have 
\begin{align*}
 P_{\mu, x}^n \varphi(\wt b) -  Q_x \varphi \big( (m_0^{\mu})^n \theta(\wt b) \big) 
\xrightarrow[n\to\infty]{}  0. 
\end{align*}
Indeed, if $P_{\mu, x}$ does not admit an eigenvalue of modulus one in $W_x^0$, then $P_{\mu, x}$
has spectral radius strictly smaller than one in $W_x^{\gamma}$, 
and hence, for any $\wt b \in \wt B$,
\begin{align}\label{convergence-P-mu-easy}
P_{\mu, x}^n \varphi(\wt b)  \xrightarrow[n\to\infty]{}   0. 
\end{align}
Else, the representation $\rho_x$ is the one-dimensional representation
associated to a character $\chi$ of $M/M_0^{\mu}$. Then, $P_{\mu, \chi}$
has spectral radius strictly smaller than one in $\ker (\zeta_{\chi})$. 
As $\varphi - \zeta_{\chi}(\varphi) \varphi_{\chi} \in \ker (\zeta_{\chi})$, we get, uniformly in $\wt b \in \wt B$,
\begin{align}\label{Convergence-P-mu}
P_{\mu, \chi}^n \varphi(\wt b) -  \zeta_{\chi}(\varphi) \lambda_{\chi}^n \varphi_{\chi}(\wt b) 
\xrightarrow[n\to\infty]{} 0. 
\end{align}
The conclusion follows, since by the definition of $m_0^{\mu}$ and $\theta$ (cf.\  \eqref{def m0 mu} and \eqref{def-theta-wt-b}), 
for any $\wt b \in \eta^{-1}(\supp \nu)$, 
\begin{align}\label{property Q chi m0 mu}
Q_{\chi} \varphi ((m_0^{\mu})^n \theta(\wt b)) =  \zeta_{\chi}(\varphi) \chi((m_0^{\mu})^n \theta(\wt b)) = \zeta_{\chi}(\varphi) \lambda_{\chi}^n \varphi_{\chi}(\wt b). 
\end{align}

To summarize, now we have built a linear operator 
\begin{align}\label{def operator Q}
Q = \bigoplus_{x \in \wh M} Q_x: \bigoplus_{x \in \wh M} \pi_x (C^{\gamma}(\wt B)) \to C^0(M/M_0^{\mu}),
\end{align}
where, by direct sum, we mean the algebraic direct sum. 
Then, by \eqref{convergence-P-mu-easy}, \eqref{Convergence-P-mu} and \eqref{property Q chi m0 mu},
 we have, for any $\varphi \in \bigoplus_{x \in \wh M} \pi_x (C^{\gamma}(\wt B))$, uniformly in $\wt b \in \eta^{-1}(\supp \nu)$,
\begin{align*}  
P_{\mu}^n \varphi(\wt b) - Q\varphi \big( (m_0^{\mu})^n \theta(\wt b) \big) 
\xrightarrow[n\to\infty]{} 0. 
\end{align*}
Note that, in particular, if $\varphi$ is real and nonnegative, so is $Q \varphi$. 
Indeed, let us fix $\wt b \in \eta^{-1}(\supp \nu)$. As $m_0^{\mu}$ spans a dense subgroup of $M/M_0^{\mu}$, 
for any $m \in M$, we may find a sequence $(n_k)_{k \geq 1}$ of integers such that $(m_0^{\mu})^{n_k} \theta(\wt b) \to m$ as $k \to \infty$. 
Then we get 
$$Q \varphi(m) = \lim_{k \to \infty} Q\varphi \big( (m_0^{\mu})^{n_k} \theta(\wt b) \big) = \lim_{k \to \infty} P_{\mu}^{n_k} \varphi(\wt b) \geq 0.$$
By Lemmas \ref{Lem-representation-compact-group} and \ref{Lem-dense-Con-Holder}, 
the space $\bigoplus_{x \in \wh M} \pi_x (C^{\gamma}(\wt B))$ is dense in $\bigoplus_{x \in \wh M} \pi_x (C^{0}(\wt B))$.
Therefore, using Lemma \ref{Lem-Riesz-repre}, we can extend $Q$ as a bounded operator  from the space $\bigoplus_{x \in \wh M} \pi_x (C^{0}(\wt B))$ to $C^0(M/M_0^{\mu})$.
Moreover, since $\|P_{\mu}\| \leq 1$ in $C^0(\wt B)$, we still get that, for any $\varphi \in C^0(\wt B)$, uniformly in $\wt b \in \eta^{-1}(\supp \nu)$, 
\begin{align*} 
P_{\mu}^n \varphi(\wt b) - Q\varphi \big( (m_0^{\mu})^n \theta(\wt b) \big) 
\xrightarrow[n\to\infty]{} 0. 
\end{align*}
Note that, in particular, for any $\psi  = \sum_{x \in \wh M} \psi_x \in \bigoplus_{x \in \wh M} \pi_x (C^{\gamma}(\wt B))$, 
by \eqref{formula Q chi varphi}, we derive that 
\begin{multline}\label{integral Q psi 001}
\int_{M/M_0^{\mu}}  Q \psi(m) dm 
 = \sum_{\chi \in \wh{M/M_0^{\mu}} } \int_{M/M_0^{\mu}} Q_{\chi} \psi_{\chi}(m) dm   \\ 
 =  \sum_{\chi \in \wh{M/M_0^{\mu}} } \zeta_{\chi}(\psi_{\chi}) \int_{M/M_0^{\mu}} \chi(m) dm  
 =  \zeta_{0}(\psi_{0}) 
 = \int_B  \psi_0 d\nu  
 = \int_{\wt B} \psi d\wt\nu. 
\end{multline}
The last equality follows from the definition of the $\mu$-stationary probability measure $\wt \nu$ 
introduced in Subsection \ref{section equidistribution 001}.
Indeed, by Lemma \ref{Lem-representation-compact-group}, for $\wt b \in \wt B$, we have $\psi_0(\wt b) = \int_{M} \psi(\wt b m) dm$, 
Hence, if we identify this $M$-invariant function on $\wt B$ with a function on $B$, we obtain $\int_B  \psi_0 d\nu = \int_{\wt B} \psi d\wt\nu$. 
By density, the formula \eqref{integral Q psi 001} holds for any function  $\psi \in C^0(\wt B)$. 
Thanks to the interpretation of the operator $Q$ as a family of probability measures (see Lemma \ref{Lem-Riesz-repre}), this proves equation \eqref{nu-M-M0-meas}. 
In the same way, thanks to equation \eqref{equivariance-chi-001}, for any $\varphi \in C^0(\wt B)$ and $m \in M$, we obtain 
$Q (P_{\mu} \varphi)(m) = Q \varphi(m_0^{\mu} m)$, which shows \eqref{property-F-meas}. 

To conclude, we need to extend the previous convergence to the case of any $\wt b \in \wt B$. 
To this end, for any $\wt b \in \eta^{-1}(\supp \nu)$,  we set $\theta_{\wt b} = \delta_{\theta(\wt b)}$ to be the Dirac measure concentrated on $\theta(\wt b)$,
and we will extend the map $\wt b \mapsto \theta_{\wt b}$ as a continuous map from all of $\wt B$ to the space of probability measures on $M/M_0^{\mu}$.

Recall that, in Corollary \ref{Cor-eigenvalue-modulus-one}, 
for any character  $\chi$ of $M/M_0^{\mu}$, we have defined a H\"older continuous function $\varphi_{\chi}$ on $\wt B$.
We claim that, for any $\wt b \in \wt B$, the function $\chi \mapsto \varphi_{\chi}(\wt b)$ is the Fourier transform of a Borel probability measure on $M/M_0^{\mu}$. 
This amounts to showing that, for any integer $r \geq 1$, any characters $\chi_1, \ldots, \chi_r$, any scalars $z_1, \ldots, z_r \in \bb C$ and any $\wt b \in \wt B$, we have
\begin{align*}
\sum_{1 \leq i, j \leq r} z_i \overline{z_j} \varphi_{\chi_i \overline{\chi_j}}(\wt b) \geq 0. 
\end{align*}
We choose a sequence of integers $(n_k)_{k \geq 1}$ such that $n_k \to \infty$ and $(m_0^{\mu})^{n_k} \to e$ as $k \to \infty$. 
Then we get 
\begin{align*}
\sum_{1 \leq i, j \leq r} z_i \overline{z_j} \varphi_{\chi_i \overline{\chi_j}}(\wt b) 
& = \lim_{k \to \infty} \sum_{1 \leq i, j \leq r} z_i \overline{z_j}  \chi_i((m_0^{\mu})^{n_k}) \overline{ \chi_j( (m_0^{\mu})^{n_k}) } \varphi_{\chi_i \overline{\chi_j}}(\wt b)  \notag\\
& = \lim_{k \to \infty} \sum_{1 \leq i, j \leq r} z_i \overline{z_j}  \lambda_{\chi_i \overline{\chi_j}}^{n_k}  \varphi_{\chi_i \overline{\chi_j}}(\wt b)  \notag\\
& = \lim_{k \to \infty} \sum_{1 \leq i, j \leq r} z_i \overline{z_j}  P_{\mu}^{n_k}  \varphi_{\chi_i \overline{\chi_j}}(\wt b)  \notag\\
& = \lim_{k \to \infty} \sum_{1 \leq i, j \leq r} z_i \overline{z_j}  \bb E  \varphi_{\chi_i \overline{\chi_j}}(g_{n_k} \cdots g_1 \wt b). 
\end{align*}
It follows from the contraction property \eqref{contraction-property-on-B} that, for any $\wt b \in \wt B$, we have,  as $n \to \infty$, 
$\wt{\bf d}(g_n \cdots g_1 \wt b, \eta^{-1}(\supp \nu)) \to 0$ in probability. 
Since, on the set $\eta^{-1}(\supp \nu)$, for any $1 \leq i, j \leq r$, we have $\varphi_{\chi_i \overline{\chi_j}} = \varphi_{\chi_i} \overline{\varphi_{\chi_j}}$, 
we get 
\begin{align*}
\sum_{1 \leq i, j \leq r} z_i \overline{z_j} \varphi_{\chi_i \overline{\chi_j}}(\wt b)
& = \lim_{k \to \infty} \sum_{1 \leq i, j \leq r} z_i \overline{z_j}  \bb E \left[ \varphi_{\chi_i} (g_{n_k} \cdots g_1 \wt b) \overline{\varphi_{\chi_j}} (g_{n_k} \cdots g_1 \wt b) \right] \notag\\
& = \lim_{k \to \infty}   \bb E \bigg[ \bigg|  \sum_{i = 1}^{r} z_i   \varphi_{\chi_i} (g_{n_k} \cdots g_1 \wt b)  \bigg|^2 \bigg]  
 \geq 0. 
\end{align*}
This proves the claim that, for any $\wt b \in \wt B$, the function $\chi \mapsto \varphi_{\chi}(\wt b)$ is the Fourier transform of a Borel probability measure on $M/M_0^{\mu}$,
which we denote by $\theta_{\wt b}$. 
It follows that, for any character $\chi$ of $M/M_0^{\mu}$, by \eqref{def m0 mu}, we have 
\begin{align*}
\int_{M/M_0^{\mu}} \chi(m) \theta_{\wt b}(dm)
& = \varphi_{\chi}(\wt b) 
 = \lambda_{\chi}^{-1} P_{\mu, \chi}\varphi_{\chi}(\wt b)   \notag\\
& = \chi(m_0^{\mu})^{-1}  \int_G  \varphi_{\chi}(g \wt b) \mu(dg) \notag\\
& =  \chi(m_0^{\mu})^{-1}  \int_G \bigg(  \int_{M/M_0^{\mu}} \chi(m)  \theta_{g \wt b}(dm) \bigg) \mu(dg)  \notag\\
& =    \int_G  \bigg(  \int_{M/M_0^{\mu}} \chi((m_0^{\mu})^{-1} m)  \theta_{g \wt b}(dm) \bigg) \mu(dg). 
\end{align*}
This proves \eqref{harmonicity-meas}. 
 
Thanks to \eqref{convergence-P-mu-easy} and \eqref{Convergence-P-mu}, we know that, for any $\varphi \in \bigoplus_{x \in \wh M} \pi_x (C^{\gamma}(\wt B))$, we have,
uniformly in $\wt b \in \wt B$, 
\begin{align*}
P_{\mu}^n \varphi(\wt b) -  \int_{M/M_0^{\mu}} Q\varphi \big( (m_0^{\mu})^n m \big) \theta_{\wt b}(dm) 
\xrightarrow[n\to\infty]{} 0. 
\end{align*}
Since the operator $Q: C^{0}(\wt B) \to C^0(M/M_0^{\mu})$ is bounded and the space $\bigoplus_{x \in \wh M} \pi_x (C^{\gamma}(\wt B))$ is dense in $C^{0}(\wt B)$,
the above convergence holds for any $\varphi \in C^{0}(\wt B)$. 
Thanks to the interpretation of the operator $Q$ as a family of probability measures (see Lemma \ref{Lem-Riesz-repre}), this proves equation \eqref{convergence-P-mu-meas}. 
As noticed in Remark \ref{Remark-unique-invariant-measure}, this, together with \eqref{nu-M-M0-meas}, implies the uniqueness of the invariant measure $\wt \nu$. 
\end{proof}


\section{Local limit theorems for cocycles}\label{Section-LLT}

We shall state a general local limit theorem for certain cocycles over group actions.
As before, we assume that $\wt B$ is a compact metric space, $M$ is a compact metrizable group acting on the right on $\wt B$, 
and $\eta: \wt B \to B$ is a Lipschitz $M$-principal bundle. 
Let $G$ be a locally compact group acting on the left on $\wt B$ by Lipschitz continuous transformations,
and suppose that this action commutes with the action of $M$. 
We also suppose that $\mu$ is a Borel probability measure on $G$ 
such that the action of $G$ on the base space $B$ satisfies the contraction property \eqref{contraction-property-on-B}.

\subsection{Cocycles and eigenvalues of modulus one}\label{Subsec-Cocycles and eigenvalues of modulus one}
We let $\sigma_0: G \times B \to \bb R$ be a continuous cocycle such that, for any $g \in G$, 
the function $\sigma_0(g, \cdot)$ is Lipschitz continuous on $B$.
For each $g \in G$, we set 
\begin{align*}
\sigma_{\sup}(g) = \sup_{b \in B} |\sigma_0(g, b)|
\quad  \mbox{and}   \quad
\sigma_{\rm Lip}(g) 
 = \sup_{\substack{b, b' \in B \\ b \neq b'}} \frac{|\sigma_0(g, b) - \sigma_0(g, b')|}{\bf d (b, b')}. 
\end{align*}
We always assume that there exists a constant $\alpha>0$ such that 
\begin{align}\label{moment-assumption-02}
\int_{G} e^{\alpha \sigma_{\sup}(g)} \mu(dg) < \infty
\quad  \mbox{and}   \quad
\int_{G} \sigma_{\rm Lip}(g)^{\alpha} \mu(dg) < \infty. 
\end{align}

For $\gamma \in [0,1]$ and $x \in \wh M$, we still write $W_x^{\gamma}$ 
for the space of $\gamma$-H\"older continuous maps $\varphi: \wt B \to \bb C^{d_x}$ that are $\rho_x$-equivariant.
For a complex number $z \in \bb C$ whose real part is sufficiently close to $0$, 
we define a bounded linear operator of $W_x^{0}$ by setting, for $\varphi \in W_x^{0}$ and $\wt b \in \wt B$, 
\begin{align}\label{def-P-mu-x-z}
P_{\mu, x, z} \varphi(\wt b) = \int_{G} \varphi(g \wt b) e^{z \sigma_0(g, \eta(\wt b))} \mu(dg). 
\end{align}
This operator is a perturbation of the operator $P_{\mu, x}$ studied in Section \ref{Sec-Existence of limit measures}. 
We can extend Proposition \ref{Propo-eigenvalue-one} to this family of operators with two parameters. 

\begin{proposition}\label{Propo-eigenvalue-one-002}
Let $\mu$ be a Borel probability measure on $G$. 
Assume that the action of $G$ on $\wt B$ satisfies the moment condition \eqref{moment-assumption-01},
the action of $G$ on $B$ satisfies the contraction property \eqref{contraction-property-on-B}, 
and the continuous cocycle $\sigma_0: G \times B \to \bb R$ satisfies the integrability condition \eqref{moment-assumption-02}. 
Suppose moreover that, for every $\wt b \in \wt B$ with $\eta(\wt b) \in \supp \nu$,   we have $\wt b M \subset \overline{\Gamma_{\mu} \wt b}$. 
Then, there exists a unique closed subgroup $M_1^{\mu}$ of $M \times \bb R$ with $[M, M] \times \{0\} \subset M_1^{\mu}$ such that,  
for any $x \in \wh M$ and $t \in \bb R$, the operator $P_{\mu, x, it}$ admits an eigenvalue with modulus one in $W^0_x$ if and only if  
$\rho_x$ is the one-dimensional representation associated with a character $\chi_x$ of $M$
such that, for any $(m, s) \in M_1^{\mu}$, one has $\chi_x(m) e^{i st} = 1$. 
\end{proposition}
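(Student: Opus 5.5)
We follow the proof of Proposition \ref{Propo-eigenvalue-one}, replacing the action of $M$ by an action of $M\times\bb R$ on an enlarged space. Fix $x\in\wh M$ and $t\in\bb R$, and let $\varphi\in W_x^0$ be nonzero with $P_{\mu,x,it}\varphi=\lambda\varphi$ for some $|\lambda|=1$. Normalize so that $\sup\|\varphi\|=1$ and set $\wt X=\{\|\varphi\|=1\}$.

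\textbf{Step 1: reduction to one-dimensional representations.} Since $|e^{it\sigma_0}|=1$, the strict convexity argument used before applies unchanged. The set $\wt X$ is closed, $M$-invariant and $\Gamma_\mu$-invariant, so $\wt X\supset\eta^{-1}(\supp\nu)$. Moreover, for every $g\in\supp\mu$ and $\wt b\in\wt X$ we have
\[
\varphi(g\wt b)=\lambda e^{-it\sigma_0(g,\eta(\wt b))}\varphi(\wt b).
\]
It follows that the line $\bb C\varphi(\wt b)$ is $\Gamma_\mu$-invariant, hence constant on $\overline{\Gamma_\mu\wt b}\supset\wt bM$. By $\rho_x$-equivariance this line is then an $M$-fixed point of $\bb P(\bb C^{d_x})$, and irreducibility forces $d_x=1$. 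Consequently only characters $\chi$ of $M$ occur, all trivial on $[M,M]$.

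\textbf{Step 2: the set $\Lambda$ is a group.} Let $\Lambda$ be the set of pairs $(\chi,t)$ for which $P_{\mu,\chi,it}$ has an eigenvalue of modulus one. We first prove the analogue of the claim \eqref{two equations varphi}: $(\chi,t)\in\Lambda$ if and only if there exist a continuous $\varphi:\eta^{-1}(\supp\nu)\to\bb U$ and $\lambda\in\bb U$ such that $\varphi(\wt bm)=\chi(m)^{-1}\varphi(\wt b)$ and
\[
\varphi(g\wt b)=\lambda e^{-it\sigma_0(g,\eta\wt b)}\varphi(\wt b)
\]
for all $g\in\supp\mu$. Necessity is Step 1.

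For sufficiency, suppose $P_{\mu,\chi,it}$ has no eigenvalue of modulus one in $W_\chi^0$. We view $P_{\mu,\chi,it}$ on sections of $E_\chi=B\times_\chi\bb C$ as in Lemma \ref{Lem-closed injection-001}. Corollary \ref{Corollary-Spectral-gap-002}, whose moment hypotheses come from Corollary \ref{Corollary-isometric Lipschitz bundle action} and \eqref{moment-assumption-02}, then gives essential spectral radius $<1$ on $W_\chi^\gamma$, hence spectral radius $<1$ there. We transfer this to $\overline W_\chi^\gamma$ via Lemma \ref{Lemma-surjectivity}; note that the cocycle factor only depends on $\eta(\wt b)$, so restriction to $\eta^{-1}(\supp\nu)$ commutes with the operator. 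We extend to $\overline W_\chi^0$ by density and the uniform bound $\|P^n_{\mu,\chi,it}\|\le|I|$, which holds because the twisted action is isometric. The resulting decay $P^n\varphi\to0$ contradicts the existence of such a $\varphi$.

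With this characterization, if $(\chi,t),(\chi',t')\in\Lambda$ with witnesses $\varphi,\varphi'$, then $\varphi^{-1}\varphi'$ witnesses $(\chi^{-1}\chi',t'-t)\in\Lambda$, by additivity of the phase $e^{-it\sigma_0}$ in $t$. So $\Lambda$ is a subgroup of $\wh M^{\rm ab}\times\bb R\cong\widehat{M/[M,M]\times\bb R}$.

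\textbf{Step 3: closedness.} Pontryagin duality identifies closed subgroups of $\widehat{M/[M,M]}\times\bb R$ with closed subgroups of $M/[M,M]\times\bb R$ via annihilators. We therefore need $\Lambda$ to be closed, and since the character part is discrete, it suffices to show that for fixed $\chi$ the set $\{t:(\chi,t)\in\Lambda\}$ is closed in $\bb R$.

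This is the main obstacle. I would argue by continuity. Let $t_k\to t$ with $(\chi,t_k)\in\Lambda$ and unit-modulus witnesses $\varphi_k$. By analyticity in $z$ (Corollary \ref{Corollary-Spectral-gap-002}) and upper semicontinuity of the peripheral spectrum under small perturbations with essential radius $<1$, any eigenvalues $\lambda_k$ of modulus one accumulate at a spectral value of $P_{\mu,\chi,it}$ of modulus one. Since the essential radius is $<1$, that value is an eigenvalue. In practice one passes to a limit of the spectral projections, which are analytic in $z$ away from the spectrum.

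Finally, set $M_1^\mu=\{(m,s):\chi(m)e^{ist}=1\ \forall(\chi,t)\in\Lambda\}$. This is closed and contains $[M,M]\times\{0\}$, and by duality $\Lambda$ is exactly its annihilator, which gives the stated equivalence. Uniqueness of $M_1^\mu$ follows because a closed subgroup containing $[M,M]\times\{0\}$ is determined by its annihilator.
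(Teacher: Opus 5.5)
Your proposal is correct and follows the same route as the paper. The steps match: reduction to $d_x=1$ via the set where $\|\varphi\|$ is maximal, the characterization of $\Lambda$ by unit-modulus witnesses on $\eta^{-1}(\supp\nu)$ obtained from Corollary~\ref{Corollary-Spectral-gap-002} and Lemma~\ref{Lemma-surjectivity}, the group property by taking quotients of witnesses, and Pontryagin duality to define $M_1^{\mu}$ as the annihilator. The paper only asserts that this characterization makes $\Lambda$ closed; your upper-semicontinuity-of-spectrum argument fills that in correctly, provided you run it in $W^{\gamma}_{\chi}$ (where $t\mapsto P_{\mu,\chi,it}$ is norm continuous and the essential spectral radius is $<1$) and use your Step~2 to see that $(\chi,t_k)\in\Lambda$ already forces a modulus-one eigenvalue of $P_{\mu,\chi,it_k}$ in $W^{\gamma}_{\chi}$.
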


\begin{remark}\label{Rem-group-M0-M1}
The group $M_0^{\mu}$ appearing in Proposition \ref{Propo-eigenvalue-one} is the closure of the projection
of the group $M_1^{\mu}$ onto $M$ in Proposition \ref{Propo-eigenvalue-one-002}. 
\end{remark}

\begin{proof}[Proof of Proposition \ref{Propo-eigenvalue-one-002} and Remark \ref{Rem-group-M0-M1}]
The argument follows the same lines as the proof of Proposition \ref{Propo-eigenvalue-one}.
By using the same technique, one first shows that, for any $x \in \wh M$ and $t \in \bb R$, 
 if the operator $P_{\mu, x, it}$ admits an eigenvalue of modulus one,
then necessarily $d_x = 1$. 
Next, for a character $\chi$ of $M$ and $t \in \bb R$, by using Corollary \ref{Corollary-Spectral-gap-002}, 
one shows that $P_{\mu, \chi, it}$ admits an eigenvalue $\lambda$ of modulus one
if and only if there exists a $\chi$-equivariant continuous function $\varphi$ on $\eta^{-1}(\supp \nu)$ with values of modulus one
such that, for any $\wt b \in \eta^{-1}(\supp \nu)$ and $g \in \supp \mu$, 
\begin{align*}
\varphi( g \wt b) = e^{- it \sigma_0(g, \eta(\wt b))} \lambda \varphi(\wt b). 
\end{align*}
This characterization implies that the set of pairs $(\chi, t)$ for which $P_{\mu, \chi, it}$ admits an eigenvalue of modulus one
is a closed subgroup of $\wh{M^{ab}} \times \bb R$, where $M^{ab} = M/[M, M]$ is the abelianization of $M$. 
We define $M_1^{\mu} \subset M \times \bb R$ as being the annihilator of this subgroup, that is, 
\begin{align*}
M_1^{\mu} 
& = \bigg\{ (m, s) \in M \times \bb R:   \forall (\chi, t) \in \wh{M^{ab}} \times \bb R,  \notag\\
& \qquad\quad \mbox{if $P_{\mu, \chi, it}$ admits an eigenvalue of modulus one, then $\chi(m) e^{ist} = 1$}  \bigg\}. 
\end{align*}
Note that, in particular, in $\wh{M^{ab}} \times \bb R$, 
we have 
\begin{align*}
(M_0^{\mu} \times \bb R)^{\perp} 
& = (M_0^{\mu})^{\perp} \times \{0\}  \notag\\
& = (M_1^{\mu})^{\perp} \cap (\wh{M^{ab}} \times \{0\}) = (M_1^{\mu})^{\perp} \cap (\{0\} \times \bb R)^{\perp}. 
\end{align*}
By Pontryagin's duality, we obtain that $M_0^{\mu} \times \bb R$ is the closure of $M_1^{\mu}(\{0\} \times \bb R)$ in $M \times \bb R$, 
which is to say that Remark \ref{Rem-group-M0-M1} holds. 
\end{proof}

We can also state an extension of Corollary \ref{Cor-eigenvalue-modulus-one}, 
whose proof follows the same lines. 

\begin{corollary}\label{Cor-eigenvalue-modulus-one-002}
Let $\mu$ be a Borel probability measure on $G$. 
Assume that the action of $G$ on $\wt B$ satisfies the moment condition \eqref{moment-assumption-01},
the action of $G$ on $B$ satisfies the contraction property \eqref{contraction-property-on-B}, 
and the continuous cocycle $\sigma_0: G \times B \to \bb R$ satisfies the integrability condition \eqref{moment-assumption-02}. 
Suppose that, for every $\wt b \in \wt B$ with $\eta(\wt b) \in \supp \nu$,  we have $\wt b M \subset \overline{\Gamma_{\mu} \wt b}$. 
Fix $\wt b_0 \in \wt B$ with $\eta(\wt b_0) \in \supp \nu$. 
Then, for every character $(\chi, t)$ of the group $(M \times \bb R)/ M_1^{\mu}$, the operator $P_{\mu, \chi, it}$ 
has a unique eigenvalue $\lambda_{\chi, t}$ of modulus one in the space $W_{\chi}^{0}$. 

Moreover, there exists a unique associated eigenvector $\varphi_{\chi, t} \in W_{\chi}^{0}$
such that $\varphi_{\chi, t}(\wt b_0) = 1$ and $\varphi_{\chi, t}$ is H\"older continuous on $\wt B$.  
Also, the generalized eigenspace associated with the eigenvalue $\lambda_{\chi, t}$ for the operator $P_{\mu, \chi, it}$ acting on the space $W_{\chi}^0$ 
is equal to $\bb C \varphi_{\chi, t}$, that is, this eigenvalue is multiplicity free. 

Finally, the restriction $\varphi_{\chi, t}|_{\eta^{-1}(\supp \nu)}$ is the unique continuous function on $\eta^{-1}(\supp \nu)$
such that $\varphi_{\chi, t}(\wt b_0) = 1$ and for every $m \in M$,  $g \in \supp \mu$ and $\wt b \in \eta^{-1}(\supp \nu)$, we have 
\begin{align*}
\varphi_{\chi, t}(\wt b m) = \chi(m)^{-1} \varphi_{\chi, t}(\wt b)
\quad \mbox{and} \quad
\varphi_{\chi, t}(g \wt b) = e^{-it\sigma_0(g, \eta(\wt b))} \lambda_{\chi, t} \varphi_{\chi, t}(\wt b). 
\end{align*} 
\end{corollary}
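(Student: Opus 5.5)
We mirror the proof of Corollary \ref{Cor-eigenvalue-modulus-one}, with $P_{\mu,\chi}$ replaced by $P_{\mu,\chi,it}$, Theorem \ref{Thm-spectral-gap} replaced by Corollary \ref{Corollary-Spectral-gap-002}, and the characterization of modulus-one eigenvectors taken from the proof of Proposition \ref{Propo-eigenvalue-one-002}.

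\emph{Step 1 (existence).} Fix a character $(\chi,t)$ of $(M\times\bb R)/M_1^{\mu}$. By the definition of $M_1^{\mu}$ as an annihilator and Pontryagin duality, $(\chi,t)$ lies in the closed subgroup of pairs for which $P_{\mu,\chi,it}$ has a modulus-one eigenvalue. So there is a nonzero $\varphi\in W_\chi^0$ and $\lambda\in\bb U$ with $P_{\mu,\chi,it}\varphi=\lambda\varphi$.

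\emph{Step 2 (rigidity on $\eta^{-1}(\supp\nu)$).} Normalize $\sup\|\varphi\|=1$. Since $|e^{it\sigma_0}|=1$ and $\mu$ is a probability measure, the maximum principle argument from the proof of Proposition \ref{Propo-eigenvalue-one} shows the following. The set $\{\|\varphi\|=1\}$ is closed, $M$-invariant and $\Gamma_\mu$-invariant, so it contains $\eta^{-1}(\supp\nu)$. On that set, for $g\in\supp\mu$,
\[
\varphi(g\wt b)=e^{-it\sigma_0(g,\eta(\wt b))}\lambda\varphi(\wt b),
\]
which is the equality case of the triangle inequality. The same holds for any other such pair $(\lambda',\varphi')$.

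\emph{Step 3 (uniqueness).} The quotient $\varphi''=\varphi/\varphi'$ is $M$-invariant on $\eta^{-1}(\supp\nu)$, and the cocycle factors cancel, so $\varphi''(g\wt b)=(\lambda/\lambda')\varphi''(\wt b)$. Thus $\varphi''$ descends to a continuous function on $\supp\nu$ that is multiplied by the constant $\lambda/\lambda'$ along the walk. By the contraction property \eqref{contraction-property-on-B} (equivalently, uniqueness of the stationary measure, Lemma \ref{Lemma-unqiue-sta-measure}, applied through $P_\mu^n\overline{\varphi''}=(\lambda/\lambda')^n\overline{\varphi''}$ on $\supp\nu$), $\overline{\varphi''}$ is constant and $\lambda=\lambda'$. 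This gives a unique eigenvalue $\lambda_{\chi,t}$. It also shows that restrictions to $\eta^{-1}(\supp\nu)$ of eigenvectors are proportional, and that the normalized functional equation has a unique solution with $\varphi_{\chi,t}(\wt b_0)=1$.

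\emph{Step 4 (from the support to all of $\wt B$; the main obstacle).} Uniqueness on $\eta^{-1}(\supp\nu)$ must be upgraded to one-dimensionality of the eigenspace and of the generalized eigenspace in $W_\chi^0$. For the eigenspace, suppose an eigenvector $\psi$ vanishes on $\eta^{-1}(\supp\nu)$. Then $|\lambda|^n|\psi(\wt b)|\le \bb E|\psi(g_n\cdots g_1\wt b)|$, and the right-hand side tends to $0$ because $g_n\cdots g_1\wt b$ approaches $\eta^{-1}(\supp\nu)$ in probability by contraction. Hence $\psi=0$.

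For the generalized eigenspace, note that $\|P^n_{\mu,\chi,it}\|\le\|P^n_{\mu}\|$ on $C^0(\wt B)$, so powers of $P_{\mu,\chi,it}$ are uniformly bounded. Exactly as in Corollary \ref{Corollary-001}, this excludes Jordan blocks at a modulus-one eigenvalue.

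\emph{Step 5 (Hölder regularity).} Corollary \ref{Corollary-Spectral-gap-002}, transported via Lemma \ref{Lem-closed injection-001} and Corollary \ref{Cor-closed injection-001}, gives essential spectral radius $<1$ on $W_\chi^\gamma$, and $W_\chi^\gamma$ is dense in $W_\chi^0$ (Lemma \ref{Lem-dense-Con-Holder}). If $\lambda_{\chi,t}$ were not an eigenvalue on $W_\chi^\gamma$, the spectral radius there would be $<1$ and $P^n\to0$ on $W_\chi^\gamma$. By density and uniform boundedness of the powers, $P^n\to0$ strongly on $W_\chi^0$, contradicting the existence of $\varphi$. So some Hölder eigenvector exists, and by the one-dimensionality from Step 4 it is a multiple of $\varphi$. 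Normalizing at $\wt b_0$ is possible because $|\varphi|$ is constant and nonzero on $\eta^{-1}(\supp\nu)$. This yields $\varphi_{\chi,t}$ Hölder, completing the plan.
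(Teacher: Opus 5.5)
Your proposal is correct and takes essentially the paper's route: the paper proves this corollary by repeating the argument of Corollary \ref{Cor-eigenvalue-modulus-one} (quotient of two modulus-one eigenvectors on $\eta^{-1}(\supp\nu)$, contraction forcing $\lambda=\lambda'$, bounded powers ruling out Jordan blocks), and it obtains H\"older regularity from the spectral-gap reasoning in the proof of Proposition \ref{Propo-eigenvalue-one}. Your Step 4 also makes explicit a point the paper leaves implicit: an eigenvector vanishing on $\eta^{-1}(\supp\nu)$ must vanish identically, and this is what upgrades proportionality on the support to one-dimensionality of the eigenspace in $W_\chi^0$.
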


\subsection{Statement of the local limit theorem}
We keep the assumptions used in Subsection \ref{Subsec-Cocycles and eigenvalues of modulus one}. 
Under the integrability condition \eqref{moment-assumption-02}, the real numbers 
\begin{align}\label{def-mean-variance}
\lambda_{\mu} = \lim_{n \to \infty} \frac{1}{n} \bb E \sigma_0(g_n \cdots g_1, b) 
\quad  \mbox{and}  \quad
\upsilon_{\mu}^2 = \lim_{n \to \infty} \frac{1}{n} \bb E \left[ \big( \sigma_0(g_n \cdots g_1, b) - n \lambda_{\mu} \big)^2 \right]
\end{align}
are well defined and do not depend on the choice of $b \in B$,
see \cite[Chapters 11.5 and 11.6]{BQ16b}. 
In particular, recall  from \cite[Lemma 11.18]{BQ16b} that $\upsilon_{\mu} = 0$ 
if and only if there exists a continuous function $\varphi: \supp\nu \to \bb R$ such that for every $g \in \supp \mu$ and $b \in \supp \nu$, 
we have $\sigma_0(g, b) = \lambda_{\mu} + \varphi(gb) - \varphi(b)$, 
where $\nu$ is the unique $\mu$-stationary Borel probability measure on $B$. 
Thus, in particular, if $\upsilon_{\mu} = 0$, then one has $M_1^{\mu} \subset M \times \{0\}$. 
Therefore, the assumption that $M_1^{\mu} = M_0^{\mu} \times \bb R$ in Theorem \ref{Local-limit-theorem-001} below 
implies $\upsilon_{\mu} > 0$.

Finally, we let $\sigma: G \times \wt B \to \bb R$ be a continuous cocycle which is cohomologous to $\sigma_0$, meaning that 
there exists a continuous function $\psi: \wt B \to \bb R$ such that, for any $g \in G$ and $\wt b \in \wt B$, 
\begin{align}\label{cohomologous-sigma}
\sigma(g, \wt b) = \sigma_0(g, \eta(\wt b)) + \psi(g \wt b) - \psi(\wt b). 
\end{align}
In this section we shall prove that, under the assumptions \eqref{moment-assumption-01} and \eqref{moment-assumption-02}, 
for $\wt b \in \wt B$, 
the sequence of random variables $(\sigma(g_n \cdots g_1, \wt b))_{n \geq 1}$
satisfies the following local limit theorem.

\begin{theorem}\label{Local-limit-theorem-001}
Let $\mu$ be a Borel probability measure on $G$. 
Assume that the action of $G$ on $\wt B$ satisfies the moment condition \eqref{moment-assumption-01},
the action of $G$ on $B$ satisfies the contraction property \eqref{contraction-property-on-B}, 
and the continuous cocycle $\sigma_0: G \times B \to \bb R$ 
satisfies the integrability condition \eqref{moment-assumption-02} and $M_1^{\mu} = M_0^{\mu} \times \bb R$. 
Suppose that, for every $\wt b \in \wt B$ with $\eta(\wt b) \in \supp \nu$, 
we have $\wt b M \subset \overline{\Gamma_{\mu} \wt b}$. 
Let $\sigma: G \times \wt B \to \bb R$ be a continuous cocycle cohomologous to $\sigma_0$. 
Then, for every continuous compactly supported function $h: \bb R \to \bb C$, we have, 
uniformly in $\wt b \in \wt B$ and in $w_n \in \bb R$ with $|w_n| = o(\sqrt{n})$, 
\begin{align*}
 \upsilon_{\mu} \sqrt{2 \pi n} \,  \bb E  \left[ h \left(\sigma(g_n \cdots g_1, \wt b) - n \lambda_{\mu}  - w_n  \right) \right]
\xrightarrow[n\to\infty]{}  \int_{\bb R}   h(u) du. 
\end{align*}
\end{theorem}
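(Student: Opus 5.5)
The plan follows the classical Fourier-analytic route of Le Page, with the perturbed operators $P_{\mu,0,it}$ on $W_0^\gamma$ (functions on $B$) and $P_{\mu,x,it}$ on $W_x^\gamma$ replacing the usual transfer operator.

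\textbf{Reductions.} First I reduce to $\sigma_0$. Since $\psi$ is continuous on a compact space, it is uniformly continuous and bounded, and we have $\sigma(g_n\cdots g_1,\wt b)=\sigma_0(g_n\cdots g_1,\eta(\wt b))+\psi(g_n\cdots g_1\wt b)-\psi(\wt b)$. By a standard approximation (sandwiching $h$ between functions with compactly supported Fourier transform, and discretizing $\psi$ into small steps), it suffices to prove, uniformly in $\wt b$ and $|w_n|=o(\sqrt n)$,
\[
\upsilon_{\mu}\sqrt{2\pi n}\,\bb E\big[ h(\sigma_0(g_n\cdots g_1,\eta(\wt b))-n\lambda_\mu-w_n)\,\varphi(g_n\cdots g_1\wt b)\big]\to \int h\,du\int_{\wt B}\varphi\,d\wt\nu
\]
for $h$ with $\hat h$ compactly supported and $\varphi\in C^0(\wt B)$. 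Indeed, $\psi(\wt b)$ is just a deterministic shift absorbed into $w_n$, and $\psi(g_n\cdots g_1\wt b)$ is handled by partitioning $\wt B$ with a partition of unity on which $\psi$ is nearly constant. By density (Lemma \ref{Lem-representation-compact-group} and Lemma \ref{Lem-dense-Con-Holder}) and the trivial uniform bound $\sqrt n\,\bb E[|h|(\cdots)]=O(1)$, which comes from a concentration estimate for the base cocycle, it is enough to treat $\varphi=\langle f,\psi_x\rangle$ with $\psi_x\in W_x^\gamma$.

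\textbf{Fourier inversion.} I then write
\[
\bb E[\cdots]=\frac1{2\pi}\int \hat h(t)e^{-it(n\lambda_\mu+w_n)}\langle f,P_{\mu,x,it}^n\psi_x(\wt b)\rangle\,dt .
\]
The integral splits into three pieces.

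\emph{Small $|t|$, trivial representation $x=0$.} On $W_0^\gamma\cong$ Hölder functions on $B$, Corollary \ref{Corollary-Spectral-gap-002} together with Lemma \ref{Lemma-unqiue-sta-measure} gives a simple isolated eigenvalue $1$ at $t=0$. Analytic perturbation then yields $\lambda(it)=1+i\lambda_\mu t-\tfrac12(\upsilon_\mu^2+\lambda_\mu^2)t^2+o(t^2)$, and the usual Taylor-expansion argument on $|t|\le \delta$, after the substitution $t=u/\sqrt n$, produces the Gaussian limit $\upsilon_\mu^{-1}(2\pi n)^{-1/2}\hat h(0)\int\varphi\,d\wt\nu$. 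The shift $w_n=o(\sqrt n)$ disappears in the limit because $e^{-iuw_n/\sqrt n}\to1$.

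\emph{Small $|t|$, nontrivial $x$.} Here $P_{\mu,x,0}$ has spectral radius $<1$ unless $x$ is a character $\chi$ trivial on $M_0^\mu$. I expect this to be the delicate point. By Proposition \ref{Propo-eigenvalue-one-002}, the hypothesis $M_1^\mu=M_0^\mu\times\bb R$ means that $P_{\mu,\chi,it}$ has an eigenvalue of modulus one exactly when $\chi$ is trivial on $M_0^\mu$ and $t=0$. Even so, for such $\chi\neq1$ the eigenvalue $\lambda_\chi$ at $t=0$ has modulus one, so a naive bound fails. The approach I would take is to perturb: near $t=0$ the leading eigenvalue is $\lambda_\chi(it)$, with derivative structure as before, and the corresponding Gaussian integral carries the oscillating factor $\lambda_\chi^n$. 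For $\chi\neq1$ it also carries a factor $\zeta_{\chi}$ applied to the projection. The resulting main term, once $\psi_x$ is paired with the eigenprojection, is $\lambda_\chi^n\zeta_\chi(\psi_x)\varphi_\chi(\wt b)\,(2\pi n)^{-1/2}\upsilon^{-1}\hat h(0)$. This is nonzero and does not vanish in the limit, which would contradict the claimed statement. So I must check that the hypothesis forces the corresponding $\chi$ trivial. This is the case: if $\chi\neq 1$ is trivial on $M_0^\mu$, then $(\chi,0)$ annihilates $M_0^\mu\times\bb R=M_1^\mu$, and by the requirement that, for $(\chi,t)$ in the annihilator, one needs $\chi(m)e^{ist}=1$ for all $s\in\bb R$ with $m$ in the projection, we get $M_0^\mu=M$. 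Hence only $\chi=1$ survives, and all nontrivial $x$ have uniform spectral radius $<1$ near $t=0$ by upper semicontinuity. Their contribution is $O(r^n)=o(n^{-1/2})$.

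\emph{Large $t$ in the compact support of $\hat h$.} For $\delta\le|t|\le T$ there is no eigenvalue of modulus one by Proposition \ref{Propo-eigenvalue-one-002}. Combined with essential spectral radius $<1$ (Corollary \ref{Corollary-Spectral-gap-002}) and continuity in $t$ plus compactness, this gives $\|P^n_{\mu,x,it}\|\le Cr^n$ uniformly, so these terms are negligible too. Uniformity in $\wt b$ comes from working in operator norm on $W_x^\gamma$, where evaluation is bounded.
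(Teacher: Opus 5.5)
You follow the paper's outline: Fourier inversion on $W_x^{\gamma}$, spectral gap for $t\neq 0$, perturbation at $t=0$, a domination estimate, density, then removal of the coboundary. However, the step where you dispose of the characters $\chi\neq 1$ of $M/M_0^{\mu}$ is wrong, and your intermediate claim falls with it.

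\textbf{The gap.} The hypothesis $M_1^{\mu}=M_0^{\mu}\times\bb R$ does not force $M_0^{\mu}=M$. The annihilator of $M_0^{\mu}\times\bb R$ is $(M_0^{\mu})^{\perp}\times\{0\}$. For $(\chi,0)$, the condition $\chi(m)e^{is\cdot 0}=1$ on $M_0^{\mu}\times\bb R$ only says that $\chi$ is trivial on $M_0^{\mu}$, which is where you started. The hypothesis rules out modulus-one eigenvalues only for $t\neq 0$. For every character $\chi$ of $M/M_0^{\mu}$, $P_{\mu,\chi}$ does have the modulus-one eigenvalue $\lambda_\chi=\chi(m_0^{\mu})$ (Corollary \ref{Cor-eigenvalue-modulus-one}).

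Moreover, $M_0^{\mu}\neq M$ occurs in cases covered by Theorem \ref{Main-Thm-LLT}. For instance, take $G={\rm GL}(V,\pm J)$ with $\mu$ carried by semilinear maps: the character coming from $G/G^{\circ}$ gives $\lambda_\chi=-1$.

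Consequently, the statement you reduce to, with limit $\int h\,du\int_{\wt B}\varphi\,d\wt\nu$, is false. For $\varphi$ in the $\chi$-isotypic component, $\int\varphi\,d\wt\nu=0$ by $M$-invariance of $\wt\nu$. The true contribution is the oscillating term $\chi(m_0^{\mu})^n\zeta_\chi(\psi)\varphi_\chi(\wt b)\int h$ that you computed yourself (cf.\ Proposition \ref{Prop-Local-limit-theorem-001}(2)). That term contradicts only your intermediate claim, not the theorem.

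\textbf{How to repair it.} The correct joint statement is Corollary \ref{Corollary-Local-limit-theorem-002}. Its limit is $\int h\cdot\int_{M/M_0^{\mu}}Q\varphi((m_0^{\mu})^n m)\,\theta_{\wt b}(dm)$, which equals $\int h\cdot P_\mu^n\varphi(\wt b)+o(1)$ by Corollary \ref{Cor-equdistribution}.

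Obtaining the $\chi$-terms needs more than ``derivative structure as before''. The paper shows that the leading eigenvalue of $P_{\mu,\chi,it}$ near $0$ is exactly $\chi(m_0^{\mu})\kappa_{0,it}$. It does this by conjugating $P_{\mu,\chi,z}$ to $P_{\mu,0,z}$ through multiplication by $\varphi_\chi$ on $\eta^{-1}(\supp\nu)$, using relation \eqref{relation-conjugacy-operator}. As a result, the $\chi$-term is the same Gaussian integral as for $\chi=1$, multiplied by the phase $\lambda_\chi^n$.

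The theorem then holds because the coboundary is removed by integrating in $u$ first. The limiting functional is integration against probability measures $q_m$ (Remark \ref{Rem-operator-Q-q}), and $\int_{\bb R}h(u+\psi(\wt b'))\,du=\int h$ for every $\wt b'$.

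Your partition-of-unity reduction can be repaired the same way. The piece for $\rho_j$ contributes $\int h(\cdot+c_j)\cdot L_n(\rho_j)=\int h\cdot L_n(\rho_j)$, where $L_n$ is the positive functional above, and $\sum_j L_n(\rho_j)=L_n(1)=1$. As written, though, your argument rests on a false intermediate result supported by an invalid deduction.
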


In this statement, we have used the notation of Corollary \ref{Cor-equdistribution}. 
By uniformly in $w_n \in \bb R$ with $|w_n| = o(\sqrt{n})$, we mean that for any sequence of positive numbers $(\beta_n)_{n \geq 1}$ satisfying $\lim_{n \to \infty} \beta_n = 0$, the result holds uniformly in $|w_n| \leq \beta_n \sqrt{n}$. 
The perturbation sequence $(w_n)_{n \geq 1}$ will be useful to deduce Theorem \ref{Main-Thm-LLT} from Theorem \ref{Local-limit-theorem-001},
as will be done in Section \ref{Section-Proof-Main-theorem}. 
At the first reading, the reader can assume that $w_n = 0$. 

\begin{remark}\label{Remark-variance}
Note that, in view of \eqref{def-mean-variance} and \eqref{cohomologous-sigma}, for any $\wt b \in \wt B$, we have 
\begin{align*}
\upsilon_{\mu}^2 = \lim_{n \to \infty} \frac{1}{n} \bb E \left[ \big( \sigma(g_n \cdots g_1, \wt b) - n \lambda_{\mu} \big)^2 \right]. 
\end{align*}
\end{remark}

\subsection{Proof for smooth observables}
We begin by proving a version of Theorem \ref{Local-limit-theorem-001} for regular target functions. 
For $\gamma \in (0, 1)$, we denote by $W_x^{\gamma}$ the space of all $\gamma$-H\"older continuous 
$M$-equivariant maps from $\wt B$ to $V_x: = \bb C^{d_x}$, that is,
all $\gamma$-H\"older continuous maps $\psi: \wt B \to V_x$ such that, for any $m \in M$ and $\wt b \in \wt B$, 
\begin{align*}
\psi(\wt b m) = \rho_x(m)^{-1} \psi(\wt b). 
\end{align*}

\begin{proposition}\label{Prop-Local-limit-theorem-001}
Let $\mu$ be a Borel probability measure on $G$. 
Assume that the action of $G$ on $\wt B$ satisfies the moment condition \eqref{moment-assumption-01},
the action of $G$ on $B$ satisfies the contraction property \eqref{contraction-property-on-B}, 
and the continuous cocycle $\sigma_0: G \times B \to \bb R$ satisfies the integrability condition \eqref{moment-assumption-02} 
and $M_1^{\mu} = M_0^{\mu} \times \bb R$. 
Suppose that, for every $\wt b \in \wt B$ with $\eta(\wt b) \in \supp \nu$,   
we have $\wt b M \subset \overline{\Gamma_{\mu} \wt b}$. 
Then, there exists $\gamma \in (0,1]$ with the following property.  
Let $h$ be a Schwartz function on $\bb R$ whose Fourier transform has compact support. 

(1) If $x \in \wh M$ is not the one-dimensional representation associated with a character vanishing on $M_0^{\mu}$, then,
for any $f \in V_x^*$ and $\psi \in W_x^{\gamma}$, 
we have, uniformly in $\wt b \in \wt B$ and in $w_n \in \bb R$ with $|w_n| = o(\sqrt{n})$,  
\begin{align*}
\upsilon_{\mu} \sqrt{2 \pi n} \,  \bb E  \left[ \langle f, \psi \rangle(g_n \cdots g_1 \wt b)  \,  h \left( \sigma_0(g_n \cdots g_1, \eta(\wt b)) - n \lambda_{\mu} - w_n \right)  \right]  
\xrightarrow[n\to\infty]{}  0. 
\end{align*}

(2) For any character $\chi$ of $M/M_0^{\mu}$, for any $\psi \in W_{\chi}^{\gamma}$, 
we have, uniformly in $\wt b \in \wt B$ and in $w_n \in \bb R$ with $|w_n| = o(\sqrt{n})$,  
\begin{align*}
&  \upsilon_{\mu} \sqrt{2 \pi n} \,  \bb E  \left[ \psi(g_n \cdots g_1 \wt b)  \,  h \left( \sigma_0(g_n \cdots g_1, \eta(\wt b)) - n \lambda_{\mu} - w_n \right)  \right]  \notag\\
& \qquad\qquad\qquad\qquad\quad
- \zeta_{\chi}(\psi)  \varphi_{\chi}(\wt b)  \chi(m_0^{\mu})^n  \int_{\bb R} h(u) du 
 \xrightarrow[n\to\infty]{}  0. 
\end{align*}
\end{proposition}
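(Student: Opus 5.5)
The plan is the standard Fourier-analytic route to a local limit theorem, carried out on the equivariant spaces $W_x^{\gamma}$ rather than on a function space. By Fourier inversion, $h(u)=\frac{1}{2\pi}\int_{\bb R}\hat h(t)e^{itu}dt$ with $\hat h$ supported in some $[-T,T]$. The cocycle property of $\sigma_0$ and induction on $n$ give
\begin{align*}
\bb E\big[\psi(g_n\cdots g_1\wt b)\,e^{it\sigma_0(g_n\cdots g_1,\eta(\wt b))}\big]=P_{\mu,x,it}^n\psi(\wt b).
\end{align*}
Hence the expectation in the statement equals
\begin{align*}
\frac{1}{2\pi}\int_{-T}^{T}\hat h(t)\,e^{-it(n\lambda_\mu+w_n)}\langle f,P_{\mu,x,it}^n\psi\rangle(\wt b)\,dt .
\end{align*}
Through Lemma \ref{Lem-closed injection-001}, Corollary \ref{Cor-closed injection-001} and Corollary \ref{Corollary-isometric Lipschitz bundle action}, $P_{\mu,x,z}$ on $W_x^{\gamma}$ is conjugate to the operator $P_{\mu,z}$ of Corollary \ref{Corollary-Spectral-gap-002} on $\boldsymbol\Gamma^\gamma(B\times_{\rho_x}\bb C^{d_x})$. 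So we may fix $\gamma$ small, uniformly in $x$ since the moment bounds from \eqref{Lipschitz-inequality001} do not depend on $\rho_x$. For this $\gamma$, $z\mapsto P_{\mu,x,z}$ is analytic near the imaginary axis, and each $P_{\mu,x,it}$ has essential spectral radius $<1$.

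The key steps are as follows. First, for $t$ in a compact set avoiding the characters of modulus one, Proposition \ref{Propo-eigenvalue-one-002} shows there are no eigenvalues of modulus one. Spectral radius is upper semicontinuous and the essential radius is $<1$, so there exist $C,\rho<1$ with $\|P_{\mu,x,it}^n\|\le C\rho^n$ uniformly for $t$ in that compact set. The contribution of that region is $O(\rho^n)$, negligible after multiplying by $\sqrt n$.

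Second, identify which $t\in[-T,T]$ give eigenvalues of modulus one. The hypothesis $M_1^\mu=M_0^\mu\times\bb R$ means $(\chi,t)$ annihilates $M_1^\mu$ only when $t=0$ and $\chi$ is trivial on $M_0^\mu$.

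In case (1), no $t$ qualifies, so the whole integral is $O(\rho^n)$, which proves (1). A small neighbourhood of $t=0$ needs separate handling only because compactness must be applied on closed sets; but at $t=0$ itself there is still no modulus-one eigenvalue, so the compactness argument covers $[-T,T]$ entirely.

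In case (2), only $t=0$ is special. Near $t=0$, analytic perturbation theory applies (Kato), using the simple isolated eigenvalue $\lambda_\chi$ of $P_{\mu,\chi}$ with eigenvector $\varphi_\chi$ and functional $\zeta_\chi$ from Corollary \ref{Cor-eigenvalue-modulus-one}. For $|t|\le\delta$ it yields
\begin{align*}
P_{\mu,\chi,it}^n=\lambda(t)^n\Pi_t+N_t^n,
\end{align*}
with $\lambda,\Pi$ analytic, $\|N_t^n\|\le C\rho^n$, $\lambda(0)=\lambda_\chi=\chi(m_0^\mu)$ and $\Pi_0\psi=\zeta_\chi(\psi)\varphi_\chi$.

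The main obstacle is the expansion of $\lambda(t)$ for nontrivial $\chi$. For trivial $\chi$ this is classical: $\lambda(t)=1+it\lambda_\mu-\frac{t^2}{2}(\upsilon_\mu^2+\lambda_\mu^2)+o(t^2)$, by \cite[Ch.~11]{BQ16b}. For general $\chi$ I would reduce to the trivial case by a gauge transformation. Multiplication by the H\"older eigenfunction $\varphi_\chi$ carries $W_0^\gamma$ to $W_\chi^\gamma$ on $\eta^{-1}(\supp\nu)$. Using \eqref{property varphi phi chi m}, one gets $P_{\mu,\chi,it}(\varphi_\chi u)=\lambda_\chi\,\varphi_\chi\,P_{\mu,0,it}u$ there. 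Lemma \ref{Lemma-surjectivity} transfers spectral information from the restriction to $\eta^{-1}(\supp\nu)$, since the peripheral spectrum is determined there. Hence $\lambda(t)=\lambda_\chi\lambda_0(t)$.

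With this, $|\lambda_0(t)|\le e^{-ct^2}$ on $[-\delta,\delta]$. The substitution $t=s/\sqrt n$ and dominated convergence then give
\begin{align*}
\sqrt n\int_{-\delta}^{\delta}\hat h(t)e^{-it(n\lambda_\mu+w_n)}\lambda_\chi^n\lambda_0(t)^n\Pi_t\psi(\wt b)\,dt\;\longrightarrow\;\chi(m_0^\mu)^n\,\hat h(0)\,\zeta_\chi(\psi)\varphi_\chi(\wt b)\int e^{-\upsilon_\mu^2s^2/2}ds .
\end{align*}
The convergence is uniform in $\wt b$ and in $|w_n|\le\beta_n\sqrt n$, because $w_n s/\sqrt n\to0$ uniformly on compacts. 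Since $\hat h(0)=\int h$ and $\int e^{-\upsilon_\mu^2s^2/2}ds=\sqrt{2\pi}/\upsilon_\mu$, the limit after the factor $\frac{1}{2\pi}$ and the normalization $\upsilon_\mu\sqrt{2\pi n}$ is exactly the claimed $\zeta_\chi(\psi)\varphi_\chi(\wt b)\chi(m_0^\mu)^n\int h$.
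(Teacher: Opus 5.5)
Your proposal is correct and follows the paper's proof essentially step by step. The shared steps are Fourier inversion, uniform exponential decay of $P_{\mu,x,it}^n$ on the support of $\widehat h$ (away from $t=0$, or everywhere in case (1)) via Proposition \ref{Propo-eigenvalue-one-002} and $M_1^{\mu}=M_0^{\mu}\times\bb R$, analytic perturbation near $t=0$, and your identity $\lambda(t)=\lambda_\chi\lambda_0(t)$ (the paper's $\kappa_{\chi,it}=\chi(m_0^{\mu})\kappa_{0,it}$) obtained from the conjugacy by $\varphi_\chi$ on $\eta^{-1}(\supp\nu)$. The differences are minor: you carry out the final Gaussian computation that the paper cites from \cite[Lemma 16.11]{BQ16b}, and your phrase ``the peripheral spectrum is determined there'' should be stated as the paper does, namely that $P_{\mu,\chi,it}$ and $P_{\mu,0,it}$ have spectral radius strictly less than one on the subspaces of $W_{\chi}^{\gamma}$ and $W_{0}^{\gamma}$ consisting of functions vanishing on $\eta^{-1}(\supp\nu)$.
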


In the above statement, we have used the notation from the proof of Corollary \ref{Cor-equdistribution}. 

The proof relies on adapting to spaces of sections of bundles, the arguments which are usually used for proving local limit theorems
for Markov chains under spectral gap assumptions.

\begin{proof}[Proof of Proposition \ref{Prop-Local-limit-theorem-001}]
Thanks to Corollary \ref{Corollary-isometric Lipschitz bundle action}, we know that the assumptions of Corollary \ref{Corollary-Spectral-gap-002}
are satisfied. Thus, we may find $\gamma \in (0, 1]$ such that the conclusions of Corollary \ref{Corollary-Spectral-gap-002} hold
for each vector bundle $B \times_{\rho_x} \bb C^{d_x} \to B$ with $x \in \wh M$ and for each $t \in \bb R$. 
Thanks to Lemma \ref{Lem-closed injection-001} and Corollary \ref{Cor-closed injection-001}, 
we can apply these conclusions to the operator $P_{\mu, x, it}$ defined by \eqref{def-P-mu-x-z} acting on the space $W_{x}^{\gamma}$.  

Let $x \in \wh M$ and $\psi \in W_{x}^{\gamma}$. 
For any integrable function $h$ on $\bb R$, we write its Fourier transform as 
\begin{align*}
\wh h(t) = \int_{\bb R} e^{- i ut} h(u) du, \quad t \in \bb R. 
\end{align*} 
By the Fourier inversion formula, for any $\wt b \in \wt B$, $n \geq 1$ and $w_n \in \bb R$, we have 
\begin{align*}
&  \bb E  \left[ h \Big( \sigma_0(g_n \cdots g_1, \eta(\wt b)) - n \lambda_{\mu} - w_n \Big) \, \psi(g_n \cdots g_1 \wt b)    \right]  \notag\\
& = \frac{1}{2 \pi} \int_{\bb R}  e^{- it (n \lambda_{\mu} + w_n) } \wh h(t)  P_{\mu, x, it}^n \psi(\wt b) dt. 
\end{align*}

If $x$ is not the one-dimensional representation associated with a character that vanishes on $M_0^{\mu}$, 
then, by Proposition \ref{Propo-eigenvalue-one-002} and the assumption that $M_1^{\mu} = M_0^{\mu} \times \bb R$,
for any $t \in \bb R$,  the operator $P_{\mu, x, it}$ has spectral radius strictly smaller than  one in $W_x^{\gamma}$. 
Consequently, $P_{\mu, x, it}^n \psi$ converges to $0$ exponentially fast in $W_x^{\gamma}$, 
uniformly in $t$ belonging to the (compact) support of $\wh h$. 
This establishes assertion (1).

We now analyze the case where $x$ is the one-dimensional representation associated with the character $\chi$ of $M/M_0^{\mu}$.
Fix a small constant $\ee > 0$ (to be determined later). 
By the assumption that $M_1^{\mu} = M_0^{\mu} \times \bb R$, for any $t \in \bb R \setminus\{0\}$, 
the operator $P_{\mu, \chi, it}$ has spectral radius strictly smaller than  one in $W_{\chi}^{\gamma}$. 
Therefore, $P_{\mu, \chi, it}^n \psi$ converges to $0$ exponentially fast  in $W_{\chi}^{\gamma}$, 
uniformly in $t$ belonging to the support of $\wh h$ with $|t| \geq \ee$. 
Consequently, uniformly in $\wt b \in \wt B$ and $w_n \in \bb R$, 
\begin{align*}
&  \upsilon_{\mu} \sqrt{2 \pi n}   \bigg(  \bb E  \left[ \psi(g_n \cdots g_1 \wt b)  \,  h \left( \sigma_0(g_n \cdots g_1, \eta(\wt b)) - n \lambda_{\mu} - w_n \right)  \right]  \notag\\
& \qquad\qquad\qquad
-  \frac{1}{2 \pi} \int_{-\ee}^{\ee}  e^{- it (n \lambda_{\mu} + w_n)} \wh h(t)  P_{\mu, \chi, it}^n \psi(\wt b) dt  \bigg) 
\xrightarrow[n\to\infty]{}  0. 
\end{align*}
By the perturbation theory for linear operators (see for example \cite{BQ16b}), 
after possibly shrinking $\ee$, 
we may write the following decomposition for $z \in \bb C$ with $|z| \leq \ee$, 
\begin{align*}
P_{\mu, \chi, z} = \kappa_{\chi, z} \zeta_{\chi, z} \otimes \varphi_{\chi, z} + R_{\chi, z},
\end{align*}
where $\kappa_{\chi, z}$ is a complex number, $\zeta_{\chi, z}$ is a bounded linear functional on $W_{\chi}^{\gamma}$,
$\varphi_{\chi, z}$ is in $W_{\chi}^{\gamma}$, 
and $R_{\chi, z}$ is a bounded linear operator on $W_{\chi}^{\gamma}$. 
All these objects depend analytically on $z$ and satisfy 
$\zeta_{\chi, z}(\varphi_{\chi, z}) = \zeta_{\chi, z}(\varphi_{\chi}) = 1$, 
$R_{\chi, z} (\varphi_{\chi, z}) = 0$ and $R_{\chi, z}^* (\zeta_{\chi, z} ) =  0$,
and at $z = 0$, we have $\kappa_{\chi, 0} = \chi(m_0^{\mu}) = \lambda_{\chi}$, 
$\varphi_{\chi, 0} = \varphi_{\chi}$ and $\zeta_{\chi, 0} = \zeta_{\chi}$. 
Moreover, for every $z \in \bb C$ with $|z| \leq \ee$, the spectral radius of $R_{\chi, z}$ is smaller than $1 - \ee$. 
Thus, using this decomposition and the exponential decay of $R_{\chi, it}^n$, 
we obtain that uniformly in $\wt b \in \wt B$ and $w_n \in \bb R$, 
\begin{align}\label{LLT decom spectral001}
& \upsilon_{\mu} \sqrt{2 \pi n}   
\bigg(  \bb E  \left[ \psi(g_n \cdots g_1 \wt b)  \,  h \left( \sigma_0(g_n \cdots g_1, \eta(\wt b)) - n \lambda_{\mu} - w_n \right)  \right]  \notag\\
& \qquad\qquad
-  \frac{1}{2 \pi} \int_{-\ee}^{\ee}  e^{- it (n \lambda_{\mu} + w_n)} \wh h(t)  \kappa_{\chi, it}^n \zeta_{\chi, it}( \psi)  \varphi_{\chi, it}(\wt b) dt  \bigg) 
\xrightarrow[n\to\infty]{}  0. 
\end{align}

As in the proof of Proposition \ref{Propo-eigenvalue-one}, 
let $\overline{W}_{\chi}^{\gamma}$ be the space of 
all $\gamma$-H\"older continuous and $\chi$-equivariant maps $\varphi: \eta^{-1}(\supp \nu) \to \bb C^{d_x}$. 
From Corollary \ref{Cor-eigenvalue-modulus-one}, 
we know that, for every $g \in \supp \mu$ and $\wt b \in \eta^{-1}(\supp \nu)$, 
it holds that $\varphi_{\chi}(g \wt b) = \lambda_{\chi} \varphi_{\chi}(\wt b) = \chi(m_0^{\mu}) \varphi_{\chi}(\wt b)$. 
This tells us that, for any $\varphi \in \overline{W}_{\chi}^{\gamma}$ and $z \in \bb C$ with real part $\Re z$ sufficiently close to $0$, we have 
\begin{align}\label{relation-conjugacy-operator}
P_{\mu, \chi, z} \varphi = \chi(m_0^{\mu}) \varphi_{\chi} P_{\mu, 0, z}(\varphi_{\chi}^{-1} \varphi). 
\end{align}
It follows from Corollary \ref{Corollary-Spectral-gap-002},  Corollary \ref{Corollary-isometric Lipschitz bundle action},
Corollary \ref{Cor-closed injection-001} and Corollary \ref{Cor-eigenvalue-modulus-one-002}
that, for any $t \in \bb R$, the operators $P_{\mu, \chi, it}$ and $P_{\mu, 0, it}$ have spectral radius strictly smaller than one
 in the spaces 
$\{\varphi \in W_{\chi}^{\gamma}: \varphi|_{\eta^{-1}(\supp \nu)} = 0\}$
and $\{\varphi \in W_{0}^{\gamma}: \varphi|_{\eta^{-1}(\supp \nu)} = 0\}$, respectively. 
Thus, the conjugacy relation \eqref{relation-conjugacy-operator} between $\overline{W_{\chi}^{\gamma}}$ and $\overline{W_{0}^{\gamma}}$
forces, for $t$ small enough, 
\begin{align*}
\kappa_{\chi, it} = \chi(m_0^{\mu}) \kappa_{0, it}.
\end{align*}
Inserting this into \eqref{LLT decom spectral001}, 
we obtain that, uniformly in $\wt b \in \wt B$ and $w_n \in \bb R$, 
\begin{align*}
& \upsilon_{\mu} \sqrt{2 \pi n}   
 \bigg(  \bb E  \left[ \psi(g_n \cdots g_1 \wt b)  \,  h \left( \sigma_0(g_n \cdots g_1, \eta(\wt b)) - n \lambda_{\mu} - w_n \right)  \right]  \notag\\
& \qquad
-  \chi(m_0^{\mu})^n 
\frac{1}{2 \pi} \int_{-\ee}^{\ee}  e^{- it (n \lambda_{\mu} + w_n)} \wh h(t)   \kappa_{0, it}^n \zeta_{\chi, it}( \psi)  \varphi_{\chi, it}(\wt b) dt  \bigg) 
\xrightarrow[n\to\infty]{} 0. 
\end{align*}
Now the classical proof of the local limit theorem (cf.\ \cite[Lemma 16.11]{BQ16b}) shows that, 
uniformly in $\wt b \in \wt B$ and in $w_n \in \bb R$ with $|w_n| = o(\sqrt{n})$, 
\begin{align*}
\upsilon_{\mu} \sqrt{\frac{n}{2 \pi}}  \int_{-\ee}^{\ee}  e^{- it (n \lambda_{\mu} + w_n) } \wh h(t)   \kappa_{0, it}^n \zeta_{\chi, it}( \psi)  \varphi_{\chi, it}(\wt b) dt
\xrightarrow[n\to\infty]{}  \wh h(0) \zeta_{\chi, 0}(\psi)  \varphi_{\chi, 0}(\wt b). 
\end{align*}
Multiplying by $\chi(m_0^{\mu})^n$ 
and noting that $\wh h(0) = \int_{\bb R} h(u) du$, $\zeta_{\chi, 0} = \zeta_{\chi}$ and $\varphi_{\chi, 0} = \varphi_{\chi}$, 
the conclusion of assertion (2) follows. 
\end{proof}

In order to extend Proposition \ref{Prop-Local-limit-theorem-001} 
to the case of an arbitrary continuous function $\psi$ on $\wt B$, 
we need a coarse domination estimate for certain auxiliary functions $h$ on $\bb R$.

\begin{corollary}\label{Corollary-Local-limit-theorem-001}
Let $\mu$ be a Borel probability measure on $G$. 
Assume that the action of $G$ on $\wt B$ satisfies the moment condition \eqref{moment-assumption-01},
 the action of $G$ on $B$ satisfies the contraction property \eqref{contraction-property-on-B}, 
and the continuous cocycle $\sigma_0: G \times B \to \bb R$ 
satisfies the integrability condition \eqref{moment-assumption-02} and $M_1^{\mu} = M_0^{\mu} \times \bb R$. 
Suppose that, for every $\wt b \in \wt B$ with $\eta(\wt b) \in \supp \nu$,   we have $\wt b M \subset \overline{\Gamma_{\mu} \wt b}$. 
Let $h$ be a non-negative measurable function on $\bb R$ such that 
$\sup_{u \in \bb R} (1 + |u|^k) h(u) < \infty$ for any $k \geq 1$. 
Then, we have, uniformly in $\wt b \in \wt B$ and in $w_n \in \bb R$ with $|w_n| = o(\sqrt{n})$,  
\begin{align*}
\sup_{n \geq 1}  \sqrt{n}  \,  \bb E  \left[  h \left( \sigma_0(g_n \cdots g_1, \eta(\wt b)) - n \lambda_{\mu} - w_n \right)  \right]   
< \infty. 
\end{align*}
\end{corollary}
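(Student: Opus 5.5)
The plan is to reduce to the case of a single dominating function with compactly supported Fourier transform, and then apply Proposition \ref{Prop-Local-limit-theorem-001}(2) with the trivial character $\chi = 1$ and $\psi \equiv 1$.

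Fix once and for all a non-negative Schwartz function $h_0$ on $\bb R$ whose Fourier transform is compactly supported and which satisfies $h_0 \geq 1$ on $[-1,1]$. One takes $h_0 = |\wh{k}|^2$ with $k$ smooth and compactly supported, so that $\wh{h_0} = \frac{1}{2\pi} k * \tilde{k}$ up to normalisation, which has compact support. After rescaling, $h_0 > 0$ on a neighbourhood of $0$, so a suitable multiple satisfies $h_0 \geq \mathds 1_{[-1,1]}$.

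Applying Proposition \ref{Prop-Local-limit-theorem-001}(2) with $\chi$ trivial, $\psi = 1$ (so $\zeta_{\chi}(1) = 1$, $\varphi_{\chi} = 1$, $\chi(m_0^{\mu}) = 1$) gives, uniformly in $\wt b$ and in $|w| \leq \beta_n \sqrt n$,
\begin{align*}
\sqrt{n}\, \bb E\big[h_0(\sigma_0(g_n\cdots g_1,\eta(\wt b)) - n\lambda_{\mu} - w)\big] \xrightarrow[n\to\infty]{} \frac{1}{\upsilon_{\mu}\sqrt{2\pi}}\int h_0 .
\end{align*}
Hence it is bounded for large $n$, and trivially for small $n$. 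This yields a uniform bound $\sqrt n\, \bb P(|S_n - w| \leq 1) \leq C$, where $S_n = \sigma_0(g_n\cdots g_1,\eta(\wt b)) - n\lambda_\mu$.

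The main obstacle is that the bound must cover arbitrary shifts, not only $|w| = o(\sqrt n)$, since we decompose $h$ over unit intervals centred at $w_n + j$, $j \in \bb Z$. I would revisit the Fourier argument of Proposition \ref{Prop-Local-limit-theorem-001} directly. With $\wh{h_0}$ supported in $[-\ee_0,\ee_0]$, write
\begin{align*}
\bb E[h_0(S_n - w)] = \frac{1}{2\pi}\int e^{-it(n\lambda_\mu + w)}\wh{h_0}(t) P^n_{\mu,0,it}1(\wt b)\,dt ,
\end{align*}
and bound it in absolute value by dropping the phase $e^{-itw}$. This gives a bound independent of $w$. On $|t| \geq \ee$ the integrand decays exponentially. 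On $|t| \leq \ee$ the perturbation decomposition gives $|\kappa_{0,it}|^n \leq e^{-c n t^2}$, since $\kappa_{0,it} = 1 + it\cdot(\text{real}) - \upsilon_\mu^2 t^2/2 + o(t^2)$ with $\upsilon_\mu > 0$. Integrating yields $O(n^{-1/2})$ uniformly in $w \in \bb R$ and $\wt b$.

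Finally, for a general $h$, bound
\begin{align*}
h(u) \leq \sum_{j\in\bb Z} \sup_{[j-1,j+1]} h \cdot \mathds 1_{[j-1,j+1]}(u) \leq \sum_j c_j\, h_0(u-j), \qquad c_j = \sup_{|u-j|\leq 1} h(u) \leq C_2(1+|j|)^{-2}.
\end{align*}
Then
\begin{align*}
\sqrt n\, \bb E[h(S_n - w_n)] \leq \sum_j c_j \sup_{w}\sqrt n\, \bb E[h_0(S_n - w)] \leq C\sum_j c_j < \infty ,
\end{align*}
uniformly in $n$, $\wt b$ and $w_n$. Only the $k = 2$ decay of $h$ is actually used.
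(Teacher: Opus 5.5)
Your argument is correct, but it takes a detour that the paper avoids.

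\textbf{The paper's route.} The paper dominates $h$ by a single admissible test function. Set $h^+(u)=\sup_{|u-u'|\le 1}h(u')$, which is still rapidly decreasing, and take a non-negative Schwartz function $\alpha$ with $\int_{-1}^{1}\alpha=1$ and $\wh\alpha$ compactly supported. Then $h_1=\alpha*h^+$ is a non-negative Schwartz function with $\wh{h_1}=\wh\alpha\,\wh{h^+}$ compactly supported and $h\le h_1$. So Proposition~\ref{Prop-Local-limit-theorem-001}(2), with trivial $\chi$ and $\psi=1$, applies to $h_1$ with the same shifts $w_n=o(\sqrt n)$, and the convergence gives the bound.

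\textbf{Your route.} Splitting $h$ into translates $h_0(\cdot-j)$ is what forces shifts $w_n+j$ outside the $o(\sqrt n)$ range. You handle this by reopening the Fourier--perturbation argument: you drop the phase, use exponential decay for $|t|\ge\ee$, and use $|\kappa_{0,it}|^n\le e^{-cnt^2}$ near $0$ via $\upsilon_\mu>0$. This is sound and yields more: $\sup_{w\in\bb R}\sqrt n\,\bb E[h_0(S_n-w)]\le C$ uniformly in all shifts, using only the $k=2$ decay of $h$. The cost is that you rely on the internal spectral estimates rather than on the proposition as a black box. Your application of the proposition to $h_0$ in the second step then becomes redundant.

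\textbf{A shortcut within your setup.} Under the full hypothesis the $c_j$ decay rapidly. Then $H=\sum_j c_j h_0(\cdot-j)$ is itself Schwartz, with $\wh H(t)=\wh{h_0}(t)\sum_j c_j e^{-ijt}$ supported in $\supp\wh{h_0}$. So the proposition applies to $H$ directly with the original shifts $w_n$, which is essentially the paper's trick.

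\textbf{A small slip.} The expansion $\kappa_{0,it}=1+it(\text{real})-\upsilon_\mu^2t^2/2+o(t^2)$ is right for the centred cocycle $\sigma_0-\lambda_\mu$. For $\sigma_0$ itself, as in your Fourier formula, one has $\log\kappa_{0,it}=it\lambda_\mu-\upsilon_\mu^2t^2/2+o(t^2)$, so the $t^2$-coefficient of $\kappa_{0,it}$ is $-(\upsilon_\mu^2+\lambda_\mu^2)/2$. Centring only multiplies $\kappa_{0,it}$ by the unimodular factor $e^{-it\lambda_\mu}$, so the bound $|\kappa_{0,it}|^n\le e^{-cnt^2}$ that you actually use is unaffected.
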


\begin{proof}
For $u \in \bb R$, set $h^+(u) = \sup_{|u - u'| \leq 1} h(u')$. 
The rapid decay of $h$ implies that $h^+$ is also rapidly decreasing.
We choose a non-negative Schwartz function $\alpha$ on $\bb R$ such that $\int_{-1}^1 \alpha(u) du = 1$
and the Fourier transform of $\alpha$ has compact support. 
Define $h_1 = \alpha * h^+$.
Then $h_1$ is a non-negative Schwartz function 
whose Fourier transform has compact support, and $h \leq h_1$ pointwise. 
Consequently, we get 
\begin{align*}
& \sup_{n \geq 1}  \sqrt{n}  \,  \bb E  \left[  h \left( \sigma_0(g_n \cdots g_1, \eta(\wt b)) - n \lambda_{\mu} - w_n \right)  \right]    \notag\\
& \leq  \sup_{n \geq 1}  \sqrt{n}  \,  \bb E  \left[  h_1 \left( \sigma_0(g_n \cdots g_1, \eta(\wt b)) - n \lambda_{\mu} -w_n \right)  \right].
\end{align*}
The latter is finite by Proposition \ref{Prop-Local-limit-theorem-001} with $\psi = 1$. 
\end{proof}

Using the domination estimate from Corollary \ref{Corollary-Local-limit-theorem-001}, 
we can now extend Proposition \ref{Prop-Local-limit-theorem-001} to 
the case of any continuous function $\psi$ on $\wt B$.

\begin{corollary}\label{Corollary-Local-limit-theorem-002}
Let $\mu$ be a Borel probability measure on $G$. 
Assume that the action of $G$ on $\wt B$ satisfies the moment condition \eqref{moment-assumption-01},
the action of $G$ on $B$ satisfies the contraction property \eqref{contraction-property-on-B}, 
and the continuous cocycle $\sigma_0: G \times B \to \bb R$ 
satisfies the integrability condition \eqref{moment-assumption-02} and $M_1^{\mu} = M_0^{\mu} \times \bb R$. 
Suppose that, for every $\wt b \in \wt B$ with $\eta(\wt b) \in \supp \nu$,   
we have $\wt b M \subset \overline{\Gamma_{\mu} \wt b}$. 
Let $h$ be a Schwartz function on $\bb R$ whose Fourier transform has compact support,
and let $\psi$ be a continuous function on $\wt B$. 
Then, uniformly in $\wt b \in \wt B$ and in $w_n \in \bb R$ with $|w_n| = o(\sqrt{n})$,   
\begin{align*}
 \upsilon_{\mu} \sqrt{2 \pi n} \,  
& \bb E \left[ \psi\left( g_n \cdots g_1 \wt b \right) h \left( \sigma_0(g_n \cdots g_1, \eta(\wt b)) - n \lambda_{\mu} - w_n \right) \right]  \notag\\
& \qquad\quad -  \int_{M/M_0^{\mu}}  Q \psi((m_0^{\mu})^n m)  \, \theta_{\wt b}(d m)  \int_{\bb R}  h(u) du 
\xrightarrow[n\to\infty]{} 0. 
\end{align*}
\end{corollary}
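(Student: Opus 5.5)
The plan is a density argument: first prove the statement for $\psi$ in the algebraic direct sum $\bigoplus_{x \in \wh M} \pi_x(C^{\gamma}(\wt B))$, using Proposition \ref{Prop-Local-limit-theorem-001}, and then pass to arbitrary continuous $\psi$ using the domination estimate of Corollary \ref{Corollary-Local-limit-theorem-001}.

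\emph{Step 1: the dense subspace.} Take $\psi \in \pi_x(C^{\gamma}(\wt B))$. By part (3) of Lemma \ref{Lem-representation-compact-group}, applied at the H\"older level via averaging as in Lemma \ref{Lem-dense-Con-Holder}, $\psi$ is a finite sum of functions $\langle f, \varphi\rangle$ with $f \in V_x^*$ and $\varphi \in W_x^{\gamma}$. By linearity we may treat each term separately, and there are two cases.
\begin{enumerate}
\item If $x$ is not a character trivial on $M_0^{\mu}$, then part (1) of Proposition \ref{Prop-Local-limit-theorem-001} shows the expectation term tends to $0$. By construction $Q_x = 0$, so the limit term vanishes as well.
\item If $x = \chi$ is a character of $M/M_0^{\mu}$, then part (2) gives the limit $\zeta_\chi(\psi)\varphi_\chi(\wt b)\chi(m_0^{\mu})^n \int h$. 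This equals $\int Q_\chi\psi((m_0^\mu)^n m)\,\theta_{\wt b}(dm) \int h$, because $Q_\chi\psi = \zeta_\chi(\psi)\chi$ by \eqref{formula Q chi varphi}, and the Fourier transform of $\theta_{\wt b}$ at $\chi$ is $\varphi_\chi(\wt b)$.
\end{enumerate}
Summing over finitely many $x$ gives the claim on the dense subspace, uniformly in $\wt b$ and in $w_n$.

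\emph{Step 2: approximation.} Given $\psi \in C^0(\wt B)$ and $\ee>0$, use Lemmas \ref{Lem-representation-compact-group} and \ref{Lem-dense-Con-Holder} to pick $\psi'$ in the dense subspace with $\|\psi-\psi'\|_\infty<\ee$. The error then splits into two pieces.
\begin{enumerate}
\item The expectation error is at most
\[
\ee\, \upsilon_\mu\sqrt{2\pi n}\,\bb E\big[|h|(\sigma_0(g_n\cdots g_1,\eta(\wt b))-n\lambda_\mu-w_n)\big].
\]
Since $|h|$ is rapidly decreasing, this is $\leq C\ee$ uniformly in $n$, $\wt b$ and $w_n$ by Corollary \ref{Corollary-Local-limit-theorem-001}.
\item The limit-term error is at most $\ee\int|h|$, because $Q$ is a contraction and $\theta_{\wt b}$ is a probability measure.
\end{enumerate}
Letting $n\to\infty$ and then $\ee\to0$ concludes.

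\emph{Expected obstacle.} The main point needing care is the identification in Step 1(2): the limit term from Proposition \ref{Prop-Local-limit-theorem-001} must match the $Q$-expression for \emph{every} $\wt b \in \wt B$, not only for $\wt b$ in $\eta^{-1}(\supp\nu)$. This holds because the extension of $\theta_{\wt b}$ in the proof of Corollary \ref{Cor-equdistribution} is defined exactly through $\int \chi\, d\theta_{\wt b} = \varphi_\chi(\wt b)$. A second point is the uniformity of Step 2 in $w_n$, which is exactly what Corollary \ref{Corollary-Local-limit-theorem-001} supplies.
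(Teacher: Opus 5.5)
Your proposal is correct and follows essentially the same route as the paper: verify the statement on $\bigoplus_{x \in \wh M} \pi_x(C^{\gamma}(\wt B))$ using Proposition \ref{Prop-Local-limit-theorem-001} and the definition of $Q$, then approximate a general $\psi$ in sup norm and bound the error by $\ee$ times the uniform bound of Corollary \ref{Corollary-Local-limit-theorem-001}, plus $\ee \int_{\bb R} |h(u)|\,du$. Your explicit check that $\int_{M/M_0^{\mu}} Q_\chi \psi((m_0^{\mu})^n m)\,\theta_{\wt b}(dm) = \zeta_\chi(\psi)\,\chi(m_0^{\mu})^n \varphi_\chi(\wt b)$ holds for every $\wt b \in \wt B$ spells out what the paper asserts in a single line.
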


\begin{proof}
From the definition of the operator $Q$ in the proof of Corollary \ref{Cor-equdistribution} (cf.\ \eqref{def operator Q}), 
we see that Proposition \ref{Prop-Local-limit-theorem-001} says that the conclusion of Corollary \ref{Corollary-Local-limit-theorem-002}
holds when $\psi$ belongs to the direct sum $\bigoplus_{x \in \wh M} \pi_x (C^{\gamma}(\wt B))$. 
For a general continuous function $\psi \in C^0(\wt B)$ and any $\ee > 0$, 
we can find $\psi' \in \bigoplus_{x \in \wh M} \pi_x (C^{\gamma}(\wt B))$
such that $\| \psi - \psi' \| < \ee$. 
Then, by applying Proposition \ref{Prop-Local-limit-theorem-001} to the function $\psi'$, we get 
\begin{align*}
& \limsup_{n \to \infty}
\bigg| \upsilon_{\mu} \sqrt{2 \pi n} \,  
 \bb E \left[ \psi\left( g_n \cdots g_1 \wt b \right) h \left( \sigma_0(g_n \cdots g_1, \eta(\wt b)) - n \lambda_{\mu} - w_n \right) \right]  \notag\\
& \qquad\qquad\qquad\qquad\qquad\qquad -  \int_{M/M_0^{\mu}}  Q \psi((m_0^{\mu})^n m) \,  \theta_{\wt b}(d m) 
   \int_{\bb R}  h(u) du \bigg|   \notag\\
& \leq \ee \left(  \sup_{n \geq 1}  \upsilon_{\mu} \sqrt{2 \pi n} \,   
\bb E \left[ |h| \left( \sigma_0(g_n \cdots g_1, \eta(\wt b)) - n \lambda_{\mu} - w_n \right) \right] 
+ \int_{\bb R} |h(u)| du  \right). 
\end{align*}
Since $\ee>0$ is arbitrary, the conclusion follows by applying Corollary \ref{Corollary-Local-limit-theorem-001}. 
\end{proof}

\begin{proof}[Proof of Theorem \ref{Local-limit-theorem-001}]
We now deduce the local limit theorem for the cocycle $\sigma: G \times \wt B \to \bb R$ cohomologous to $\sigma_0$. 
By applying the same method as in the proof of \cite[Proposition 16.6]{BQ16b}, 
one shows from Corollary \ref{Corollary-Local-limit-theorem-002} that,
for every continuous compactly supported function $\varphi: \wt B \times \bb R \to \bb C$, we have, uniformly in $\wt b \in \wt B$
and in $w_n \in \bb R$ with $|w_n| = o(\sqrt{n})$,   
\begin{align}\label{LLT joint target function001}
&  \upsilon_{\mu} \sqrt{2 \pi n} \,  
\bb E  \left[ \varphi \left(g_n \cdots g_1 \wt b, \sigma_0(g_n \cdots g_1, \eta(\wt b)) - n \lambda_{\mu}  - w_n  \right)  \right]
  \notag\\
& \qquad\qquad\quad
- \int_{\bb R} \int_{M/M_0^{\mu}}  Q \varphi((m_0^{\mu})^n m, u)  \, \theta_{\wt b}(d m) \, du 
\xrightarrow[n\to\infty]{}  0. 
\end{align}
Now let $h$ be a continuous compactly supported function on $\bb R$
and define, for $\wt b \in \wt B$ and $u \in \bb R$, $\varphi(\wt b, u) = h (u + \psi(\wt b))$,
where $\psi: \wt B \to \bb R$ is the continuous function appearing in \eqref{cohomologous-sigma}. 
Applying \eqref{LLT joint target function001} to this target function $\varphi$ and using the equation \eqref{cohomologous-sigma}, 
we get that, uniformly in $\wt b \in \wt B$ and for $w_n \in \bb R$ with $|w_n| = o(\sqrt{n})$,   
\begin{align*}
& \upsilon_{\mu} \sqrt{2 \pi n} \,  \bb E  \left[ h \left(\sigma(g_n \cdots g_1, \wt b) - n \lambda_{\mu} - w_n  \right) \right]  \notag\\
& \qquad
 - \int_{\bb R} \int_{M/M_0^{\mu}}  Q \varphi((m_0^{\mu})^n m, u) \, \theta_{\wt b}(d m) \, du 
 \xrightarrow[n\to\infty]{}  0,  
\end{align*}
where we used the fact that $\psi(\wt b)$ is bounded and hence it can be absorbed by $w_n$. 
Notice that, for any $n \geq 1$, by Remark \ref{Rem-operator-Q-q} and Fubini's theorem, we have 
\begin{align*}
& \int_{\bb R} \int_{M/M_0^{\mu}}  Q \varphi \big( (m_0^{\mu})^n m, u \big) \,  \theta_{\wt b}(d m) \, du  \notag\\
& = \int_{\bb R} \int_{M/M_0^{\mu}}  \int_{\wt B}  \varphi(\wt b', u)  \,  q_{(m_0^{\mu})^n m}(d \wt b')  \, \theta_{\wt b}(d m) \,  du  \notag\\
& = \int_{\bb R} \int_{M/M_0^{\mu}}  \int_{\wt B}  h (u + \psi(\wt b'))  \,  q_{(m_0^{\mu})^n m}(d \wt b')  \,   \theta_{\wt b}(d m) \,  du    \notag\\
& =  \int_{M/M_0^{\mu}}  \int_{\wt B}  \int_{\bb R}  h (u + \psi(\wt b')) \,  du  \,  q_{(m_0^{\mu})^n m}(d \wt b')  \,  \theta_{\wt b}(d m)  \notag\\
& = \int_{\bb R}  h (u) du. 
\end{align*}
This concludes the proof of the theorem. 
\end{proof}

\section{Contraction property and strongly irreducible representations}\label{Sec-action-Strong-irre}

\subsection{Setting}
Now we come back to the assumptions used in the introduction. 
Let $\mu$ be a Borel probability measure on the linear group ${\rm GL}(V)$,
where $V$ is a finite-dimensional real vector space. 
Denote by $\Gamma_{\mu}$ the closed subsemigroup of ${\rm GL}(V)$ spanned by the support of the measure $\mu$, 
and assume that $\Gamma_{\mu}$ acts strongly irreducibly on $V$. 
We also assume that the moment condition \eqref{Exponential-moment} is satisfied. 
As an intermediate step towards proving Theorem \ref{Main-Thm-LLT}, we will apply Theorem \ref{Local-limit-theorem-001} to obtain
that, for a certain class of (non-invertible) endomorphisms $h \in {\rm End}(V)$, 
the sequence of random variables $\log \| g_n \cdots g_1 h \|$ satisfies a local limit theorem. 
In order to state this result, we need to introduce precisely the class of endomorphisms $h$ that we will be able to manage. 

Let $V_{\bb C} = \bb C \otimes V$ be the complexification of $V$. 
Let $\bf G \subset {\rm GL}(V_{\bb C})$ be the Zariski closure of $\Gamma_{\mu}$, 
and let $\bf G^{\circ} \subset \bf G$ be the Zariski connected component of the identity in $\bf G$. 
Thus $\bf G^{\circ}$ is a normal subgroup of finite index in $\bf G$. 
Since we have assumed that $\Gamma_{\mu}$ acts strongly irreducibly on $V$, 
the action of $G^{\circ}$ on $V$ remains strongly irreducible, 
where $G^{\circ} = \bf G^{\circ}(\bb R) = \bf G^{\circ} \cap {\rm GL}(V)$ is the group of real points of $\bf G^{\circ}$. 
In particular, the connected real algebraic group $\bf G^{\circ}$ is reductive.

We choose a maximal split torus $\bf A \subset \bf G^{\circ}$ and a minimal parabolic subgroup $\bf P \subset \bf G^{\circ}$ 
such that $\bf A \subset \bf P$, and let $\bf U$ be the unipotent radical of $\bf P$.
We denote by $V^+$ the space of $U$-invariant vectors in $V$, i.e.,
\begin{align*}
V^+ = \{ v \in V: \forall u \in U, uv = v \}, 
\end{align*}
where $U = \bf U(\bb R) = \bf U \cap {\rm GL}(V)$ is the group of real points of $\bf U$.
(Note that the space $V^+$ is the highest weight space for the action of $A = \bf A(\bb R)$ on $V$ 
with respect to the order associated with the choice of the group $\bf P$, see \cite{Tit71}.) 
The space $V^+$ admits a unique $A$-invariant complementary subspace, which we denote by $V^-$. 
Let $\varpi \in {\rm End}(V)$ be the projection onto $V^+$ with kernel $V^-$. 

Our intermediate step towards proving Theorem \ref{Main-Thm-LLT} is stated as follows.

\begin{theorem}\label{Thm-LLT-intermediate}
Assume that $\Gamma_{\mu}$ is strongly irreducible 
and that the image of $\Gamma_{\mu}$ in the projective linear group 
${\rm PGL}(V)$ is not contained in any compact subgroup of ${\rm PGL}(V)$. 
Assume that $\mu$ admits a finite exponential moment \eqref{Exponential-moment}. 
Then, there exists $\upsilon_{\mu} > 0$ such that, for any real numbers $a < b$,
 uniformly in $\xi \in G \varpi G$ and in $w_n \in \bb R$ with $|w_n| = o(\sqrt{n})$,   
\begin{align*}
\upsilon_{\mu} \sqrt{2 \pi n}  \,  \bb P  \left( \log \frac{\|g_n \cdots g_1 \xi \|}{\| \xi \|} - n \lambda_{\mu} - w_n \in [a, b] \right) 
\xrightarrow[n\to\infty]{}  b-a. 
\end{align*}
\end{theorem}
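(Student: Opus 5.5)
We will derive Theorem \ref{Thm-LLT-intermediate} from Theorem \ref{Local-limit-theorem-001}. The first step is to choose the principal bundle. The natural candidate is $\wt B$, the orbit of $\varpi$ (or of $V^+$ with a frame) under $G^{\circ}$. Concretely, let $P = \bf P(\bb R)$, let $Z$ be the centralizer of $A$ in $G^\circ$, and write $Z = A\,M'$ with $M'$ compact.

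We set $B = G^\circ/P$, the flag variety, and
\[
\wt B = G^{\circ}/(A_{\rm }U)\cdot(\text{compact normalizing part})
\]
arranged so that the fiber over a flag is the compact group $M = M'/(M'\cap \ker)$ acting on $V^+$ modulo scalars. In practice we take $\wt B$ to be the set of endomorphisms $g\varpi$, $g\in G^\circ$, normalized to norm one and considered up to positive scalars. The right action of $M$ is composition on the right by elements of $M'$ acting on $V^+$; this action is free and smooth, so Lemma \ref{Lem-principal-bundle} applies. The left action of $\Gamma_\mu$ commutes with it. We pass to the finite-index subgroup $G^\circ$ via the standard induced-walk argument, or else work with the full $\bf G$ acting on finitely many copies.

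The second step is to verify the hypotheses of Theorem \ref{Local-limit-theorem-001}.
\begin{enumerate}
\item[(a)] The contraction property \eqref{contraction-property-on-B} on the flag variety $B$ follows from the Guivarc'h--Raugi/Benoist--Quint theory for Zariski dense semigroups in reductive groups with exponential moment ($\Gamma_\mu$ is Zariski dense in $\bf G$, and $B$ is the Furstenberg boundary). This is a known result in \cite{BQ16b}.
\item[(b)] The moment condition \eqref{moment-assumption-01} follows from \eqref{Exponential-moment}, since ${\rm Lip}(g)\le \|g\|^2\|g^{-1}\|^2$.
\item[(c)] The cocycle is $\sigma(g,\xi)=\log(\|g\xi\|/\|\xi\|)$. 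It is cohomologous to a cocycle $\sigma_0$ on $B$ (the highest-weight Iwasawa cocycle for a $K$-invariant Euclidean norm). The transfer function $\psi(\xi)=\log(\|\xi\|/\|\xi\|_K)$ is continuous, so \eqref{cohomologous-sigma} holds, and the integrability condition \eqref{moment-assumption-02} follows from the exponential moment.
\item[(d)] The density condition $\wt b M\subset \overline{\Gamma_\mu \wt b}$ over $\supp\nu$ comes from Benoist's result that the closed subgroup generated by the Jordan projections (including the elliptic $M$-parts) of a Zariski dense semigroup is large. Loxodromic elements $g\in\Gamma_\mu$ fixing the attracting flag act on the fiber by their $M$-component, and these components generate a dense subgroup of $M$ modulo what is absorbed.
\end{enumerate}

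The third step, which we expect to be the main obstacle, is showing $M_1^\mu = M_0^\mu\times\bb R$. Equivalently, we must exclude eigenfunctions $\varphi(g\wt b)=e^{-it\sigma_0}\lambda\varphi(\wt b)$ with $t\neq0$, twisted by characters $\chi$ of $M$. The plan is to evaluate such an identity at attracting fixed points of loxodromic elements $g\in\Gamma_\mu$. This yields $\chi(m(g))e^{-it\lambda_1(g)}=\lambda^{n}$ for $g$ in the $n$-th convolution support, that is, $\chi(m(g))e^{-it\lambda_1(g)}\lambda^{-\ell(g)}=1$ for all loxodromic $g$. Benoist's theorem \cite{Benoist97,Be2} says that the Jordan projections $(\lambda(g), m(g))$ of Zariski dense semigroups generate a dense subgroup of $\mathfrak a\times M^{ab}$ (up to the component group). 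Combined with the word-length variable, this forces $t=0$ once we know the highest weight is nonconstant on $\mathfrak a$. That nonconstancy is precisely the assumption that the image of $\Gamma_\mu$ in ${\rm PGL}(V)$ is unbounded. This also gives $\upsilon_\mu>0$.

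Finally, Theorem \ref{Local-limit-theorem-001} is applied with $h$ approximating $\mathds 1_{[a,b]}$ from above and below by continuous compactly supported functions, which yields $b-a$ in the limit. Uniformity in $\xi\in G\varpi G$ follows from uniformity in $\wt b$, after writing $\xi=g\varpi g'$ and absorbing the right factor into the choice of norm. The norms $\|\cdot g'\|$ vary in a compact family when $g'$ ranges over $G$ modulo $P$, up to the scaling accounted for by $\|\xi\|$; the bounded discrepancy $\psi$ is then absorbed into $w_n$, as in the proof of Theorem \ref{Local-limit-theorem-001}.
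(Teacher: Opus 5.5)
Your architecture matches the paper's: realize the problem on an $M$-principal bundle over the flag variety; check contraction, moments, cohomology to the highest-weight Iwasawa cocycle, the fiber-density hypothesis, and $M_1^{\mu}=M_0^{\mu}\times\bb R$; then apply Theorem \ref{Local-limit-theorem-001}. However, the step you call the main obstacle is not closed by the result you invoke.

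\textbf{The non-arithmeticity step.} Evaluating at attracting fixed points gives, for loxodromic $g\in\supp\mu^{*n}$, a relation $\kappa(\lambda(g))=w^{-n}$. Here $w$ is the unknown eigenvalue, and $\kappa$ is a character of the centralizer $Z$ of $A$ combining $\chi$ on the compact part with $a\mapsto|\theta(a)|^{-it}$ on $A$.

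Density of the group generated by Jordan projections (Benoist), even jointly with elliptic parts, does not rule this out with $t\neq0$. It is logically compatible with, say, $\theta(\lambda(g))\in nc+d\bb Z$ for $g\in\supp\mu^{*n}$ with $c/d\notin\bb Q$. That situation yields the relation with $t=2\pi/d$ and $w=e^{itc}$. Restricting to word lengths divisible by some $N$ only helps if $w$ is a root of unity, which it need not be.

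The central split part of $\lambda$ is additive along the walk and produces exactly such terms. So the relevant condition is nontriviality of $\theta$ on $A\cap S$ (Lemma \ref{Lem image of G}), not on $\mathfrak a$. Your claim that nonconstancy on $\mathfrak a$ ``is precisely'' PGL-unboundedness is false: take $\Gamma_\mu\subset e^{c}K$ with $c\neq0$.

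The paper eliminates $w$ and the central part by using triples $g,h,gh$ of loxodromic elements. For these, $\kappa(\lambda(gh)\lambda(g)^{-1}\lambda(h)^{-1})=1$. It then needs the main theorem of \cite{GQX26}: the closed subgroup generated by these defects contains an open subgroup of $(S\cap Z)/[Z,Z]$, \emph{jointly} in the split and compact directions. The compact directions matter because $M$ may have infinite abelianization, which is exactly the non-proximal case. Your phrase ``combined with the word-length variable'' stands in for this missing theorem.

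\textbf{The density hypothesis (d).} Step (d) fails for your concrete model. With $\wt B$ the rays $\bb R_{>0}g\varpi$, the fiber group contains signs, and $\wt bM\subset\overline{\Gamma_\mu\wt b}$ can be false. Take $\mu$ supported on $\begin{pmatrix}2&1\\1&1\end{pmatrix}$ and $\begin{pmatrix}1&1\\1&2\end{pmatrix}$ in $\mathrm{SL}_2(\bb R)$. Then $\Gamma_\mu$ is Zariski dense and strongly irreducible, but it preserves the closed positive quadrant. So no orbit closure of a positive ray reaches the opposite ray, and every loxodromic element has elliptic part $+1$. Your assertion that these elliptic parts generate a dense subgroup of $M$ therefore fails here.

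The paper avoids this in two ways. It works projectively, which is possible because $\log(\|g\xi\|/\|\xi\|)$ is scale invariant. It also takes $A=\mathbf{A}(\bb R)$ including its finite $2$-torsion, so that $M=N_G(P)/AU$ absorbs such signs. It then deduces (d) from the uniqueness of the $\Gamma_\mu$-minimal subset of $G/AU$ \cite{BQ14}. For non-connected $\mathbf G$ it adds a separate minimality argument on $G/G^{\circ}\times\supp\nu$.
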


In this statement, we write $G$ for the group of real points of $\bf G$, that is, $G = \bf G(\bb R) = \bf G \cap {\rm GL}(V)$. 

\begin{remark}\label{Remark-variance-002}
Theorem \ref{Thm-LLT-intermediate} will be deduced from Theorem \ref{Local-limit-theorem-001}. 
Together with Remark \ref{Remark-variance}, 
this shows in particular that the asymptotic variance $\upsilon_{\mu}^2$ can be defined by the convergence 
\begin{align*}
\upsilon_{\mu}^2 = \lim_{n \to \infty} \frac{1}{n} \bb E \left[ \left( \log \frac{\|g_n \cdots g_1 \xi \|}{\| \xi \|} - n \lambda_{\mu} \right)^2 \right], 
\end{align*}
which is uniform in $\xi \in G \varpi G$. 
We will show later that the convergence \eqref{def-variance-aa} also holds, see Remark \ref{Remark-variance-003} below. 
\end{remark}

\begin{remark}
Since all minimal parabolic subgroups and maximal split tori of $\bf G$ are conjugated 
(see  \cite[Theorem 19.2 and Proposition 20.5]{Borel12}), 
the set $G \varpi G$ does not depend on the choices of $\bf P$ and $\bf A$. 
Indeed, if $\bf P' \supset  \bf A'$ is another such pair, we may find $g \in G^{\circ}$ with $\bf P' = g \bf P g^{-1}$ and $\bf A' = g \bf A g^{-1}$.
Then, if $\varpi'$ is the projection of $V$ associated with the choice of $\bf P'$ and $\bf A'$, we have $\varpi' = g \varpi g^{-1}$. 
Hence $G \varpi' G = G \varpi G$. 
\end{remark}

\begin{example}
Assume that $\Gamma_{\mu}$ is Zariski dense in ${\rm GL}(V)$. Then the projection $\varpi$ has rank $1$ and the set 
$G \varpi G = {\rm GL}(V) \varpi {\rm GL}(V)$ is the set of rank $1$ endomorphisms of $V$. 
More generally, if $\Gamma_{\mu}$ is proximal in $V$, then all the elements of the set $G \varpi G$ have rank $1$. 
\end{example}

\begin{example}
Assume that $V$ has even dimension and that $\Gamma_{\mu}$ preserves a complex structure on $V$, that is,
there exists a real endomorphism $J$ of $V$ such that $J^2 = -1$ and $\Gamma_{\mu}$ commutes with $J$. 
For simplicity, suppose that $\Gamma_{\mu}$ is Zariski dense in the group ${\rm GL}(V, J)$ 
of automorphisms of $V$ which preserve the complex structure, that is, the group automorphisms which commute with $J$. 
Then the set $G \varpi G = {\rm GL}(V, J) \varpi {\rm GL}(V, J)$ is the set of rank $2$ endomorphisms of $V$  
which preserve the complex structure. 
\end{example}

In the previous examples, the group $G$ is Zariski connected. Here is an example where it is not. 

\begin{example}
Assume again that $V$ has even dimension and that $J$ is a complex structure on $V$. 
Suppose that $G$ is the group ${\rm GL}(V, \pm J)$ defined by ${\rm GL}(V, \pm J) = \{ g \in {\rm GL}(V):  gJg^{-1} = \pm J \}$. 
In other words, the group ${\rm GL}(V, \pm J)$ is the group of $\bb R$-linear automorphisms of $V$ which are either linear or semi-linear with respect to 
the complex structure $J$ on $V$. 
Then the set $G \varpi G$ is the set of rank $2$ endomorphisms of $V$  
which are either linear or semi-linear. 
\end{example}

\subsection{Structure results}
The proof of Theorem \ref{Thm-LLT-intermediate} boils down to applying Theorem \ref{Local-limit-theorem-001} 
to the action of $\Gamma_{\mu}$ on the quotient space $\wt B = G / AU$. 
Indeed, let us now relate this compact space to the objects that appear in the statement of Theorem \ref{Local-limit-theorem-001}.

We set $M = N_G(P) / AU$ to be the quotient of the normalizer of $P$ in $G$ by its normal subgroup $AU$.  
The group $M$ is compact. 
Indeed, $P/AU$ is compact since $P$ is a minimal parabolic subgroup of $G^{\circ}$,  
and as $N_G(P) \cap G^{\circ} = P$ (see for example \cite[Theorem 11.16]{Borel12}),
the group $N_G(P) / P$ is isomorphic to the finite group $G / G^{\circ}$.

Denote by $X$ the space $G (\bb R \varpi) G \subset \bb P ({\rm End}(V))$. 
The set $X$ is equipped with a transitive action of the group $G \times G$: for $(g_1, g_2) \in G \times G$ and $x \in X$,  
we set $(g_1, g_2) x = g_1 x g_2^{-1}$. 
Denote by $\bf P^-$ the opposite minimal parabolic subgroup of $\bf P$ with respect to $\bf A$, 
that is, the unique minimal parabolic subgroup of $\bf G$ such that $\bf P \cap \bf P^-$ is the centralizer of $\bf A$ in $\bf G^\circ$, 
and write $U^-$ for the group of real points of the unipotent radical of $\bf P^-$, see \cite[Section 21]{Borel12}. 

We claim that the orbit map $G \times G \to X: (g_1, g_2) \mapsto g_1 (\bb R \varpi) g_2^{-1}$
induces a surjective map $(G \times G) / (AU \times AU^-) \to X$. 
Indeed, for every $u \in U$, since the range of $\varpi$ is the space of $U$-invariant vectors, we have $u \varpi = \varpi$. 
In the same way, 
the adjoint of $\varpi$ is the $A$-invariant projection onto the space of $U^-$-invariant vectors (see \cite{Tit71}), 
in the dual vector space $V^*$ of $V$, so that $\varpi u = \varpi$ for every $u \in U^-$. 
Moreover, the space $V^+$ is the weight space associated to some character $\chi$ of the group $A$, 
hence, for $a \in A$, we have $a \varpi = \varpi a = \chi(a) \varpi$, which is to say that the line $\bb R \varpi$ is 
a fixed point for the actions of $A$ on the projective space $\bb P ({\rm End}(V))$ both on the left and on the right. 
Therefore, $AU  \times AU^-$ is contained in the stabilizer of $x = \bb R \varpi \in X$ 
under the action of the group $G \times G$. 

Consequently, for proving Theorem \ref{Thm-LLT-intermediate}, 
we will study random trajectories associated to the probability measure $\mu$ on $G$ and the action of $G$ on the first component 
of the product $G/AU \times G/AU^-$, which amounts to studying trajectories of the action of $G$ on $G/AU$. 
Notice that, by Lemma \ref{Lem-principal-bundle} and the definition of $M$, 
the space $G/AU$ is an $M$-principal bundle over $G/N_G(P) = G^{\circ} / P$. 

The next statement says that the action of $\mu$ on $G/AU$ satisfies all the assumptions of Corollary \ref{Cor-equdistribution}. 
Since it does not make use of the action of $G$ on the vector space $V$, 
we forget about this action and just phrase it as a consequence of  
 the fact that $\mu$ is a Borel probability measure on $G$ and $\Gamma_{\mu}$ is Zariski dense in $G$.

\begin{proposition}\label{Prop-check-assum-Corollary}
Let $\bf G$ be a real reductive group and let $\mu$ be a Borel probability measure on $G$. 
Suppose that $\Gamma_{\mu}$ is Zariski dense in $G$ and that $\mu$ admits a finite exponential moment. 
Then the following assertions hold. 

(1) The action of $G$ on $B = G/N_G(P)$ satisfies the contraction property \eqref{contraction-property-on-B}
with respect to any Riemannian distance on $B$.  

(2) The action of $G$ on $\wt B = G/AU$ satisfies the moment condition \eqref{moment-assumption-01} 
with respect to any Riemannian distance on $\wt B$. 

(3) For every $\wt b \in \wt B$ with $\eta(\wt b) \in \supp \nu$,   we have $\wt b M \subset \overline{\Gamma_{\mu} \wt b}$,
where $\nu$ is the unique $\mu$-stationary probability measure on $B$. 
\end{proposition}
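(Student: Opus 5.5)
(1) is the classical contraction on the flag variety for Zariski dense semigroups; I will quote it rather than reprove it. Since $G^{\circ}$ has finite index in $G$ and $G/N_G(P) = G^{\circ}/P$, I first pass to the measure induced by $\mu$ on $G^{\circ}$ (the first-return random walk, which keeps a finite exponential moment). For it, the results of Guivarc'h--Raugi, Goldsheid--Margulis and Benoist--Quint (\cite[Chapters 10--13]{BQ16b}) give a unique stationary measure on $G^{\circ}/P$ and the contraction estimate \eqref{contraction-property-on-B} for some small $\delta$. One proves this representation by representation: embed $G^{\circ}/P$ into a product of projective spaces of proximal highest-weight representations (Tits), and use the simplicity of the top Lyapunov exponent in each, which follows from Zariski density. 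The exponent $\delta$ is then chosen using the exponential moment. The statement for $\mu$ itself follows by a stopping-time argument, or directly because the estimate is uniform.

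(2) follows because $G$ acts smoothly on the compact manifold $\wt B = G/AU \cong K/(K\cap M')$. For a Riemannian metric, ${\rm Lip}(g) \le C\|{\rm Ad}(g)\|^{c}$, for instance via the Iwasawa decomposition $g = k a u$. Since $\|{\rm Ad}(g)\|$ is bounded by a power of $\max\{\|g\|,\|g^{-1}\|\}$ in the faithful representation $V$, the exponential moment gives \eqref{moment-assumption-01} for small $\alpha$.

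(3) is the main obstacle. Fix $\wt b$ over $\supp\nu$. The set $\overline{\Gamma_\mu \wt b}$ is $M$-compatible: if $\wt b m \in \overline{\Gamma_\mu\wt b}$ then $\overline{\Gamma_\mu\wt b}\, m \subset \overline{\Gamma_\mu\wt b}$, because the actions commute. So $H_{\wt b} = \{m : \wt b m \in \overline{\Gamma_\mu \wt b}\}$ is a closed subsemigroup of the compact group $M$, hence a closed subgroup. We must show $H_{\wt b} = M$. I would use Benoist's theory of loxodromic elements. Zariski density implies that $\Gamma_\mu$ contains loxodromic elements $\gamma$, each conjugate to $a_\gamma m_\gamma u$ with attracting fixed point $\xi_\gamma^+\in\supp\nu$. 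Lifting, a point $\wt b$ over $\xi_\gamma^+$ satisfies $\gamma^n \wt b = \wt b\, m_\gamma^n$, so $m_\gamma$ lies in $H_{\wt b}$ (up to conjugation fixed by the choice of lift). Benoist's theorem \cite{Benoist97, Be2} states that the elements $m_\gamma$, for $\gamma$ loxodromic in a Zariski dense semigroup, generate a dense subgroup of $M$ (the analogue of the density of the Jordan projections). By the contraction in (1), the points $\gamma^n \wt b'$ for $\wt b'$ over $\supp\nu$ accumulate near the fibers of the $\xi_\gamma^+$, which are dense in $\supp\nu$. Using the equivariance of $H_{\wt b g}$ under translation, it then suffices to control one fiber: I would show that $\wt b \mapsto H_{\wt b}$ is constant up to conjugation along $\overline{\Gamma_\mu\wt b}$ and contains all the $m_\gamma$. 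The delicate points are the choice of conjugating elements, handled by Benoist's simultaneous-conjugacy statement, and the fact that $M$ may be disconnected because $G$ is not connected. For the latter I would treat $N_G(P)/P \cong G/G^{\circ}$ separately, using that $\Gamma_\mu$ meets every component of $G$.
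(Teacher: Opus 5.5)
Parts (1) and (2) are fine. Part (3) has a genuine gap.

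For (1) you give essentially the paper's citation (contraction on the flag variety, \cite[Lemma 13.5]{BQ16b}), though the passage from $\mu_0$ back to $\mu$ is only gestured at. For (2), the bound ${\rm Lip}(g)\leq\|{\rm Ad}(g)\|$ for the quotient metric on $\mathfrak g/{\rm Ad}(x)\mathfrak h$ is a valid alternative to the paper's Chevalley embedding of $G/AU$ into a projective space.

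The step in (3) that carries all the weight is the claim that the $m_\gamma$ ``generate a dense subgroup of $M$''. As stated this is not meaningful. The conjugating element $h$ in $\gamma=h\,a_\gamma m_\gamma h^{-1}$ is not unique, and changing it replaces $m_\gamma$ by an $M$-conjugate; this is exactly what happens when you move $\wt b$ inside its fiber. So $m_\gamma$ is only defined up to conjugacy in $M$.

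What your construction actually yields is the following. Take a $\Gamma_\mu$-minimal set $Y\subset\eta^{-1}(\supp\nu)$ and put $H=\{m\in M: Ym=Y\}$. This is the right object: your pointwise $H_{\wt b}$ can only be transported along $\overline{\Gamma_\mu\wt b}$ if you already know this orbit closure is minimal, and otherwise you get only one inclusion. Then $H$ meets the $M$-conjugacy class of every $m_\gamma$.

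That cannot force $H=M$ when $M$ is non-abelian, for example $G={\rm SO}(n,1)$ with $n\geq 4$, where $\mathfrak m=\mathfrak{so}(n-1)$. A maximal torus of $M$ meets every conjugacy class. So even if the classes of the $m_\gamma$ were dense, you would only learn that $H$ contains a maximal torus. This is why the density results that exist for loxodromic elements are formulated in $\mathfrak a$ or in the abelianization $Z/[Z,Z]$: density of Jordan projections, or \cite[Theorem 1.1]{GQX26}, which the paper uses later. The ``simultaneous-conjugacy statement'' you defer to is therefore not a technicality. It is the whole content of (3), and it is not supplied.

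Note that (3) is equivalent to $\Gamma_\mu$-minimality of $\eta^{-1}(\supp\nu)$. To see this, apply (3) to show $\overline{\Gamma_\mu\wt b}$ is $M$-invariant and projects onto $\supp\nu$. So any proof must establish a result of exactly this strength.

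The paper imports it. It uses \cite[Proposition 4.12]{BQ14}: $G/AU$ has a unique $\Gamma_\mu$-minimal subset when $\mathbf{G}$ is Zariski connected. It also uses \cite[Lemma 4.8]{BQ14}: for $\wt b$ over $\supp\nu$, the orbit closure $\overline{\Gamma_\mu\wt b}$ is minimal and contains $\wt b$. Then (3) follows in two lines. Right translation by $m\in M$ commutes with $\Gamma_\mu$, so $\overline{\Gamma_\mu\wt b}\,m$ is again minimal, hence equals $\overline{\Gamma_\mu\wt b}$, and so $\wt b m$ lies in it.

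For disconnected $\mathbf{G}$ the paper applies the connected case to $\mu_0$ on $G^\circ$, obtaining $\wt b\,(P/AU)\subset\overline{\Gamma_{\mu_0}\wt b}$. It then recovers the finite part $N_G(P)/P\cong G/G^\circ$ from minimality of $\Gamma_\mu$ on $G/G^\circ\times\supp\nu$. That minimality follows from $G=\Gamma_\mu G^\circ$ together with minimality of $\Gamma_{\mu_0}$ on $\supp\nu$. Your last sentence points in this direction, but it only helps once the connected case is established.
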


\begin{remark}
Recall from \cite{BQ16b} that saying that $\mu$ admits a finite exponential moment is equivalent to saying that 
\eqref{Exponential-moment} holds in a certain faithful rational representation of $\bf G$. 
Then, \eqref{Exponential-moment} actually holds in any rational representation of $\bf G$. 
\end{remark}

\begin{remark}\label{Rem-Riemannian metric}
In order to apply the results of Sections \ref{Sec-Existence of limit measures} and \ref{Section-LLT} 
to the $M$-principal bundle $\wt B = G/AU \to B = G/N_G(P)$, 
we need to equip the space $\wt B$ with an $M$-invariant distance.
As $\wt B$ is a compact smooth manifold and the right $M$-action on $\wt B$ is smooth, 
we can choose an $M$-invariant Riemannian metric on $\wt B$. 
The associated distance only depends on the choice of the metric up to Lipschitz equivalence.
This distance, or rather this Lipschitz class, will work for all our constructions. 
In particular, the moment assumption \eqref{Exponential-moment} for the norm 
implies the moment assumption \eqref{moment-assumption-01} for the Lipschitz constant. 

Indeed, given a Euclidean norm on a vector space $W$, 
we can equip the space $\bb P(W)$ with the distance defined by, 
for $x = \bb R v$ and $y = \bb R w$ in $\bb P(W)$, 
\begin{align*}
\bf d(x, y) = \frac{\| v \wedge w \|}{\|v\| \|w\|}, 
\end{align*}
where the exterior square $\wedge^2 W$ is equipped with the Euclidean norm such that whenever $v$ and $w$ are orthogonal vectors in $W$,
one has $\| v \wedge w \| = \|v\| \|w\|$.
The above distance $\bf d$ on $\bb P(W)$ is not Riemannian. 
Indeed, the distance between the lines $x = \bb R v$ and $y = \bb R w$ is the absolute value of the sine of the angle between these two lines. 
Therefore, the Riemannian distance for the natural Riemannian metric on $\bb P(W)$ associated with the Euclidean norm is defined by the formula:
\begin{align*}
\bf d_R(x, y) = \mbox{arcsin}(\bf d(x, y)). 
\end{align*}
Elementary analysis shows that 
\begin{align*}
\frac{2}{\pi} \bf d_R(x, y) \leq \bf d(x, y) \leq \bf d_R(x, y), 
\end{align*}
which means that these two distances are equivalent. 
Note that in \cite[Section 13]{BQ16b} all inequalities are shown for the distance $\bf d$, but from the above discussion, 
they also hold for the Riemannian distance $\bf d_R$.

By Chevalley's theorem (see for example \cite{Borel12}), we may find a rational representation $\rho: G \to {\rm GL}(W)$ and a line $x_0 \in \bb P(W)$
such that the stabilizer of $x_0$ in $G$ is $AU$. 
Then, the compact quotient space $G/AU$ may be seen as a submanifold of $\bb P(W)$ 
and the distance on $G/AU$ is Lipschitz equivalent to the restrictions of the distances $\bf d$ and $\bf d_R$. 
Now, for $x, y \in \bb P(W)$ and $g \in G $, the definition of the distance $\bf d$ gives 
\begin{align*}
\bf d(gx, gy) \leq \| \rho(g) \|^2  \| \rho(g)^{-1} \|^{2} \bf d(x, y).
\end{align*}
Since the coefficients of $\rho(g)$ are polynomial functions of the coefficients of $g$ and of ${\rm det}(g)^{-1}$, 
we may find a constant $C>0$ such that, for any $g \in G $, we have 
$$\max\{\| \rho(g) \|,  \|\rho(g)^{-1} \| \} \leq \max\{\|g\|,  \|g^{-1} \| \}^C.$$
We obtain 
\begin{align*}
\bf d(gx, gy) \leq  \max\{\|g\|,  \|g^{-1} \| \}^{2C}  \bf d(x, y). 
\end{align*}

\end{remark}

\begin{proof}[Proof of Proposition \ref{Prop-check-assum-Corollary}]
(1) It follows from Lemma 13.5 in \cite{BQ16b} together with Remark \ref{Rem-Riemannian metric}. 
Indeed, this lemma shows a contraction property in $G/P$ which is even a bit stronger than (1), 
since $G/P$ may be identified with $(G/G^{\circ}) \times (G/N_G(P))$. 
By Remark \ref{Rem-Riemannian metric}, the distance considered in \cite{BQ16b} is equivalent to the Riemannian distance that we have chosen on $B$. 

(2) This is proved in Remark \ref{Rem-Riemannian metric}. 

(3) In the case where $\bf G$ is connected, this follows from the results in \cite{BQ14}.
Indeed, on one hand, Lemma 4.8 (i) and (ii) in \cite{BQ14} says that the orbit closure $\overline{\Gamma_{\mu} \wt b}$ is $\Gamma_{\mu}$-minimal
and contains $\wt b$,
while on the other hand, Proposition 4.12 in \cite{BQ14} says that the set $\wt B = G/AU$ contains a unique $\Gamma_{\mu}$-minimal subset. 
(Note that Lemma 4.8 is written under the assumption that $\Gamma$ is a group, but the proof only uses the fact that $\Gamma$ is a semigroup.)
For any $m \in M$, 
we get $\wt b m \in \overline{\Gamma_{\mu} \wt b m} = \overline{\Gamma_{\mu} \wt b}$ and the conclusion follows. 

In the general case, write $\mu_0$ for the probability measure induced by $\mu$ on the finite index normal subgroup $G^{\circ}$ 
(see \cite[Section 4.2]{BQ16b}). 
Since $\Gamma_{\mu_0}$ is a Zariski dense subsemigroup of $G^{\circ}$, by \cite[Lemma 4.7]{BQ16b}, 
the measure $\nu$ is also the unique $\mu_0$-stationary probability measure on $G^{\circ} / P = G/N_G(P)$. 
Therefore, by the connected case, for $\wt b \in \eta^{-1}(\supp \nu)$, 
we have $\overline{\Gamma_{\mu_0} \wt b} \supset \wt b M_1^{\mu}$, where $M_1^{\mu} = (N_G(P) \cap G^{\circ}) / AU = P/AU$.  
Thus, to conclude, it suffices to prove that the action of $\Gamma_{\mu}$ on $\eta_0^{-1}(\supp \nu)$ is minimal, 
where $\eta_0$ is the quotient map $G/P \to G/N_G(P)$.  
Indeed, as mentioned above, the quotient space $G/P$ may be identified with $G/G^{\circ} \times G/N_G(P)$. 
More precisely, the group $G^{\circ}$ acts transitively on $G/N_G(P)$ (see for example \cite[Proposition 21.12]{Borel12})
and $N_G(P) \cap G^{\circ} = P$, so that the diagonal action of $G$ on $G/G^{\circ} \times G/N_G(P)$ is transitive, 
and the stabilizer of the pair of cosets $(G^{\circ}, N_G(P))$ is $P$. 
This gives an identification $G/P$ with $G/G^{\circ} \times G/N_G(P)$ 
under which the map $\eta_0$ becomes the projection onto the second component. 
Since $\nu$ is the unique $\mu_0$-stationary probability measure on $G/N_G(P) = G^{\circ} /P$, 
the semigroup $\Gamma_{\mu} \cap G^{\circ} = \Gamma_{\mu_0}$ is minimal on the support of $\nu$. 
As $\Gamma_{\mu}$ is Zariski dense in $G$, we have $G = \Gamma_{\mu} G^{\circ}$. 
Thus, we obtain that $\Gamma_{\mu}$ is minimal on $G/G^{\circ} \times \supp \nu$. The conclusion follows.  
\end{proof}

\subsection{Equidistribution of the random walk in $\wt B$}\label{Subsection Equidistribution RW}
We continue to discuss properties which just follow from the fact that $\Gamma_{\mu}$ is Zariski dense in $G$. 
Thanks to Proposition \ref{Prop-check-assum-Corollary}, 
we know that the conclusion of Corollary \ref{Cor-equdistribution} holds.
Now in our situation, we can describe more precisely the group $M_0^{\mu}$ and the operator $Q$ mapping $C^0(\wt B)$ to $C^0(M/M_0^{\mu})$. 

\begin{lemma}\label{Lem quotient group Dirac}
Let $\bf G$ be a real reductive group and let $\mu$ be a Borel probability measure on $G$. 
Suppose that $\Gamma_{\mu}$ is Zariski dense in $G$. 
Then, the group $G$ admits a smallest algebraic closed normal subgroup $G^{\mu}$ such that the quotient $G / G^{\mu}$
is compact and the image of $\mu$ in $G / G^{\mu}$ is a Dirac mass.  
\end{lemma}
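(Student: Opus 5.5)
The plan is to show that the family $\mathcal F$ of algebraic closed normal subgroups $H \subset G$ with $G/H$ compact and the image of $\mu$ in $G/H$ a Dirac mass is nonempty and stable under finite intersections. Then we use the descending chain condition on Zariski closed subsets to extract a smallest element. Nonemptiness is immediate, since $H = G$ belongs to $\mathcal F$.

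For stability under intersections, take $H_1, H_2 \in \mathcal F$ and set $H = H_1 \cap H_2$, which is again an algebraic closed normal subgroup. The natural map $G/H \to G/H_1 \times G/H_2$ is an injective continuous homomorphism. I will check that its image is closed, for instance because it is the group of real points of an algebraic subgroup of the algebraic quotient. It then follows that $G/H$ is compact. The image of $\mu$ under this map is $\delta_{x_1} \otimes \delta_{x_2}$ restricted to the image, hence a Dirac mass, and so $\mu$ projects to a Dirac mass in $G/H$.

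For existence of a minimum, Zariski closed subgroups satisfy the descending chain condition (Noetherianity). Hence the family of Zariski closures of elements of $\mathcal F$, which are all algebraic, has a minimal element $H_0$. Given any $H \in \mathcal F$, the intersection $H_0 \cap H$ lies in $\mathcal F$ by the previous step and is contained in $H_0$, so minimality forces $H_0 \cap H = H_0$, that is $H_0 \subset H$. Thus $G^\mu := H_0$ is the smallest element.

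The main obstacle is the compactness of $G/(H_1 \cap H_2)$, namely the closedness of the image of $G/H$ in the product, together with the precise sense of ``algebraic'' that makes $G/H$ a real algebraic group. For compactness I would first try the following: since $G/H_i$ is compact and $G/H_i$ is the real points of a reductive algebraic quotient, the subgroup $H_i$ contains the noncompact part, and the image of $G$ in the product is of the form $\mathbf{L}(\bb R)$ for the algebraic image group $\mathbf{L}$. This image is closed, being the orbit of a real algebraic group action, which has closed orbits in this compact setting. Zariski density of $\Gamma_\mu$ is not needed for the existence argument; it will only enter later, when the identification of $G/G^\mu$ with $M/M_0^\mu$ is established.
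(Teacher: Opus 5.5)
Your argument is correct, but it takes a genuinely different route from the paper.

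\textbf{The paper's route.} The paper never runs a minimality argument. It constructs $G^{\mu}=L_0^{\mu}ST$ explicitly:
\begin{itemize}
\item $S$ is the connected semisimple part of $G$;
\item $T$ is the real points of the $\mathbb{R}$-split component of the torus $\mathbf{G}^\circ/\mathbf{S}$;
\item $L_0^{\mu}$ is the kernel of the largest topologically cyclic quotient of the compact group $L=G/ST$ on which $\mu$ becomes a Dirac mass (Example \ref{Example-G-compact}).
\end{itemize}
Minimality is then checked directly. For any competitor $H$, Zariski density plus Godement's theorem make $G/H$ topologically cyclic, hence abelian, so $S\subset H$. The image of $T$ in $G/H$ is a split torus in a compact algebraic group, hence trivial, so $T\subset H$. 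Thus $H$ is a subgroup of $L$, and it contains $L_0^{\mu}$ by definition.

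\textbf{What each approach buys.} Your Noetherian and intersection-stability argument is more general; as you note, it does not use Zariski density at all. But it needs the compactness of $G/(H_1\cap H_2)$, which the paper avoids entirely because everything happens inside the compact group $L$. It also yields no structural information about $G^{\mu}$. The explicit description, in particular $AU\subset ST\subset G^{\mu}$, is used immediately afterwards:
\begin{itemize}
\item to get the map $\theta\colon \wt B\to G/G^{\mu}=L/L_0^{\mu}$;
\item to get transitivity of $G^{\mu}$ on $G/N_G(P)$;
\item in Lemma \ref{Lem-S-invariant-function} and Proposition \ref{Prop-convergence-operator}.
\end{itemize}
To obtain this from your $G^{\mu}$ you would have to redo the paper's minimality argument anyway.

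\textbf{Tightening the closedness step.} Algebraicity is essential here: for the closed subgroups $\mathbb{Z}$ and $\sqrt2\,\mathbb{Z}$ of $\mathbb{R}$, both quotients are compact but the intersection has non-compact quotient $\mathbb{R}$. A clean version of your step goes as follows.
\begin{itemize}
\item Let $\mathbf{H}_i$ be the Zariski closure of $H_i$, so $H_i=\mathbf{H}_i(\mathbb{R})$.
\item The image of $G/H_i$ in $(\mathbf{G}/\mathbf{H}_i)(\mathbb{R})$ is an open subgroup, and it has finite index because real algebraic groups have finitely many components. Hence $(\mathbf{G}/\mathbf{H}_i)(\mathbb{R})$ is compact.
\item The image $\mathbf{L}\cong\mathbf{G}/(\mathbf{H}_1\cap\mathbf{H}_2)$ of $\mathbf{G}$ in the product is Zariski closed, so $\mathbf{L}(\mathbb{R})$ is compact.
\item By the open mapping theorem, $G/(H_1\cap H_2)$ is homeomorphic to an open, hence closed, subgroup of $\mathbf{L}(\mathbb{R})$, and is therefore compact.
\end{itemize}
This subgroup need not be all of $\mathbf{L}(\mathbb{R})$, contrary to what your sketch asserts, but that does not affect the conclusion.
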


\begin{remark}\label{Rem topologically cyclic}
Note that the assumption implies that the group $G / G^{\mu}$ is topologically cyclic.
Indeed, let $\ell_0^{\mu}$ be the element in $G / G^{\mu}$ such that the image of $\mu$ is $\delta_{\ell_0^{\mu}}$. 
Then the image of $\Gamma_{\mu}$ in $G / G^{\mu}$ is the semigroup $(\ell_0^{\mu})^{\bb N} = \{ (\ell_0^{\mu})^n: n \in \bb N \}$.
This semigroup is Zariski dense in the compact algebraic group $G / G^{\mu}$. Hence, by Godement's theorem, it is dense. 
\end{remark}

\begin{proof}[Proof of Lemma \ref{Lem quotient group Dirac}]
Recall that, if $L$ is a compact group and $\mu$ is a Borel probability measure on $L$ whose support spans a dense subgroup of $L$, 
then the group $L$ admits a largest topologically cyclic quotient $L/L_0^{\mu}$ such that the image of $\mu$ on $L/L_0^{\mu}$
is a Dirac mass $\delta_{\ell_0^{\mu}}$, see Example \ref{Example-G-compact}. 
Denote by $\bf S$ the semisimple part of $\bf G$, meaning that $\bf S$ is the Zariski connected component of the identity
in the derived group $[\bf G, \bf G]$ of $\bf G$. 
Then the real algebraic group $\bf G^\circ / \bf S$ is a torus and we denote by $\bf T$ its $\bb R$-split component 
so that the group $\bf G^\circ / \bf S \bf T$ is a $\bb R$-anisotropic torus.  
Finally, the group $\bf L: = \bf G / \bf S \bf T$ is a finite extension of the above $\bb R$-anisotropic torus. 
In particular, the group $L = \bf L(\bb R)$ is a compact group, and we denote by $L / L_0^{\mu}$ its largest topologically cyclic quotient
on which the image of $\mu$ is a Dirac mass. 
Set 
\begin{align*}
S = \bf S(\bb R), \quad  T = \bf T(\bb R)  \quad \mbox{and} \quad G^{\mu} = L_0^{\mu} S T. 
\end{align*}
Since $L$ is a compact algebraic group, by Godement's theorem, $L_0^{\mu}$ is Zariski closed in $L$, 
and hence $G^{\mu}$ is Zariski closed in $G$.

If $H$ is a Zariski closed normal cocompact subgroup of $G$ and the image of $\mu$ in $G/H$ is a Dirac mass, 
reasoning as in Remark \ref{Rem topologically cyclic} shows that $G/H$ is topologically cyclic and hence Abelian. 
In particular, $H$ contains $S$, that is, we may view $H$ as a subgroup of $G/S$. 
The image of $T$ in $G/H$ is a split torus in a compact algebraic group, 
hence it is trivial, that is, $T$ is contained in $H$. 
Thus, we may see $H$ as a subgroup of $L = G/ST$, and by definition, we must have $L_0^{\mu} \subset H$. 
\end{proof}

We keep the notation from the proof of Lemma \ref{Lem quotient group Dirac}.  
Since $AU \subset ST \subset G^{\mu}$, we obtain a continuous $G$-equivariant map 
\begin{align*}
\theta: \wt B = G/AU \to G/G^{\mu} = L / L_0^{\mu}.
\end{align*}

Besides, notice that the group $S$ acts transitively on $G/N_G(P) = G^{\circ} / P$, 
so that the group $G^{\mu} = L_0^{\mu} S T$ also acts transitively on $G/N_G(P)$ and therefore, 
we have an identification of $G/(G^{\mu} \cap N_G(P))$ with $(G/G^{\mu})  \times (G/N_G(P))$. 
Now, the space $G/(G^{\mu} \cap N_G(P))$ may be seen as the quotient of the space $\wt B = G/AU$
by the right action of the compact group $M_0^{\mu} = (G^{\mu} \cap N_G(P)) / AU$.  
We define a nonnegative operator $Q: C^0(\wt B) \to C^0(L/L_0^{\mu})$ as follows. 
Given a continuous function $\varphi \in C^0(\wt B)$, we first set for $\wt b \in \wt B$, 
\begin{align*}
Q_1 \varphi(\wt b) = \int_{M_0^{\mu}} \varphi(\wt b m') dm'. 
\end{align*}
Then the function $Q_1 \varphi$ may be seen as a continuous function on $G/(G^{\mu} \cap N_G(P))$, 
or equivalently, a continuous function on $(G/G^{\mu})  \times (G/N_G(P))$. 
In other words, we have built a nonnegative operator 
\begin{align*}
Q_1: C^0(\wt B) \longrightarrow C^0(G/(G^{\mu} \cap N_G(P))) = C^0((G/G^{\mu})  \times (G/N_G(P))). 
\end{align*}

For any continuous function $\psi$ on $(G/G^{\mu})  \times (G/N_G(P))$, we set,
for $\ell \in L/L_0^{\mu} = G/G^{\mu}$, 
\begin{align*}
Q_2 \psi(\ell) = \int_{G/N_G(P)} \psi(\ell, \xi) \nu(d\xi). 
\end{align*}
Thus, we have defined a nonnegative operator 
\begin{align*}
Q_2: C^0((G/G^{\mu})  \times (G/N_G(P))) \longrightarrow  C^0(L/L_0^{\mu}). 
\end{align*}
Finally, we define the operator $Q: C^0(\wt B) \to C^0(L/L_0^{\mu})$ by setting
$$Q = Q_2 Q_1.$$ 
Corollary \ref{Cor-equdistribution} now translates into the following. 

\begin{proposition}\label{Prop-convergence-operator}
Let $\bf G$ be a real reductive group and let $\mu$ be a Borel probability measure on $G$. 
Suppose that $\Gamma_{\mu}$ is Zariski dense in $G$ and that $\mu$ admits a finite exponential moment \eqref{Exponential-moment}.
Then, for any $\varphi \in C^0(\wt B)$, we have, uniformly in $\wt b \in \wt B$,  
\begin{align*}
 P_{\mu}^n \varphi (\wt b) - Q \varphi \big((\ell_0^{\mu})^n \theta(\wt b) \big) \xrightarrow[n\to\infty]{}  0. 
\end{align*}
\end{proposition}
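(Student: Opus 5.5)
Since Proposition \ref{Prop-check-assum-Corollary} verifies all the hypotheses of Corollary \ref{Cor-equdistribution} for the bundle $\wt B = G/AU \to B = G/N_G(P)$ with structure group $M = N_G(P)/AU$, we already know that $P_\mu^n\varphi(\wt b) - \int Q'\varphi((m_0^\mu)^n m)\,\theta'_{\wt b}(dm)\to 0$ uniformly. Here $M_0^{\mu}$, $m_0^{\mu}$, $\theta'$ and $Q'$ are the abstract objects produced there. The plan is to identify these abstract objects with the concrete ones defined above: the abstract $M_0^\mu$ with $(G^\mu\cap N_G(P))/AU$, the abstract $M/M_0^\mu$ with $L/L_0^\mu = G/G^\mu$, $m_0^\mu$ with $\ell_0^\mu$, $\theta'_{\wt b}$ with $\delta_{\theta(\wt b)}$ for \emph{every} $\wt b$, and $Q'$ with $Q=Q_2Q_1$.

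First I check directly that the right-hand side is asymptotically invariant in the required way. Since the image of $\mu$ in $G/G^\mu$ is $\delta_{\ell_0^\mu}$ and $\theta$ is $G$-equivariant, we have $\theta(g\wt b)=\ell_0^\mu\theta(\wt b)$ for $\mu$-a.e.\ $g$. I then decompose along the fibration $\wt B \to (G/G^\mu)\times(G/N_G(P))$, whose fiber is $M_0^\mu$. Take $\varphi$ that is invariant under $M_0^\mu$, i.e. a function $\psi(\ell,\xi)$ on the product. Then $P_\mu^n\varphi(\wt b) = \bb E\,\psi\big((\ell_0^\mu)^n\theta(\wt b), g_n\cdots g_1 \eta(\wt b)\big)$. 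By Lemma \ref{Lemma-unqiue-sta-measure} (contraction on $B$) and uniform continuity of $\psi$, this is asymptotic to $\int\psi((\ell_0^\mu)^n\theta(\wt b),\xi)\,\nu(d\xi) = Q\varphi((\ell_0^\mu)^n\theta(\wt b))$, uniformly in $\wt b$. So the statement holds for $M_0^\mu$-invariant $\varphi$ without any spectral input.

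The main step is the reduction of a general $\varphi$ to its $M_0^\mu$-average $Q_1\varphi$, i.e. showing $P_\mu^n(\varphi - Q_1\varphi)\to 0$ uniformly. For this I use the abstract results. Decompose $\varphi$ into $M$-isotypic pieces, using density from Lemma \ref{Lem-representation-compact-group} and $\|P_\mu\|\le 1$. By the proof of Corollary \ref{Cor-equdistribution}, the pieces not attached to characters of $M/M_0^{\mu,\rm abs}$ (the abstract subgroup) decay to $0$. So it suffices to show that the abstract subgroup $M_0^{\mu,\rm abs}$ from Proposition \ref{Propo-eigenvalue-one} coincides with $(G^\mu\cap N_G(P))/AU$.

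One inclusion is easy. A character $\chi$ of $M$ trivial on $(G^\mu\cap N_G(P))/AU$ factors through $M\to G/G^\mu$, which is surjective since $G^\mu$ acts transitively on $G/N_G(P)$. Then $\varphi=\bar\chi^{-1}\circ\theta$ is an eigenfunction of $P_{\mu,\chi}$ with eigenvalue $\chi(\ell_0^\mu)^{\pm1}$ of modulus one. This gives $M_0^{\mu,\rm abs}\subset(G^\mu\cap N_G(P))/AU$.

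The converse inclusion is the main obstacle. I start from an eigenfunction given by Corollary \ref{Cor-eigenvalue-modulus-one}: for a character $\chi$ with eigenvalue $\lambda$, it yields the equivariant map $\Theta:\eta^{-1}(\supp\nu)\to M/M_0^{\mu,\rm abs}$ satisfying $\Theta(g\wt b)=m_0\Theta(\wt b)$ for $g\in\supp\mu$. I would then show that the subgroup $H=\{g\in G: \Theta(g\,\cdot)=\Theta(\cdot)\cdot{\rm const}(g)\}$ yields a homomorphism from the Zariski closure onto a compact abelian group sending $\mu$ to a Dirac mass. To do this I would use the minimality of $\Gamma_\mu$ on $\eta^{-1}(\supp\nu)$ (Proposition \ref{Prop-check-assum-Corollary}(3)) and an algebraicity argument in the spirit of \cite[Section 15]{BQ16b}, or reduce to Benoist–Quint's description of the closed semigroup generated. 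By minimality of $G^\mu$ (Lemma \ref{Lem quotient group Dirac}), the kernel of this homomorphism contains $G^\mu$, which forces $\chi$ to be trivial on $(G^\mu\cap N_G(P))/AU$.

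Once the groups agree, uniqueness in Corollary \ref{Cor-eigenvalue-modulus-one} identifies the eigenfunctions $\varphi_\chi$ with $\chi\circ\theta$ up to the normalization at $\wt b_0$. Hence $\theta'_{\wt b}=\delta_{\theta(\wt b)}$ after translating by a fixed element, and $m_0^\mu=\ell_0^\mu$. Combined with the invariant case handled in the second paragraph, this gives the stated convergence.
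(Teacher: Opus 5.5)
Your architecture is sound, and with the missing lemma it would give a valid alternative to the paper's argument. The $M_0^{\mu}$-invariant part $Q_1\varphi$ is handled correctly by contraction on $B$ alone. The reduction of $\varphi-Q_1\varphi$ to the identity $M_0^{\mu,\rm abs}=(G^{\mu}\cap N_G(P))/AU$ is right, and the inclusion $M_0^{\mu,\rm abs}\subset (G^{\mu}\cap N_G(P))/AU$ via $\bar\chi^{-1}\circ\theta$ is fine.

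The gap is the converse inclusion. You flag it as the main obstacle, and it is the real content of the proposition, but your sketch does not prove it. The set $H=\{g\in G:\Theta(g\,\cdot)=\Theta(\cdot)\,{\rm const}(g)\}$ is not even well defined. $\Theta$, and the relation $\varphi_\chi(g\wt b)=\lambda\varphi_\chi(\wt b)$, are only available on $\eta^{-1}(\supp\nu)$. That set is $\Gamma_\mu$-invariant but not $G$-invariant, and off it you know neither that $|\varphi_\chi|=1$ nor that the eigen-relation holds pointwise. Passing from $\Gamma_\mu$ to its Zariski closure is precisely the nontrivial step, and ``an algebraicity argument in the spirit of \dots'' does not supply it. Even granting a continuous homomorphism of $G$ onto a compact abelian group sending $\mu$ to a Dirac mass, a second issue remains. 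Lemma~\ref{Lem quotient group Dirac} only compares $G^\mu$ with \emph{Zariski closed} normal subgroups, so you would still need the kernel to be algebraic. Continuous characters of a split torus such as $t\mapsto |t|^{is}$ show this is not automatic.

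The missing idea is Lemma~\ref{Lem-S-invariant-function}: every continuous eigenfunction $\varphi$ of $P_\mu$ on $\wt B$ with eigenvalue $\lambda$ of modulus one is $S$-invariant. Hence it factors through $L$, and it is then $L_0^\mu$-invariant by the definition of $L_0^\mu$ (Example~\ref{Example-G-compact}). The paper proves this as follows.
\begin{enumerate}
\item Let $Z$ be the closed group generated by $\lambda$, and turn $\varphi$ into the $P_{\mu\otimes\delta_\lambda}$-invariant function $\psi(\wt b,z)=z^{-1}\varphi(\wt b)$ on $(G/AU)\times Z$.
\item Check that the Zariski closure $\mathbf G_1$ of $\Gamma_{\mu\otimes\delta_\lambda}$ is reductive and contains $\mathbf S\times\{1\}$, so that all $G_1$-orbits are compact.
\item Invoke \cite[Proposition 4.12]{BQ14} to get $G_1$-invariance, hence $S$-invariance.
\end{enumerate}
Applied to a $\chi$-equivariant eigenfunction, this shows at once that $\chi$ is trivial on $(G^\mu\cap N_G(P))/AU$, which is exactly the inclusion you need.

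With that inclusion in hand, your decomposition $\varphi=Q_1\varphi+(\varphi-Q_1\varphi)$ goes through. The paper argues differently: it identifies $Q$ with the abstract operator of Corollary~\ref{Cor-equdistribution} by showing that on each $\pi_x(C^0(\wt B))$ the two are commuting projectors with the same range, and the range identification uses the same lemma.
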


In the above statement, we have denoted by $\ell_0^{\mu}$ the unique element of $L/L_0^{\mu}$ 
such that $\mu$ has image $\delta_{\ell_0^{\mu}}$ in $L/L_0^{\mu}$. 

\begin{remark}
Using the theory of equicontinuous operators (cf.\ \cite{GR07}), 
one could remove the assumption that $\mu$ admits a finite exponential moment \eqref{Exponential-moment} in Proposition \ref{Prop-convergence-operator}. 
\end{remark}

\begin{remark}
By construction, the natural morphism $M \to L$ is surjective. 
As it will be apparent from the proof of Proposition \ref{Prop-convergence-operator}, 
the group $M_0^{\mu}$ which shows up in Corollary \ref{Cor-equdistribution} is the inverse image of $L_0^{\mu} \subset L$ in $M$. 
\end{remark}

The explicit construction of the operator $Q$ relies on a description of the continuous functions on $\wt B = G/AU$,
which are eigenvectors of the operator $P_{\mu}$ associated with an eigenvalue of modulus one. 
The following lemma is an extension of \cite[Lemma 5.6]{BQ14}. 

\begin{lemma}\label{Lem-S-invariant-function}
Let $\bf G$ be a real reductive group and let $\mu$ be a Borel probability measure on $G$. 
Assume that $\Gamma_{\mu}$ is Zariski dense in $G$. 
Suppose that $\varphi \in C^0(\wt B)$ is an eigenfunction of the operator $P_{\mu}$ associated with an eigenvalue of modulus one.  
Then $\varphi$ is $S$-invariant and therefore, when viewed as a function on $L$, it is $L_0^{\mu}$-invariant.  
\end{lemma}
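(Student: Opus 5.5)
Let $\varphi \in C^0(\wt B)$ with $P_{\mu}\varphi = \lambda \varphi$, $|\lambda| = 1$, normalized so that $\sup |\varphi| = 1$. The plan follows the proof of Proposition \ref{Propo-eigenvalue-one}.

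First, the set $\wt X = \{|\varphi| = 1\}$ is closed. Since $|\lambda|=1$ and $\sup|\varphi|=1$, the eigenvalue equation forces $g\wt X \subset \wt X$ for $g \in \supp\mu$, and $\varphi(g\wt b) = \lambda\varphi(\wt b)$ for $g \in \supp \mu$ and $\wt b \in \wt X$. Hence $\varphi(\gamma \wt b) = \lambda^{n}\varphi(\wt b)$ for $\gamma$ in the support of $\mu^{*n}$. To exploit this we pass to $M$-equivariant components. Decompose $\varphi = \sum_x \pi_x \varphi$ as in Lemma \ref{Lem-representation-compact-group}. Each $\pi_x\varphi$ is again a $\lambda$-eigenfunction, because $P_\mu$ commutes with the right $M$-action. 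By Proposition \ref{Propo-eigenvalue-one} and Proposition \ref{Prop-check-assum-Corollary}, only characters $\chi$ of $M/M_0^{\mu}$ contribute. By Corollary \ref{Cor-eigenvalue-modulus-one}, each contributing component is a multiple of $\varphi_\chi$, and $\lambda = \lambda_\chi$.

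So it suffices to show that each $\varphi_\chi$ is $S$-invariant. By the uniqueness in Corollary \ref{Cor-eigenvalue-modulus-one}, this reduces to showing that $s\cdot\varphi_\chi := \varphi_\chi(s^{-1}\,\cdot)$ satisfies the same characterizing relations for every $s \in S$. Equivalently, as in the proof of Corollary \ref{Cor-equdistribution}, we must show that the map $\theta:\eta^{-1}(\supp\nu) \to M/M_0^{\mu}$ from \eqref{def-theta-wt-b}, which satisfies $\theta(g\wt b) = m_0^{\mu}\theta(\wt b)$ for $g \in \supp\mu$ and $\theta(\wt b m) = m^{-1}\theta(\wt b)$, factors through a $G$-equivariant map to an abelian quotient killing $S$.

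The approach is the one of \cite[Lemma 5.6]{BQ14}. Consider the character $\chi\circ\theta$ on $\eta^{-1}(\supp\nu)$ together with the relation $\chi\theta(\gamma\wt b) = \lambda_\chi^{n}\chi\theta(\wt b)$ for $\gamma \in \supp\mu^{*n}$. Using Zariski density of $\Gamma_\mu$, and in particular the abundance of loxodromic elements in $\Gamma_\mu \cap S$-directions via the Jordan projection and the group of periods (Benoist), we show that the induced homomorphism from the Zariski closure of the group generated by $\Gamma_\mu^{-1}\Gamma_\mu$ lands in a compact abelian group. The key tool is to evaluate at attracting fixed points: for a loxodromic $\gamma$ with attracting point $\wt b_\gamma$ in $\eta^{-1}(\supp \nu)$ (up to $M$), we have $\gamma\wt b_\gamma = \wt b_\gamma m_\gamma$, where $m_\gamma$ is the $M$-part of the Jordan decomposition. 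This gives $\chi(m_\gamma)^{-1} = \lambda_\chi^{n}$. It then follows that $\chi$, restricted to the image of $S$-elements, is trivial: for $\gamma_1,\gamma_2$ of the same length, the quotient $\chi(m_{\gamma_1})\chi(m_{\gamma_2})^{-1} = 1$, and the $M$-parts of such ratios generate a dense subgroup of $(M\cap S)$-image by Benoist's density of the group generated by Jordan projections. Hence $\chi$ is trivial on $(S\cap N_G(P))/ (AU\cap S)$.

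Once $\chi$ kills the image of $S$ in $M$, we show that $\varphi_\chi$ is $S$-invariant. The function $\wt b \mapsto \varphi_\chi(s\wt b)/\varphi_\chi(\wt b)$ on the $\Gamma_\mu$-minimal set is $M$-invariant and $\Gamma_\mu$-invariant, so it is constant by contraction on $B$. This yields a continuous character of $S$, which is trivial since $S$ is semisimple (a semisimple group has no nontrivial continuous unitary characters on its identity component, and the finite part is handled through $G^\circ$). Extending from $\eta^{-1}(\supp\nu)$ to all of $\wt B$ is done by applying Theorem \ref{Thm-spectral-gap} to $\varphi_\chi - s\cdot\varphi_\chi$, which vanishes on $\eta^{-1}(\supp\nu)$ and is therefore zero, since no modulus-one eigenvalue survives there. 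The final statement on $L_0^{\mu}$-invariance follows because the surviving characters are exactly those trivial on $M_0^{\mu}$, which is the preimage of $L_0^\mu$. The main obstacle is the triviality of $\chi$ on the $S$-part of $M$, which is where Benoist's results are essential.
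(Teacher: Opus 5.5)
You correctly reduce to the components $\varphi_\chi$ via the isotypic decomposition. The step everything then hinges on is that every $\chi$ with a modulus-one eigenvalue is trivial on the image of $S\cap N_G(P)$ in $M$, and this is only asserted. Benoist's density theorem concerns the Jordan projections in $\mathfrak a$, not the elliptic components $m_\gamma$. What you need is a length-sensitive non-arithmeticity statement for the images of the $m_\gamma$ in $M/[M,M]$. A statement of this kind is precisely the new input \cite[Theorem 1.1]{GQX26}, which the paper reserves for Proposition \ref{Prop-Non-arithmeticity}. Even that theorem only yields an open subgroup of $(S\cap Z)/[Z,Z]$, not a dense one, so it does not control possible non-identity components of the image of $S\cap N_G(P)$. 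In addition, routing through Propositions \ref{Propo-eigenvalue-one}, \ref{Prop-check-assum-Corollary} and Corollary \ref{Cor-eigenvalue-modulus-one} imports the exponential moment (contraction, spectral gap). Lemma \ref{Lem-S-invariant-function} does not assume it.

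Second, even granting that step, your passage to $S$-invariance of $\varphi_\chi$ does not work.
\begin{itemize}
\item The ratio $\wt b\mapsto\varphi_\chi(s\wt b)/\varphi_\chi(\wt b)$ is not $\Gamma_\mu$-invariant. We have $\varphi_\chi(sg\wt b)=\varphi_\chi((sgs^{-1})s\wt b)$ with $sgs^{-1}\notin\supp\mu$. Also, $s\wt b$ generally leaves $\eta^{-1}(\supp\nu)$, where no relation holds and $\varphi_\chi$ may vanish.
\item For the same reason, $s\cdot\varphi_\chi$ and $\varphi_\chi-s\cdot\varphi_\chi$ are not eigenfunctions of $P_\mu$. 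So neither the uniqueness in Corollary \ref{Cor-eigenvalue-modulus-one} nor the argument ``vanishes on $\eta^{-1}(\supp\nu)$, hence is zero'' applies to them.
\item $S=\mathbf S(\bb R)$ can have nontrivial finite characters, for instance the sign of the determinant on ${\rm PGL}_2(\bb R)$.
\end{itemize}

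A workable replacement, once $\chi=\bar\chi\circ(M\to L)$ for a character $\bar\chi$ of $L$, goes as follows. Set $\psi(gAU)=\bar\chi(\bar g)^{-1}$, an $S$-invariant element of $W^0_\chi$. Contraction on $B$ shows that $\varphi_\chi/\psi$ is constant on $\eta^{-1}(\supp\nu)$. Hence $\bar\chi$ is constant on the image of $\supp\mu$, so $\psi$ is itself a $\lambda_\chi$-eigenfunction, and uniqueness gives $\varphi_\chi\in\bb C\psi$.

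The idea you are missing is the paper's simpler one, which needs neither loxodromic density nor moments. Absorb $\lambda$ into the compact abelian group $\overline{\lambda^{\bb Z}}$. The function $(\wt b,z)\mapsto z^{-1}\varphi(\wt b)$ on $(G/AU)\times\overline{\lambda^{\bb Z}}$ is $P_{\mu\otimes\delta_\lambda}$-invariant. The Zariski closure of $\Gamma_{\mu\otimes\delta_\lambda}$ surjects onto $\mathbf G$ and is the preimage of the graph of a homomorphism into an abelian group, so it contains $\mathbf S\times\{1\}$. Then \cite[Proposition 4.12]{BQ14} makes the function invariant under this Zariski closure, in particular under $S$.
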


\begin{proof}
Let $\lambda$ be the complex number with modulus one such that $P_{\mu} \varphi = \lambda \varphi$,
and let $Z \subset \{z \in \bb C: |z| = 1\}$ be the closed subgroup spanned by $\lambda$ which is either a finite group of roots of unity
or the whole group of complex numbers with modulus one. 
We can always write $Z$ as the group of real points $\bf Z(\bb R)$, 
where $\bf Z$ is the complex Zariski closure of the group 
\begin{align*}
\left\{ 
\begin{pmatrix}
\Re z & - \Im z \\
\Im z & \Re z
\end{pmatrix}
: z \in Z  \right\}
\subset {\rm GL}_2(\bb C). 
\end{align*}
Less formally, if $Z$ is finite, $\bf Z$ is just $Z$ viewed as a complex algebraic group. 
If $Z$ is the whole circle, $\bf Z$ is the group of complex matrices of the form $\begin{pmatrix}
a & - b \\
b & a
\end{pmatrix}$, where $a, b \in \bb C$ and $a^2 + b^2 = 1$, this group being a copy of $\bb C^*$. 

On the group $G \times Z = (\bf G \times \bf Z)(\bb R)$, consider the Borel probability measure $\mu_1 = \mu \otimes \delta_{\lambda}$.
We claim that the Zariski closure $\bf G_1$ of the subsemigroup $\Gamma_{\mu_1}$ spanned by the support of $\mu_1$
is a reductive subgroup of $\bf G \times \bf Z$ that contains $\bf S \times \{1\}$. 
Indeed, since $\Gamma_{\mu}$ is Zariski dense in $\bf G$, 
$\bf G_1$ has Zariski dense projection onto $\bf G$. As the image of an algebraic group by a regular morphism is Zariski closed 
(cf.\ \cite[Corollary 1.4]{Borel12} or \cite[Section 7.4]{Hum81}), the projection on $\bf G$ maps $\bf G_1$ onto $\bf G$.
Therefore, $\bf G_1$ is the inverse image in $\bf G \times \bf Z$ of the graph of 
a certain homomorphism $\theta: \bf G \to \bf Z/ (\bf G_1 \cap \bf Z)$. 
In particular, as $\bf Z$ is Abelian, $\theta$ is trivial on $\bf S$, so that $\bf S \times \{1\} \subset \bf G_1$.  
Since $\bf G_1/\bf S \times \{1\}$ is isomorphic to a subgroup of $(\bf G/ \bf S) \times \bf Z$ 
and the latter group is a finite extension of a torus, 
any unipotent element in $\bf G_1$ actually belongs to $\bf S \times \{1\}$. 
Thus the unipotent radical of $\bf G_1$ is a normal unipotent subgroup of $\bf S$, 
which is therefore trivial as $\bf S$ is semisimple. 
Hence the group $\bf G_1$ is reductive.

Define a continuous function $\psi$ on $(G/AU) \times Z$ by setting, for $\wt b \in G/AU$ and $z \in Z$, 
\begin{align*}
\psi(\wt b, z) = z^{-1} \varphi(\wt b). 
\end{align*}
Then we have $P_{\mu_1} \psi = \psi$. 
As $G_1 = \bf G_1(\bb R)$ contains $S$, every $G_1$-orbit in $(G/AU) \times Z$ is compact and therefore, 
by \cite[Proposition 4.12]{BQ14}, every $P_{\mu_1}$-invariant continuous function on $(G/AU) \times Z$ is $G_1$-invariant. 
In particular, the function $\psi$ is $S \times \{1\}$-invariant and hence $\varphi$ is $S$-invariant. 
Therefore, we can view $\varphi$ as a continuous function on $L = G/AS$. Since $P_{\mu} \varphi = \lambda \varphi$,
it is $L_0^{\mu}$-invariant by the definition of $L_0^{\mu}$ (see Example \ref{Example-G-compact}). 
\end{proof}

\begin{proof}[Proof of Proposition \ref{Prop-convergence-operator}]
We use the quotient map $\theta: \wt B = G/AU \to G/G^{\mu} = L/L_0^{\mu}$ to identify $C^0(L/L_0^{\mu})$ 
with a subspace of $C^0(\wt B)$. 
Then, notice that, by construction, for any $\varphi \in C^0(L/L_0^{\mu})$, we have 
$Q \varphi = \varphi$, which is to say that $Q$ is a projection onto $C^0(L/L_0^{\mu})$. 
Notice also that, by Lemma \ref{Lem-S-invariant-function}, the space $C^0(L/L_0^{\mu})$ is the closed subspace of $C^0(\wt B)$
spanned by the eigenspaces of $P_{\mu}$ associated with eigenvalues of modulus one. 

We claim that $P_{\mu}$ and $Q$ commute with each other. 
Indeed, $P_{\mu}$ commutes with $Q_1$ as $Q_1$ commutes with all left translations by elements of $\bf G$. 
Besides, for any $g \in \supp \mu$ and any $(\ell, \xi) \in (L/L_0^{\mu})  \times (G/N_G(P))$, 
since the image of the measure $\mu$ in $L/L_0^{\mu}$ is the Dirac mass concentrated on $\ell_0^{\mu}$, 
we have 
\begin{align*}
g (\ell, \xi) = (g \ell, g \xi) = (\ell_0^{\mu} \ell, g \xi). 
\end{align*}
Therefore, for $\psi \in C^0(L/L_0^{\mu} \times G/N_G(P))$ and $\ell \in L/L_0^{\mu}$, we have 
\begin{align*}
Q_2 P_{\mu} \psi (\ell) 
& = \int_{ G/N_G(P) }  \left( \int_{G} \psi(g(\ell, \xi)) \mu(dg) \right)  \nu(d\xi) \notag\\
& = \int_{ G/N_G(P) }  \left( \int_{G} \psi(\ell_0^{\mu} \ell, g \xi) \mu(dg) \right)  \nu(d\xi)  \notag\\
& = \int_{ G/N_G(P) }  \psi(\ell_0^{\mu} \ell, \xi) \nu(d\xi) 
 = Q_2 \psi(\ell_0^{\mu} \ell)  
= P_{\mu} Q_2 \psi(\ell). 
\end{align*}
That is, $P_{\mu}$ also commutes with $Q_2$. Consequently, $P_{\mu}$ commutes with $Q$. 

Note that Proposition \ref{Prop-check-assum-Corollary} implies that the assumptions of Corollary \ref{Cor-equdistribution} are satisfied. 
We will now check that the operator $Q$ constructed above coincides with the one appearing in the proof of Corollary \ref{Cor-equdistribution}. 

First note that each of the spaces $\wt B = G/AU$, $G/(G^{\mu} \cap N_G(P)) = (G/G^{\mu})  \times (G/N_G(P))$
and $L/L_0^{\mu} = G/G^{\mu}$ is equipped with a right action of the group $M = N_G(P) / AU$. 
Note also that, the operators $Q_1$ and $Q_2$ commute with this action. 
Indeed, on one hand, 
$Q_1$ is defined by averaging functions under the action of $M_0^{\mu}$. 
As $G^{\mu}$ is a normal subgroup of $G$, 
it follows that $M_0^{\mu}$ is also a normal subgroup of $M$. Therefore, $Q_1$ commutes with the action of $M$. 
On the other hand, for $(\ell, \xi) \in L/L_0^{\mu}  \times G/N_G(P)$ and $m \in M$, we have $(\ell, \xi) m = (\ell m, \xi)$, 
so that $Q_2$ also commutes with the action of $M$. 
As a consequence, the operator $Q$ preserves the spaces $\pi_x (C^0(\wt B))$ for $x \in \wh M$, 
which are defined in Lemma \ref{Lem-representation-compact-group}. 
The range $Q \pi_x (C^0(\wt B))$ of $Q$ in $\pi_x (C^0(\wt B))$ is the space $\pi_x (C^0(\wt B)) \cap C^0(L/L_0^{\mu})$.
Since 
$S$ acts transitively on $G/N_G(P) = G^{\circ} /P$, 
the group $N_G(P)$ also acts transitively on $G/S$ and hence, as $G^{\mu}$ contains $AS$, the natural morphism from 
$M = N_G(P)/AU$ to $G/G^{\mu} = L/L_0^{\mu}$ is surjective. 
Consequently, the space $\pi_x (C^0(\wt B)) \cap C^0(L/L_0^{\mu})$ can be identified with 
the $x$-isotypic component $\pi_x (C^0(M/M_0^{\mu}))$ of $C^0(M/M_0^{\mu})$. 
As $M/M_0^{\mu}$ is Abelian, if $x$ is not the representation of $M$ associated with a character of $M/M_0^{\mu}$,
then we get 
\begin{align*}
\pi_x (C^0(\wt B)) \cap C^0(L/L_0^{\mu}) = \{0\}. 
\end{align*}
If $x$ is the representation of $M$ associated with the character $\chi$ of $M/M_0^{\mu}$, 
then we get 
\begin{align*}
\pi_x (C^0(\wt B)) \cap C^0(L/L_0^{\mu}) = \bb C \chi. 
\end{align*}
In other words, by using Lemma \ref{Lem-S-invariant-function}, 
we see that, for $x \in \wh M$, the restriction $Q_x$ of the projection $Q$ to the space $\pi_x (C^0(\wt B))$ 
has the same range as the operator constructed in the proof of Corollary \ref{Cor-equdistribution}, 
which to avoid ambiguity we temporarily 
denote by $Q'_x$. Our goal is to show that $Q'_x = Q_x$. 
Again, if $x$ is not the representation of $M$ associated with a character of $M/M_0^{\mu}$, 
then we have $Q_x = 0 = Q'_x$. 
If $x$ is the representation of $M$ associated with the character $\chi$ of $M/M_0^{\mu}$, 
then the construction of $Q'_x$ shows that, for any $\varphi \in C^0(\wt B)$, we have 
\begin{align*}
Q'_x \varphi = \lim_{n \to \infty} \chi(\ell_0^{\mu})^{-n} P_{\mu}^n \varphi.  
\end{align*}
Since $Q$ commutes with $P_{\mu}$, we obtain that $Q_x$ and $Q'_x$ commute with each other.
Hence, $Q_x$ and $Q'_x$ are commuting projectors of the vector space $\pi_x (C^0(\wt B))$ with the same range $\bb C \chi$. 
Therefore, they are equal. 
Since $Q'_x = Q_x$ for any $x \in \wh M$, we obtain that 
the operator $Q$ is the one constructed in the proof of Corollary \ref{Cor-equdistribution}. 

The statement \eqref{convergence-P-mu-meas} of Corollary \ref{Cor-equdistribution} says that,
for any $\varphi \in C^0(\wt B)$, one has 
\begin{align*}
 P_{\mu}^n (\varphi - Q \varphi)  \xrightarrow[n\to\infty]{}  0,
\end{align*}
which is exactly the statement of Proposition \ref{Prop-convergence-operator}. 
\end{proof}

\subsection{Norms and cocycles}\label{Sec-Norms and cocycles}
We come back to the study of the action of $G$ on $V$, that is, from now on, 
we consider $G$ as a subgroup of ${\rm GL}(V)$. 
We shall now use Theorem \ref{Local-limit-theorem-001} to prove the local limit theorem (Theorem \ref{Thm-LLT-intermediate})
 on $X = G (\bb R \varpi) G \subset \bb P ({\rm End}(V))$ 
for a cocycle $\sigma$ associated with the choice of the norm $\| \cdot \|$ on the space $\End(V)$. 
We will actually work on the space $Y: = (G \times G) /(AU \times AU^-)$ of which $X$ may be seen as a quotient, 
thanks to the orbit map $(g_1, g_2) \mapsto g_1 (\bb R \varpi) g_2^{-1}$. 
For $g \in G$ and $y = (g_1, g_2) (AU \times AU^-) = (g_1 AU) \times (g_2 AU^-)  \in Y$, 
we set 
\begin{align*}
\sigma(g, y) = \log \frac{\| g g_1 \varpi g_2^{-1} \|}{\| g_1 \varpi g_2^{-1} \|}, 
\end{align*}
so that $\sigma: G \times Y \to \bb R$ is a continuous cocycle for the action of $G$ on $Y$ by the multiplication of the first factor. 
Note that, for this action, every $G$-orbit may be identified with the space $G/AU$. 
We will now show that the restriction of $\sigma$ to such an orbit satisfies the assumptions of Theorem \ref{Local-limit-theorem-001}. 
To this aim, we need to build another cocycle $\sigma_0: G \times (G/N_G(P)) \to \bb R$.

To define $\sigma_0$, we introduce a new vector space $W$ and a representation $\rho: G \to {\rm GL}(W)$ as follows. 
Recall that the image of the projection $\varpi$ is the space $V^+$ of $U$-invariant vectors in $V$.  
Let $r$ be the dimension of $V^+$, which is called the proximal dimension of $G$ on $V$. 
We then let $W$ be the linear span of the lines $g_1 \wedge^r V^+$ in the space $\wedge^r V$ when $g_1$ runs over $G$. 
Note that the natural action of $G$ on $\wedge^r V$ preserves $W$. We denote by $\rho$ this representation of $G$.  
By Lemma 4.36 and Remark 4.37 in \cite{BQ16b}, 
the representation $\rho$ is proximal and strongly irreducible. 

We now choose good norms on $V$ and $W$. Fix a maximal compact subgroup $K$ of $G^{\circ}$ such that, 
if $\theta$ is the associated Cartan involution of $G^{\circ}$, we have $\theta(a) = a^{-1}$ for any $a \in A$. 
The normalizer $N_G(K)$ in $G$ is compact and we have the Cartan decomposition $G = K A N_G(K)$. 

\begin{lemma}
We may choose a Euclidean norm $\scr N_{V}$ on $V$
which is $N_G(K)$-invariant and for which the elements of $A$ are symmetric. 
\end{lemma}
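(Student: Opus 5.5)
We will start from the standard fact that $G^{\circ}$, being a reductive real algebraic subgroup of ${\rm GL}(V)$ (Zariski connected component of a reductive group), is self-adjoint for a suitable Euclidean structure on $V$. Concretely, by Mostow's theorem (or the Cartan–Mostow construction used in \cite{BQ16b}), there exists a Euclidean norm $\scr N_0$ on $V$ such that $G^{\circ}$ is stable under the adjoint operation $g \mapsto g^*$, and the Cartan involution $\theta(g) = (g^*)^{-1}$ restricts to a Cartan involution of $G^{\circ}$. All Cartan involutions of $G^{\circ}$ are conjugate under $G^{\circ}$, and so are all maximal compact subgroups. After conjugating $\scr N_0$ by a suitable element of $G^{\circ}$, we may therefore assume that the associated Cartan involution is the one fixed in the text, namely the one with fixed group $K$ and with $\theta(a) = a^{-1}$ on $A$. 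For this norm, $K = G^{\circ} \cap O(\scr N_0)$ acts isometrically. Moreover, each $a \in A$ satisfies $(a^*)^{-1} = \theta(a) = a^{-1}$, that is $a^* = a$, so the elements of $A$ are symmetric.

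The remaining issue is to upgrade $K$-invariance to $N_G(K)$-invariance without destroying symmetry of $A$. Since $N_G(K)$ is compact, the natural idea is to average: set
\begin{align*}
\scr N_V(v)^2 = \int_{N_G(K)} \scr N_0(kv)^2 \, dk ,
\end{align*}
where $dk$ is the normalized Haar measure. This is a Euclidean norm and is $N_G(K)$-invariant. The main obstacle is to check that $A$ stays symmetric, i.e. that for each $k \in N_G(K)$ the pulled-back inner product $\langle k\cdot, k\cdot \rangle_0$ still makes $A$ symmetric. Equivalently, writing $\theta_k(g) = k^{-1}\theta(kgk^{-1})k$ for the Cartan involution associated with $\langle k \cdot, k \cdot\rangle_0$, we need $\theta_k(a) = a^{-1}$ for $a \in A$.

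To handle this, use that $k$ normalizes $K$ and hence normalizes $G^{\circ}$ (normal in $G$). Thus $\theta_k$ is a Cartan involution of $G^{\circ}$ with fixed group $k^{-1}Kk = K$. A Cartan involution of a reductive group is determined by its fixed maximal compact subgroup, so $\theta_k = \theta$ on $G^{\circ}$. Hence $\theta_k(a) = a^{-1}$ for $a \in A$, and $a$ is symmetric for each $\langle k\cdot, k\cdot\rangle_0$. Averaging preserves symmetry, since
\begin{align*}
\int \langle k a v, k w\rangle_0 \, dk = \int \langle k v, k a w\rangle_0 \, dk ,
\end{align*}
and this concludes the argument.

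The one delicate point to verify carefully is the uniqueness of the Cartan involution given its fixed group $K$, for a reductive (possibly non-semisimple) $G^{\circ}$, including on the central split torus part. For this one can use $\mathfrak g = \mathfrak k \oplus \mathfrak p$, with $\mathfrak p$ the orthogonal of $\mathfrak k$ for an invariant form, or directly the fact that $\theta$ acts by $-1$ on the Lie algebra of the split center.
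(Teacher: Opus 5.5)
Your proof is correct, but it takes a different route from the paper's.

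The paper uses Weyl's unitarian trick. It forms the compact real form $\mathfrak h = \mathfrak k \oplus i\mathfrak p$ and the corresponding compact subgroup $H$ of the complex group $\mathbf G$, notes that $N_G(K)H$ is compact, and takes a Hermitian product on $V_{\mathbb C}$ invariant under it. Invariance under $H$ forces $\mathfrak p \supset \mathfrak a$ to act by Hermitian operators. So $N_G(K)$-invariance and symmetry of $A$ come out of a single averaging, with no prior self-adjoint structure.

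You instead import Mostow's self-adjointness theorem and conjugacy of maximal compact subgroups, and then average only over $N_G(K)$. Once those facts are granted this is a softer argument. Its cost is that you need the Cartan involution of $G^\circ$ to be determined by its fixed group $K$, so that $\theta_k=\theta$ for all $k\in N_G(K)$. The paper's argument hides the same fact: it is what makes $N_G(K)$ normalize $H$, i.e.\ $\mathrm{Ad}(k)\mathfrak p=\mathfrak p$ for $k\in N_G(K)$.

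You are right to flag this point. It is false for abstract reductive Lie groups: on $S^1\times\mathbb R$ the involutions $(z,t)\mapsto (ze^{ict},-t)$ all have fixed group $S^1\times\{0\}$ but different $(-1)$-eigenspaces. Also, the Killing form cannot see the center, so it cannot serve as the "invariant form" in your sketch.

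In your setting the point is easy to settle, because $\theta$ and $\theta_k$ are both of the form $g\mapsto (g^*)^{-1}$ for inner products making $G^\circ$ self-adjoint. The right form is the trace form $(X,Y)\mapsto \mathrm{tr}_V(XY)$. For any such inner product:
\begin{itemize}
\item the skew elements of $\mathfrak g$ form $\mathfrak k$ and the symmetric ones form $\mathfrak p$;
\item the trace form is negative definite on $\mathfrak k$, positive definite on $\mathfrak p$, and the two are orthogonal.
\end{itemize}
Hence $\mathfrak p$ is exactly the $\mathrm{tr}_V$-orthogonal of $\mathfrak k$ and depends only on $K$. Combined with the polar decomposition $G^\circ=K\exp\mathfrak p$, this gives $\theta_k=\theta$ on all of $G^\circ$ and closes your argument. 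The same substitution of the trace form for the Killing form is what makes the paper's definition of $\mathfrak p$ correct when $G^\circ$ has a split center.
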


\begin{proof}
We recall the proof of this classical fact in the case where $\bf G$ is connected. 
We use freely the language of \cite[Chapter VI]{Hel01}. 
Let $\frak g$ be the Lie algebra of $G$ and let $\frak k \subset \frak g$ be the Lie algebra of $K$.
Also, denote by $\frak p \subset \frak g$ the orthogonal subspace to $\frak k$ with respect to the Killing form. 
The restriction of the Killing form to $\frak k$ is negative definite 
and the restriction of the Killing form to $\frak p$ is positive definite. 
Define $\frak h = \frak k \oplus i \frak p$, which is a real subalgebra of the complexification $\frak g_{\bb C}$ of $\frak g$. 
Since the restriction of the real part of the Killing form to $\frak h$ is negative definite, 
this algebra is the Lie algebra of a compact connected subgroup $H$ of $\bf G$. 
As $H$ is normalized by $N_G(K)$, the set $N_G(K) H$ is a subgroup of $\bf G$, 
which is compact since $K$ has finite index in $N_G(K)$ and $K \subset H$.  
Now the action of $N_G(K) H$ on the complexification $V_{\bb C}$ preserves a Hermitian scalar product. 
With respect to this scalar product, the action of these elements of $\frak h$ is skew Hermitian. 
Hence, the action of the elements of $\frak p$ is Hermitian. 
We take $\scr N_{V}$ to be the induced norm on $V$.  
\end{proof}

We equip $\wedge^r V$ with the associated tensor norm whose restriction to the subspace $W$ is denoted by $\scr N_{W}$. 

\begin{lemma}\label{Lem-Norms-V-W}
For any $g \in G $, we have 
$\scr N_{V}(g) = \scr N_{W}(\rho(g))^{1/r}$.
\end{lemma}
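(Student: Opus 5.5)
The plan is to reduce both sides to the largest singular value via the Cartan decomposition. Here $\scr N_V(g)$ and $\scr N_W(\rho(g))$ denote the operator norms induced by these Euclidean norms. Write $g = k a \ell$ with $k \in K$, $a \in A$ and $\ell \in N_G(K)$, as allowed by $G = K A N_G(K)$. Since $\scr N_V$ is $N_G(K)$-invariant, and hence $K$-invariant, we get $\scr N_V(g) = \scr N_V(a)$. The tensor norm on $\wedge^r V$ is invariant under $\wedge^r$ of isometries of $\scr N_V$, so $\scr N_W(\rho(g)) = \scr N_W(\rho(a))$. It therefore suffices to treat $g = a \in A$.

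For $a \in A$: the elements of $A$ are symmetric for $\scr N_V$ and commute pairwise, so $V$ splits orthogonally into $A$-weight spaces $V = \bigoplus_\chi V_\chi$, with $a$ acting on $V_\chi$ by the scalar $\chi(a)$. Thus $\scr N_V(a) = \max_\chi |\chi(a)|$. Let $\chi_+$ be the weight of $V^+$ (the highest weight). I then need the standard fact that, for $a$ in the closed positive Weyl chamber $A^+$, one has $|\chi(a)| \le |\chi_+(a)|$ for every weight $\chi$, since all other weights are $\chi_+$ minus nonnegative combinations of simple roots. Using the Weyl group, which is realized by elements of $N_K(A) \subset K$, I will first conjugate $a$ into $A^+$. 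Then $\scr N_V(a) = |\chi_+(a)|$.

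On $\wedge^r V$ with the tensor norm, $\wedge^r a$ is again symmetric and diagonal in the wedge basis built from orthonormal weight bases. Its eigenvalues are products of $r$ weights counted with multiplicity. On the line $\wedge^r V^+ \subset W$ it acts by $\chi_+(a)^r$, so $\scr N_W(\rho(a)) \ge |\chi_+(a)|^r$. The reverse inequality holds because $\scr N_W(\rho(a)) \le \|\wedge^r a\| \le \scr N_V(a)^r$. Combining the two gives $\scr N_W(\rho(a)) = \scr N_V(a)^r$, which is the claim.

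The main obstacle is the reduction of $a$ to the positive chamber when $\bf G$ is not connected. Elements of $N_G(K)$ outside $K$ may permute weights, and the Cartan decomposition is stated with $N_G(K)$ only on the right. However, the lower bound $\scr N_W(\rho(a)) \ge |\chi_+(a)|^r$ requires $\chi_+$ to achieve the maximal modulus. I will handle this by using $G = K A^+ N_G(K)$, absorbing the Weyl group element into $K$. Alternatively, I can avoid the chamber argument: $W$ contains $\wedge^r(g_1 V^+)$ for $g_1 = w$ ranging over Weyl representatives, and these lines realize $\chi^r$ for any extremal weight $\chi$. The maximum modulus of $\chi(a)$ over weights is attained at an extremal weight, namely a Weyl conjugate of $\chi_+$, whose weight space has dimension $r$.
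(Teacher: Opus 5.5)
Your proposal is correct and follows essentially the paper's route: verify the identity on $A^+$, where the highest weight $\theta$ dominates, so that $\scr N_V(a)=|\theta(a)|$ and $\scr N_W(\rho(a))=|\theta(a)|^r$; then extend to all of $G$ via $G=KA^+N_G(K)$ and the $N_G(K)$-invariance of both norms. Your sandwich $|\theta(a)|^r\le \scr N_W(\rho(a))\le \scr N_V(a)^r$ usefully spells out the second equality, which the paper states without comment, and the non-connectedness detour is unnecessary since the paper simply uses $G=KA^+N_G(K)$ directly, exactly as you end up doing.
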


\begin{proof}
Let $\Sigma$ be the set of relative roots, that is, $\Sigma$ is the set of algebraic characters of the group $\bf A$ 
which are non-trivial weights of the adjoint representation of $\bf G$. 
The choice of the minimal parabolic subgroup $\bf P$ fixes the choice of a system of positive roots $\Sigma^+ \subset \Sigma$.
The elements of $\Sigma^+$ are the weights of the restriction of the adjoint representation to the Lie algebra of $\bf U$. 
Recall that we have denoted by $\theta$ the weight of the action of $A$ on $V^+$. 
The other weights of $A$ on $V$ are of the form 
$\theta \prod_{\alpha \in \Sigma^+} \alpha^{-m_{\alpha}}$ for some natural integers $m_{\alpha}$. 
In particular, if 
\begin{align*}
A^+ = \left\{ a \in A: |\alpha(a)| \geq 1, \forall \alpha \in \Sigma^+   \right\}, 
\end{align*}
then we have $\scr N_{V}(a) = |\theta(a)|$ for any $a \in A^+$. 
In the same way, as $V^+$ has dimension $r$ and $\wedge^r V^+ \subset W$, for $a \in A^+$, 
we have $\scr N_{W}(\rho(a)) = |\theta(a)|^r$. 
Hence, the lemma holds for elements in $A^+$. 

The Cartan decomposition says that we have $G = K A^+ N_G(K)$. 
Since the norms $\scr N_{V}$ and $\scr N_{W}$ are $N_G(K)$-invariant, the conclusion of the lemma holds for any $g \in G$. 
\end{proof}

Now we define the continuous cocycle $\sigma_0: G \times (G/N_G(P)) \to \bb R$ as follows. 
The space $V^+ \subset V$ is exactly the space of $U$-invariant vectors in $V$.
Therefore, since $U$ is the unipotent radical of $P$, this space is $N_G(P)$-invariant 
and the line $W^+: = \wedge^r V^+ \subset W$ also is $N_G(P)$-invariant. 
Hence, the orbit map induces a surjection $B = G/N_G(P) \to G W^+ \subset \bb P (W)$. 
For $g \in G $ and $b = g' N_G(P) \in B$, we set 
\begin{align}\label{def-sigma-0}
\sigma_0(g, b) = \frac{1}{r} \log \frac{ \scr N_{W}( \rho(g g') w) }{\scr N_{W}( \rho(g') w)}, 
\end{align}
where $w$ is a non-zero element in $W^+$. 
Note that this definition does not depend on the choices of $g'$ and $w$. 

The next lemma asserts that, on each $G \times \{e\}$-orbit in $Y = (G \times G) /(AU \times AU^-)$, the restriction of the cocycle $\sigma$
is cohomologous to $\sigma_0$, as required in the assumptions of Theorem \ref{Local-limit-theorem-001}. 
We denote by $\eta: Y \to B$ the map $(g_1, g_2) (AU \times AU^-) \to g_1 N_G(P)$.  

\begin{lemma}\label{Lem-cohomology-cocycle}
There exists a continuous function $\varphi: Y \to \bb R$ such that, for any $g \in G$ and $y \in Y$, 
\begin{align*}
\sigma(g, y) = \sigma_0(g, \eta(y)) + \varphi(gy) - \varphi(y). 
\end{align*}
\end{lemma}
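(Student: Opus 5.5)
The cocycle identity is equivalent to exhibiting a continuous function $\varphi$ on $Y$ with
\[
\varphi(y) = \log \| g_1 \varpi g_2^{-1} \| - \tfrac1r \log \scr N_W(\rho(g_1) w)
\]
for $y = (g_1,g_2)(AU\times AU^-)$ and $0 \neq w \in W^+$, up to a normalization. Indeed, with such a $\varphi$ the difference $\varphi(gy)-\varphi(y)$ is exactly $\sigma(g,y)-\sigma_0(g,\eta(y))$, since $\eta(gy) = g g_1 N_G(P)$. So the plan has two parts: fix the normalization so that the right-hand side is well defined on $Y$, and then prove that $\varphi$ is continuous, in particular bounded, which is the real content.

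\emph{Normalization.} We must make the expression independent of the representatives $g_1$ and $g_2$. Replacing $g_1$ by $g_1 au$ multiplies $\varpi$ by $\chi(a)$, where $\chi$ is the weight of $A$ on $V^+$. It multiplies $w \in \wedge^r V^+$ by $\chi(a)^r$, because $U$ acts trivially on $V^+$. These changes cancel in $\log\|\cdot\| - \frac1r\log \scr N_W(\cdot)$ up to signs of $\chi(a)$, which disappear inside the absolute values. Replacing $g_2$ by $g_2 a u^-$ multiplies $\varpi$ on the right by $\chi(a)^{-1}$ and leaves $w$ unchanged. This is a genuine dependence on $g_2$. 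To remove it, subtract a term depending only on $g_2 AU^-$, namely $\log \scr N_{V^*}(g_2^{-\top}\varpi^*)$ or an analogous dual quantity, which transforms by the same factor. Such a term is constant along each $G\times\{e\}$-orbit, so it does not affect the cocycle identity. With this correction,
\[
\varphi(y) = \log \| g_1 \varpi g_2^{-1}\| - \tfrac1r\log \scr N_W(\rho(g_1)w) - \log \scr N'(g_2),
\]
where $\scr N'(g_2)$ is the dual normalizing quantity just described.

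\emph{Continuity.} Since all norms on $\End(V)$ are equivalent and differ by a bounded additive constant in $\log$, it suffices to prove continuity for one convenient norm and then add the continuous function $\log(\|\cdot\|/\scr N_V(\cdot))$ on the compact projective set $X$. So take the operator norm for $\scr N_V$. Consider the map $(g_1,g_2)\mapsto g_1\varpi g_2^{-1}$ divided by the two normalizers. By the $A$, $U$ and $U^-$ invariances above, this descends to a continuous map from $Y$ to $\End(V)$, and we need it never to vanish. The space $Y$ is the compact quotient $G/AU\times G/AU^-$. The map is continuous because each factor is continuous on $G$ and invariant under the right action. Nonvanishing holds because $g_1\varpi g_2^{-1}\neq 0$. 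Concretely, by the Iwasawa/Cartan decomposition $G=KAU\cdot(\text{finite})$, every point of $Y$ has a representative with $g_1,g_2$ in the compact group $N_G(K)$. On that compact set all three quantities are bounded above and below, and $\varphi$ is continuous. The only care needed is that descending via these compact representatives gives a well-defined continuous function, which the invariances ensure.

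\emph{Main obstacle.} The hard step is matching the exponents. We must check that $\scr N_W(\rho(g_1)w)^{1/r}$ scales under $A$ exactly like the $V^+$ part of $g_1\varpi$. That is the computation behind Lemma~\ref{Lem-Norms-V-W}: the weight of $W^+$ is $\chi^r$. The second delicate point is making the dual normalization explicit. If needed, we would use the projection onto the lowest-weight line of $V^*$ stabilized by $P^-$, and check its $AU^-$-equivariance in the same way.
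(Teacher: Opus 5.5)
Your proof is correct, but it runs in the opposite direction from the paper's.

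\textbf{The paper's route.} The paper takes $\varphi(y)=\log\bigl(\|g_1\varpi g_2^{-1}\|/\scr N_V(g_1\varpi g_2^{-1})\bigr)$, a ratio of two norms of the same endomorphism. Well-definedness and continuity are then automatic, since $\varphi$ is a function of the point $\bb R g_1\varpi g_2^{-1}\in X$. All the work goes into showing that the $\scr N_V$-cocycle equals $\sigma_0$ \emph{exactly}. This uses Lemma~\ref{Lem-Norms-V-W}, extended by continuity to $\varpi\in\overline{\bb R G}$, together with the rank-one factorization $\scr N_W(\rho(g_1\varpi g_2^{-1}))=\scr N_W^*(\rho^*(g_2)f)\,\scr N_W(\rho(g_1)w_0)$.

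\textbf{Your route.} You write the potential of $\sigma_0$ into $\varphi$ by hand, so the cohomological equation becomes a telescoping identity. The only real content is $AU\times AU^-$-invariance, which is elementary weight bookkeeping: $U$ fixes $V^+$ pointwise, $\wedge^r V^+$ has weight $\chi^r$, and $\varpi u=\varpi$ for $u\in U^-$, so $\|\varpi g_2^{-1}\|$ scales by $|\chi(a)|^{-1}$.

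\textbf{Comparison.} Your approach avoids Lemma~\ref{Lem-Norms-V-W} and the special $N_G(K)$-invariant Euclidean norm entirely. The paper's approach gives the extra fact that, for the norm $\scr N_V$, no coboundary is needed at all. The two potentials in fact agree: take the normalization $\scr N'(g_2)=\scr N_W^*(\rho^*(g_2)f)^{1/r}$ and $w=w_0$, and the paper's factorization together with Lemma~\ref{Lem-Norms-V-W} turns your $\varphi$ into theirs.

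\textbf{Continuity step.} The detour through Iwasawa and compact representatives is unnecessary. A function on $Y=(G\times G)/(AU\times AU^-)$ is continuous as soon as its lift to $G\times G$ is continuous and invariant, by definition of the quotient topology. Also, your ``normalized endomorphism'' only descends to $Y$ up to the sign of $\chi(a)$. Its norm, which is all you actually use, does descend.
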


\begin{proof}
Note that the representation $\rho: G \to {\rm GL}(W)$ 
may be extended as a continuous morphism from the semigroup $\overline{\bb R G} \subset \End(V)$ to $\End(W)$. 
Indeed, for any $v \in \overline{\bb R G}$, the endomorphism $\wedge^r v$ preserves the space $W \subset \wedge^r(V)$. 
By continuity, Lemma \ref{Lem-Norms-V-W} still applies to all elements in $\overline{\bb R G}$. 

Define, for $y = (g_1, g_2) (AU \times AU^-) \in Y$, 
\begin{align*}
\varphi(y) = \log \frac{ \| g_1 \varpi g_2^{-1} \| }{ \scr N_{V}(g_1 \varpi g_2^{-1}) }. 
\end{align*}
For $g \in G$, a direct computation yields 
\begin{align}\label{cohomological-equ-001}
\sigma(g, y) - \varphi(gy) + \varphi(y) 
& = \log \frac{  \scr N_{V}(g g_1 \varpi g_2^{-1}) }{  \scr N_{V}(g_1 \varpi g_2^{-1}) }  \notag\\
& = \frac{1}{r}  \log \frac{  \scr N_{W} \left( \rho ( g g_1 \varpi g_2^{-1}) \right) }{  \scr N_{W} \left( \rho(g_1 \varpi g_2^{-1}) \right) }, 
\end{align}
where the last equality follows from the fact that $\varpi \in \overline{\bb R A^+}$ and using Lemma \ref{Lem-Norms-V-W}.  
By construction, the endomorphism $\wedge^r \varpi$ of $\wedge^r V$ is a projection onto $W^+$, 
and hence the endomorphism $\rho(\varpi)$ is also a projection onto $W^+$. 
Fix a non-zero vector $w_0 \in W^+$, then we may find a linear form $f$ of the space $W$ such that,
for any $w \in W$, we have $\rho(\varpi) w = f(w) w_0$. 
Consequently, we get that, for any $g_1, g_2 \in G$ and $w \in W$, 
\begin{align*}
\rho \left( g_1 \varpi g_2^{-1} \right) w 
=  \rho(g_1)  \rho(\varpi) \rho(g_2)^{-1}  w
= f \left( \rho (g_2)^{-1} w \right) \rho(g_1) w_0
\end{align*}
and hence the operator norm of $\rho(g_1 \varpi g_2^{-1})$ is given by 
\begin{align*}
\scr N_{W} \left( \rho(g_1 \varpi g_2^{-1}) \right) = \scr N^*_{W} \left( \rho^*(g_2) f \right)  \scr N_{W}( \rho(g_1) w_0),
\end{align*}
where we have denoted by $\scr N^*_{W}$ the norm on the dual space $W^*$ of $W$, which is dual to the norm $\scr N_{W}$,
and by $\rho^*$ the dual representation of $\rho$. 
In the same way, we also have 
\begin{align*}
\scr N_{W} \left( \rho(g g_1 \varpi g_2^{-1}) \right) = \scr N^*_{W}(\rho^*(g_2) f)  \scr N_{W}( \rho(g g_1) w_0). 
\end{align*}
The conclusion now follows by taking the ratio of the last two displayed formulas,
and comparing \eqref{def-sigma-0} with \eqref{cohomological-equ-001}. 
\end{proof}

\subsection{Extended non-arithmeticity}

We have defined in \eqref{def-sigma-0} the cocycle $\sigma_0$ on the base space $B = G/N_G(P)$. 
Thanks to \cite[Corollary 13.4]{BQ16b}, we know that the moment assumptions \eqref{moment-assumption-02} of Theorem \ref{Local-limit-theorem-001} are satisfied. 
Thus, to apply this theorem in the $M$-principal bundle $\wt B = G/AU$, 
we need to check that the group $M_1^{\mu}$ introduced in Proposition \ref{Propo-eigenvalue-one-002}
verifies $M_1^{\mu} = M_0^{\mu} \times \bb R$.

\begin{proposition}\label{Prop-Non-arithmeticity}
Let $\mu$ be a Borel probability measure on ${\rm GL}(V)$. 
Assume that $\Gamma_{\mu}$ is strongly irreducible on $V$ and that $\mu$ admits a finite exponential moment \eqref{Exponential-moment}. 
Assume also that the image of $\Gamma_{\mu}$ in ${\rm PGL}(V)$ is not contained in a compact subgroup of ${\rm PGL}(V)$. 
Then, we have $M_1^{\mu} = M_0^{\mu} \times \bb R$. 
\end{proposition}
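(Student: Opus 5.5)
We argue by Pontryagin duality. By the definition of $M_1^{\mu}$ in the proof of Proposition \ref{Propo-eigenvalue-one-002}, the annihilator of $M_0^{\mu}\times\bb R$ in $\wh{M^{ab}}\times\bb R$ is $(M_0^{\mu})^{\perp}\times\{0\}$. So it suffices to show the following: if $t\neq 0$, then $P_{\mu,\chi,it}$ has no eigenvalue of modulus one for any character $\chi$. We argue by contradiction. Suppose that for some $t\neq0$, some $\chi$ and some $\lambda\in\bb U$ there is a continuous $\chi$-equivariant function $\varphi:\eta^{-1}(\supp\nu)\to\bb U$ with
\[
\varphi(g\wt b)=e^{-it\sigma_0(g,\eta(\wt b))}\lambda\,\varphi(\wt b)
\]
for all $g\in\supp\mu$ (such a $\varphi$ exists by the characterization used in the proof of Proposition \ref{Propo-eigenvalue-one-002}). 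Iterating, for $g=g_n\cdots g_1$ with $g_i\in\supp\mu$, the same identity holds with $\lambda^n$, and by continuity it extends to the closure of such products of length $n$.

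\textbf{Step 1: evaluate at fixed points.} Take such a product $g$ which is loxodromic, i.e.\ $g=h\,a\,m\,h^{-1}$ with $a$ in the interior of $A^+$, $m\in N_G(P)$ commuting with $a$, and $h\in G$. Its attracting fixed point $hN_G(P)$ in $B$ lies in $\supp\nu$, since attracting points of loxodromic elements of $\Gamma_\mu$ belong to the limit set. On the fiber $\wt b=hAU$ over this point we have $g\wt b=\wt b\,m_g$, where $m_g\in M$ is the class of $m$; moreover $\sigma_0(g,\eta(\wt b))=\frac1r\log|\theta(a)|=:\ell(g)$ is the Jordan projection seen by $\theta$. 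The equivariance then gives
\[
\chi(m_g)^{-1}=e^{-it\ell(g)}\lambda^n .
\]
This relation must hold for every loxodromic element of $\Gamma_\mu$ that is a product of exactly $n$ elements of $\supp\mu$.

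\textbf{Step 2: remove the word length.} To eliminate $n$, we pass to the semigroup $\Gamma_{\mu_1}$ spanned by the support of $\mu_1=\mu\otimes\delta_{e}$ on $G\times\bb R_{>0}^*$. As in the proof of Lemma \ref{Lem-S-invariant-function}, its Zariski closure $\bf G_1$ is reductive and contains $\bf S\times\{1\}$. Its Jordan projection is $(\lambda(g),n)$, which lies in the Cartan subspace $\frak a_1$ of $\bf G_1$. We then invoke Benoist's density theorem for Zariski dense subsemigroups \cite{Benoist97, Be2}, in the refined form that includes the $M$-component (as used by Guivarc'h \cite{Gui08}). It says that the closed subgroup generated by the pairs (Jordan projection, $M$-part) of loxodromic elements contains $\frak a_1\times M_1^{c}$, where $M_1^c$ is the identity component of the relevant compact part. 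Consequently the character $(H,m)\mapsto e^{it\theta(H)}\chi(m)^{-1}\lambda^{-n(H)}$ would be trivial on the identity component of $\frak a_1$. In particular the linear form $t\,\theta|_{\frak a_1}$ would be proportional to the coordinate $n$ on $\frak a_1$.

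\textbf{Step 3: use non-compactness.} If $t\ne0$, the proportionality in Step 2 would force $\theta$ to be constant on the split part of $\bf S$. Here $\theta$ is normalized modulo the central direction; equivalently, $\ell(g)$ would be an affine function of word length. That happens exactly when $\bf S$ has compact real points, i.e.\ when $e^{-\lambda_\mu}\Gamma_\mu$ lies in a compact group, or equivalently when the image of $\Gamma_\mu$ in ${\rm PGL}(V)$ is relatively compact. Indeed, strong irreducibility and a non-compact image make the highest weight $\theta$ nontrivial on the split part of $S$. This contradicts the hypothesis, so $t=0$, and hence $M_1^{\mu}=M_0^{\mu}\times\bb R$.

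The main obstacle is Step 2. It requires the precise form of Benoist's density theorem that controls the $M$-components together with the Jordan projections, for semigroups in a possibly non-connected reductive group augmented by the word-length coordinate. I would reduce to $G^\circ$ via the induced measure $\mu_0$ (as in the proof of Proposition \ref{Prop-check-assum-Corollary}), and use that the finite group $G/G^\circ$ only contributes torsion, which is absorbed into $M_0^{\mu}$.
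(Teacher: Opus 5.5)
Your architecture is the paper's. You reduce by duality to showing that $(\chi,t)\perp M_1^{\mu}$ forces $t=0$. You evaluate the eigenfunction on the fibre over the attracting fixed point of a loxodromic $g\in\supp\mu^{*n}$, which lies over $\supp\nu$; this gives the paper's relation $\kappa(\lambda(g))=w^{-n}$. You then kill the resulting character on $\exp(\mathfrak a\cap{\rm Lie}(S))$ by a density theorem and conclude with Lemma \ref{Lem image of G}. The one structural difference is how the factor $w^{n}$ (your $\lambda^n$) is removed. The paper takes pairs $g\in\supp\mu^{*n}$, $h\in\supp\mu^{*p}$ with $g,h,gh$ loxodromic, so that $w$ cancels in $\kappa(\lambda(gh)\lambda(g)^{-1}\lambda(h)^{-1})=1$. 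You instead record word length as an extra coordinate through $\mu\otimes\delta_e$. Either device works.

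The problem is Step 2, which is where all the content sits, and as stated it is false. The closed subgroup generated by the Jordan projections of loxodromic elements of $\Gamma_{\mu_1}$ can never contain $\mathfrak a_1$, because their word-length coordinate is always an integer. More generally, nothing can be asserted about the split central directions of $\mathbf G_1$. Also, $\lambda^{-n(H)}$ only makes sense once you fix a logarithm of $\lambda$.

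What is true, and all that your Step 3 actually uses, concerns only the semisimple part $\mathbf S\times\{1\}$ of $\mathbf G_1$, on which $n=0$. There you need the closed subgroup generated by the pairs (Jordan projection, $M$-part) to contain $\exp(\mathfrak a\cap{\rm Lie}(S))$ with trivial $M$-component. The joint control is essential. Density of the $\mathfrak a$-components alone, which is what Benoist's theorem gives as used in \cite{Gui08}, does not exclude a closed subgroup of the form $\{(H,c(H))\}$, with $c$ a nontrivial continuous homomorphism from $\mathfrak a\cap{\rm Lie}(S)$ into the compact abelian part. Your character can be trivial on such a subgroup with $t\neq 0$.

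For a Zariski dense subsemigroup of a possibly non-connected reductive group, this joint statement is not a classical form of Benoist's theorem. It is exactly the input the paper imports from \cite[Theorem 1.1]{GQX26}, with \cite{Benoist05, GR07} as related results. There it is phrased as: the defects $\lambda(gh)\lambda(g)^{-1}\lambda(h)^{-1}$ span a closed subgroup containing an open subgroup of $(S\cap Z)/[Z,Z]$. With that in place of your Step 2, your character is trivial on $\exp(\mathfrak a\cap{\rm Lie}(S))$, so $t\theta|_{A\cap S}=0$ directly, and the ``proportional to $n$'' detour is unnecessary.

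Two minor slips. First, $\sigma_0(a,b_0)=\log|\theta(a)|$, without the factor $1/r$. Second, $m$ should be taken in the compact group $\widetilde M$, so that $\sigma_0(m,b_0)=0$ and no $A$-part leaks into $m_g$.
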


We will use the assumption that 
the image of $\Gamma_{\mu}$ in ${\rm PGL}(V)$ is not contained in a compact subgroup of ${\rm PGL}(V)$, 
under the following form. 
Recall from Subsection \ref{Subsection Equidistribution RW} that we defined $S$ as the connected semisimple part of the group $G$.

\begin{lemma}\label{Lem image of G}
Let $G \subset {\rm GL}(V)$ be a Zariski closed subgroup acting strongly irreducibly on $V$. 
Then, the image of $G$ in ${\rm PGL}(V)$ is not compact if and only if the highest weight $\theta$ has non-trivial 
restriction to the group $A \cap S$. 
\end{lemma}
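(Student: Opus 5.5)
We plan to prove both implications by reducing compactness of the image of $G$ in ${\rm PGL}(V)$ to a statement about the split torus $A$, using the Cartan decomposition $G = K A^+ N_G(K)$ recalled above. Since $K$ and $N_G(K)$ are compact, the image of $G$ in ${\rm PGL}(V)$ is compact if and only if the image of $A$ (equivalently of $A^+$) is bounded in ${\rm PGL}(V)$. For $a \in A$, a Euclidean norm for which $a$ is symmetric shows that the image of $a$ in ${\rm PGL}(V)$ stays bounded over a subset of $A$ exactly when the ratios $|\chi(a)|/|\chi'(a)|$ between weights $\chi,\chi'$ of $A$ on $V$ stay bounded. So the first step is: the image of $G$ in ${\rm PGL}(V)$ is compact if and only if all weights of $A$ on $V$ coincide in absolute value on $A$, i.e. $A$ acts on $V$ by scalars $|\theta(a)|$ up to sign.

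Next we compare this with the restriction of $\theta$ to $A \cap S$. Write $A = (A\cap S)\cdot (A\cap T)$ up to finite index, where $T$ is the split central torus introduced in the proof of Lemma \ref{Lem quotient group Dirac}. By strong irreducibility of $G^{\circ}$ (hence irreducibility up to finite index) and Schur's lemma, the central split torus $T$ acts on $V$ by real scalars, so $A\cap T$ contributes the same factor to every weight. Hence the weights of $A$ differ only through their restrictions to $A \cap S$, and the previous step gives: the image of $G$ in ${\rm PGL}(V)$ is compact iff all weights of $A\cap S$ on $V$ have the same absolute value, which (weights of an algebraic split torus being determined by absolute values up to finite-order twists, which we may ignore by passing to the identity component of $A\cap S$) means $A\cap S$ acts by a single weight.

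It remains to see that $A\cap S$ acts by a single weight iff $\theta|_{A\cap S}$ is trivial. If $\theta|_{A\cap S}$ is trivial, the other weights are $\theta\prod\alpha^{-m_\alpha}$ with $m_\alpha\ge 0$; since the weights of a representation of the semisimple group $S$ restricted to $A\cap S$ are invariant under the restricted Weyl group, which sends $\theta$ to a lowest weight, their sum over all weights with multiplicity is trivial on $A \cap S$, and a positive combination of $\prod\alpha^{-m_\alpha}$ vanishing forces all $m_\alpha=0$; so only one weight occurs. Conversely, if $A\cap S$ acts by a single character $\chi$, Weyl invariance forces $\chi$ to be Weyl-invariant, hence trivial on the split torus of the semisimple group $S$ (the Weyl group acts without nonzero fixed vectors on the character lattice of $A\cap S$ tensored with $\bb R$, as roots span it); thus $\theta|_{A\cap S}=\chi$ is trivial.

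The main obstacle is the careful bookkeeping with non-connectedness: $G$ need not be Zariski connected, $A\cap S$ may be disconnected, and $T$ acts by scalars only after using irreducibility of $G^\circ$ over $\bb R$ (Schur gives a commutant that is a division algebra, whose split part is $\bb R$). We would handle this by working throughout with $G^{\circ}$, noting $G/G^{\circ}$ is finite so compactness of the image is unaffected, and replacing $A\cap S$ by its identity component, on which triviality of $\theta$ is equivalent to triviality of $|\theta|$.
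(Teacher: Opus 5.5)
Your argument is correct, but it takes a different route from the paper.

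\textbf{The paper's route.} The paper argues structurally. If $\theta|_{A\cap S}$ is trivial, then, because $S\cdot Z(G^\circ)$ has finite index in $G^\circ$, every irreducible $S$-submodule of $V$ has trivial highest weight, so by Tits $S$ is compact. The center of $G^\circ$ lies in the commutant of $G^\circ$, which is a division algebra by Schur's lemma, so it has compact image in ${\rm PGL}(V)$. Conversely, compact image forces $S$ to be compact.

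\textbf{Your route.} You push everything onto the split torus. The Cartan decomposition $G=KA^+N_G(K)$ converts compactness into equality of the absolute values of all $A$-weights. The central split torus drops out because it acts by a scalar. Weyl-invariance of the weight multiset, together with positivity of $\Sigma^+$, then replaces the appeal to Tits.

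\textbf{Comparison.} Your route gives a single chain of equivalences that proves both implications at once. It never needs the fact that a semisimple group of real rank zero is compact, nor the division-algebra argument for the anisotropic part of the center; both are absorbed into the compact factors $K$ and $N_G(K)$. The paper's route is shorter, given standard structure theory.

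\textbf{Two small points.}
\begin{enumerate}
\item $T$ should be taken to be the maximal $\mathbb{R}$-split subtorus of the connected center of $\mathbf{G}^\circ$. In the proof of Lemma \ref{Lem quotient group Dirac} it is introduced inside the quotient $\mathbf{G}^\circ/\mathbf{S}$.
\item Your passage to $(A\cap S)^\circ$ is necessary, not mere bookkeeping. For $V=\mathbb{R}^4$ and $G=\mathbb{R}^*{\rm SO}(4)$, the image in ${\rm PGL}(V)$ is compact. Yet $A=\mathbb{R}^*I$, $A\cap S=\{\pm I\}$ and $\theta(-I)=-1$. So the lemma must be read with $|\theta|$, that is, with $\theta\in\mathfrak a^*$ restricted to $\mathfrak a\cap{\rm Lie}(S)$. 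This is exactly the version used in the proof of Proposition \ref{Prop-Non-arithmeticity}. The paper's sentence ``$A\cap S$ is a trivial group'' has the same imprecision, since $A\cap S$ is only finite when $S$ is compact. Your formulation is the accurate one.
\end{enumerate}
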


\begin{proof}
Clearly, if $G$ is contained in $\bb R^* K$ for some compact subgroup $K \subset {\rm GL}(V)$,
then $S$ is compact and $A \cap S$ is a trivial group. 
Conversely, since the product of $S$ with the center of $G^{\circ}$ is a finite index subgroup of $G^{\circ}$,
the character $\theta|_{A \cap S}$ is the highest weight of every irreducible $S$-submodule of $V$. 
By \cite{Tit71}, this says that $S$ is compact. 
The center of $G^{\circ}$ is contained in the center of the commutant of $G^{\circ}$, 
which, by Schur's lemma, is a division algebra. 
Therefore, this center has compact image in ${\rm PGL}(V)$. 
Again, as the product of $S$ with the center of $G^{\circ}$ is a finite index subgroup of $G$,
we conclude that $G$ has compact image in ${\rm PGL}(V)$. 
\end{proof}

\begin{proof}[Proof of Proposition \ref{Prop-Non-arithmeticity}]
This result is a translation of the main result of \cite{GQX26}. 
We explain how to deduce it. 
Set $\wt b_0 \in \wt B = G / AU$ to be the base point and $b_0 = \eta(\wt b_0)$ to be the base point of $B = G/N_G(P)$.

Since $M_0^{\mu}$ and $M_1^{\mu}$ are defined through duality, 
we will actually prove the dual statement which is to say that the orthogonal group $(M_1^{\mu})^{\perp}$
is equal to $(M_0^{\mu})^{\perp} \times \{0\}$. 
By Remark \ref{Rem-group-M0-M1}, we know that $(M_0^{\mu})^{\perp} \times \{0\}$ is contained in 
$(M_1^{\mu})^{\perp}$, so it only remains to prove that for any $(\chi, t) \in \wh M \times \bb R$ 
belonging to the orthogonal group of $M_1^{\mu}$, we have $t = 0$.

Now we recall the precise definition of $M_1^{\mu}$ from Proposition \ref{Propo-eigenvalue-one-002}. 
If $(\chi, t) \in \wh M \times \bb R$ belongs to the orthogonal group of $M_1^{\mu}$, 
then there exists a continuous function $\varphi: \wt B \to \bb C$ with constant modulus one on the set $\eta^{-1} (\supp \nu)$ 
and a complex number $w$
with $|w| = 1$ such that, for any $g \in \supp \mu \subset G$, $m \in M$ and $\wt b \in \eta^{-1} (\supp \nu)$, 
we have 
\begin{align*}
\varphi(g \wt b) = w e^{- it \sigma_0(g, \eta(\wt b))} \varphi(\wt b)
\quad \mbox{and} \quad
\varphi(\wt b m) = \chi(m)^{-1} \varphi(\wt b). 
\end{align*} 
For $h \in G$, define
\begin{align*}
\psi(h) = \varphi(h \wt b_0) e^{it \sigma_0(h, b_0)}. 
\end{align*}
If $h b_0 \in \supp \nu$, then we get that, for $g \in \supp \mu$, 
\begin{align}
 |\psi(h)| & = 1,  \label{eigenvalue equation psi00} \\
 \psi(gh)  & = \varphi(gh \wt b_0) e^{it \sigma_0(gh, b_0)}  \notag\\
& = w e^{- it \sigma_0(g, h b_0)} \varphi(h \wt b_0) e^{it \sigma_0(gh, b_0)}  = w \psi(h),  \label{eigenvalue equation psi}
\end{align}
where for the last equality we have used the cocycle identity $\sigma_0(gh, b_0) - \sigma_0(g, h b_0) = \sigma_0(h, b_0)$. 
Note that, by the definition of $\sigma_0$ in \eqref{def-sigma-0}, for $h \in G$ and $a \in A$, we have 
\begin{align}\label{equivariance psi 001}
\psi(ha) & = \varphi(ha \wt b_0) e^{it \sigma_0(ha, b_0)} \notag\\
& = \varphi(h \wt b_0) e^{it \sigma_0(h, b_0)} e^{it \sigma_0(a, b_0)}
= \psi(h)  |\theta(a)|^{it}, 
\end{align}
where $\theta$ is the highest weight of $A$ in $V$. 

We would also like to deduce a property of $\psi$ coming from the fact that 
$\varphi$ is $\chi$-equivariant with respect to the action of $M$ on the right.  
Until now, we have defined the group $M$ as being the quotient $N_G(P)/AU$. 
Note that it is also equal to the quotient $(N_G(P) \cap N_G(A))/A$: 
this follows from conjugacy of the maximal split tori in $P$, see \cite[Theorem 15.14]{Borel12}. 
We set 
\begin{align*}
\wt M = N_G(P) \cap N_G(A) \cap N_G(K), 
\end{align*}
where $K$ is the maximal compact subgroup of $G^{\circ}$ used in Subsection \ref{Sec-Norms and cocycles},
and we have $N_G(P) \cap N_G(A) = \wt M A$. 
Indeed, this follows from the uniqueness of the $A$-invariant flat submanifold in the symmetric space of $G$, 
see \cite[Chapter VI]{Hel01}. 
The group $Z = (\wt M A) \cap G^\circ$ is the centralizer of $A$ in $G^\circ$. 

Denote by $\wt \chi$ the pullback of the character $\chi$ under the natural morphism $\wt M \to M$. 
For $h \in G$ and $m \in \wt M$, 
we have 
\begin{align}\label{equivariance psi 002}
\psi(h m) & = \varphi(h m \wt b_0) e^{it \sigma_0(hm, b_0)}  \notag\\
& = \varphi(h \wt b_0 m) e^{it \sigma_0(h, mb_0)} e^{it \sigma_0(m, b_0)} 
= \wt \chi(m)^{-1} \psi(h), 
\end{align}
where we have used the fact that $\sigma_0(m, b_0) = 0$ 
which follows from the definition of $\sigma_0$ in \eqref{def-sigma-0}, since the norm $\scr N_{W}$ is $N_G(K)$-invariant. 

Since $\wt M \cap A$ is a compact (hence finite) subgroup of $A$, 
for all $a \in \wt M \cap A$, we have $|\theta(a)| = 1$. 
Moreover, the character $\wt \chi$ vanishes on $\wt M \cap A$ since 
$\wt M \cap A$ is the kernel of the natural morphism $\wt M \to M$. 
Therefore, there exists a unique character $\kappa$ of $Z$ whose restriction to $\wt M \cap G^\circ$ is $\wt \chi$
and whose restriction to $A$ is $a \mapsto |\theta(a)|^{-it}$. 
By \eqref{equivariance psi 001} and \eqref{equivariance psi 002}, 
for $h \in G$ and $z \in Z$, we have 
\begin{align}  \label{equivariance psi 003}
\psi(hz) = \kappa(z)^{-1} \psi(h). 
\end{align}

Recall that an element $g \in G^{\circ}$ is said to be loxodromic if it is conjugated to an element of $(\wt M \cap G^\circ) \exp(\frak a^{++})$,
where $\frak a^{++}$ is the interior of the cone $\frak a^{+}$ from \eqref{def a plus},
that is, 
\begin{align*}
\mathfrak a^{++} = \left\{ X \in \mathfrak a: \alpha(X) > 0, \forall \alpha \in \Sigma^+ \right\}. 
\end{align*}
If $g$ is loxodromic and $\gamma \in G^\circ$ is such that $\gamma^{-1} g \gamma$ belongs to $(\wt M \cap G^\circ) \exp(\frak a^{++})$,
then the image of $\gamma^{-1} g \gamma$ in $Z/[Z, Z]$ does not depend on the choice of $\gamma$.
As in \cite{GQX26}, we denote this element of $Z/[Z, Z]$ by $\lambda(g)$. 
By \cite[Lemma 10.3]{BQ16b}, if $g$ is a loxodromic element of $G^\circ$, 
then its attracting fixed point $\xi_g^+$ in $G / N_G(P) = G^\circ / P$ belongs to the support of the $\mu$-stationary probability measure $\nu$. 
In other words, if $\gamma$ is as above, then $\gamma b_0 \in \supp \nu$. 
Therefore, on one hand, by \eqref{eigenvalue equation psi00} and \eqref{eigenvalue equation psi}, if $n \geq 1$ is such that $g \in \supp \mu^{*n}$, 
we get 
\begin{align*}
|\psi(\gamma)| = 1
\quad \mbox{and}  \quad 
\psi(g \gamma) = w^n \psi(\gamma). 
\end{align*}
On the other hand, by \eqref{equivariance psi 003}, 
we get 
\begin{align*}
\psi(g \gamma) = \psi(\gamma \lambda(g)) = \psi(\gamma) \kappa(\lambda(g))^{-1}, 
\end{align*}
so that $\kappa(\lambda(g)) = w^{-n}$. 

Now let $p \geq 1$ and $h \in \supp \mu^{*p} \cap G^\circ$, which is also loxodromic.
In the same way as above, we have $\kappa(\lambda(h)) = w^{-p}$. 
If $gh$ is also loxodromic, we get $\kappa(\lambda(gh)) = w^{-n-p}$. 
Consequently, we obtain 
\begin{align}\label{kappa equation 001}
\kappa \left( \lambda(gh) \lambda(g)^{-1} \lambda(h)^{-1} \right) = 1. 
\end{align}
By \cite[Theorem 1.1]{GQX26} (see \cite{Benoist05, GR07} for related results), 
when $(g, h)$ runs among the set of all pairs of loxodromic elements of $\Gamma_{\mu} \cap G^\circ$
for which $gh$ is loxodromic,   
the closed subgroup spanned by the elements 
$\lambda(gh) \lambda(g)^{-1} \lambda(h)^{-1}$ contains an open subgroup of $(S \cap Z) / [Z, Z]$.  
In this sentence, closed and open are understood in the sense of the locally compact topology. 
Therefore, by \eqref{kappa equation 001}, we have $\kappa = 1$ on $\exp (\frak a \cap {\rm Lie}(S))$, 
and hence $t \theta = 0$ on $A \cap S$. 
Using Lemma \ref{Lem image of G}, we conclude that $t = 0$, as required. 
\end{proof}

\subsection{Proof of Theorem \ref{Thm-LLT-intermediate}}
We can now use the above constructions to conclude that the assumptions in Theorem \ref{Local-limit-theorem-001} 
are satisfied by the restriction of the cocycle $\sigma$ to $(G \times \{e\})$-orbit in $Y = (G \times G) /(AU \times AU^-)$. 
Indeed, any such orbit is a copy of $\wt B = G/AU$, and by Proposition \ref{Prop-check-assum-Corollary}, 
the moment assumption \eqref{moment-assumption-01} holds on $\wt B$, the contraction property \eqref{contraction-property-on-B} holds on $B = G/N_G(P)$,
and for every $\wt b \in \wt B$ with $\eta(\wt b) \in \supp \nu$,  we have $\wt b M \subset \overline{\Gamma_{\mu} \wt b}$,
where $\nu$ is the unique $\mu$-stationary probability measure on $B$.  
Thanks to \cite[Corollary 13.4]{BQ16b} and Remark \ref{Rem-Riemannian metric}, 
the cocycle $\sigma_0$ fulfills the moment assumptions \eqref{moment-assumption-02}. 
By Lemma \ref{Lem-cohomology-cocycle}, the restriction of the cocycle $\sigma$ to 
any $(G \times \{ e \})$-orbit in $Y$ is cohomologous to $\sigma_0$. 
Finally, Proposition \ref{Prop-Non-arithmeticity} says that $M_1^{\mu} = M_0^{\mu} \times \bb R$. 
The conclusion of Theorem \ref{Thm-LLT-intermediate} now follows from Theorem \ref{Local-limit-theorem-001}.

\section{Local limit theorems for the norm}\label{Sec-LLT-approximation}

The aim of this section is to deduce Theorem  \ref{Main-Thm-LLT} from Theorem \ref{Thm-LLT-intermediate}. 
We keep the notation introduced in Section \ref{Sec-action-Strong-irre}. 
Our strategy consists in writing $\log \|g_n \cdots g_1\|$ as 
\begin{align*}
\log \|g_n \cdots g_1\| = \log \Big\|g_n \cdots g_{p+1} \frac{g_p \cdots g_1}{\| g_p \cdots g_1 \|}  \Big\| +  \log \|g_p \cdots g_1\|, 
\end{align*}
where $p$ is much smaller than $n$, so that the term $\log \|g_p \cdots g_1\|$ will be dealt with as a small deviation.
We will show that, with high probability, 
the normalized endomorphism $\frac{g_p \cdots g_1}{\| g_p \cdots g_1 \|}$ of $V$ is very close to an element $h$ of the set $\bb R^* G \varpi G$. 
By using a continuity property, one can then prove that the logarithm of norms $\log \|g_n \cdots g_{p+1} \frac{g_p \cdots g_1}{\| g_p \cdots g_1 \|} \|$
and $\log \|g_n \cdots g_{p+1} h \|$ are also close, and finally conclude by applying Theorem \ref{Thm-LLT-intermediate}.

\subsection{Approximation of the norm}
Recall that we have fixed a maximal compact subgroup $K$ of $G^{\circ}$ so that the associated Cartan involution is the inverse map on $A$. 
Let $\mathfrak a$ be the Lie algebra of $A$ and let $\mathfrak a^+ \subset \mathfrak a$ be the Weyl chamber associated with 
the choice of the minimal parabolic subgroup $\bf P$ of $G^{\circ}$. 

More precisely, the set $\mathfrak a^+$ is a closed convex cone in $\mathfrak a$, which is defined as follows.
We take $\Sigma \subset \mathfrak a^*$ to be the set of roots of $\mathfrak a$ in the Lie algebra $\mathfrak g$ of $G$,
that is, $\Sigma$ is the set of non-zero linear forms $\alpha$ on $\mathfrak a$ for which 
the space 
\begin{align*}
\mathfrak g_{\alpha} = \{ X \in \mathfrak g:  [Z, X] = \alpha(Z) X,  \,  \forall Z \in \mathfrak a \} 
\end{align*}
is not zero. 
The Lie algebra $\mathfrak p$ of $P$ is decomposed as $\mathfrak p = \mathfrak z \oplus \bigoplus_{\alpha \in \Sigma^+} \mathfrak g_{\alpha}$,
where $\mathfrak z$ is the Lie algebra of the centralizer of $A$ in $G$,
and $\Sigma^+ \subset \Sigma$ is a subset satisfying $\Sigma = \Sigma^+ \cup (- \Sigma^+)$. 
We set 
\begin{align}\label{def a plus}
\mathfrak a^+ = \left\{ Z \in \mathfrak a: \alpha(Z) \geq 0,  \, \forall \alpha \in \Sigma^+ \right\}
\end{align}
and $A^+ = \exp (\mathfrak a^+)$. 

Then we have the Cartan decomposition $G = K A^+ N_G(K)$. 
More explicitly, for every $g \in G $, there exists a unique $a_g \in A^+$ 
such that $g$ belongs to $K a_g N_G(K)$. 
Once and for all, for each $g \in G $, we fix $k_g \in K$ and $n_g \in N_G(K)$ such that $g = k_g a_g n_g$.
We define $\pi_g$ as being the endomorphism $k_g \varpi n_g$ of $V$. 
Since the pair $(k_g, n_g)$ is defined up to the action of a closed subgroup of $K$,
by taking Borel sections, we can assume that the map $g \mapsto \pi_g$ is Borel measurable. 

The next proposition says that, when computing norms of big products of elements in $G$, 
we can replace some of them by elements in $\bb R^* G \varpi G$ without significantly changing the result.

\begin{proposition}\label{Prop-norm-comparison}
Assume that $\Gamma_{\mu}$ is strongly irreducible and that $\mu$ admits a finite exponential moment \eqref{Exponential-moment}. 
Then there exist constants $\ee, c, C >0$ such that for any $n \geq 1$ and any $g \in G $, 
\begin{align*}
& \bb P \Big( \Big| \log \| g g_n \cdots g_1 \|  - \log \| g_n \cdots g_1 \|
  - \log \| g \pi_{g_n \cdots g_1} \| +  \log \|  \pi_{g_n \cdots g_1} \| \Big| > c e^{-\ee n} \Big) \notag\\
  &  \leq C e^{-\ee n}. 
\end{align*} 
\end{proposition}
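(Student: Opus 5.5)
The plan is to compare $g_n\cdots g_1$ with its ``rank-$r$ truncation'' $\|g_n\cdots g_1\|\,\pi_{g_n\cdots g_1}/\|\pi_{g_n\cdots g_1}\|$ in a way that is uniform with respect to left multiplication by an arbitrary $g\in G$. Write $x_n = g_n\cdots g_1 = k a n'$ with $k=k_{x_n}$, $a=a_{x_n}\in A^+$, $n'=n_{x_n}$. Decompose $V = V^+\oplus V^-$ along the $A$-weight spaces, so that $a = \theta(a)\varpi + a(1-\varpi)$. Here $a$ acts on $V^-$ by weights $\theta\prod_{\alpha}\alpha^{-m_\alpha}$, with at least one $m_\alpha\ge1$ in each of these weights.

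The first step is the key deterministic estimate. Using the $N_G(K)$-invariant norm $\scr N_V$ from Subsection \ref{Sec-Norms and cocycles}, for which $A$ acts symmetrically, we get
\begin{align*}
\scr N_V\big(a(1-\varpi)\big) \le |\theta(a)|\, e^{-\beta(a)},
\end{align*}
where $\beta(a) = \min_{\alpha}\log|\alpha(a)|$ runs over those simple roots $\alpha$ that actually appear in the weights of $V^-$. It follows that
\begin{align*}
g x_n = |\theta(a)|\, g\pi_{x_n} + g k\, a(1-\varpi)\, n',
\end{align*}
and the error term is bounded by $\scr N_V(g)\,|\theta(a)|\,e^{-\beta(a)}$. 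Since $\scr N_V(g\pi_{x_n}) = \scr N_V(g k\varpi)$, one needs a lower bound on this quantity relative to $\scr N_V(g)$. This is the delicate point, because $g$ is arbitrary. Write $g = k_g a_g n_g$. Then $\scr N_V(gk\varpi)$ is comparable to $\scr N_V(g)$ times a ``distance'' quantity: roughly, how far the subspace $kV^+$ is from the span of the non-top singular directions of $g$. If this quantity is at least $e^{-\beta(a)/2}$, then
\begin{align*}
\big|\log\|g x_n\| - \log\big(|\theta(a)|\,\|g\pi_{x_n}\|\big)\big| \lesssim e^{-\beta(a)/2}.
\end{align*}
Here we pass from $\scr N_V$ to the arbitrary norm $\|\cdot\|$ by equivalence of norms; the ratio structure is what makes this possible, because the additive error becomes multiplicative. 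Applying the same estimate with $g=e$ removes $|\theta(a)|$ and yields exactly the combination appearing in the statement.

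The second step is probabilistic. By the large deviation and regularity results of \cite{BQ16b} for Zariski dense walks with exponential moment (Cartan projection and simple roots, Section 13 there), there exists $\ee_0>0$ such that $\bb P(\beta(a_{x_n})\le \ee_0 n)\le Ce^{-\ee_0 n}$. Here it matters that only roots which are nontrivial on the relevant weights are involved. This uses that $\frac1n\log a_{x_n}$ converges to the Lyapunov vector, which lies in the interior of $\mathfrak a^+$, together with exponential large deviations.

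For the angle condition, the main obstacle is uniformity in $g$. I would reformulate it in the representation $\rho$ on $W$, where it becomes a rank-one statement: $\scr N_W(\rho(g k)w_0)\ge e^{-\ee n}\scr N_W(\rho(g))$. Equivalently, the line $k W^+\in\bb P(W)$ stays away from the hyperplane determined by the contracted directions of $\rho(g)^{*}$. Note that $kW^+$ is the $\rho$-image of the Cartan attracting direction of $x_n$. Regularity of the stationary measure, namely the Hölder estimate $\bb P(\delta(kW^+,H)\le e^{-\ee n})\le Ce^{-\ee' n}$ uniformly over hyperplanes $H$ (\cite[Prop.\ 14.3 / Lemma 14.11]{BQ16b}, combined with the fact that Cartan directions approach the random limit point exponentially fast), gives this bound uniformly in $H$, and hence uniformly in $g$. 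Combining the two exceptional events and choosing $\ee$ small relative to $\ee_0,\ee'$ yields the claimed bound with error $ce^{-\ee n}$.
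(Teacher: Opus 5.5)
Your proposal is correct and takes essentially the paper's route: the decomposition $x_n=g_n\cdots g_1=\theta(a)\,\pi_{x_n}+k\,a(1-\varpi)\,n'$, combined with large deviations for the roots of the Cartan projection, is the content of Lemma \ref{Lem-norm-comparison-001}, and your angle condition in $W$ is the second estimate of Lemma \ref{Lem-norm-comparison-002} (via \cite[Proposition 14.3]{BQ16b}). The only difference is bookkeeping: you control the error relative to $\theta(a)\|g\pi_{x_n}\|$, so the lower bound on $\|g g_n\cdots g_1\|$ comes for free, and you do not need the first estimate of Lemma \ref{Lem-norm-comparison-002} that the paper uses before applying the mean value theorem.
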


The proof of Proposition \ref{Prop-norm-comparison} relies on the following two lemmas. 

\begin{lemma}\label{Lem-norm-comparison-001}
Assume that $\Gamma_{\mu}$ is strongly irreducible and that $\mu$ admits a finite exponential moment \eqref{Exponential-moment}. 
Then, there exist constants $\ee, c >0$ such that for any $n \geq 1$, 
\begin{align*}
\bb P \left( \left\| \frac{g_n \cdots g_1}{\| g_n \cdots g_1 \|} - \frac{\pi_{g_n \cdots g_1} }{\| \pi_{g_n \cdots g_1} \|}  \right\| > e^{- \ee n} \right) \leq c e^{-\ee n}. 
\end{align*}
\end{lemma}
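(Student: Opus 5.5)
We will write $g = g_n \cdots g_1 = k_g a_g n_g$ with the Cartan decomposition fixed above, and reduce the statement to a deterministic estimate on $a_g$ combined with a large deviation bound for the gap between the top weight $\theta$ and the next weights of $A$ on $V$. Since $k_g$ and $n_g$ lie in the compact group $N_G(K)$, and any norm on $\End(V)$ is equivalent to $\scr N_V$ with constants uniform over left and right multiplication by elements of $N_G(K)$, it will suffice to compare $a_g/\|a_g\|$ with $\varpi a_g/\|\varpi a_g\|$. Indeed $g - \theta(a_g)\pi_g = k_g (a_g - \theta(a_g)\varpi) n_g$, and $\varpi a_g = a_g\varpi = \theta(a_g)\varpi$.

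The first step is the deterministic estimate. Decompose $V = V^+ \oplus V^-$, where $V^-$ is the sum of the other $A$-weight spaces. As recalled in the proof of Lemma \ref{Lem-Norms-V-W}, these weights have the form $\theta\prod_{\alpha\in\Sigma^+}\alpha^{-m_\alpha}$ with $\sum m_\alpha \ge 1$. So for $a \in A^+$ we get $\|a - \theta(a)\varpi\| \le C |\theta(a)| \max_{\alpha \in \Pi_V} |\alpha(a)|^{-1}$. Here $\Pi_V$ is the finite set of simple roots $\alpha$ such that $\theta\alpha^{-1}$ (or some weight $\theta\prod\alpha^{-m_\alpha}$ involving $\alpha$) occurs; these are exactly the simple roots that can appear in a non-top weight. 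Moreover $\|a\| \asymp |\theta(a)| \asymp \|\theta(a)\varpi\|$. Dividing by the norms, we obtain
\begin{align*}
\left\| \frac{g}{\|g\|} - \frac{\pi_g}{\|\pi_g\|}\right\| \le C' \max_{\alpha\in \Pi_V} e^{-\alpha(\log a_g)}.
\end{align*}

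The second step is probabilistic. It suffices to show that there are $\ee'>0$ and $c>0$ with $\bb P(\alpha(\log a_{g_n\cdots g_1}) \le 2\ee' n) \le c e^{-\ee' n}$ for each $\alpha \in \Pi_V$. We intend to invoke the large deviation estimates for the Cartan projection from \cite{BQ16b}, Chapter 13, applied to the Zariski dense semigroup $\Gamma_\mu \subset G$ under the exponential moment. These give $\frac1n \log a_{g_n\cdots g_1} \to \lambda_{\mathfrak a}$ with exponentially small deviation probabilities, where the Lyapunov vector $\lambda_{\mathfrak a}$ lies in the interior $\mathfrak a^{++}$. This property is the simplicity of the Lyapunov spectrum in Guivarc'h--Raugi / Goldsheid--Margulis form, which holds for Zariski dense $\mu$. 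Then $\alpha(\lambda_{\mathfrak a}) > 0$ for every simple root, and we may pick $\ee'$ below $\alpha(\lambda_{\mathfrak a})/4$. An alternative route, if the $\mathfrak a$-valued statement is inconvenient to cite, is to use the representations $\wedge^k$ for the proximal representations associated with each simple root $\alpha$ (by Tits' theorem). There $\alpha(\log a)$ equals $\log$ of the ratio of the first two singular values, and we would apply the exponential large deviation bound for the gap $\lambda_1 - \lambda_2 > 0$ in a proximal strongly irreducible representation, again from \cite{BQ16b}. Combining the two steps and taking $\ee = \min(\ee', \ee'\cdot\text{const})$ with adjusted constants yields the lemma.

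The main obstacle is justifying the large deviation bound uniformly for all roots in $\Pi_V$ when $\bf G$ is not connected. We will handle this by passing to the induced measure on $G^\circ$ as in the proof of Proposition \ref{Prop-check-assum-Corollary}. The Cartan projections of $G$ and $G^\circ$ agree up to the compact group $N_G(K)$, which we have already absorbed. Equidistribution of the number of $G^\circ$-returns then transfers the exponential bounds, at the cost of a constant factor in $\ee$.
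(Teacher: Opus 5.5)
Your proposal is correct and follows essentially the paper's route. The paper combines the deterministic bound $\scr N_V\bigl(e^{-\theta(Z)}\exp Z-\varpi\bigr)\le\max_{\alpha\in\Sigma^+}e^{-\alpha(Z)}$ for $Z\in\mathfrak a^+$ with positivity $\alpha(\sigma_\mu)>0$ of the Lyapunov vector (Theorem 10.9 of \cite{BQ16b}), the large deviation bound for the Cartan projection (Theorem 13.17 of \cite{BQ16b}), and equivalence of norms. It cites these results directly, without your reduction to $G^\circ$ via the induced measure or your alternative through Tits' proximal representations; those are harmless extra care.
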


\begin{proof}
Let $\sigma_{\mu} \in \mathfrak a^+$ be the Lyapunov vector of $\mu$ 
(see Theorem 10.9 of \cite{BQ16b}), so that, almost surely, 
\begin{align*}
 \frac{1}{n} \log a_{g_n \cdots g_1}  \xrightarrow[n\to\infty]{}  \sigma_{\mu}. 
\end{align*}
Still by Theorem 10.9 of \cite{BQ16b}, if $\alpha$ is in $\Sigma^+$, 
then we have $\alpha(\sigma_{\mu}) > 0$. 
Therefore, by Theorem 13.17 of \cite{BQ16b}, we may find constants $\ee, c > 0$ such that for any $n \geq 1$ and $\alpha \in \Sigma^+$, 
\begin{align*}
\bb P \big( \alpha( \log a_{g_n \cdots g_1} ) \leq \ee n \big)
\leq c e^{- \ee n}. 
\end{align*}
Recall that the range $V^+$ of $\varpi$ is the weight space associated to the highest weight $\theta$ of $A$
in $V$.
This means that $\theta$ is an element of $\mathfrak a^*$ and that for any $Z \in \mathfrak a$ and $v \in V^+$,
we have $\exp(Z) v = e^{\theta(Z)} v$. 
Besides, if $\theta'$ is a weight of $A$ in the kernel $V^-$ of $\varpi$, then the difference $\theta - \theta'$ is non-zero and 
is a sum of elements in $\Sigma^+$, see \cite{Tit71}. 
Consequently, for any $Z \in \mathfrak a^+$, we have 
\begin{align*}
\mathscr N_{V} \left(e^{- \theta(Z)} \exp{Z} - \varpi \right) \leq \max_{ \alpha \in \Sigma^+}  e^{- \alpha(Z)}, 
\end{align*}
where $\mathscr N_{V}$ is the Euclidean norm introduced in Section \ref{Sec-Norms and cocycles}. 
In particular, we obtain 
\begin{align*}
\bb P \left( \mathscr N_{V} \bigg(  \frac{a_{g_n \cdots g_1}}{\mathscr N_{V}(a_{g_n \cdots g_1})} - \varpi  \bigg) > e^{-\ee n}  \right)
\leq c e^{-\ee n}. 
\end{align*}
The conclusion follows by using the equivalence of the norms on $\End (V)$. 
\end{proof}

\begin{lemma}\label{Lem-norm-comparison-002}
Assume that $\Gamma_{\mu}$ is strongly irreducible and that $\mu$ admits a finite exponential moment \eqref{Exponential-moment}. 
Then, for any $\ee >0$, there exist constants $a, c  >0$ such that for any $n \geq 1$ and any $g \in G $, 
\begin{align*}
& \bb P \Big( \|g g_n \cdots g_1\| \leq  e^{-\ee n} \|g\| \|g_n \cdots g_1\| \Big) \leq  c e^{- a n},  \notag\\
& \bb P \Big( \|g \pi_{g_n \cdots g_1} \| \leq  e^{-\ee n} \|g\| \|\pi_{g_n \cdots g_1}\| \Big) \leq c e^{- a n}. 
\end{align*}
\end{lemma}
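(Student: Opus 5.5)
We reduce both inequalities to a large-deviation estimate for the Cartan projection, combined with the regularity of the stationary measure on the dual flag variety. We work with the Euclidean norm $\scr N_V$ of Section \ref{Sec-Norms and cocycles}; changing to an arbitrary norm on $\End(V)$ only costs multiplicative constants, which are absorbed by $e^{-\ee n}$ after slightly decreasing $\ee$. We treat the second inequality first, because it is essentially deterministic apart from the distribution of $n_{g_n\cdots g_1}$.

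\textbf{Second inequality.} Write $h = g_n\cdots g_1 = k_h a_h n_h$, so that $\pi_h = k_h\varpi n_h$. Since $k_h$ and $n_h$ are $\scr N_V$-isometries, we have $\scr N_V(\pi_h)=\scr N_V(\varpi)$ and $\scr N_V(g\pi_h)=\scr N_V(g k_h \varpi)$. The range of $k_h\varpi$ is the subspace $k_hV^+$, which is the image of the point $k_h V^+$ of the flag-type variety $G V^+$. So the event in question is the event that $\scr N_V(g|_{k_hV^+}) \le e^{-\ee n}\scr N_V(g)$, i.e.\ that the $r$-dimensional subspace $k_hV^+$ lies within angle $e^{-\ee n}$ of the subspace where $g$ is small. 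Passing to $\rho$ on $W$ via Lemma \ref{Lem-Norms-V-W} makes this rank-one: $k_hV^+$ corresponds to the line $\rho(k_h)W^+ \in \bb P(W)$, and $\scr N_W(\rho(g) \rho(k_h) w_0) \geq \scr N_W(\rho(g))\,\delta(\rho(k_h)W^+, \ker\text{-type hyperplane } H_g)$ for a suitable hyperplane $H_g$ determined by $g$. Hence it suffices to show
\begin{align*}
\bb P\big(\delta(\rho(k_{h})W^+, H) \le e^{-\ee' n}\big) \le c e^{-an}
\end{align*}
uniformly in hyperplanes $H\subset W$.

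This follows from two ingredients. First, by the large deviation estimate for the Cartan projection \cite[Theorem 13.17]{BQ16b}, the gap $\alpha(\log a_h)$ is at least $\ee n$ with exponentially high probability, so $\rho(k_h)W^+$ is exponentially close to the random line $\rho(h)\rho(b)$ for generic $b$. More precisely, it is close to the top singular direction, which converges at exponential rate to a point distributed according to the stationary measure $\nu_W$ on $\bb P(W)$. Second, because $\rho$ is proximal and strongly irreducible, \cite[Proposition 14.1 / Theorem 13.1]{BQ16b} gives Hölder regularity of $\nu_W$: $\nu_W(\{x:\delta(x,H)\le r\})\le C r^{\beta}$ uniformly in $H$. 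The finite-$n$ version that we actually need is the uniform estimate $\bb P(\delta(\rho(h) x, H)\le e^{-\ee n})\le ce^{-an}$, which is \cite[Prop.\ 14.3]{BQ16b}. We combine it with the closeness of $\rho(k_h)W^+$ to $\rho(h)x$, which holds off an exponentially small event.

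\textbf{First inequality.} It follows from the second. By Lemma \ref{Lem-norm-comparison-001}, $h/\|h\|$ is within $e^{-\ee n}$ of $\pi_h/\|\pi_h\|$ with exponentially high probability. Then
\begin{align*}
\frac{\|gh\|}{\|g\|\|h\|} \ge \frac{\|g\pi_h\|}{\|g\|\|\pi_h\|} - C e^{-\ee n},
\end{align*}
and on the complement of the second bad event (taken with $\ee/2$), the right-hand side is at least $e^{-\ee n/2}-Ce^{-\ee n}\ge e^{-\ee n}$ for large $n$. Small $n$ are absorbed into the constant $c$.

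The \textbf{main obstacle} is the uniformity in $g$, i.e.\ in the hyperplane $H$, of the exponential regularity estimate for $k_h$. The key input is proximality and strong irreducibility of $\rho$ on $W$, which the paper has already recorded via \cite[Lemma 4.36]{BQ16b}.
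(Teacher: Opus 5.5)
Your proof is correct, but it is organized differently from the paper's.

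\textbf{Second inequality.} Here you essentially follow the paper. You reduce to the Euclidean operator norm and pass to $W$ via Lemma \ref{Lem-Norms-V-W}, extended by continuity to $g\pi_h\in\overline{\bb R G}$. You note that $\rho(\pi_h)$ has rank one, with image the top singular direction $\rho(k_h)W^+$ of $\rho(h)$. You then bound $\scr N_W(\rho(g)w)$ below by $\scr N_W(\rho(g))\,\delta(\bb R w,H_g)\,\scr N_W(w)$, as in Lemma 14.2 (i) of \cite{BQ16b}.

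The one difference is how you get $\bb P(\delta(\rho(k_h)W^+,H)\leq e^{-\ee n})\leq ce^{-an}$. You rebuild it from the estimate for $\delta(\rho(h)x,H)$ plus closeness of $\rho(h)x$ to the top singular direction. That closeness also needs $x$ to stay $e^{-\ee n}$-away from the repelling hyperplane of $\rho(h)$ with high probability; this is what your ``generic $b$'' has to mean. The paper simply cites inequality (14.7) of Proposition 14.3 in \cite{BQ16b}, which is exactly this estimate. Also, for the left walk $g_n\cdots g_1$ the top singular direction converges only in law, not almost surely; in law is all you use.

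\textbf{First inequality.} This is where the routes genuinely differ. The paper proves it directly in $W$ with a fixed unit vector $v$: $\|gg_n\cdots g_1\|\geq\|gg_n\cdots g_1v\|\geq e^{-\ee n}\|g\|\,\|g_n\cdots g_1v\|\geq e^{-2\ee n}\|g\|\,\|g_n\cdots g_1\|$, using (14.8) and (14.5) of \cite{BQ16b}. You instead deduce it from the second inequality together with Lemma \ref{Lem-norm-comparison-001}. This is legitimate, since that lemma is proved independently.

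Be careful, though: Lemma \ref{Lem-norm-comparison-001} gives only one fixed rate $\ee_0$, not the arbitrary $\ee$ of the statement. Since the events shrink as $\ee$ grows, you may assume $\ee<\ee_0$. Then $e^{-\ee n/2}-c_0e^{-\ee_0 n}\geq e^{-\ee n}$ for large $n$, and your argument goes through.

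\textbf{Trade-off.} Your route makes the first inequality a formal corollary of the second once the approximation $h/\|h\|\approx\pi_h/\|\pi_h\|$ is available. The cost is importing the large deviations for the full Cartan projection that underlie Lemma \ref{Lem-norm-comparison-001}. The paper keeps this lemma independent of Lemma \ref{Lem-norm-comparison-001} and uses only rank-one estimates in the proximal representation $W$.
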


\begin{proof}
By equivalence of norms on the space $\End(V)$, we can assume that $\| \cdot \|$ is the operator norm associated 
with the Euclidean norm $\mathscr N_{V}$ introduced in Section \ref{Sec-Norms and cocycles}. 
In that case, by Lemma \ref{Lem-Norms-V-W}, up to replacing $V$ by $W$,
we can assume $\Gamma_{\mu}$ to be proximal. 
We fix a vector $v \in V$ with norm one. 
Then, by Lemma 14.2 (i) and Proposition 14.3 (inequalities (14.8) and (14.5)) of \cite{BQ16b}, for any $g \in G$, 
on a set whose complement has probability at most $c e^{-a n}$, 
we get 
\begin{align*}
\| g g_n \cdots g_1 \| 
& \geq \| g g_n \cdots g_1 v \| \notag\\
& \geq  e^{- \ee n}  \|g\| \| g_n \cdots g_1 v \|
 \geq   e^{- 2 \ee n} \|g\|  \| g_n \cdots g_1 \|. 
\end{align*}
Replacing $2 \ee$ by $\ee$ (after adjusting the constants) yields the first estimate.
In the same way, still by Lemma 14.2 (i) and Proposition 14.3 (inequality (14.7)) of \cite{BQ16b}, 
for $g \in G $, 
on a set whose complement has probability smaller than $c e^{-a n}$, 
we have 
\begin{align*}
\| g \pi_{g_n \cdots g_1} \|  
 \geq  e^{- \ee n} \|g\|  \| \pi_{g_n \cdots g_1} \|, 
\end{align*}
which gives the second estimate. 
This completes the proof of the lemma. 
\end{proof}

We can now conclude this subsection with the following proof. 

\begin{proof}[Proof of Proposition \ref{Prop-norm-comparison}]
For $g \in G $, we need to dominate 
\begin{align*}
& \Big|  \log \| g g_n \cdots g_1 \|  - \log \| g_n \cdots g_1 \|
 - \log \| g \pi_{g_n \cdots g_1} \|  +  \log \|  \pi_{g_n \cdots g_1} \|  \Big| \notag\\
& =  \bigg| \log \bigg\| \frac{g}{\|g\|} \frac{g_n \cdots g_1}{\| g_n \cdots g_1 \|}  \bigg\| 
 - \log \bigg\| \frac{g}{\|g\|} \frac{\pi_{g_n \cdots g_1}}{\| \pi_{g_n \cdots g_1} \|}  \bigg\|  \bigg|. 
\end{align*} 
Using the mean value theorem for the log-function, we get 
\begin{align*}
& \Big|  \log \| g g_n \cdots g_1 \|  - \log \| g_n \cdots g_1 \|
 - \log \| g \pi_{g_n \cdots g_1} \|  +  \log \|  \pi_{g_n \cdots g_1} \|  \Big| \notag\\
 & \leq  \max \bigg\{ \frac{\|g\| \| g_n \cdots g_1 \|}{ \| g g_n \cdots g_1 \|},  \frac{\|g\| \| \pi_{g_n \cdots g_1} \|}{ \| g \pi_{g_n \cdots g_1} \|} \bigg\}
  \left| \bigg\|  \frac{g}{\|g\|} \frac{g_n \cdots g_1}{\| g_n \cdots g_1 \|} \bigg\|  -   \bigg\| \frac{g}{\|g\|} \frac{\pi_{g_n \cdots g_1}}{\| \pi_{g_n \cdots g_1} \|} \bigg\|  \right|  \notag\\
& \leq  \max \bigg\{ \frac{\|g\| \| g_n \cdots g_1 \|}{ \| g g_n \cdots g_1 \|},  \frac{\|g\| \| \pi_{g_n \cdots g_1} \|}{ \| g \pi_{g_n \cdots g_1} \|} \bigg\}
  \bigg\| \frac{g}{\|g\|} \left( \frac{g_n \cdots g_1}{\| g_n \cdots g_1 \|} - \frac{\pi_{g_n \cdots g_1}}{\| \pi_{g_n \cdots g_1} \|} \right) \bigg\|  \notag\\
&  \leq  c_0  \max \bigg\{ \frac{\|g\| \| g_n \cdots g_1 \|}{ \| g g_n \cdots g_1 \|},  \frac{\|g\| \| \pi_{g_n \cdots g_1} \|}{ \| g \pi_{g_n \cdots g_1} \|} \bigg\}
  \bigg\|  \frac{g_n \cdots g_1}{\| g_n \cdots g_1 \|} -  \frac{\pi_{g_n \cdots g_1}}{\| \pi_{g_n \cdots g_1} \|}  \bigg\|, 
\end{align*} 
where $c_0 = \max_{u, v \in \End (V) \setminus \{0\}} \frac{\|u v\|}{\|u\| \|v\|}$ is a constant
(as the norm $\|\cdot \|$ is not assumed to be an operator norm, we may have $c_0 > 1$). 
Let $\ee, c_1 >0$ be as in Lemma \ref{Lem-norm-comparison-001} and let $a, c_2 >0$ be as in Lemma \ref{Lem-norm-comparison-002}
which we apply to $\ee/2$.
Then, outside an event of probability at most $c_1 e^{-\ee n} + 2 c_2 e^{-a n}$, 
we have 
\begin{align*}
&  \frac{\|g\| \| g_n \cdots g_1 \|}{ \| g g_n \cdots g_1 \|} \leq e^{\frac{\ee}{2} n},  
\quad   \frac{\|g\| \| \pi_{g_n \cdots g_1} \|}{ \| g \pi_{g_n \cdots g_1} \|}   \leq e^{\frac{\ee}{2} n},   \notag\\
& \bigg\| \frac{g_n \cdots g_1}{\| g_n \cdots g_1 \|} -  \frac{\pi_{g_n \cdots g_1}}{\| \pi_{g_n \cdots g_1} \|} \bigg\| \leq e^{ -\ee n}. 
\end{align*} 
The conclusion of the proposition follows. 
\end{proof}

\subsection{Proof of Theorem \ref{Main-Thm-LLT}}\label{Section-Proof-Main-theorem}
We shall in fact prove the following more general statement. 

\begin{theorem}\label{Main-Thm-LLT-general}
Assume that $\Gamma_{\mu}$ is strongly irreducible 
and that the image of $\Gamma_{\mu}$ in the projective linear group 
${\rm PGL}(V)$ is not contained in any compact subgroup of ${\rm PGL}(V)$. 
Assume also that $\mu$ admits a finite exponential moment \eqref{Exponential-moment}. 
Let $f$ be a compactly supported continuous function on $\bb R$. 
Then, uniformly in $w_n \in \bb R$ with $|w_n| = o(\sqrt{n})$,   
\begin{align*}
\upsilon_{\mu} \sqrt{2 \pi n} \,  \bb E  \Big( f (\log \|g_n \cdots g_1 \| - n \lambda_{\mu} - w_n)  \Big) 
\xrightarrow[n\to\infty]{}  \int_{\bb R} f(u) du. 
\end{align*}
\end{theorem}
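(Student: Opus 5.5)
The plan is the one announced at the start of this section. Split the product at a time $p = p_n$ with $p_n \to \infty$ and $p_n = o(\sqrt{n})$; the choice $p = \lfloor \kappa \log n \rfloor$ with $\kappa$ large is one option. Write $L_p = g_p \cdots g_1$ and $R = g_n \cdots g_{p+1}$, which are independent, and $R$ has the law of a product of $n-p$ steps. Conditioning on $L_p$ and applying Proposition~\ref{Prop-norm-comparison} with $g = R$ fixed, we obtain
\begin{align*}
\log \|g_n \cdots g_1\| = \log \|R\, \pi_{L_p}\| - \log \|\pi_{L_p}\| + \log \|L_p\| + \Delta_n ,
\end{align*}
where $|\Delta_n| \leq c e^{-\ee p}$ outside an event of probability at most $C e^{-\ee p}$. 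Since $\pi_{L_p} = k \varpi n'$ lies in $G\varpi G$, the first term equals $\log \frac{\|R \xi\|}{\|\xi\|}$ with $\xi = \pi_{L_p}$. It is important here that the approximation is made in the factor that is independent of $R$.

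Next, reduce to $f$ continuous with compact support, and sandwich $f$ by functions with slightly enlarged or shrunk supports to absorb the error $\Delta_n$. Concretely, for $\delta > 0$ choose continuous $f_\delta^\pm$ with $f_\delta^- \leq f(\cdot + s) \leq f_\delta^+$ for all $|s| \leq \delta$ and $\int (f_\delta^+ - f_\delta^-) \to 0$ as $\delta \to 0$. This is legitimate for nonnegative $f$, and we split $f = f^+ - f^-$, taking real and imaginary parts if needed. The exceptional event has probability $C e^{-\ee p}$, which is $o(n^{-1/2})$ once $\kappa \ee > 1/2$. Hence it contributes nothing after multiplication by $\sqrt{n}$, because $f$ is bounded.

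On the good event, condition on $\mathcal F_p = \sigma(g_1, \dots, g_p)$ and set
\begin{align*}
w_n' = w_n + p\lambda_{\mu} - \log \|L_p\| .
\end{align*}
Apply Theorem~\ref{Thm-LLT-intermediate} to $R$ with $n - p$ steps, with $\xi = \pi_{L_p}$, and with shift $w_n'$. We need $|w_n'| = o(\sqrt{n-p})$. This holds on an event of probability $1 - o(n^{-1/2})$, since $|\log \|L_p\|| \leq C' p$ outside an exponentially small event by the exponential moment assumption (a large deviation bound for $\max\{\|g\|, \|g^{-1}\|\}$). The uniformity in $\xi \in G\varpi G$ and in the shift, both provided by Theorem~\ref{Thm-LLT-intermediate}, is exactly what makes it possible to integrate over $\mathcal F_p$. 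The passage from intervals in Theorem~\ref{Thm-LLT-intermediate} to continuous compactly supported test functions is standard: approximate by step functions, using uniform local boundedness.

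This gives
\begin{align*}
\upsilon_{\mu} \sqrt{2\pi (n-p)}\, \bb E \big( f_\delta^\pm (\cdots) \mid \mathcal F_p \big) \to \int f_\delta^\pm ,
\end{align*}
uniformly. Since $\sqrt{n-p}/\sqrt{n} \to 1$, letting $n \to \infty$ and then $\delta \to 0$ concludes. The same $\upsilon_{\mu}$ appears by Remark~\ref{Remark-variance-002}, and the identification with \eqref{def-variance-aa} follows from the same comparison.

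The main obstacle is controlling the error term uniformly. Proposition~\ref{Prop-norm-comparison} is stated for deterministic $g$, with the random product on the right, whereas here the long product $R$ is random. Independence and Fubini handle this: condition on $R$ and apply the proposition to the law of $L_p$. Its bounds are uniform in $g$, so the exceptional probability integrates to at most $C e^{-\ee p}$. One also needs to check that $\pi_{L_p}$ is measurable and lies in $G\varpi G$; the former holds by the Borel choice made earlier. A genuine subtlety is that the shift $\log\|L_p\| - p\lambda_\mu$ is only $O(p)$ and not $O(1)$, which is why the $o(\sqrt{n})$ uniformity in Theorem~\ref{Thm-LLT-intermediate} is essential and why $p$ must be chosen logarithmic.
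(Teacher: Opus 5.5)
Your proposal is correct and follows the paper's proof essentially step by step. You split at $p_n \asymp \log n$, use Proposition~\ref{Prop-norm-comparison} with $g$ equal to the independent long product $g_n\cdots g_{p_n+1}$ to replace $\log\|g_n\cdots g_1\|$ by $\log\frac{\|g_n\cdots g_{p_n+1}\pi_{L_{p_n}}\|}{\|\pi_{L_{p_n}}\|}+\log\|L_{p_n}\|$, and then apply Theorem~\ref{Thm-LLT-intermediate} with the random shift $w_n+p_n\lambda_\mu-\log\|L_{p_n}\|$, which is controlled outside an event of probability $o(n^{-1/2})$. Your sandwich argument for the $O(e^{-\ee p_n})$ error and your Fubini/measurability remarks simply spell out details the paper leaves implicit. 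One wording fix: in your first paragraph you should condition on $R$, not on $L_p$, as you correctly state at the end.
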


Here, the asymptotic variance $\upsilon_{\mu}^2$ is defined by Remark \ref{Remark-variance-002}, but we will see below that it coincides with 
the one given by \eqref{def-variance-aa}. 

\begin{proof}[Proof of Theorem \ref{Main-Thm-LLT-general}]
Let $f$ be a compactly supported continuous function on $\bb R$ and for $n \geq 1$, set $p_n = [A \log n]$, where $A>0$ is a fixed constant. 
Applying Proposition \ref{Prop-norm-comparison}, if $A$ is chosen sufficiently large, we get that, uniformly in $w_n \in \bb R$ with $|w_n| = o(\sqrt{n})$,   
\begin{align*}
&  \upsilon_{\mu} \sqrt{2 \pi n} \bigg(   \bb E  \Big[ f(\log \|g_n \cdots g_1 \| - n \lambda_{\mu} -  w_n) \Big]  \notag\\
&   -   
\bb E  \Big[ f \Big(\log  \frac{ \| g_n \cdots g_{p_n + 1} \pi_{g_{p_n} \cdots g_1} \| }{ \| \pi_{g_{p_n} \cdots g_1} \| } 
  - n \lambda_{\mu} - w_n + \log \|g_{p_n} \cdots g_1\| \Big)  \Big] \bigg) 
  \xrightarrow[n\to\infty]{}  0. 
\end{align*}
Now, by Chebyshev's inequality and the finite exponential moment assumption, we have
\begin{align*}
\bb P \left(  | \log \|g_{p_n} \cdots g_1\| | > n^{1/4} \right) = o \left( n^{-1/2} \right). 
\end{align*}
Therefore, applying Theorem \ref{Thm-LLT-intermediate} with the perturbation $w_n' = w_n - \log \|g_{p_n} \cdots g_1\| + p_n \lambda_{\mu}$, 
we obtain that, uniformly in $w_n \in \bb R$ with $|w_n| = o(\sqrt{n})$,   
\begin{align*}
 \upsilon_{\mu} \sqrt{2 \pi n} 
\,  \bb E  \Big[ f(\log \|g_n \cdots g_1 \| - n \lambda_{\mu} - w_n) \Big]  
\xrightarrow[n\to\infty]{}   \int_{\bb R} f(u) du, 
\end{align*}
as claimed. 
\end{proof}

\begin{remark}\label{Remark-variance-003}
From Remark \ref{Remark-variance-002} and Proposition \ref{Prop-norm-comparison}, 
the same argument as in the proof above shows that the asymptotic variance $\upsilon_{\mu}^2$
introduced in Remark \ref{Remark-variance-002} coincides with the expression given in \eqref{def-variance-aa}. 
\end{remark}

\begin{proof}[Proof of Theorem \ref{Main-Thm-LLT}]
The assertion of Theorem \ref{Main-Thm-LLT} follows directly from Theorem \ref{Main-Thm-LLT-general} 
by taking $w_n = 0$ and using standard approximation arguments. 
\end{proof}


\end{document}